  \documentclass[11pt,letterpaper]{amsart}
\RequirePackage[colorlinks,citecolor=blue,urlcolor=blue,linkcolor=blue]{hyperref}
\hypersetup{
colorlinks = true,
citecolor=blue,
urlcolor=blue,
linkcolor=blue,
pdfpagemode = UseNone
}

\usepackage{accsupp}

\usepackage{graphicx,hyperref,xcolor}
\usepackage{enumerate,amsmath}
\usepackage{soul,fancybox}
 \usepackage{tikz}
 \usetikzlibrary{patterns}
 \usepackage{geometry}

   \usepackage{pgf,float}

\usepackage{subcaption}

\usepackage[normalem]{ulem}
\newtheorem{theorem}{Theorem}[section]
\newtheorem{lemma}[theorem]{Lemma}
\newtheorem{proposition}[theorem]{Proposition}
\newtheorem{corollary}[theorem]{Corollary}
\newtheorem{claim}[theorem]{Claim}
\theoremstyle{definition}
\newtheorem{definition}{Definition}[section]

\newtheorem{assumption}{Assumption}[section]
\newtheorem{remark}[theorem]{Remark}

\numberwithin{equation}{section}

\def\topp#1{^{(#1)}}
\def\vv#1{\boldsymbol{#1}}
\def\vv#1{\boldsymbol{#1}}

\newcommand{\eps}{\varepsilon}
\newcommand{\la}{\lambda}
\newcommand{\RR}{\mathbb{R}}
\newcommand{\EE}{\mathbb{E}}
\newcommand{\CC}{\mathbb{C}}
\newcommand{\ZZ}{\mathbb{Z}}
\newcommand{\NN}{\mathbb{N}}

\newcommand{\II}{\vv I}

\newcommand{\GGG}{\vv G}
\newcommand{\GG}{\mathcal{Z}}

    \def\re{\textnormal {Re}}
    \def\im{\textnormal {Im}}
     \def\d{{\textnormal d}}
    \def\i{{\textnormal i}}
     \def\Res{\textnormal{Res}}
\makeatletter
\renewcommand*\env@matrix[1][\arraystretch]{%
  \edef\arraystretch{#1}%
  \hskip -\arraycolsep
  \let\@ifnextchar\new@ifnextchar
  \array{*\c@MaxMatrixCols c}}
\makeatother

 \usepackage{framed,pgf}
\newcounter{oldeq}
\newcounter{usesofarxiv}
 \newcommand{\arxiv}[1]{
\setcounter{oldeq}{\value{equation}}
 \addtocounter{usesofarxiv}{1}
 \setcounter{equation}{0}
\def\theoldeq{\theequation}
\def\theequation{x-\arabic{usesofarxiv}.\arabic{equation}}
\def\theequation{\arabic{section}.\arabic{usesofarxiv}.\arabic{equation}}
\def\theequation{\thesection.\arabic{usesofarxiv}.\arabic{equation}}
  \colorlet{shadecolor}{gray!10}
{
\begin{shaded}
\footnotesize
#1
\normalsize
\end{shaded}
   \setcounter{equation}{\value{oldeq}}
\numberwithin{equation}{section}
}}
\newcommand{\hide}[1]{\shadowbox{\tiny     \hl{\%}$\backslash$hide~$\Rightarrow$~view}}
\renewcommand{\hide}[1]{\shadowbox{\tiny     \hl{delete}} {\color{gray} \footnotesize #1}}

\begin{document}
\title[Log-gamma polymer on a strip]{Stationary measures for log-gamma polymer on a strip and in half-space}
\author{W{\l}odek Bryc}%
\address
{
W{\l}odzimierz Bryc\\
Department of Mathematical Sciences\\
University of Cincinnati\\
2815 Commons Way\\
Cincinnati, OH, 45221-0025, USA.
}
\email{wlodek.bryc@gmail.com}%

\author{Jacek  Weso{\l}owski}
\address{Jacek Weso{\l}owski, Faculty of Mathematics and Information Science,
Warsaw University of Technology, ul. Koszykowa 75, 00-662
Warszawa, Poland; Statistics Poland, Al. Niepodleg{\l}o\'sci 208, 00-925 Warsaw}
\email{jacek.wesolowski@pw.edu.pl}
\subjclass{60K35;82D60}
\keywords{Stationary measures, directed polymers, phase diagram, contour integral formulas}

\begin{abstract}
We study stationary measures of the log-gamma polymer on a finite diagonal strip and in a half-space. We establish the phase diagram for the stationary measure on the strip. To this end we develop a representation of  {the Laplace transform of} this stationary measure by  independent $\mathrm{Beta}_{II}$ random variables. We also present an analytic approach that extends Barraquand's contour integral representation of the Laplace transform.
We prove that, as the strip width tends to infinity, the stationary measure of the log-gamma polymer on the strip converges to the stationary measure of the half-space log-gamma polymer. Finally, we derive a contour integral formula for the Laplace transform of the stationary measure of the half-space log-gamma polymer.
\end{abstract}
\maketitle

\arxiv
{This is an expanded version of the paper  with additional details.
}

\section{Introduction}

The directed polymer in a random environment was introduced in the statistical physics literature in \cite{huse1985pinning}. While directed polymer models can be defined in higher dimensions or in continuous space and time, in this work we restrict attention to models on the lattice $\mathbb{Z}^2$. Of particular interest are \emph{integrable directed polymer models}, which admit exact descriptions of their stationary and fluctuation properties and play a central role in the rigorous study of the KPZ (Kardar--Parisi--Zhang) universality class.
A prominent example is the \emph{log-gamma polymer} on the quadrant $\mathbb{Z}_{>0}^2$,  a solvable directed polymer model with inverse-gamma-distributed weights introduced by Sepp\"al\"ainen \cite{Seppalainen2012scaling}, who identified a one-parameter family of stationary measures consisting of multiplicative random walks with inverse-gamma-distributed increments.

Restricting the log-gamma polymer to smaller sublattices gives rise to nontrivial boundary effects. Barraquand and Corwin \cite{Barraquand-Corwin-AOP2024} showed that the half-space log-gamma polymer, first studied in \cite{OSZ2014}, admits a family of stationary measures depending on the bulk and boundary parameters together with an additional parameter controlling the free-energy drift at infinity.

Barraquand, Corwin, and Yang \cite{barraquand2024stationary} introduced the log-gamma polymer on a strip and proved the existence and uniqueness of its stationary measure, which depends on the bulk parameter and two boundary parameters. They also obtained an explicit representation of the stationary measure in terms of a pair of suitably reweighted independent log-gamma random walks. Subsequently, Barraquand \cite{Barraquand-2024-integral} derived an explicit contour integral formula  for the multipoint Laplace transform of the stationary measure.

In this paper, we first establish the phase diagram of the log-gamma polymer on a finite strip. We show that the stationary measure exhibits boundary-driven phases whose structure closely parallels the phase diagram of the open asymmetric simple exclusion process \cite{derrida93exact}, with the free-energy increments playing the role of occupation variables. In particular, different regimes of the boundary parameters give rise to distinct asymptotic behaviors, separated by a transition line where the boundary effects balance, analogous to the coexistence line in open ASEP \cite[Theorem~1.6]{WWY-2024}.

We then establish that, as the strip width tends to infinity, the stationary measure on the strip converges to a stationary measure of the half-space log-gamma polymer. The limiting measure is characterized by a drift parameter determined by the phase diagram of the strip model.

 Finally, we derive a contour integral   formula for the Laplace transform of the stationary measure of the  half-space log-gamma polymer. %

 Our proofs are based on a mixture of probabilistic and analytic approaches: we develop
representations of the Laplace transform in terms of independent $\mathrm{Beta}_{II}$ random variables, we use classical Barnes-type integral identities to establish contour integral representations, and we provide explicit formulas for their  analytic continuation.

  \subsection{Log-gamma polymer model} \sloppy
Throughout the paper we use the gamma function $\Gamma(z)$ for $z\in\CC\setminus\ZZ_{\le0}$, where
$\ZZ_{\le0}=\{0,-1,-2,\ldots\}$. Recall that $1/\Gamma(z)$ extends to an entire function on $\CC$ and vanishes precisely on $\ZZ_{\le0}$.
We write $\psi(x)=\Gamma'(x)/\Gamma(x)$ for the digamma function.

 For a random variable $X$ with distribution $\mu$, we write $X\simeq\mu$. The notation $X\stackrel{d}{=}Y$ means that random variables $X$ and $Y$ have the same distribution.
A positive random variable $X$ is said to have the gamma distribution with parameter $p>0$ if it has density $x^{p-1}e^{-x}/\Gamma(p)$ on $(0,\infty)$; then we write $X\simeq\mathrm{Gamma}(p)$.

\medskip
For $N=1,2,\dots$,
we consider a strip
$$\mathbb{S}_N:=\{(j,n): n\in \ZZ_{\ge 0}, j\in\{n, n+1, \dots, n+N\}\}\subset \ZZ_{\ge 0}^2$$
of width $N$  of the $\mathbb{Z}^2$ lattice  with vertices $(j,n)$. Vertices $(n,n)$ for $n\geq 0$ are called the left boundary vertices; vertices $(n+N,n)$ for $n\geq 0$  are called the right boundary vertices and the remaining vertices $(j,n)$ for $n\geq 0$ and $n<j<n+N$ are called the bulk vertices.   Edges of the lattice $\mathbb{Z}^2$ connecting two neighboring vertices of the strip are called edges of the strip.

Let $\alpha>0$ be a bulk parameter and $u,v\in\RR$ be boundary parameters such that $u+\alpha>0 $ and $v+\alpha>0$.
\normalsize  Let $\{W_{j,n}\}_{1\leq n\leq j \leq n+N}$
be the family of independent  random variables indexed by the vertices of the strip. For any bulk vertex $(j,n)$ we assume that $1/W_{j,n}\simeq \mathrm{Gamma}(2\alpha)$. For any left boundary vertex $(n,n)$ we assume that $1/W_{n,n}\simeq \mathrm{Gamma}(\alpha+u) $. For any right boundary vertex $(n+N,n)$ we assume $1/W_{n+N,n}\simeq\rm{Gamma}(v+\alpha)$.
For a given initial condition $\vv H_0:\{0,1,\dots, N\}\to \RR$ with  $\vv H_0(0)=0$ and  $(j,n)\in \mathbb{S}_N$ we
 define the associated free energy and polymer partition
function  for all points of the strip:
\begin{equation}
  \label{Log-G-Poly}
  H_{j,n}:= \log Z_{j,n}, \quad Z_{j,n}=  \sum_{i=1}^N e^{\vv H_0(i)}\sum_{\pi:(i,1)\to(j,n)} \prod_{k=0}^{\ell(\pi)}W_{\pi(k)},
\end{equation}
where the second sum is taken over all  up-right paths $\pi:\{0,\dots,K\}\to \mathbb{S}_N$, with endpoints $\pi(0)=(i,1)$, $\pi(K)=(j,n)$. The
up-right { path} means  that   $\pi(k+1)-\pi(k)\in\{(0,1),(1,0)\}$ for $k=0,1,\dots$, and $K=\ell(\pi)$ is the length of the path so that $K=n-1+j-i$.
   For illustration, see Figure~\ref{fig:sub1}.

Formula      \eqref{Log-G-Poly} is equivalent to the statement that the array $\{Z_{j,n}\}$ satisfies the  recurrence
\begin{equation}
  \label{one-step+}
  Z_{j,n}=W_{j,n}\left(Z_{j-1,n}+Z_{j,n-1}\right), \quad (j,n)\in \mathbb{S}_N, \quad  n\geq 1,
\end{equation}
together with the initial condition $Z_{j,0}=e^{\vv H_0(j)}$ for  $j=0,\dots,N$ and the boundary conditions $Z_{j,n}=0$ for $(j,n)\not\in \mathbb{S}_N$ and $n\geq 0$.

We interpret %
$H_{i+n,n}$
as a  stochastic growth model on the finite space  $i\in\{0,\dots, N\}$, with  $n$  playing the role of  the \emph{ time variable}.  We write %
$\vv H_n=(H_{i+n,n}-H_{n,n})_{i=0,\dots,N}$ for the %
vector of the free energy of the polymer at %
time $n=1,2,\dots$.
{ Note that  the first component of $\vv H_n$ is   $0$ (nevertheless, it is convenient to retain the first component in the notation).}
Then $(\vv H_n)_{n=0,1,\dots}$ forms a Markov chain.   See Figure~\ref{fig:sub2}. %

\begin{definition}[stationary measure] \label{Def11} We say that a probability measure  $\nu$ on $\RR^{N+1}$ is a {\em stationary measure}   for the log-gamma polymer on a strip if, whenever the initial condition $\vv H_0$  has distribution $\nu$ and is independent of the collection
  $\{W_{j,n}\}_{0\leq n\leq j \leq n+N}$,
the random vector
$\vv H_n$  also has distribution $\nu$ for all $n\geq 1$.
\end{definition}

 \begin{figure}[H]
    \begin{subfigure}{0.48\textwidth}
        \centering  \vfill
        \BeginAccSupp{ActualText={An example of an up-right path}}
      \begin{tikzpicture}[scale=.5]
    \draw[step=1cm,gray,very thin,dotted] (-1,-1) grid (11,5);
     \draw[gray, ultra thick] (0,0)    -- (6,0);
  \draw (0,0) node[below] {\tiny $0$};
    \draw (6,0) node[below] {\tiny $N$};

\draw    (1,1) node[above, blue]{\tiny $W_{11}$};
    \draw [fill,blue]   (1,1) circle (.1);

    \draw    (2,1) node[above]{\tiny $W_{21}$};
       \draw [fill]   (2,1) circle (.1);

        \draw [fill]   (6,1) circle (.1);
\draw    (7.4,1) node[above,red]{\tiny $W_{N+1,1}$};
        \draw [fill,red]   (7,1) circle (.1);

       \draw    (3,2) node[above]{\tiny $W_{32}$};

       \draw    (2,2) node[above, blue]{\tiny $W_{22}$};
        \draw[-] (0,-.1) to (0,.1);
          \draw[-] (1,-.1) to (1,.1);
           \draw[-] (2,-.1) to (2,.1);
            \draw[-] (3,-.1) to (3,.1);
              \draw[-] (4,-.1) to (4,.1);
               \draw[-] (5,-.1) to (5,.1);
         \draw[-] (6,-.1) to (6,.1);

         \draw[-,gray,very thin] (1,0) -- (1,1)   -- (6,1) -- (6, 0);
          \draw[-,gray,very thin] (2,0) -- (2,2)   -- (7,2) -- (7, 1) -- (6,1);
            \draw[-,gray,very thin] (3,0) -- (3,3)   -- (8,3) -- (8, 2) -- (7,2);
           \draw[-,gray,very thin] (4,0) -- (4,4)   -- (9,4) -- (9, 3) -- (8,3);
             \draw[-,gray,very thin] (5,0) -- (5,5)   -- (10,5) -- (10, 4) -- (9,4);
          \draw[-,gray,very thin] (6,1) --   (6,5.5);
          \draw[-,gray,very thin] (7,2) --   (7,5.5);
          \draw[-,gray,very thin] (8,3) --   (8,5.5);
          \draw[-,gray,very thin] (9,4) --   (9,5.5);
          \draw[-,gray,very thin] (10,5) --   (10,5.5);
          \draw[-,gray,very thin] (10,5) --   (11,5) -- (11,5.5);

   \foreach \x in {1, ..., 5} {
    \foreach \y in {1,..., 5}  {

            \filldraw[blue!50, draw=blue, thick] (\x, \x) circle (.05);
              \filldraw[red!50, draw=red, thick] (\x+6, \x) circle (.05);
            \filldraw[thick] (\x+\y, \x) circle (.05);
        }

 }
      \draw    (0,4) node[left ]{\tiny $n$};
            \draw    (8,0) node[below]{\tiny $j$};
            \draw [fill]   (8,4) circle (.1);
      \draw[-,thick] (4,0)  -- (4, 1);
         \draw[-, ultra thick] (4,1)  -- (5, 1) -- (5, 2) -- (5,3)--(6,3) -- (7,3) --  (7,4) -- (8,4);
            \draw    (7.4,4) node[above right]{\tiny $Z_{j,n}$};
        \draw    (4,0) node[below]{\tiny $Z_{i,0}$};
   \end{tikzpicture}
   \vfill
   \subcaption{%
    The black path is an example of an up-right path $\pi$ with $\ell(\pi)=7$, contributing
$
Z_{i,0}W_{i,1}\dots W_{j-1,n}W_{j,n}
$
to partition function in \eqref{Log-G-Poly}.}
        \label{fig:sub1}
    \end{subfigure}
   \hfill
    \begin{subfigure}[b]{0.48\textwidth}
        \centering \vfill
         \BeginAccSupp{ActualText={Two black piecewise linear curves represent that represent two vectors}}
     \begin{tikzpicture}[scale=.32]

      \foreach \x in {0,..., 5} {
\def\slope{2}
\draw[-,gray,very thin] (\slope*\x, \x) --   (\slope*\x+6, \x);
              \draw[-,gray,very thin] (\x, 0) --   (\slope*\x, \x);
 \draw[-,gray,very thin] (\slope*\x+6, \x) --   ( {\slope*\x+6+(5.5-\x)},  5.5);
        }

 \draw[gray, ultra thick] (0,0)    -- (6,0);
   \draw[-] (0,-.1) to (0,.1);
          \draw[-] (1,-.1) to (1,.1);
           \draw[-] (2,-.1) to (2,.1);
            \draw[-] (3,-.1) to (3,.1);
              \draw[-] (4,-.1) to (4,.1);
               \draw[-] (5,-.1) to (5,.1);
         \draw[-] (6,-.1) to (6,.1);
 \draw (0,0) node[below]{\tiny $0$};
 \draw (6,0) node[below]{\tiny $N$};
 \draw (4,0) node[below]{\tiny $i$};
\fill[pattern=vertical lines, pattern color=gray!60] (0, 0) --  (1, 1.2) -- (2,1.5) -- (3,0.5) -- (4,0.5) --  (5,1.3) -- (6, 1) -- (6,0) -- (0,0) -- cycle;
\draw[-,thick] (0, 0) --  (1, 1.2) -- (2,1.5) -- (3,0.5) -- (4,0.5) --  (5,1.3) -- (6, 1) -- (6,0);

\fill[pattern=crosshatch, pattern color=blue!10]  (0,0) -- (2, 4) -- (4, 6) -- (6 , 6.5) -- (8,8) -- (8,4)  -- (0,0) -- cycle;

\draw[-,blue,dashed] (0,0) -- (2, 4) -- (4, 6) -- (6 , 6.5) -- (8,8) -- (10,8.3) -- (12, 9.6);
\draw[-,blue, thick,dotted] (10,5) to (10, 8.3);
\draw[-,blue, thick] (2,1) to (2, 4);
\draw[-,blue, thick] (4,2) to (4, 6);
\draw[-,blue, thick] (6,3) to (6 ,6.5);
\draw[-,blue, thick] (8,4) to (8,8);
\draw[-,blue, thick] (8,8) to (8,4.5);

\draw  (7.9, 8) node[left]{\tiny \color{blue} $H_{n,n}$};
\filldraw[blue!50, draw=blue, thick] (8, 8) circle (.05);
\filldraw[black] (8, 4) circle (.05);

\fill[pattern=crosshatch, pattern color=blue!10] (10.3,8.5)-- (12,9.6) -- (12, 7.5) -- (11,8.5);

\fill[pattern=vertical lines, pattern color=gray!60] (0, 0) --  (1, 1.2) -- (2,1.5) -- (3,0.5) -- (4,0.5) --  (5,1.3) -- (6, 1) -- (6,0) -- (0,0) -- cycle;
\draw[-,thick] (0, 0) --  (1, 1.2) -- (2,1.5) -- (3,0.5) -- (4,0.5) --  (5,1.3) -- (6, 1);

\draw[-] (1,0) -- (1, 1.2);
\draw[-] (2,0) --  (2,1.5);
\draw[-] (3,0) --  (3,0.5);
\draw[-] (4,0) -- (4,0.5);
\draw[-] (5,0) --  (5,1.3);
\draw[-] (6,0) --  (6,1);

\fill[pattern=horizontal lines light gray, pattern color=gray!20] (8,4)--(8, 8) --  (9, 9.2)  -- (10,8.5)--(11,8.5) -- (12,7.5) --  (13,8.5) -- (14, 7) -- (14,4) -- (13.35,4) -- (13,4.5) -- (12.5,4) --(11.5,4)  -- (11, 4.5) -- (10,4.5)  -- (9,5.2) -- (8, 4) ;

 \draw[-,dashed,thick]   (8, 8) --  (9, 9.2) -- (10,8.5) -- (11,8.5) -- (12,7.5) --  (13,8.5) -- (14, 7);

\fill[pattern=vertical lines, pattern color=gray!60]     (8, 4) --  (9, 5.2) -- (10,4.5) -- (11,4.5) -- (12,3.5) --  (13,4.5) -- (14, 3) -- (14, 4);

\draw[-]  (11,4) to (11,4.5);
\draw[-]  (10,4) to(10,4.5);
\draw[-]  (9,4) to (9,  5.2);
\draw[-]  (13,4) to (13,4.5);
\draw[-]  (14,4) to (14,3);
\draw[-]  (12,4) to (12,3.5);

\draw[gray, thick] (8,4)    -- (14,4);
\draw[-,gray] (14,4)    -- (14,4.5);
\draw[-, thick] (14,7)    -- (14,4);
\draw[-,thick]   (8, 4) --  (9, 5.2) -- (10,4.5) -- (11,4.5) -- (12,3.5) --  (13,4.5) -- (14, 3)  ;

\draw (6,1) node[right]{\tiny $\vv H_0$};
\draw (14,3) node[right]{\tiny $\vv H_n$};

\draw (12,7.5) node[left]{\tiny $\log Z_{j,n}$};
\draw[-]  (12,4) to (12,7.5);
     \filldraw[black] (12, 7.5) circle (.05);
       \filldraw[black] (12, 4) circle (.05);
\draw  (14, 7) node[right]{\tiny \color{red} $H_{n+N,n}$};
\filldraw[red!50, draw=red, thick] (14, 7) circle (.05);
\filldraw[black] (14, 3) circle (.05);
   \end{tikzpicture}
\vfill \subcaption{The black piecewise linear curves represent the
 random vectors $\vv H_0$ and $\vv H_n$ appearing in Definition~\ref{Def11}.}
        \label{fig:sub2}
    \end{subfigure}
    \caption{The subset $\mathbb{S}_N$ of the $\ZZ^2$ lattice for $N=6$.}
     \label{Fig:0}
\end{figure}

According to \cite[Theorem 1.6]{barraquand2024stationary}, the log-gamma polymer on a strip has  a unique stationary measure, which  is the %
law of the first vector component, which is \eqref{H0} below, of the pair
  of   reweighted log-gamma random walks.
  We rewrite their    representation in terms of the reweighted gamma densities as follows.
{  Let $(X_1\topp N,\dots,X_N\topp N,Y_1\topp N,\dots,Y_N\topp N)$ be a random vector in $(0,\infty)^{2N}$ with joint density}
    \begin{equation}
  \label{f-joint+}
  f_{LG}(\vv x,\vv y)=\frac{1}{\mathcal{Z}_N} \frac{ (x_1\dots x_N)^{\alpha+v -1}(y_1 \dots y_N)^{\alpha+u-1} }{ \left(\sum_{k=0}^{N-1} x_1\dots x_{k} y_{k+2}\dots y_N\right)^{u+v}} e^{-(x_1+\dots+x_N+y_1+\dots +y_N)},
\end{equation}
where $\mathcal{Z}_N=\mathcal{Z}_N(\alpha,u,v)$ is a  normalizing constant, which is known to be finite.
Then the law of %
\begin{multline}
    \label{H0} \vv L\topp N  := (\vv L\topp N_0,\vv L\topp N_1,\dots, \vv L\topp N_N)
    \\=\left(0,-\log X_1\topp N,-\log\big(X_1\topp N X_2\topp N\big),\dots,-\log\big(X_1\topp N\dots X_N\topp N\big)\right)
\end{multline}
is the
unique stationary law  of the log-gamma polymer \eqref{Log-G-Poly}.
  That is, with the initial condition $\vv H_0$   independent of $\{W_{i,j}\}_{(i,j)\in\mathbb{S}_N}$, such that  $\vv H_0 \stackrel{d}{=}  \vv L\topp N$,   we have $\vv H_n \stackrel{d}{=} \vv H_0$  for each $n=1,2,\ldots$.

 For typographical convenience, we write $\vv L$ in place of $\vv L\topp N$ whenever no confusion can arise.

Our first main result establishes the phase diagram for
 this stationary measure, see  Figure~\ref{Fig-PhD}.

\begin{theorem}\label{Thm:PhD}  The stationary measure of log-gamma polymer \eqref{H0}
 has the following scaling limits.
\begin{enumerate}
  [(i)]
  \item
 {      If $u,v\geq 0$ (maximal current), then $ \displaystyle \lim_{N\to\infty}\tfrac1N \vv L_N\topp N=
      -\psi(\alpha)$ in probability.
}
      \item If $u>v, v<0$ %
      (high
      density), then $  \displaystyle \lim_{N\to\infty}\tfrac1N \vv L_N\topp N=
      -\psi(\alpha+v)$ in probability.
      \item If $ u<v, u<0$ %
       (low  density), then $  \displaystyle\lim_{N\to\infty}\tfrac1N \vv L_N\topp N =
      -\psi(\alpha-u)$ in probability.
      \item If $u=v<0$ (coexistence line), then
      $ \displaystyle\tfrac1N \vv L_N\topp N \Rightarrow U$
      in distribution, where $U$ is  uniform  on the interval $[-\psi(\alpha-u), -\psi(\alpha+u)]$.
\end{enumerate}
\end{theorem}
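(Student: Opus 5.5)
Since $\vv L_N\topp N=-\sum_{k=1}^N\log X_k\topp N$, it suffices to show that the Laplace transform $\EE\exp(-\tfrac{s}{N}\cdot N\,\vv L_N\topp N/N)$, or more conveniently the Mellin transform $\EE\big[(X_1\topp N\cdots X_N\topp N)^{t/N}\big]$ for $t$ in a neighbourhood of $0$, converges to $e^{-t c}$ with $c$ the claimed constant in (i)--(iii), and to $\EE e^{tU}$ in (iv). Convergence of the moment generating function on an open interval around $0$ then gives convergence in distribution, and hence in probability when the limit is degenerate.

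The basic tool is a representation of the reweighting denominator. Write $P_k=x_1\cdots x_k$ and $Q_k=y_{k+2}\cdots y_N$. I would use the gamma identity
\[
D^{-(u+v)}=\frac{1}{\Gamma(u+v)}\int_0^\infty r^{u+v-1}e^{-rD}\,\d r ,\qquad D=\sum_{k=0}^{N-1} P_kQ_k,
\]
valid for $u+v>0$; the remaining cases would be handled by analytic continuation in $u,v$ or by a reflection. The aim is a Beta$_{II}$ or Barnes-type representation of $\mathcal Z_N(\alpha,u,v)$ and of the tilted quantity
\[
\mathcal Z_N(\alpha,u,v;t):=\int (x_1\cdots x_N)^{t/N}\,\cdots\,\d\vv x\,\d\vv y .
\]
The key observation is that multiplying by $(x_1\cdots x_N)^{t/N}$ amounts to replacing $\alpha+v$ by $\alpha+v+t/N$ in the $x$-exponent, while the $y$-exponent $\alpha+u$ and the power $u+v$ stay fixed. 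The target is therefore a ratio of normalizing constants,
\[
\EE\big[(X_1\cdots X_N)^{t/N}\big]=\frac{\widetilde{\mathcal Z}_N(\alpha,u,v,t/N)}{\mathcal Z_N(\alpha,u,v)} .
\]
I would then obtain an explicit formula for $\mathcal Z_N$ as a contour integral (Barraquand's formula) of the form
\[
\int \frac{\Gamma(\alpha\pm z)^{N}\,\Gamma(u\pm z)\,\Gamma(v\pm z)}{\Gamma(\pm 2z)}\,\d z ,
\]
plus residue terms that appear when $u<0$ or $v<0$, at $z=\pm u$ and $z=\pm v$. The tilted version should replace $\Gamma(\alpha+v\pm\cdots)$-type factors by a one-sided shift, producing factors like $\Gamma(\alpha+z+s)^N/\Gamma(\alpha+z)^N$ with $s=t/N$.

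With this formula in hand, the asymptotics proceed by phase. In the maximal current phase (i), the contour integral dominates and is governed by a saddle at $z=0$. The ratio becomes approximately
\[
\left(\frac{\Gamma(\alpha+t/N)}{\Gamma(\alpha)}\right)^N\to e^{t\psi(\alpha)} ,
\]
which gives $\vv L_N/N\to-\psi(\alpha)$. In the high-density phase (ii), where $v<0$ and $v<u$, the residue at $z=\pm v$ dominates because it grows like $\Gamma(\alpha+v)^N\Gamma(\alpha-v)^N$, larger than $\Gamma(\alpha)^{2N}$ by log-convexity. The shift then acts on $\Gamma(\alpha+v)$, giving the limit $e^{t\psi(\alpha+v)}$. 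Case (iii) follows symmetrically with $\Gamma(\alpha-u)$. Alternatively, (iii) can be derived from (ii) by the symmetry that reverses indices and swaps $x\leftrightarrow y$, $u\leftrightarrow v$, using $\sum\log X+\sum\log Y$ behaviour. On the coexistence line (iv), $u=v$, so the two residues coincide and form a double pole, producing a sum of the form
\[
\sum_{k=0}^{N}\Gamma(\alpha+u+t/N)^k\,\Gamma(\alpha-u+t/N)^{N-k}\cdots .
\]
The normalized ratio then tends to
\[
\int_0^1 e^{-t(\theta\psi(\alpha+u)+(1-\theta)\psi(\alpha-u))}\,\d\theta ,
\]
which is the moment generating function of the uniform law on the stated interval. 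A more probabilistic reading: conditionally on a random index $K$ that is approximately uniform, the $X_k$ are Gamma$(\alpha-u)$ before $K$ and Gamma$(\alpha+u)$ after.

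\textbf{Main obstacle.} The hardest step is deriving the tilted contour/residue formula and controlling its asymptotics uniformly. Specifically, one must show that in phase (i) the residue terms are negligible and the saddle contributions of the numerator and denominator have the same polynomial prefactor, so that they cancel. One must also verify in (iv) that the discrete sum really behaves as a Riemann sum with uniform weight. I would first try to avoid contour analysis by working directly with the $\mathrm{Beta}_{II}$ representation, which should express the law as a mixture over a "switching position" $K$ with explicit weights. The phases would then reduce to locating the maximum of these weights in $K$: concentration at $K=0$, at $K=N$, or spreading out (the maximal current phase needing separate care), followed by the law of large numbers $\tfrac1N\sum\log\mathrm{Gamma}(p)\to\psi(p)$.
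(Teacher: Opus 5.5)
Your main line is essentially the paper's alternative analytic proof in Section~\ref{Sec:AltProof}. It uses the same ingredients: the reparametrization \eqref{LG-xx}, i.e.\ $(\alpha,u,v)\mapsto(\alpha+s,u-s,v+s)$ with $2s=t/N$; the residue-continued Barraquand formula of Proposition~\ref{Prop2}; and a comparison of $\Gamma(\alpha)^{2N}$ with the $j=0$ residues. One correction is needed on the coexistence line. Only the untilted denominator has a double pole, which produces the factor $N(\psi(\alpha-u)-\psi(\alpha+u))$. In the numerator the poles at $\pm(u-s)$ and $\pm(u+s)$ stay simple, with weights of order $\Gamma(\pm 2s)\approx\pm N/t$, and it is the difference of these two contributions that gives your Riemann sum. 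Its limit is $\EE[e^{-tU}]$, not $\EE[e^{tU}]$ as you wrote.

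Be aware that the paper carries out this analytic route only formally in (i) and omits some degenerate parameter values (colliding poles, $u=0$ or $v=0$). Its complete proof in Section~\ref{Proof of Thm:PHD} uses $\mathrm{Beta}_{II}$ variables differently from your fallback. It does not use them as a switching-position mixture, which exists literally only when $u+v=-1$. Instead it writes $\GG_N$ as a perpetuity moment $\EE[(1+\zeta_1+\zeta_1\zeta_2+\cdots)^{-u-v}]$. From there, (ii)--(iii) follow from convergence of perturbed perpetuities, (i) from Kolmogorov and Chernoff bounds, and (iv) from a Ces\`aro argument.
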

 \begin{figure}[hbt]
  \BeginAccSupp{ActualText={Three regions of the phase diagram}}
  \begin{tikzpicture}[scale=.7]
\fill[pattern=dots, pattern color=gray!60] (5,5)--(5,10) -- (10,10)
-- (10,5) -- cycle;

\fill[pattern=horizontal lines, pattern color=gray!60] (1,1)--(5,5) -- (10,5)
-- (10,1) -- cycle;

\fill[pattern=vertical lines, pattern color=gray!60] (1,1)--(1,10) -- (5,10)
-- (5,5) -- cycle;

  \draw[->] (5,0) to (5,11);
 \draw[->] (0.,5) to (11,5);

 \draw[-,thick](1,1) to (11,1);
 \draw[-,thick](1,1) to (1,11);
 \draw[-,thick] (5,5) to (11,5);
 \draw[-,thick] (5,5) to (5,11);
 \draw[-,thick] (1,1) to (5,5);

  \node [below] at (8.5,8.5) {$\rho=-\psi(\alpha)$};
   \node at (8,2) {\footnotesize $\rho=-\psi(\alpha+v)$};
    \node  at (3,8) {\footnotesize $\rho=-\psi(\alpha-u)$};
  \node[below] at (11,5) {$u$};
    \node[left] at (5,11) {$v$};
        \node[left,below] at (1,1) {$(-\alpha,-\alpha)$};
\end{tikzpicture}
  \caption{Phase diagram for the stationary measure of the log-gamma polymer. The limit
  $\frac{1}{N}\vv  L_N\to \rho=\rho(\alpha,u ,v )$ depends on the boundary parameters $u,v> -\alpha$.
  The  line between the points $(-\alpha,-\alpha)$ and $(0,0)$ separates the {  regions with the low
  $\rho=-\psi(\alpha-u)$ and high $\rho=-\psi(\alpha+v)$ density, and for the parameters  $(u,v)$  on that line, the limit of  $\frac{1}{N}\vv  L_N$  is random. } }
  \label{Fig-PhD}
\end{figure}

 We give two proofs of Theorem~\ref{Thm:PhD}, both based on the analysis of the Laplace transform. In Section~\ref{Proof of Thm:PHD}, we develop a probabilistic approach  based on its representations
  by independent  $\mathrm{Beta}_{II}$ random variables.  %
 An alternative analytic proof, based on Barraquand’s integral formula discussed in Section~\ref{Sec:BarInt}, is presented in Section~\ref{Sec:AltProof}.

\subsection{Stationary measures for  log-gamma polymer in a half-space}\label{Sec:Stat-meas-half}

For $\alpha>0$ and $u+\alpha>0$, the half-space log-gamma polymer partition function is defined by the recursion \eqref{one-step+} on the strip
$$\mathbb{S}_\infty:=\{(j,n):\; n\in \ZZ_{\ge 0}, \; j\in\{n, n+1, \dots\}\}\subset \ZZ_{\ge 0}^2.$$
 As previously, we use  independent random variables %
$\{W_{j,n}\}_{(j,n)\in\mathbb{S}_\infty}$ with  $1/W_{j,n}\simeq \mathrm{Gamma}(2\alpha)$ for $j>n$ and $1/W_{n,n}\simeq \mathrm{Gamma}(\alpha+u) $, $n=1,2,\dots$.

As in Definition~\ref{Def11}, a stationary measure is the law of a random sequence
$\{H_k\}_{k\in\ZZ_{\ge0}}$ with the property that, if the recursion \eqref{one-step+} on the half-infinite strip $\mathbb S_{\infty}$ is started from the initial condition
\[
Z_{k,0}=e^{H_k}, \qquad k\in\ZZ_{\ge0},
\]
then, for every $n\ge0$, the sequence
\[
\{\log Z_{n+k,n}-\log Z_{n,n}\}_{k\in\ZZ_{\ge0}}
\]
has the same distribution as $\{H_k\}_{k\in\ZZ_{\ge0}}$.

As already mentioned, for given $\alpha>0$ and $u>-\alpha$, Barraquand and Corwin
\cite{Barraquand-Corwin-AOP2024} constructed a one-parameter family of stationary measures indexed by a drift parameter $\tilde v$ satisfying $|\tilde v|<\alpha$ and $\tilde v\le u$.
To state their result, consider a random sequence $\{Z_k\}_{k\in\ZZ_{\geq 0}}$ %
given by
\begin{equation}\label{Half-plane Z}
  Z_k=\frac{1}{X_1\dots X_k}
  + \sum_{j=1}^k
  \frac{V}{Y_1\dots Y_j\, X_j\dots X_k},\quad k=1,2,\dots
\end{equation}
where
$
X_j \simeq \mathrm{Gamma}(\alpha-\tilde v)$,
$Y_j \simeq \mathrm{Gamma}(\alpha+\tilde v)$, $j=1,2,\dots$,
$V \simeq \mathrm{Gamma}(u-\tilde v)$,
and all random variables are independent. We take $V=0$ when $\tilde v=u$. We also define $Z_0=1$.
(Formula \eqref{Half-plane Z} is a reformulation of
\cite[Eq.~(1.6)]{Barraquand-Corwin-AOP2024}.)

 According to  \cite[Theorem 1.8]{Barraquand-Corwin-AOP2024}, for every
 $\alpha>0$ and $u,\tilde v>-\alpha$ such that $\tilde v\leq \min\{0,u\}$,
 the law of the sequence $\{\log Z_k\}_{k\in\ZZ_{\geq 0}}$ is a stationary measure for the log-gamma polymer in the half space.
The authors also note that  the distribution of the sequence $\{Z_k\}$ remains the same if
 \(\tilde v\) is replaced with \(-\tilde v\), see also \eqref{Z-int} below.

\medskip
It is natural to expect that, as $N\to\infty$, the stationary measures of log-gamma polymers on strips of width $N$ converge to a stationary measure of the half-space log-gamma polymer.
The following theorem, which is our second main result, establishes this convergence.

\begin{theorem}\label{Thm-half-lim}
Let $\{\vv L\topp N_k\}_{k\ge 0}$ be the  log-gamma polymer  \eqref{H0} on a strip of width $N$ distributed according to the stationary measure with parameters $\alpha,u,v$, and extended by appending infinitely many zeros.
As $N\to\infty$, %
\[
\left\{\vv L\topp N_k\right \}_{k\ge 0}
\Rightarrow
\left\{\log Z_k\right\}_{k\ge 0},
\]
where $\{Z_k\}_{k\ge 0}$ is   given by
\eqref{Half-plane Z} with parameters $\alpha,u$ and   $\tilde v=\min\{0,u,v\}$.
\end{theorem}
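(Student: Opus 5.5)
Convergence of finite-dimensional distributions suffices, since the statement concerns the product topology on $\RR^{\ZZ_{\ge0}}$. Fix $K$ and study $(\vv L\topp N_1,\dots,\vv L\topp N_K)=(-\log X_1\topp N,\dots,-\log(X_1\topp N\cdots X_K\topp N))$ as $N\to\infty$. By continuity of the map, it is enough to show that $(X_1\topp N,\dots,X_K\topp N)$ converges in law to the vector $(1/Z_1,Z_1/Z_2,\dots,Z_{K-1}/Z_K)$ built from \eqref{Half-plane Z}.

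The plan is to work with the density \eqref{f-joint+}. Marginalize over everything except $x_1,\dots,x_K$. Write the denominator sum $S_N=\sum_{k=0}^{N-1}x_1\cdots x_k\,y_{k+2}\cdots y_N$ and split it at $k<K$ versus $k\ge K$. Factoring out $y_{K+1}\cdots y_N$, we get
\[
S_N=y_{K+1}\cdots y_N\Big(\sum_{k=0}^{K-1}x_1\cdots x_k\, y_{k+2}\cdots y_K+x_1\cdots x_K\, R_N\Big),
\]
where $R_N$ depends only on the tail variables. Conditionally on the first $K$ coordinates, the tail is again a reweighted gamma system of the same type, with one boundary degree of freedom carried by $R_N$. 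The goal is to identify the limiting law of the scalar $R_N$, or rather of an appropriate mixing variable $V$, under the tilted tail measure. This is where the phase diagram of Theorem~\ref{Thm:PhD} enters.

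\begin{enumerate}
\item[(a)] If $v\ge\min\{0,u\}$, then $\tilde v=\min\{0,u\}$. The tail sum should concentrate at the left end, and the effective tilt $(\cdot)^{-(u+v)}$ reorganizes into the $V\simeq\mathrm{Gamma}(u-\tilde v)$ mixture. Integrating out the $y$'s near the left boundary via a Beta/Gamma identity converts $\sum_j V/(Y_1\cdots Y_j X_j\cdots X_k)$ into the structure of \eqref{Half-plane Z}.
\item[(b)] If $v<\min\{0,u\}$, the mass moves to the right boundary. The first $K$ increments then look like i.i.d.\ $\mathrm{Gamma}(\alpha+v)$ variables, matching \eqref{Half-plane Z} with $\tilde v=v$ after using the $\tilde v\leftrightarrow-\tilde v$ symmetry.
\end{enumerate}

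Rather than doing these densities directly, I would prefer to go through Laplace transforms, mirroring the proof of Theorem~\ref{Thm:PhD}. Use the representation of $\EE\exp(-\sum_k s_k e^{\vv L_k})$ (or of the joint Mellin transform of $X_1,\dots,X_K$) by independent $\mathrm{Beta}_{II}$ variables, developed in Section~\ref{Proof of Thm:PHD}. In that representation the dependence on $N$ should sit in a finite number of explicit gamma-ratio factors and in a normalizing ratio $\mathcal Z_{N-K}/\mathcal Z_N$. The steps would be:
\begin{enumerate}
\item[(i)] Express the $K$-point joint Mellin/Laplace transform as an expectation over $\mathrm{Beta}_{II}$ variables times $\mathcal Z_{N-K}(\cdot)/\mathcal Z_N$.
\item[(ii)] Compute the asymptotics of $\mathcal Z_N$ in each phase: exponential growth rate plus polynomial prefactor, with the coexistence line $u=v<0$ handled separately.
\item[(iii)] Pass to the limit under the expectation by dominated convergence.
\item[(iv)] Check that the limit equals the corresponding transform of $\{\log Z_k\}$ computed from \eqref{Half-plane Z}, using Beta–Gamma algebra; here $V$ corresponds to the limiting $\mathrm{Beta}_{II}$ boundary variable.
\end{enumerate}
Because Mellin transforms along a vertical strip determine the law of positive vectors, this would give the finite-dimensional convergence. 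Tightness comes for free from the convergence of transforms on an open set.

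The main obstacle is step (ii): the sharp ratio asymptotics of $\mathcal Z_N$, with shifted parameters, uniformly enough to justify dominated convergence, and especially the boundary cases $v=0$, $u=v$ and $v=u<0$. On the coexistence line the ratio has a polynomial correction. I would first try to obtain $\mathcal Z_N$ as a single or double Barnes-type contour integral (from Barraquand's formula in Section~\ref{Sec:BarInt}) and read off the asymptotics by residue or saddle-point analysis. The phase is selected by the rightmost pole among $\{0,u,v\}$, which is exactly how $\tilde v=\min\{0,u,v\}$ should emerge.
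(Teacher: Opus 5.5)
\textbf{Overall.} Your plan works in the high-density, low-density and coexistence regions, where it essentially matches the paper. It has a genuine gap in the maximal current region and on its boundary lines, and your step (i) and heuristic (b) are misstated.

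\textbf{Step (i) and heuristic (b).} Step (i) misstates the structure, and this hides where the difficulty lies. For instance, by \eqref{pert-MU} the numerator $\GG_{t_1,\dots,t_K,0,\dots,0}(\alpha,u,v)$ of the $K$-point transform is an explicit gamma factor times $\EE[(1+\xi_1+\dots+\xi_1\cdots\xi_{K-1}+\xi_1\cdots\xi_K R_{N-K-1})^{-u-v}]$. Here the tail perpetuity $R_{N-K-1}$ sits inside the power together with the head variables. So the transform is not a $\mathrm{Beta}_{II}$ expectation times $\GG_{N-K}/\GG_N$, and the asymptotics of $\GG_N$ alone do not determine the limit. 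What you need is the limit of ratios like $\EE[(A+BR_m)^{-u-v}]/\EE[R_{m'}^{-u-v}]$, jointly in the head variables $A,B$.

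This is straightforward exactly when the log-$\mathrm{Beta}_{II}$ walk has negative drift, so that $R_m\to R$ almost surely and in $L^p$:
\begin{itemize}
\item the high-density region, via \eqref{pert-MU};
\item the low-density region, via \eqref{pert-MV};
\item the coexistence line $w=-u=-v>0$, via \eqref{pert-MV}. Here $\EE[R^{2w}]=\infty$, so dominated convergence must be replaced by a norm comparison showing the ratio tends to $1$.
\end{itemize}
In these regions your steps (iii)--(iv) are essentially the paper's proof. In the high-density case the identification is done through the change of variables of Lemma~\ref{Lem:H=L}.

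Your heuristic (b) is wrong. For $v<\min\{0,u\}$ the limit is not a multiplicative random walk, because $V\simeq\mathrm{Gamma}(u-v)$ is nondegenerate. Already for $K=1$,
$\lim_{N}\EE[(X_1\topp N)^{2t}]=\tfrac{\Gamma(\alpha+v+2t)\Gamma(\alpha-v+2t)\Gamma(\alpha+u)}{\Gamma(\alpha+v)\Gamma(\alpha-v)\Gamma(\alpha+u+2t)}$,
not $\Gamma(\alpha+v+2t)/\Gamma(\alpha+v)$. The inverse-gamma random walk is the low-density limit, with $X_j\simeq\mathrm{Gamma}(\alpha-u)$.

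\textbf{The gap: maximal current region and its boundary.} For $u,v>0$ the walk in \eqref{pert-MV} has positive drift, so $R_m\to\infty$ and $\EE[R_m^{-u-v}]\to0$. The head--tail coupling then does not disappear under dominated convergence.

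Your fallback, Barraquand's formula, does produce the limit if you use the multipoint version, Theorem~\ref{BarraquandThm1.11} with $\vv t=(t_1,\dots,t_K,0,\dots,0)$. The mechanism is concentration of $|\Gamma(\alpha+\i x)|^{2N}$ at $x=0$ (Lemma~\ref{Lem:Extr}), not a residue. But the outcome is a $K$-fold integral containing $|\Gamma(t_K+\i y_K)|^4$. Nothing in your step (iv) lets you recognize this as the transform of \eqref{Half-plane Z} with $\tilde v=0$; "Beta--Gamma algebra" does not act on a contour integral. The missing ingredient is an independent contour-integral formula for the half-space stationary measure itself, Theorem~\ref{Thm:IntegralHS}. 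The paper proves it by induction on $k$ starting from \eqref{direct-last}, using Lemma~\ref{Lem:H=L}, Barnes' first lemma and the de Branges--Wilson integral.

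Moreover, Theorem~\ref{BarraquandThm1.11} with this $\vv t$ requires $u>t_1>\dots>t_K>0$ and $v>0$. It is therefore unavailable on the lines $u=0<v$ and $v=0<u$. There the paper relies on sharp asymptotics for exponential functionals of the symmetric $\mathrm{Beta}_{II}(\alpha,\alpha)$ walk with $P_m=\tau_1\cdots\tau_m$:
\begin{itemize}
\item $\sqrt m\,\EE[R_m^{-\theta}]\to c\in(0,\infty)$ (Xu);
\item $\sqrt m\,\EE[P_mR_m^{-1-\theta}]\to0$ (Hirano).
\end{itemize}
These are combined with a monotone coupling when $u=0$, and with the change of variables \eqref{subRSKlike} when $v=0$. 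Your proposal names these cases as obstacles but gives no mechanism for them.
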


The proof of Theorem~\ref{Thm-half-lim} uses different methods in different regions of the phase diagram. Outside the maximal current region (Theorem~\ref{Thm:PhD}), it relies on $\mathrm{Beta}_{II}$ representations. For $u,v>0$, it instead uses Barraquand's integral representation of the multipoint Laplace transform, reproduced here as Theorem~\ref{BarraquandThm1.11}, together with a new contour integral representation for the multipoint Laplace transform of the stationary measure of the half-space log-gamma polymer, which constitutes our third main result.
\begin{theorem}\label{Thm:IntegralHS}
Fix $\alpha>0$, $u>-\alpha$,  $|\tilde v|<\alpha$  such that $\tilde v<u$ and $k\geq 1$.
Let $\{Z_k\}_{k\ge 0}$ be the stationary half-line log-gamma polymer \eqref{Half-plane Z} with drift parameter $\tilde v$.
Then     for
  $u>t_1>t_2>\dots>t_k> |\tilde v|$, with $t_{k+1}=0$.
\begin{multline}\label{Z-int}
 \EE\left[\prod_{r=1}^k Z_r^{2t_{r+1}-2t_r}\right]
 \\= \tfrac{1}{\mathbf{C}_k(\vv t)}
 \int_{(0,\infty)^{k}}
\Big(\prod_{j=1}^{k-1}
\left|
\Gamma(t_j-t_{j+1}+\i y_j+\i y_{j+1})
\Gamma(t_j-t_{j+1}+\i y_j-\i y_{j+1})
\right|^2\Big)
\\ \cdot
 \Big(
\prod_{j=1}^k
\tfrac{|\Gamma(\alpha+t_j+\i y_j)|^2}
     {|\Gamma(2\i y_j)|^2}
\Big)
\left|\Gamma(t_k+\tilde v+\i y_k)\right|^2 \left|\Gamma(t_k-\tilde v+\i y_k)\right|^2
 |\Gamma(u-t_1+\i y_1)|^2    %
\,\d \vv y ,
\end{multline}
where
\begin{equation*}
   \mathbf C_k(\vv t)=(2\pi)^k \Gamma(u+\tilde v)\Gamma(u-\tilde v) \Gamma(\alpha+\tilde v)^{k}
   \Gamma(\alpha-\tilde v)^{k}\prod_{j=1}^k \Gamma(2t_j-2t_{j+1}).
\end{equation*}
\end{theorem}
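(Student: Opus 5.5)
The plan is to prove \eqref{Z-int} by induction on $k$. The inductive step will integrate out one block of gamma variables $(X_j,Y_j)$ at a time. Mellin--Barnes integrals will split the sums in \eqref{Half-plane Z}, and Barnes' first and second lemmas will collapse the resulting contour integrals.

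\textbf{Step 1: rewrite the sums.} Factor \eqref{Half-plane Z} as
\[
Z_r=\frac{1}{X_1\cdots X_r}\Big(1+V\,S_r\Big),\qquad S_r=\sum_{j=1}^r\frac{X_1\cdots X_{j-1}}{Y_1\cdots Y_j}.
\]
Set $s_r=2t_r-2t_{r+1}>0$. Then $\prod_r Z_r^{-s_r}$ splits into a product of powers of the $X_j$ and a product of the factors $(1+VS_r)^{-s_r}$.

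\textbf{Step 2: remove $V$ and introduce the $y$-variables.} Apply the Mellin--Barnes formula
\[
(1+x)^{-s}=\frac{1}{2\pi\i\,\Gamma(s)}\int \Gamma(-z)\Gamma(s+z)x^{z}\,\d z
\]
to the outermost factor. After that, the expectation in $V\simeq\mathrm{Gamma}(u-\tilde v)$ is an explicit Gamma ratio. I expect this to produce the factors $|\Gamma(u-t_1+\i y_1)|^2$ and $\Gamma(u\pm\tilde v)$, after shifting the contour to $\re z=-t_1$ or similar and symmetrizing $z\leftrightarrow \bar z$. The symmetrization produces the Plancherel density $1/|\Gamma(2\i y)|^2$ on $y\in(0,\infty)$.

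\textbf{Step 3: the transfer structure.} The key structural observation is that the integrand in \eqref{Z-int} is a chain. The weights $|\Gamma(\alpha+t_j+\i y_j)|^2/|\Gamma(2\i y_j)|^2$ are Kontorovich--Lebedev type Plancherel weights. The kernels $|\Gamma(t_j-t_{j+1}+\i y_j\pm\i y_{j+1})|^2$ are the Mellin transforms of products of $K$-Bessel functions (equivalently, $\mathrm{GL}_2$ Whittaker functions) against a power, evaluated at $y_j,y_{j+1}$.

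I would therefore prove the following transfer identity. Conditionally on the first $j$ blocks, and after integrating out $(X_{j+1},Y_{j+1})$, the Mellin-variable dependence passes from $y_j$ to $y_{j+1}$ with exactly the kernel $|\Gamma(t_j-t_{j+1}+\i y_j+\i y_{j+1})\Gamma(t_j-t_{j+1}+\i y_j-\i y_{j+1})|^2/\Gamma(s_j)$. The factor $\Gamma(\alpha\pm\tilde v)^{-1}$ comes from the gamma normalizations. This identity is Barnes' second lemma in disguise. Concretely, the $X$-moments give $\Gamma(\alpha-\tilde v+\cdot)$ and the $Y$-moments give $\Gamma(\alpha+\tilde v+\cdot)$. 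Combining two Mellin--Barnes splittings and applying Barnes' lemma should then yield the displayed kernel.

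At the last step $r=k$, the innermost sum terminates with $Y_1$, and the base case produces $|\Gamma(t_k\pm\tilde v+\i y_k)|^2$. For $k=1$ the whole argument reduces to a single Barnes first lemma. I would verify this base case directly first, since it fixes the normalization $\mathbf C_1$.

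\textbf{Main obstacle.} I expect the hardest part to be the bookkeeping of contours in the inductive step. The conditions $u>t_1>\dots>t_k>|\tilde v|$ must place every contour so that the poles of $\Gamma(-z)$ and $\Gamma(s+z)$ are separated. Fubini must also be justified, which requires absolute convergence from Stirling decay of the gamma products, so that the $z$-contours can be moved to the vertical lines $\re=t_j$ and symmetrized.

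If the nested-sum decomposition proves too tangled, the fallback is to use the Markov structure of $\{Z_k\}$ directly. Here $(1/Z_k)$ can be updated by one $X$ and one $Y$ variable. One would then identify the one-step transition as a Whittaker/Kontorovich--Lebedev integral operator whose eigenfunctions are $K_{2\i y}$, so that \eqref{Z-int} follows by composing the one-step spectral representations.
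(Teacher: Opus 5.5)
Your plan has a genuine gap at its core. The ``transfer identity'' that is supposed to drive the induction is only asserted, and the tools you name do not produce the integrand of \eqref{Z-int}. The weights $1/|\Gamma(2\i y_j)|^2$ are the Kontorovich--Lebedev Plancherel density. Barnes' first and second lemmas evaluate contour integrals to ratios of gamma functions, and symmetrizing a Mellin--Barnes contour does not by itself create a factor $1/(\Gamma(2z)\Gamma(-2z))$. So you have no mechanism that generates these weights, or the two-variable kernels attached to them.

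This is visible already at $k=1$. The left-hand side is an explicit gamma ratio from elementary gamma and beta integrals. The right-hand side is, up to constants, $\int_0^\infty|\Gamma(\alpha+t_1+\i y)\Gamma(u-t_1+\i y)\Gamma(t_1+\tilde v+\i y)\Gamma(t_1-\tilde v+\i y)|^2|\Gamma(2\i y)|^{-2}\,\d y$. Identifying the two is the de Branges--Wilson integral \eqref{WI+} with $a_1=\alpha+t_1$, $a_2=u-t_1$, $a_{3,4}=t_1\pm\tilde v$. That is a well-poised evaluation, not an instance of Barnes' first lemma.

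Step 2 also fails as written for $k\ge 2$. Every factor $(1+VS_r)^{-s_r}$ contains $V$, so splitting ``the outermost'' one does not make the $V$-expectation explicit. Splitting all of them leaves $k$ contour variables plus the nested sums $S_r$, with no indication of how these collapse to a $k$-fold well-poised integral.

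Your fallback has a similar hole. The recursion $Z_k=(Z_{k-1}+V/(Y_1\cdots Y_k))/X_k$ needs the extra state $V/(Y_1\cdots Y_k)$. A one-step spectral representation for $Z$ alone would therefore require an intertwining argument (in the spirit of Matsumoto--Yor) together with Kontorovich--Lebedev inversion. That is essentially Barraquand's Whittaker-function machinery, none of which you supply.

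The paper closes the gap with two ideas you are missing.

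First, it collapses the $k$ factors into one. Integrating out the gamma variables gives the $\mathrm{Beta}_{II}$ integral \eqref{direct-last}, where the powers of $Z_r$ become $\prod_r R_r(\vv z)^{2t_{r+1}-2t_r}$. The change of variables \eqref{subRSKlike} of Lemma~\ref{Lem:H=L} then turns this product into the single factor $R_k(\vv x)^{-(u+\tilde v)}$ in \eqref{GGGk}.

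Second, the right-hand side is never derived forward. Instead one shows that $\GGG_{t_1,\dots,t_k}(u,\tilde v)$ and $\II_{t_1,\dots,t_k}(u,\tilde v)$ satisfy the same recursion.
\begin{itemize}
\item On the $\GGG$ side, one Mellin--Barnes splitting of $R_k(\vv x)=1+x_1R_{k-1}(x_2,\dots,x_k)$, followed by a beta integral in $x_1$, gives \eqref{GGG-rec+}. This expresses the $k$-variable object through $\GGG_{t_2,\dots,t_k}$ at the complex parameter $u-\eps+\i s$. So the induction hypothesis must be available for complex $u$, which is where analyticity in $u$ enters.
\item On the $\II$ side, the $y_1$-integral is evaluated by de Branges--Wilson with $a_1=\alpha+t_1$, $a_2=u-t_1$, $a_{3,4}=t_1-t_2\pm\i y_2$. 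This is exactly what consumes $1/|\Gamma(2\i y_1)|^2$ and the kernel linking $y_1$ to $y_2$. The leftover $s$-integral is Barnes' first lemma.
\end{itemize}
The induction thus peels off $t_1$ at the $u$-end, rather than building the chain block by block from $(X_j,Y_j)$ as you propose.
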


A version of Theorem~\ref{Thm:IntegralHS} permitting repeated parameters
$t_1 \ge t_2 \ge \dots\ge t_k$, in the spirit of Theorem~\ref{BarraquandThm1.11}, can also be established. The simplest instance, corresponding to equal parameters
{
$t_1=t_2=\dots=t_k=t$} and $\tilde v=0$
reads as follows.
\begin{multline*}
 \EE[Z_k^{-2t}]=\tfrac{1}{2\pi \Gamma(2t)\Gamma(u)^2\Gamma(\alpha)^{2k}}
 \\ \cdot
\int_0^\infty\frac{|\Gamma(\alpha+t+\i x)|^{2k}|\Gamma(t+\i x)|^4|\Gamma(u-t+\i x)|^2}{|\Gamma(2\i x)|^2}\,\d x.
\end{multline*}

\subsection{Overview of the paper}
The paper is organized as follows.

In Section~\ref{Sec:LapTr}, we study the one-point Laplace transform of the stationary   distribution of the log-gamma polymer on a strip. We first express  the Laplace transform in terms of the normalizing constant $\GG_N(\alpha,u,v)$ in \eqref{f-joint+}. Corollary~\ref{per1} { establishes } the asymptotic behavior of
$\GG_N(\alpha,u,v)$ %
in terms of $\mathrm{Beta}_{II}$ perpetuities (equivalently, exponential functionals of log-$\mathrm{Beta}_{II}$ random walks), yielding corresponding formulas for the Laplace transform. To the best of our knowledge, these representations are new; they play a key role in the proofs of Theorems~\ref{Thm:PhD} and~\ref{Thm-half-lim}.

In Section~\ref{Proof of Thm:PHD} we apply independent $\mathrm{Beta}_{II}$ random variables to give a probabilistic proof of Theorem~\ref{Thm:PhD}.

In Section~\ref{Sec:BarInt} we discuss the one-point version of Barraquand's contour integral formula { for the Laplace transform of the stationary measure of the log-gamma polymer on a strip} and give explicit formulas for its analytic continuation in Proposition~\ref{Prop2}. We then study the multipoint Laplace transform by deriving an integral representation in terms of independent $\mathrm{Beta}_{II}$ random variables and giving a self-contained proof of multipoint Barraquand's contour integral formula.

Section~\ref{Sec:AC} is devoted to the proof of Proposition~\ref{Prop2} and to a further study of the analytic continuation of Barraquand's integral representation for the one-point Laplace transform.

In Section~\ref{Sec:AltProof}, we present an alternative   proof of Theorem~\ref{Thm:PhD}, based on the analytic tools developed in Section~\ref{Sec:AC}.

Finally, in Section~\ref{Sec:H-HLGP}, we derive %
integral representations, in terms of independent $\mathrm{Beta}_{II}$ random variables, for the multipoint Laplace transform of the stationary measures of the half-space log-gamma polymer and use them { (together with the contour integral approach)} to prove Theorems~\ref{Thm-half-lim} and~\ref{Thm:IntegralHS}.

For the reader's convenience, the appendices collect several useful results and a proof of an integral identity stated in Lemma~\ref{Lem:H=L}, for which we have been unable to locate a reference in the literature.

\section{Normalizing constant $\GG_N(\alpha,u,v)$ and the Laplace transform}\label{Sec:LapTr}
In this section we discuss the univariate Laplace transform \(
\EE[e^{-2t\,\vv L_N}]
\), which  we use in the proof of Theorem~\ref{Thm:PhD}.
Since $ e^{-2t\,\vv L_N}=\left(X_1\topp N\dots X_N\topp N\right)^{2t}$,
it follows from \eqref{f-joint+} that the Laplace transform
can be expressed as the ratio of two integrals,
\begin{equation}\label{LG-xx}
\EE\bigl[e^{-2t\,\vv L_N}\bigr]
=
\frac{\GG_N(\alpha+t,u-t,v+t)}
     {\GG_N(\alpha,u,v)},
\end{equation}
where
\begin{multline}\label{II-1}
  \GG_N(\alpha,u,v)
  \\ =\int_{(0,\infty)^{2 N}}
  \frac{ (x_1\dots x_N)^{\alpha+v -1}(y_1 \dots y_N)^{\alpha+u-1} }{ \left(\sum_{k=0}^{N-1} x_1\dots x_{k} y_{k+2}\dots y_N\right)^{u+v}} e^{-(x_1+\dots+x_N+y_1+\dots +y_N)} \d \vv x \d \vv y
\end{multline}
is the normalizing constant from \eqref{f-joint+}.
 Recall that $\alpha,\alpha+u,\alpha+v>0$.
 By evaluating $(N+1)$ of the $2N$ integrals in \eqref{II-1}, we arrive at the following integral representation.
\begin{lemma} \label{Lem:Jacek-Beta}
    \begin{multline}
      \label{Jacek-Beta}
    \frac{\GG_N(\alpha,u,v)}{\Gamma(\alpha+v)\Gamma(\alpha+u)(\Gamma(2\alpha))^{N-1}}
    \\=\,\int_{(0,\infty)^{N-1}}\,\Big(\prod_{j=1}^{N-1}\,\frac{z_j^{\alpha+u-1}}{(1+z_j)^{2\alpha}}\Big)\,\frac{\mathrm dz_1\ldots\mathrm dz_{N-1}}{(1+\sum_{j=1}^{N-1}\,\prod_{k=1}^j\,z_k)^{u+v}}.
    \end{multline}

\end{lemma}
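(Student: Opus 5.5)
The plan is to integrate out the $N+1$ "radial" variables in \eqref{II-1} by a suitable change of variables. Only the $N-1$ "ratio" variables should then survive, and they become the $z_j$ in \eqref{Jacek-Beta}. The key observation is that the denominator
\[
S=\sum_{k=0}^{N-1} x_1\dots x_k\,y_{k+2}\dots y_N
\]
couples $x_j$ only with $y_{j+1}$, for $j=1,\dots,N-1$. Moreover, $x_N$ and $y_1$ do not appear in $S$ at all.

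\emph{Step 1: integrate out $x_N$ and $y_1$.} These variables occur only in the factors $x_N^{\alpha+v-1}e^{-x_N}$ and $y_1^{\alpha+u-1}e^{-y_1}$. Integrating them out produces the constant $\Gamma(\alpha+v)\Gamma(\alpha+u)$.

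\emph{Step 2: rewrite $S$.} Factor out $x_1\cdots x_{N-1}$. The $k$-th term of $S$ divided by $x_1\cdots x_{N-1}$ equals $\prod_{j=k+1}^{N-1} (y_{j+1}/x_j)$, and the term $k=N-1$ equals $1$. Hence
\[
S=x_1\cdots x_{N-1}\Big(1+\sum_{k=0}^{N-2}\prod_{j=k+1}^{N-1} w_j\Big),\qquad w_j:=\frac{y_{j+1}}{x_j}.
\]
The factor $(x_1\cdots x_{N-1})^{-(u+v)}$ combines with $x_j^{\alpha+v-1}$ to give $x_j^{\alpha-u-1}$. Each pair $(x_j,y_{j+1})$ therefore carries the weight
\[
x_j^{\alpha-u-1}\,y_{j+1}^{\alpha+u-1}\,e^{-(x_j+y_{j+1})}.
\]

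\emph{Step 3: polar-type substitution in each pair.} For each $j\le N-1$, substitute $x_j=s_j/(1+w_j)$ and $y_{j+1}=s_jw_j/(1+w_j)$; the Jacobian is $s_j/(1+w_j)^2$. The pair weight then becomes
\[
s_j^{2\alpha-1}e^{-s_j}\,\frac{w_j^{\alpha+u-1}}{(1+w_j)^{2\alpha}}.
\]
The remaining denominator depends only on the $w$'s, so integrating each $s_j$ over $(0,\infty)$ gives a factor $\Gamma(2\alpha)$. Together with Step 1, this accounts for $N+1$ evaluated integrals and yields the constant $\Gamma(\alpha+v)\Gamma(\alpha+u)\Gamma(2\alpha)^{N-1}$.

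\emph{Step 4: reindex.} Set $z_i:=w_{N-i}$ for $i=1,\dots,N-1$. Then $\prod_{j=k+1}^{N-1}w_j=\prod_{i=1}^{N-1-k}z_i$, so the bracket in Step 2 becomes $1+\sum_{j=1}^{N-1}\prod_{i=1}^j z_i$, which is exactly the denominator in \eqref{Jacek-Beta}. The integrand is positive, so Tonelli's theorem justifies the order of integration.

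The only delicate point is choosing the right factorization of $S$. Factoring out $y_2\cdots y_N$ instead of $x_1\cdots x_{N-1}$ would produce the exponent $\alpha+v$ in place of $\alpha+u$. That version is equivalent by symmetry, but it does not match \eqref{Jacek-Beta} as stated.
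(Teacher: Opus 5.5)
Your proof is correct and follows the paper's route: integrate out $x_N$ and $y_1$, factor $x_1\cdots x_{N-1}$ out of the denominator, pass to the ratios $y_{N-j+1}/x_{N-j}$ (your reindexed $w$'s), and integrate out the remaining $N-1$ radial variables to get $\Gamma(2\alpha)^{N-1}$. The one cosmetic difference is that the paper keeps $x_j$ and substitutes only the ratio (Jacobian $x_1\cdots x_{N-1}$), then integrates $x^{2\alpha-1}e^{-x(1+z)}$ directly, whereas you use the sum--ratio substitution $s_j=x_j+y_{j+1}$; the two computations are equivalent.
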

\begin{proof}
Using the formula \eqref{II-1} and integrating out $x_N$ and $y_1$ we get
 \begin{multline*}
 \GG_N(\alpha,u,v)
 =\Gamma(\alpha+v)\Gamma(\alpha+u)\\
  \cdot \int_{\RR_+^{2 (N-1)}}
  \tfrac{ (x_1\dots x_{N-1})^{\alpha+v -1}(y_2 \dots y_N)^{\alpha+u-1}e^{-(x_1+y_2%
  +\dots+x_{N-1}+y_N)} }{ \left(x_1\dots x_{N-1}\big(1+\frac{y_N}{x_{N-1}}+\frac{y_{N-1}}{x_{N-2}}\frac{y_N}{x_{N-1}}+\dots+\prod_{j=1}^{N-1}\,\frac{y_{j+1}}{x_j}\big)\right)^{u+v}}  \d \vv x \d \vv y.
 \end{multline*}
 Substituting %
 $z_j=\tfrac{y_{N-j+1}}{x_{N-j}} $
 for $j=1,\ldots,N-1$ while keeping $x_j$, $j=1,\ldots,N-1$, we see that the Jacobian is $x_1\ldots x_{N-1}$ and thus
  \begin{multline*}
\frac {\GG_N(\alpha,u,v)}{\Gamma(\alpha+v)\Gamma(\alpha+u)}
=
 \int_{(0,\infty)^{N-1}}
 \Big(\int_{(0,\infty)^{N-1}}\,(x_1\dots x_{N-1})^{2\alpha-1}
     \\ \cdot  e^{-x_1(1+z_{N-1})-x_2(1+z_{N-2})-\ldots-x_{N-1}(1+z_{1})}
   \d \vv x\Big)
   \tfrac{ (z_1 \dots z_{N-1})^{\alpha+u-1}}{ \left(1+z_1+z_1z_2+\ldots +z_1z_2\ldots z_{N-1}\right)^{u+v}}\,\d \vv z.
 \end{multline*}
 Integrating with respect to $\d \vv x $ w obtain  \eqref{Jacek-Beta}.
\end{proof}

 \arxiv
 {
Trivially, $\GG_1(\alpha,u,v)=\Gamma(\alpha+u)\Gamma(\alpha+v)$. One can check that
\begin{equation*}
    \label{GG2} \GG_2(\alpha,u,v)= \frac{\Gamma (2 \alpha ) \Gamma (u+\alpha )^2 \Gamma (v+\alpha
   )^2}{\Gamma (u+v+2 \alpha )}.
\end{equation*}
}
A positive random variable $\zeta$ is said to have the $\mathrm{Beta}_{II}(a,b)$ distribution, where $a,b>0$,
if it has density $$\frac{\Gamma(a+b)}{\Gamma(a)\Gamma(b)} \frac{x^{a-1}}{(1+x)^{a+b}}\; \mathbf{1} _{x>0}. $$
 Since $\GG_N(\alpha,u,v)=\GG_N(\alpha,v,u)$, see \eqref{II-1},
 Lemma \ref{Lem:Jacek-Beta} gives immediately  the following two probabilistic expressions for the normalizing constant.

\begin{corollary}\label{per1}   If $|u|<\alpha$ { and $v>-\alpha$}, then
        \begin{multline}\label{perpet}
        \frac{\GG_N(\alpha,u,v)}{\Gamma(\alpha+v)\Gamma(\alpha+u)^N\Gamma(\alpha-u)^{N-1}}
        \\=\mathbb E\,\left[(1+\zeta_1+\zeta_1\zeta_2+\ldots+\zeta_1\zeta_2\ldots \zeta_{N-1})^{-u-v}\right],
    \end{multline}
    where %
     $(\zeta_i)_{i\ge 1}$ is a sequence of
    i.i.d. $\mathrm{Beta}_{II}(\alpha+u,\alpha-u)$ random variables.

     If $|v|<\alpha$   and $u>-\alpha$, then
        \begin{multline}\label{perpet1}
        \frac{\GG_N(\alpha,u,v)}{\Gamma(\alpha+u)\Gamma(\alpha+v)^N\Gamma(\alpha-v)^{N-1}}
        \\ =\mathbb E\,\left[(1+  \zeta_1'+\  \zeta_1'\  \zeta_2'+\ldots+\  \zeta_1'\  \zeta_2'\ldots \  \zeta_{N-1}')^{-u-v}\right],
    \end{multline}
    where %
     $(\  \zeta_i')_{i\ge 1}$ is a sequence of
    i.i.d. $\mathrm{Beta}_{II}(\alpha+v,\alpha-v)$ random variables.
\end{corollary}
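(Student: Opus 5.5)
The plan is to read \eqref{perpet} directly off Lemma~\ref{Lem:Jacek-Beta} by recognizing the integrand as a product of $\mathrm{Beta}_{II}$ densities, and then to get \eqref{perpet1} from the symmetry $\GG_N(\alpha,u,v)=\GG_N(\alpha,v,u)$.

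For \eqref{perpet}, assume $|u|<\alpha$, so that $a=\alpha+u>0$ and $b=\alpha-u>0$, with $a+b=2\alpha$. Then each factor in the integrand of \eqref{Jacek-Beta} is, up to a constant, a $\mathrm{Beta}_{II}$ density:
\[
\frac{z_j^{\alpha+u-1}}{(1+z_j)^{2\alpha}}=\frac{\Gamma(\alpha+u)\Gamma(\alpha-u)}{\Gamma(2\alpha)}\, g(z_j),
\]
where $g$ is the $\mathrm{Beta}_{II}(\alpha+u,\alpha-u)$ density. Substituting this into \eqref{Jacek-Beta}, the right-hand side becomes
\[
\Big(\tfrac{\Gamma(\alpha+u)\Gamma(\alpha-u)}{\Gamma(2\alpha)}\Big)^{N-1}\,\EE\big[(1+\zeta_1+\zeta_1\zeta_2+\dots+\zeta_1\cdots\zeta_{N-1})^{-u-v}\big]
\]
with $\zeta_i$ i.i.d.\ $\mathrm{Beta}_{II}(\alpha+u,\alpha-u)$. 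Next I multiply through by $\Gamma(\alpha+v)\Gamma(\alpha+u)\Gamma(2\alpha)^{N-1}$. The factors $\Gamma(2\alpha)^{N-1}$ cancel, and what remains on the left is $\Gamma(\alpha+v)\Gamma(\alpha+u)^N\Gamma(\alpha-u)^{N-1}$, which is exactly the normalization in \eqref{perpet}.

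The only point that needs checking is that the expectation is well defined. It is, because the integrand is nonnegative, so the identity holds in $[0,\infty]$ by Tonelli. Finiteness of the left-hand side, namely finiteness of $\GG_N$, is given in the paper; in any case it is automatic when $u+v\ge0$, since then the base is at least $1$. The hypothesis $v>-\alpha$ is needed only so that $\Gamma(\alpha+v)$ is finite.

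For \eqref{perpet1}, the integrand of \eqref{II-1} maps to itself under the relabeling $(x_1,\dots,x_N,y_1,\dots,y_N)\mapsto(y_N,\dots,y_1,x_N,\dots,x_1)$ combined with swapping $u\leftrightarrow v$. Under this map the denominator sum $\sum_k x_1\cdots x_k\,y_{k+2}\cdots y_N$ is carried to itself, which gives $\GG_N(\alpha,u,v)=\GG_N(\alpha,v,u)$. Applying \eqref{perpet} with the roles of $u$ and $v$ exchanged then yields \eqref{perpet1}.

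There is no real obstacle here. The only step that requires care is verifying the index reversal in that symmetry of the denominator sum.
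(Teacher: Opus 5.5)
Your proposal is correct and follows the paper's argument: \eqref{perpet} is read off from \eqref{Jacek-Beta} by recognizing $\mathrm{Beta}_{II}(\alpha+u,\alpha-u)$ densities, so that $\Gamma(2\alpha)^{N-1}$ cancels, and \eqref{perpet1} follows from the symmetry $\GG_N(\alpha,u,v)=\GG_N(\alpha,v,u)$ of \eqref{II-1}, which your reindexing $k\mapsto N-1-k$ verifies correctly. One small correction: you say $v>-\alpha$ is needed only so that $\Gamma(\alpha+v)$ is finite, but it is also what makes the expectation finite when $u+v<0$, since $\EE[\zeta_1^{-u-v}]<\infty$ if and only if $-u-v<\alpha-u$.
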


\begin{remark}
{The expression on the right-hand side of \eqref{perpet} (and, similarly, on the right-hand side of  \eqref{perpet1}) is a  moment of a random variable }
\begin{equation}\label{RRn}
    R_n:=1+\zeta_1+\zeta_1\zeta_2+\ldots+\zeta_1\zeta_2\dots\zeta_n,
\end{equation}
where $\zeta_j>0$ are i.i.d. Note that $R_n \stackrel{d}{=}\tilde R_{n}$, where  $\tilde R_k=1+\tilde R_{k-1} \zeta_k$ for $k=1,2,\ldots$,   $\tilde R_0=1$.
Such Markov sequences $\{\tilde R_n\}$ arise in a variety of contexts, including financial mathematics, where they are known as perpetuities.
Additional background on perpetuities and related stochastic recursions can be found in \cite{Kesten1973,goldie1996perpetuities,goldie2000perpetuities,buraczewski2016stochastic,grincevicius1975limit}.

 Random variable $R_n$ can also be expressed as an (additive) exponential functional of the random walk with increments
$\log\zeta_j$; { such functionals have been studied e.g. in }  \cite{Hirano-1997,lePage1997local,Xu-Wei:2023}.
\end{remark}

Probabilistic expressions in Corollary~\ref{per1} yield  asymptotics for the normalizing constant $\GG_N(\alpha,u,v)$. We write $a_n\sim b_n$ if $\lim_{n\to\infty} a_n/b_n=1$.

By the symmetry of \eqref{II-1} in $u$ and $v$, the asymptotics for $-\alpha<v<0$ and $u>v$ follows from the case $-\alpha<u<0$ and $v>u$, which we now consider.
If $0< p<-2u$  and $\zeta$ has $\mathrm{Beta}_{II}(\alpha+u,\alpha-u)$ law,  then
  \begin{equation}\label{beta-moment}
   \mathbb E\,[\zeta^p]= \tfrac{\Gamma(\alpha-u-p)\Gamma(\alpha+u+p)}{\Gamma(\alpha-u)\Gamma(\alpha+u)}  <1
  \end{equation}
since the moment is $1$ at $p=0$ and at $p=-2u$. Consequently, see e.g. \cite[Theorem 2.1]{goldie2000perpetuities}, the series
\begin{equation}\label{Rseries}
 R=1+\zeta_1+\zeta_1\zeta_2+\ldots+\zeta_1\zeta_2\ldots\zeta_n+\dots
\end{equation}
converges almost surely and in $p$-th moments. Denoting by $R_n$ the partial sum of the series, see \eqref{RRn}, we see that
$$\EE[R_n^{-u-v}]\to \EE[R^{-u-v}].$$ Indeed, if $u+v<0$ we can take $p=-u-v$, and if $u+v\geq 0$ then random variables
$R_n^{-u-v}$ are bounded by $1$.

This gives the  following asymptotics for the normalizing constant.
\begin{lemma}\label{Cor:from-perp}
   If $-\alpha<u<0$ and $v>u$ then,   as $N\to\infty$, we have
 \begin{equation}
     \label{Cor:from-perp-A}
     \GG_N(\alpha,u,v)\sim \Gamma(\alpha-u)^{N}\Gamma(\alpha+u)^N \tfrac{\Gamma(\alpha+v)}{\Gamma(\alpha-u)}\EE[R^{-u-v}],
 \end{equation}
 where $R$ is  given by \eqref{Rseries}.

If $-\alpha<v<0$ and $u>v$, then, as $N\to\infty$, we have
\begin{equation}\label{Cor:from-perp-B}
  \GG_N(\alpha,u,v)\sim
  \Gamma(\alpha+v)^N\Gamma(\alpha-v)^N
  \tfrac{\Gamma(\alpha+u)}{\Gamma(\alpha-v)}
  \EE[(R')^{-u-v}],
\end{equation}
where
\begin{equation}\label{Rseries2}
  R'  =1+\zeta_1'+\zeta_1'\zeta_2'
  +\ldots+\zeta_1'\dots\zeta_n'+\dots,
\end{equation}
and $(\zeta_i')_{i\ge1}$ are i.i.d.\ $\mathrm{Beta}_{II}(\alpha+v,\alpha-v)$ random variables.
\end{lemma}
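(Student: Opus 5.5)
The first case follows by combining \eqref{perpet} from Corollary~\ref{per1} with the convergence $\EE[R_n^{-u-v}]\to\EE[R^{-u-v}]$ established just before the statement. The hypothesis $-\alpha<u<0$ gives $|u|<\alpha$, and $v>u>-\alpha$ gives $v>-\alpha$, so \eqref{perpet} applies. It reads
\[
\GG_N(\alpha,u,v)=\Gamma(\alpha+v)\Gamma(\alpha+u)^N\Gamma(\alpha-u)^{N-1}\,\EE\bigl[R_{N-1}^{-u-v}\bigr],
\]
with $R_{N-1}$ as in \eqref{RRn} built from i.i.d.\ $\mathrm{Beta}_{II}(\alpha+u,\alpha-u)$ variables. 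Rewriting $\Gamma(\alpha-u)^{N-1}=\Gamma(\alpha-u)^N/\Gamma(\alpha-u)$ produces the prefactor in \eqref{Cor:from-perp-A}. It then remains to show that $\EE[R_{N-1}^{-u-v}]\to\EE[R^{-u-v}]$ and that the limit lies in $(0,\infty)$, so that the ratio tends to $1$.

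For the convergence I would argue as follows. Since $u<0$, \eqref{beta-moment} gives $\EE[\zeta^p]<1$ for $0<p<-2u$. In particular $\EE[\log\zeta]<0$ by convexity of $p\mapsto\EE[\zeta^p]$, which equals $1$ at $p=0$ and $p=-2u$. Hence the series \eqref{Rseries} converges almost surely, and $R_{N-1}\uparrow R$.

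\emph{Case $u+v\ge0$.} Here $R_n\ge1$, so $R_{N-1}^{-u-v}\le 1$, and bounded (or monotone) convergence gives $\EE[R_{N-1}^{-u-v}]\to\EE[R^{-u-v}]$. The limit is positive because $R<\infty$ almost surely.

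\emph{Case $u+v<0$.} Here $p:=-u-v\in(0,-2u)$, because $v>u$. By \cite[Theorem 2.1]{goldie2000perpetuities}, or directly by Minkowski for $p\ge1$ and subadditivity of $x\mapsto x^p$ for $p<1$ applied to $\EE[(\zeta_1\cdots\zeta_n)^p]=(\EE\zeta^p)^n$, we get $\EE[R^p]<\infty$. Monotone convergence then gives $\EE[R_{N-1}^p]\uparrow\EE[R^p]\in[1,\infty)$.

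In both cases the limit is finite and nonzero, which yields \eqref{Cor:from-perp-A}.

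The second statement follows by symmetry. Since $\GG_N(\alpha,u,v)=\GG_N(\alpha,v,u)$ from \eqref{II-1}, I apply the first part with the roles of $u$ and $v$ swapped. The hypotheses become $-\alpha<v<0$ and $u>v$, the perpetuity is built from $\mathrm{Beta}_{II}(\alpha+v,\alpha-v)$ variables, and the prefactor is $\Gamma(\alpha+v)^N\Gamma(\alpha-v)^N\Gamma(\alpha+u)/\Gamma(\alpha-v)$. This is exactly \eqref{Cor:from-perp-B}, and it could equally be read off from \eqref{perpet1}.

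The only step that needs care is the finiteness of $\EE[R^{-u-v}]$ when $u+v<0$. That is precisely where the condition $v>u$ enters, through $-u-v<-2u$.
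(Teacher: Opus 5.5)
Your proof is correct and follows essentially the same route as the paper: combine \eqref{perpet} with the convergence $\EE[R_{N-1}^{-u-v}]\to\EE[R^{-u-v}]$, using the bound $R_n^{-u-v}\le 1$ when $u+v\ge 0$ and $p$-th moments with $p=-u-v\in(0,-2u)$ when $u+v<0$. The second part then follows from the $u\leftrightarrow v$ symmetry of $\GG_N$. The only difference is cosmetic: you use the monotonicity $R_n\uparrow R$ together with finiteness of $\EE[R^p]$, whereas the paper cites convergence in $p$-th moments from \cite[Theorem 2.1]{goldie2000perpetuities}.
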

 {We remark that this asymptotics can be made even more explicit. According to \cite[Example 9]{chamayou1991explicit}, $R-1\simeq \mathrm{Beta}_{II}(\alpha+u,-2u)$ so
 $$\EE[R^{-u-v}]=\tfrac{\Gamma(\alpha-v)\Gamma(v-u)}{\Gamma(\alpha+v)\Gamma(-2 u)}.$$
 This also follows by comparing \eqref{Cor:from-perp-A} with expression \eqref{C1-Mo} for $t=0$.
Similarly, comparing \eqref{Cor:from-perp-B} with  expression \eqref{C2-m0} for $t=0$, we obtain
$$\EE[ (R')^{-u-v}]=\tfrac{\Gamma(\alpha-v)\Gamma(u-v)}{\Gamma(\alpha+u)\Gamma(-2v)}.$$
   }

We will also need asymptotics of the normalizing constant under small perturbations of the parameters. Note that by Corollary~\ref{per1}, for every $N$ we have
\begin{multline*}
\tfrac{\GG_N\!\left(\alpha+\frac{t}{N},\,u-\frac{t}{N},\,v+\frac{t}{N}\right)}
{\Gamma\!\left(\alpha+v+\frac{2t}{N}\right)\Gamma(\alpha+u)^N
\Gamma\!\left(\alpha-u+\frac{2t}{N}\right)^{N-1}}
\\ =
\mathbb{E}\!\left[
\left(1+\zeta_{N,1}+\zeta_{N,1}\zeta_{N,2}
+\ldots+\zeta_{N,1}\zeta_{N,2}\dots \zeta_{N,N-1}\right)^{-u-v}
\right].
\end{multline*}
Here $\{\zeta_{N,1},\zeta_{N,2},\ldots\}_{N\ge1}$ is a triangular array of   random variables such that, for each fixed $N$, the random variables $\zeta_{N,1},\zeta_{N,2},\ldots$ are independent and identically distributed with  %
$
\mathrm{Beta}_{II}\!\left(\alpha+u,\;\alpha-u+\tfrac{2t}{N}\right)
$ distribution.

Interchanging the roles of $u$ and $v$, we also obtain
\begin{multline*}
\tfrac{\GG_N\left(\alpha+\frac tN,u
-\frac tN,v+\frac tN\right)}{\Gamma(\alpha+u)\Gamma(\alpha-v)^{N-1}\Gamma(\alpha+v+\frac{2t}{N})^N}
 \\=\mathbb E\,\left[(1+ \zeta_{N,1}'+ \zeta_{N,1}' \zeta_{N,2}'+\ldots+ \zeta_{N,1}' \zeta_{N,2}'\ldots \zeta_{N,N-1}')^{-u-v}\right],
\end{multline*}
where $\{\zeta'_{N,j}\}_{N,j\ge1}$ is a triangular array whose rows are i.i.d.\ with
$
\mathrm{Beta}_{II}\!\left(\alpha+v+\tfrac{2t}{N},\,\alpha-v\right)$ distribution.

The asymptotics of these expressions can be determined from the following lemma.
\begin{lemma}\label{triangle}
Let $-\alpha<u<0$ and $v>u$. %
Then , as $N\to\infty$,
\begin{equation}\label{limrn}
R_{N,N-1}:=1+\zeta_{N,1}+\zeta_{N,1}\zeta_{N,2}+\ldots+\zeta_{N,1}\zeta_{N,2}\ldots \zeta_{N,N-1}{\Rightarrow}%
R,
\end{equation}
where $R=\lim_{n\to\infty} R_n$   is   given by \eqref{Rseries}; furthermore,
\begin{equation}\label{limuv}
\lim_{N\to\infty}\,\mathbb E\,[R_{N,N-1}^{-u-v}]\,=\,\mathbb E\,[R^{-u-v}].
\end{equation}

Let $-\alpha<v<0$ and  $v<u$. %
Then, as $N\to\infty$, \begin{equation}\label{limrn2}
  R_{N,N-1}':=1+  \zeta_{N,1}'+  \zeta_{N,1}'  \zeta_{N,2}'+\ldots+
  \zeta_{N,1}'  \zeta_{N,2}'\ldots   \zeta_{N,N-1}'{\Rightarrow} %
  R',
\end{equation}
where $  R'=\lim_{n\to\infty}   R_n'$  is  given by \eqref{Rseries2}; furthermore,
\begin{equation}\label{limuv2}
\lim_{N\to\infty}\,\mathbb E\,[  (R_{N,N-1}')^{-u-v}]\,=\,\mathbb E\,[ (R')^{-u-v}].
\end{equation}
\end{lemma}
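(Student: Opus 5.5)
The plan is to build a coupling in which the triangular array $\zeta_{N,j}$ converges almost surely and monotonically to a fixed i.i.d.\ sequence $\zeta_j$. Then \eqref{limrn} holds almost surely, and \eqref{limuv} follows from a uniform moment bound. I treat the first statement; the second is identical with the roles of the Gamma variables swapped.

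\textbf{Coupling.} Recall that if $G\simeq\mathrm{Gamma}(a)$ and $G'\simeq\mathrm{Gamma}(b)$ are independent, then $G/G'\simeq\mathrm{Beta}_{II}(a,b)$. Take independent i.i.d.\ sequences $G_j\simeq\mathrm{Gamma}(\alpha+u)$ and $G'_j\simeq\mathrm{Gamma}(\alpha-u)$, and set $\zeta_j=G_j/G'_j$.

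\emph{Case $t>0$.} Take independent gamma subordinators $(\gamma_j(s))_{s\ge0}$, independent of everything else, and define
\[
\zeta_{N,j}=\frac{G_j}{G'_j+\gamma_j(2t/N)}.
\]
Then $\zeta_{N,j}\simeq \mathrm{Beta}_{II}(\alpha+u,\alpha-u+2t/N)$, the variables are i.i.d.\ in $j$ for each $N$, and $\zeta_{N,j}\uparrow\zeta_j$ almost surely as $N\to\infty$. Hence every term $\zeta_{N,1}\cdots\zeta_{N,k}$ is nondecreasing in $N$. The number of terms also grows with $N$, so by monotone convergence $R_{N,N-1}\uparrow R$ almost surely, using that $R<\infty$ a.s.\ by \eqref{Rseries}.

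\emph{Case $t<0$.} Define instead $\zeta_j=G_j/(G'_{N,j}+\gamma_j(2|t|/N))$ with $G'_{N,j}\simeq\mathrm{Gamma}(\alpha-u-2|t|/N)$. The cleaner route is to realise $G'_j=\Gamma^{(j)}(\alpha-u)$ and $G'_{N,j}=\Gamma^{(j)}(\alpha-u-2|t|/N)$ from one gamma process $\Gamma^{(j)}$ per $j$. This gives $\zeta_{N,j}\downarrow\zeta_j$ almost surely. Then each partial sum $R_{N,m}$ (the sum of the first $m+1$ terms) converges to $R_m$ for fixed $m$, and the tail must be controlled by the moment bound below.

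\textbf{Uniform moments.} Fix $p'$ with $\max(p,0)<p'<-2u$, where $p=-u-v$; this is possible because $v>u$. By \eqref{beta-moment} and continuity of the Gamma function,
\[
\rho_N:=\EE[\zeta_{N,1}^{p'}]=\frac{\Gamma(\alpha-u+2t/N-p')\,\Gamma(\alpha+u+p')}{\Gamma(\alpha-u+2t/N)\,\Gamma(\alpha+u)}\to\EE[\zeta^{p'}]<1,
\]
so $\rho_N\le\rho<1$ for all large $N$.

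\emph{If $p'\le1$:} subadditivity gives $\EE[R_{N,N-1}^{p'}]\le\sum_k\rho^k<\infty$.

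\emph{If $p'>1$:} Minkowski's inequality gives $\|R_{N,N-1}\|_{p'}\le\sum_k\rho^{k/p'}<\infty$.

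The same estimates bound the tails $\EE\big[(R_{N,N-1}-R_{N,m})^{p'\wedge1}\big]$ uniformly in $N$ by a quantity tending to $0$ as $m\to\infty$. This gives \eqref{limrn} in probability in the case $t<0$, hence weak convergence.

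\textbf{Convergence of moments.} If $u+v\ge0$, then $0<R_{N,N-1}^{-u-v}\le1$ and bounded convergence gives \eqref{limuv}. If $u+v<0$, the family $R_{N,N-1}^{p}$ is uniformly integrable because its $p'/p>1$ moments are bounded, and \eqref{limuv} follows.

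\textbf{Second statement.} Here $\mathrm{Beta}_{II}(\alpha+v+2t/N,\alpha-v)$ perturbs the numerator Gamma. The same construction applies with the monotonicity reversed: for $t>0$ one gets $\zeta'_{N,j}\downarrow\zeta'_j$. The moment condition becomes $p'<-2v$, which is compatible with $p=-u-v$ because $u>v$.

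\textbf{Main obstacle.} The delicate step is the direction in which $\zeta_{N,j}\downarrow\zeta_j$: there monotone convergence fails, and one needs the uniform-in-$N$ geometric bound $\rho_N\le\rho<1$ to control the tails. The key point is that $\EE[\zeta^{p'}]<1$ holds strictly on the open interval $0<p'<-2u$, so it survives the small perturbation of the parameters.
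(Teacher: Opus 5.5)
Your proof is correct and takes essentially the paper's route. You couple each row of the triangular array to a single i.i.d.\ sequence $(\zeta_j)$, use that $\EE[\zeta^{p}]<1$ strictly for $0<p<-2u$ (so $\EE[\zeta_{N,1}^{p}]\le\rho<1$ for large $N$) to bound the tail of $R_{N,N-1}$ geometrically and uniformly in $N$, and handle $u+v\ge 0$ by boundedness of $x\mapsto x^{-u-v}$ on $[1,\infty)$.

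The differences are only technical. Your gamma-process coupling replaces the paper's quantile coupling and makes one sign of $t$ a pure monotone-convergence argument; keep the version $G'_{N,j}=\Gamma^{(j)}(\alpha-u+2t/N)$, because your first $t<0$ definition makes the limit $\zeta_j$ depend on $N$. You also get \eqref{limuv} from uniform integrability at an exponent $p'>-u-v$, whereas the paper shows $\|R_{N,N-1}-R\|_p\to0$ directly with $p=-u-v$ in the quasi-norm \eqref{p-metric}.
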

\begin{proof}
Fix $t\in\mathbb{R}$. Since the random variables $\zeta_{N,j}$ are i.i.d.\ (for each fixed $N$) and converge in distribution to $\zeta_j$, Skorohod's representation theorem implies that, without loss of generality, we may assume that all variables are defined on a common probability space and that
$
\zeta_{N,j} -\zeta_j \to 0$ almost surely and in $p$-th moments as $N\to\infty$ for any $j=1,2,\dots,$ and
 $0<p <\alpha-u$.

For example, one may construct such a coupling via distribution functions by setting
\[
\zeta_{N,j} = F_N^{-1}\bigl(F(\zeta_j)\bigr),
\]
where $F_N$ and $F$ denote the distribution functions of $\zeta_{N,j}$ and $\zeta_j$, respectively.

With this coupling, we will verify convergence in moments using the following $L^p$-type metric:
\begin{equation}\label{p-metric}
   \|X\|_p :=
\begin{cases}
\mathbb{E}\left[|X|^p\right], & 0 < p \le 1, \\[4pt]
\left(\mathbb{E}\left[|X|^p\right]\right)^{1/p}, & 1 \le p < \infty.
\end{cases}
\end{equation}
    The key observation is that if $-u-v\leq p<-2u$ and $p> 0$  then, as we noted in \eqref{beta-moment}, we have
   \begin{equation}\label{rho-bound}
    \lim_{N\to\infty} \mathbb E\,[\zeta_{N,1}^p]=\lim_{N\to\infty}\tfrac{\Gamma(\alpha-u+\frac{2t}{N}-p)\Gamma(\alpha+u+p)}{\Gamma(\alpha-u+\frac{2t}{N})\Gamma(\alpha+u)}
     =\mathbb E\,[\zeta^p]<\mathbb E\,[\zeta^{-2u}]=1.
  \end{equation}
  Hence, there exists a constant $\rho=\rho(p)\in(0,1)$ such that, for all sufficiently large $N$, we have $\|\zeta_{N,1}\|_p<\rho$.

 Given $\eps>0$, choose $K$ such that $\rho^K/(1-\rho)<\eps$.
 As %
 { in \eqref{RRn}}, let $R_k$ denote the partial sum of the first $k+1$ terms of the series \eqref{Rseries}, and define
 $$
R_{N,k}= 1+\zeta_{N,1}+\zeta_{N,1}\zeta_{N,2}+\ldots+\zeta_{N,1}\zeta_{N,2}\ldots \zeta_{N,k}.
$$

 Then by the triangle inequality and independence, we obtain
\begin{multline*}
    \limsup_{N\to\infty} \|R_{N,N-1}-R\|_p\leq \limsup_{N\to\infty}  \|R_{N,K}-R_{K}\|_p
   \\  +\limsup_{N\to\infty}\sum_{k=K}^\infty \|\zeta_{N,1}\|_p^k+\sum_{k=K}^\infty \|\zeta\|_p^k
    \\ \leq \limsup_{N\to\infty}  \|R_{N,K}-R_{K}\|_p
    +2\sum_{k=K}^\infty \rho^k <2\eps,
\end{multline*}
since $\|R_{N,K}-R_K\|_p\to 0$  for a fixed $K$.

If $u+v<0$, then  taking $p=-u-v$ in the above limit  establishes both \eqref{limrn} and \eqref{limuv}.  If $u+v\geq 0$, then \eqref{limrn} follows from the convergence of moments of order $p>0$, for instance with  $p=-u$; consequently, \eqref{limuv} holds since $R_{N,N-1}^{-u-v}\in(0,1]$. This completes the proof of  the first part of the lemma.

The second part of the lemma follows by a similar  argument, using moments of order $p$ with $-u-v\leq p<-2 v$.
\end{proof}
\arxiv
{  Here is the proof of the second part of the lemma.
Without loss of generality, we assume that $\  \zeta_{N,j}'-\  \zeta_j'\to 0$ as $N\to \infty$ almost surely and in moments.

Let $p>0$ be such that $-u-v\leq p<-2 v$.  Then we have the following version of \eqref{rho-bound}:
$$
\EE[\  (\zeta_{N,1}')^p]= \frac{\Gamma (-p-v+\alpha ) \Gamma
   \left(p+\frac{2 t}{N}+v+\alpha
   \right)}{\Gamma (u+v+\frac{2 t}{N}) \Gamma (\alpha -v)}\to \EE[\  (\zeta')^p]<\EE[\  (\zeta')^{-2v}]=1.
$$
 Thus, there exists a constant  $\rho\in(0,1)$ such that for all sufficiently large  $N$, we  have  $\|\  \zeta_{N,1}'\|_p<\rho$.

 As in the previous argument, given $\eps>0$, choose $K$ such that $\rho^K/(1-\rho)<\eps$. Then, with $R_K'$ denoting the sum of the first $K$ terms of the series \eqref{Rseries2}, we have
$$
     \| R_{N,N-1}'-  R'\|_p\leq  \|  R_{N,K}'-\  R_K'\|_p+
     \sum_{k=K}^\infty \|\ \zeta_{N,1}'|_p^k+  \sum_{k=K}^\infty \|\  \zeta'|_p^k
     \\  \leq  \|\  R_{N,K}'-\  R_K'\|_p+2\eps
$$
and convergence follows. As previously, if $u+v<0$ we take $p=-u-v$, and if $u+v\geq 0$ we  deduce \eqref{limrn2} from convergence of moments, say,
of order $p=-v$ and then infer \eqref{limuv2} from the fact that the sequence $  (R_{n}')^{-u-v}$ is bounded.
}

\section{Proof of Theorem \ref{Thm:PhD}}\label{Proof of Thm:PHD}

The proof   of Theorem \ref{Thm:PhD} proceeds by analyzing the asymptotic behavior of the logarithm of the Laplace transform \(
\EE[e^{-2t\,\vv L_N}]
\).
  In this section, we carry out this analysis using a probabilistic approach involving independent $\mathrm{Beta}_{II}$ random variables. An alternative analytic argument, based on Barraquand’s contour integral formulas, is presented in Section \ref{Sec:AltProof}.

\subsection{Proof of Theorem \ref{Thm:PhD}(i)}
 From  \eqref{Jacek-Beta}, for $t$ in a neighbourhood of zero (in particular for $t=0$), we have
\begin{multline*}
 \GG_N\left(\alpha+\tfrac tN,u-\tfrac tN,v+\tfrac tN\right)
 \\ =\tfrac{\Gamma\left(\alpha+v+\frac{2t}{N}\right)\Gamma(\alpha+u)\Gamma^{N-1}\left(2\alpha+\frac{2t}N\right)\,\Gamma^{2(N-1)}(\alpha)}{\Gamma^{N-1}(2\alpha)}\,\mathbb E\,[D_{N}\,e^{-\frac{2t}{N}\sum_{k=1}^{N-1}\,\log(1+\eta_k)}],
\end{multline*}
 where $(\eta_n)_{n\ge 1}$ is a sequence of i.i.d. $\mathrm{Beta}_{II}(\alpha,\alpha)$ r.v's and
 $$
 D_{N}:=D_N(\eta_1,\ldots,\eta_{N-1})= \tfrac{\prod_{k=1}^{N-1}\,\eta_k^u}{\left(1+\sum_{j=1}^{N-1}\,\prod_{i=1}^j\,\eta_i\right)^{u+v}}.
 $$
That is
\begin{equation}\label{ELG}
\EE[e^{-2 t \vv L_N/N}]=\tfrac{\GG_N\left(\alpha+\frac tN,u-\frac tN,v+\frac tN\right)}{\GG_N(\alpha,u,v)}=C_N\,\mathbb E\,[e^{-2t\widetilde X_N}],
\end{equation}
where $\widetilde X_N=\tfrac1{N}\sum_{k=1}^{N-1}\,\log(1+Y_{N,k})$ for $\vv Y_N=(Y_{N,1},\ldots,Y_{N,N-1})$, which has the law
\[
\mathbb P_{\vv Y_N}(\mathrm d\mathbf y)=\tfrac{D_N(y_1,\ldots,y_{N-1})}{\mathbb E\,[D_N]}\,\mathbb P_{(\eta_1,\ldots,\eta_{N-1})}(\mathrm d\mathbf y),\quad N=2,3,\ldots,
\]
and
\[
C_N=\tfrac{\Gamma(\alpha+v+\frac{2t}{N})}{\Gamma(\alpha+v)}\,\left(\tfrac{\Gamma(2\alpha+\frac{2t}N)}{\Gamma(2\alpha)}\right)^{N-1}\to\,e^{2t\psi(2\alpha)}.
\]
Thus, in view of \eqref{ELG}, it suffices to prove that $\widetilde X_N\stackrel{\mathbb P}{\to}\mu:=\mathbb E[\log(1+\eta)]=\psi(2\alpha)-\psi(\alpha).$

\bigskip
Let $S_k=\sum_{i=1}^k\,\log\eta_i$, $k=1,2,\ldots$. Since  $\mathbb E\,[\log\eta_i]=0$ and $\mathbb V\mathrm{ar}\,[\log\eta_i]=2\psi_1(\alpha)<\infty$, by the Kolmogorov maximal inequality for $\delta>0$ and $\kappa=\tfrac{1+\delta}2\in(\tfrac12,1)$ we have
$$
\mathbb P(\max_{1\le k\le N}\,|S_k|\ge N^{\kappa})\le \tfrac{\mathbb V\mathrm{ar}\,S_N}{N^{2\kappa}}=\tfrac{2\psi_1(\alpha)}{N^{\delta}}\to 0.
$$

Therefore for $B_N=\{\max_{1\le k\le N}\,|S_k|< N^{\kappa}\}$ we have $\mathbb P(B_N)\to 1$. On $B_N$ we have
\[
1+\sum_{j=1}^{N-1}\,\prod_{i=1}^j\,\eta_i\le N\,e^{N^{\kappa}}\quad\mbox{and}\quad \prod_{k=1}^N\,\eta_i=e^{S_N}\ge e^{-N^{\kappa}}.
\]
We obtain
\begin{equation}\label{Kol}
\mathbb E\,[D_N]\ge \mathbb E\,[D_N\mathbf 1_{B_N}]\ge \tfrac{e^{-uN^{\kappa}}}{\left(Ne^{N^{\kappa}}\right)^{u+v}}\,\mathbb P(B_N)=N^{-u-v}\,e^{-(2u+v)N^{\kappa}}\mathbb P(B_N).
\end{equation}

Set $X_N=\tfrac1{N}\sum_{k=1}^{N-1}\,\log(1+\eta_k)$. In view of the Chernoff inequality there exists $c>0$ such that $\mathbb P(|X_N-\mu|\ge\epsilon)<e^{-cN}$. Note that since $u,v\ge 0$ we have $D_N\in[0,1]$.
Together with \eqref{Kol}, this completes the proof, as for any $\epsilon>0$ we obtain
\begin{multline*}
  \mathbb P(|\widetilde X_N-\mu|\ge \epsilon)=\tfrac{\mathbb E\,\left[D_N\mathbf 1_{|X_N-\mu|\ge\epsilon}\right]}{\mathbb E\,[D_N]}\le \tfrac{\mathbb P(|X_N-\mu|>\epsilon)}{\mathbb E\,[D_N]}
  \\ \le \tfrac{e^{-cN}}{N^{-u-v}e^{-(2u+v)N^{\kappa}}\mathbb P(B_N)}\stackrel{\kappa<1}{\longrightarrow} 0.
\end{multline*}

\subsection{Proof of Theorem~\ref{Thm:PhD}(ii)}
In view of \eqref{Cor:from-perp-B} %
and  {  the second part of}  Lemma \ref{triangle} we see that
\begin{multline*}
\lim_{N\to\infty}  \log \EE[e^{-2 t \vv L_N/N}]=
\lim_{N\to\infty} \log \tfrac{ \GG_N(\alpha+t/N,u-t/N,v+t/N)}{ \GG_N(\alpha,u,v)}\\
=\lim_{N\to\infty} N \big( \log\Gamma(\alpha+v+\tfrac{2t}{N})- \log\Gamma(\alpha+v)\big) = 2 t \psi(\alpha+v).
\end{multline*}
\qed
 \subsection{Proof of Theorem~\ref{Thm:PhD}(iii)}
In view of \eqref{Cor:from-perp-A} %
and { the first part of} Lemma \ref{triangle} we see that
\begin{multline*}
\lim_{N\to\infty}  \log \EE[e^{-2 t \vv L_N/N}]= \lim_{N\to\infty} \log \tfrac{ \GG_N(\alpha+t/N,u-t/N,v+t/N)}{ \GG_N(\alpha,u,v)}\\
=\lim_{N\to\infty} N \left(\log\Gamma(\alpha-u+\tfrac{2t}{N})-\log\Gamma(\alpha-u)\right)=2t\,\psi(\alpha-u).
\end{multline*}
 \qed

\subsection{Proof of Theorem \ref{Thm:PhD}(iv)}
We again consider the asymptotics of the  denominator and numerator in \eqref{LG-xx}.
Denote  by $w\in(0,\alpha)$ the common value of $-u=-v$.
Take $s\ge 0$.  Then, by Corollary~\ref{per1},
\begin{multline*}
 \mathcal Z_N\!\left(\alpha+\tfrac s2,u-\tfrac s2,u+\tfrac s2\right)
\\ =
\Gamma \left(\alpha-w+s\right)
\Gamma^N(\alpha-w)
\Gamma^{N-1}\!\left(\alpha+w+s\right)\,\EE\left[(R_{N-1}(s))^{2w}\right],
\end{multline*}
where
\[
R_k(s)=1+\zeta_1(s)+\dots+\zeta_1(s)\dots\zeta_k(s), \quad k\ge 1,\quad R_0(s)=1,
\]
and random variables $\zeta_j(s)\simeq \mathrm{Beta}_{II}(\alpha-w,\alpha+w+s)$, $j\in \mathbb N$, are independent. We note that the moments \(M_k(s):=\EE[(R_k(s))^{2w}]\) satisfy the recurrence relation
\begin{equation}\label{E-rec'}
M_k(s)=\Delta_k(s)+\rho(s) M_{k-1}(s),\quad k\ge 1,\quad \mbox{with }\;M_0(s)=1,
\end{equation}
where
\begin{equation}\label{DeltaN'}
\Delta_k(s)
:=
\EE\!\left[(R_k(s))^{2w}
-(\zeta_k(s)R_{k-1}(s))^{2w}\right] \quad \mbox{(with $\Delta_0(s):=1$)}
\end{equation}
and
\begin{equation}\label{W_N'}
\rho(s):=\EE[(\zeta_k(s))^{2w}]=\tfrac{\Gamma(\alpha+w)}{\Gamma(\alpha+w+s)}\,\tfrac{\Gamma(\alpha-w+s)}{\Gamma(\alpha-w)}.
\end{equation}
The solution of \eqref{E-rec'} is
\begin{equation*}
M_k(s)
=
\sum_{j=0}^k \Delta_j(s)\rho^{k-j}(s).
\end{equation*}
In particular, for $s=0$, we omit the argument $(0)$ and write $M_k=\sum_{j=0}^k\,\Delta_j$.

Consequently, for $s=s_N:=\tfrac{2t}{N}$ %
we have
\[
\tfrac{\mathcal Z_N\!\left(\alpha+s_N,u-s_N,u+s_N\right)}{\mathcal Z_N\!\left(\alpha,u,u\right)}\,=J_1(N)J_2(N)J_3(N),
\]
where
\[
J_1(N)=\tfrac{\Gamma(\alpha-w+s_N)}{\Gamma(\alpha-w)}\to 1,\quad J_2(n)=\left(\tfrac{\Gamma(\alpha+w+s_N)}{\Gamma(\alpha+w)}\right)^{N-1}\to e^{2t\psi(\alpha+w)},
\]
and
\[
J_3(N)=\tfrac{M_{N-1}(s_N)}{M_{N-1}}=\tfrac{\frac 1N\left(M_{N-1}(s_N)-\Delta\sum_{j=0}^{N-1}\,\rho^j(s_N)\right)}{\tfrac 1N\,M_{N-1}}+\tfrac{\Delta}{\frac 1NM_{N-1}}\,\tfrac1N\frac{1-\rho^N(s_N)}{1-\rho(s_N)},
\]
where
\begin{equation}\label{Delta}
    \Delta=\tfrac{\Gamma(\alpha+w)}{\Gamma(2w)\Gamma(\alpha-w)}\left(\psi(\alpha+w)-\psi(\alpha-w)\right).
\end{equation}

We now apply Lemma~\ref{Lem:Kronecker'}, to be proved later.
\begin{lemma}\label{Lem:Kronecker'}
Let $s_N=\tfrac{2t}{N}$ for $t\ge 0$ and $N\ge 1$. Then
\begin{equation}
    \label{Cesaro} \lim_{N\to\infty}\,\tfrac{M_{N-1}(s_N)-\Delta\sum_{j=0}^{N-1}\,\rho^j(s_N)}{N}=0.
\end{equation}
\end{lemma}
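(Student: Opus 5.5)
The plan is to rewrite the numerator of \eqref{Cesaro} as a Ces\`aro-type sum and reduce the lemma to a uniform convergence statement for the increments $\Delta_j(s)$. Since $M_{N-1}(s)=\sum_{j=0}^{N-1}\Delta_j(s)\rho^{N-1-j}(s)$, we get
\[
M_{N-1}(s_N)-\Delta\sum_{j=0}^{N-1}\rho^j(s_N)=\sum_{j=0}^{N-1}\bigl(\Delta_j(s_N)-\Delta\bigr)\rho^{N-1-j}(s_N).
\]
By \eqref{W_N'}, $\frac{\d}{\d s}\log\rho(s)=\psi(\alpha-w+s)-\psi(\alpha+w+s)<0$ and $\rho(0)=1$, so $0<\rho(s_N)\le 1$ for $t\ge0$. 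Hence the absolute value of the sum is at most $\sum_{j=0}^{N-1}|\Delta_j(s_N)-\Delta|$. It then suffices to prove
\[
\lim_{j\to\infty}\ \sup_{0\le s\le s_0}\bigl|\Delta_j(s)-\Delta(s)\bigr|=0 \quad\text{and}\quad \lim_{s\downarrow 0}\Delta(s)=\Delta .
\]
Here $s_0>0$ is fixed, and $\Delta(s):=\EE[g(\zeta(s)R_\infty(s))]$ with $g(x)=(1+x)^{2w}-x^{2w}$ and $R_\infty(s)$ the a.s.\ limit of $R_k(s)$. Given these two facts, for large $j$ and $N$ the terms are $<\eps$ uniformly, and the finitely many initial terms are bounded, so dividing by $N$ gives \eqref{Cesaro}.

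For the uniform convergence, note that $\Delta_j(s)=\EE[g(\zeta_j(s)R_{j-1}(s))]$ by \eqref{DeltaN'}. The function $g$ is bounded by $1$ if $2w\le1$, and satisfies $0\le g(x)\le C(1+x)^{2w-1}$ and $|g'(x)|\le C(1+x)^{(2w-2)_+}$ in general. Choose $p\in(0,\min\{1,2w\})$. Then $\EE[\zeta(0)^p]<\EE[\zeta(0)^{2w}]=1$ by log-convexity, as in \eqref{beta-moment}, and $\EE[\zeta(s)^p]$ is decreasing in $s$. So there is a single $\varrho<1$ with $\|\zeta(s)\|_p\le\varrho$ for all $s\in[0,s_0]$, in the metric \eqref{p-metric}. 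As in the proof of Lemma~\ref{triangle}, this gives $\|R_\infty(s)-R_{j-1}(s)\|_p\le \varrho^{j}/(1-\varrho)$ uniformly in $s$, together with uniform bounds on $\EE[R_\infty(s)^{q}]$ for every $q<2w$.

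Next, split $|g(\zeta R_\infty)-g(\zeta R_{j-1})|$ using the mean value theorem and H\"older's inequality, or simply the H\"older continuity of $g$ of order $\min(1,2w)$. Combined with these moment bounds, this yields $|\Delta_j(s)-\Delta(s)|\le C\varrho_1^{\,j}$ uniformly in $s\in[0,s_0]$, for some $\varrho_1<1$. Continuity of $\Delta(s)$ at $s=0$ follows the same way: by the quantile coupling used in Lemma~\ref{triangle}, $\zeta_j(s)\to\zeta_j(0)$, and the same uniform $L^p$ control of the tails shows $R_\infty(s)\to R_\infty(0)$ in $L^p$. Uniform integrability of $g(\zeta R_\infty(s))$ then follows from the uniform $q$-moment bounds with $2w-1<q<2w$.

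The remaining and hardest step is the identification $\Delta(0)=\Delta$ with $\Delta$ as in \eqref{Delta}; this is a critical Kesten case, because $\EE[R_\infty(0)^{2w}]=\infty$. I would use the perpetuity law cited after Lemma~\ref{Cor:from-perp} from \cite{chamayou1991explicit}: with $u=-w$ it gives $R_\infty(0)-1\simeq\mathrm{Beta}_{II}(\alpha-w,2w)$. Since $R_\infty\stackrel{d}{=}1+\zeta R_\infty$, we get $\zeta R_\infty\stackrel{d}{=}B\simeq\mathrm{Beta}_{II}(\alpha-w,2w)$, and so $\Delta(0)=\EE[(1+B)^{2w}-B^{2w}]$. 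To evaluate this, compute $\EE[(1+B)^{q}-B^{q}]$ for $q<2w$ as a difference of two explicit Gamma ratios; both ratios blow up like $1/(2w-q)$ as $q\uparrow 2w$. By dominated convergence, the limit $q\uparrow2w$ is the derivative of the Gamma ratios, which should produce exactly $\frac{\Gamma(\alpha+w)}{\Gamma(2w)\Gamma(\alpha-w)}(\psi(\alpha+w)-\psi(\alpha-w))$. I expect this bookkeeping to be the main obstacle.
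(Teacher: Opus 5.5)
Your proof is correct. It shares the paper's reduction: $0<\rho(s_N)\le 1$ bounds the numerator by $\sum_{j=0}^{N-1}|\Delta_j(s_N)-\Delta|$, and a Ces\`aro argument finishes. It also uses the same inputs: $\EE[\zeta(s)^p]\le\EE[\zeta(0)^p]<1$ for $0<p<2w$, moment bounds of order below $2w$ uniform in $s$, uniform integrability, and the Chamayou--Letac law of $R$. The difference lies in how you obtain $\lim_{j,N\to\infty}\Delta_j(s_N)=\Delta$.

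The paper couples $\zeta_j(s_N)$ with $\zeta_j$ and shows $R_j(s_N)\to R$ in $L^p$ jointly in $(j,N)$. It then passes to the limit in $\EE[f(R_j(s_N))]$ by uniform integrability from a $(1+\delta)$-moment bound. This is a soft argument that uses only continuity and the growth of $f$.

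You instead split the double limit into two pieces. The first is a rate $|\Delta_j(s)-\Delta(s)|\le C\varrho_1^{\,j}$, uniform in $s\in[0,s_0]$ and proved inside the natural coupling at fixed $s$. The second is continuity of $s\mapsto\Delta(s)$ at $0$, where you reuse the paper's soft argument. This buys a geometric rate, but it needs a modulus of continuity for $g$, and as written that step has slips:
\begin{itemize}
\item The bound $|g'(x)|\le C(1+x)^{(2w-2)_+}$ fails near $x=0$ when $2w<1$.
\item $g$ is not globally H\"older of order $\min(1,2w)$ when $2w>2$.
\item For $2w<1$, the exponent $2w$ gives no decay at $s=0$, since $\EE[\zeta(0)^{2w}]=1$ and $\EE[R_\infty(0)^{2w}]=\infty$.
\end{itemize}
A uniform repair is $|g(x)-g(y)|\le C(1+x\vee y)^{(2w-1)_+}|x-y|^{p}$ for $p\in(0,\min\{1,2w\})$. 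When $2w>1$, follow this with H\"older's inequality using exponents satisfying $(2w-1)r<2w$ and $pr'<2w$.

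The identification you flag as the main obstacle is short; the paper simply asserts it. Take $B\simeq\mathrm{Beta}_{II}(\alpha-w,2w)$ and $q<2w$. Then $\EE[(1+B)^q-B^q]=\tfrac{\Gamma(2w-q)}{\Gamma(\alpha-w)\Gamma(2w)}\bigl[\tfrac{\Gamma(\alpha-w)\Gamma(\alpha+w)}{\Gamma(\alpha+w-q)}-\Gamma(\alpha-w+q)\bigr]$. The bracket vanishes at $q=2w$, and its $q$-derivative there is $\Gamma(\alpha+w)\bigl(\psi(\alpha-w)-\psi(\alpha+w)\bigr)$. Since $(2w-q)\Gamma(2w-q)\to1$, the limit as $q\uparrow 2w$ is exactly $\Delta$ in \eqref{Delta}. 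Passing the limit under the expectation is justified by $0\le(1+x)^q-x^q\le C(1+x)^{(2w-1)_+}$ for $q$ near $2w$.
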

For $t=0$ Lemma \ref{Lem:Kronecker'} implies  $\tfrac1N M_{N-1} \to \Delta$
and, again referring to Lemma \ref{Lem:Kronecker'}, this time for $t>0$, we conclude that %
\[
\lim_{N\to\infty}\,J_3(N)\,=\lim_{N\to\infty}\,\tfrac1N\tfrac{1-\rho^N(s_N)}{1-\rho(s_N)}=
\tfrac{1-e^{2t(\psi(\alpha-w)-\psi(\alpha+w))}}
{2t(\psi(\alpha+w)-\psi(\alpha-w))},
\]
since, by \eqref{W_N'},
\[
\rho(s_N)=1+\tfrac{2t(\psi(\alpha-w)-\psi(\alpha+w))}{N}+o(\tfrac1N).
\]
Multiplying the limits, we obtain
$$\lim_{N\to\infty}\EE[e^{-2 t \vv L_N/N}] =
 \frac{1}{2t}\;\tfrac{e^{2\psi(\alpha+w) t}-e^{2
   \psi(\alpha-w) t}}{\psi(\alpha+w)  - \psi(\alpha-w)}{=
   \int_{-\psi(\alpha+w)}^{-\psi(\alpha-w)}\,\tfrac{e^{-2t\theta}}{\psi(\alpha+w)-\psi(\alpha-w)}\,\mathrm d\theta.}$$
Hence, for $t> 0$, the limiting Laplace transform corresponds to the uniform distribution on $[-\psi(\alpha+w),-\psi(\alpha-w)]$.
To complete the proof, we invoke \cite[Theorem 2]{Mukherjea-Rao-Suen-2006}, %
{ which says that convergence of the Laplace transforms on an open interval implies weak convergence.}
\begin{proof}[Proof of Lemma \ref{Lem:Kronecker'}]
Recall \eqref{DeltaN'}. Since $\rho(s_N)\in(0,1]$ we have
\begin{multline*}
  \left|M_{N-1}(s_N)-\Delta\sum_{j=0}^{N-1}\,\rho^j(s_N)\right|\le \sum_{j=0}^{N-1}\,\left|\Delta_j(s_N)-\Delta\right|\,\rho^{N-1-j}(s_N)
  \\ \le \sum_{j=0}^{N-1}\,\left|\Delta_j(s_N)-\Delta\right|.
\end{multline*}

It suffices to prove that
\begin{equation}\label{jN}
K:=\sup_{j,N}\Delta_j(s_N)<\infty
\qquad\text{and}\qquad
\lim_{\substack{j\to\infty\\ N\to\infty}}\Delta_j(s_N)=\Delta.
\end{equation}

Indeed, \eqref{jN} implies that
\begin{equation}
    \label{AI-claim}
    \lim_{N\to\infty}\frac1N\sum_{j=0}^{N-1}|\Delta_j(s_N)-\Delta|=0,
\end{equation}
which  proves \eqref{Cesaro}.
\arxiv{
To verify \eqref{AI-claim}, let
$
K=\sup_{N,j}\Delta_j(s_N)<\infty
$.
Given $\varepsilon>0$, choose $M$ such that
\[
|\Delta_j(s_N)-\Delta|<\varepsilon,
\qquad N,j>M.
\]
Then, for $N>M$,
\[
\frac1N\sum_{j=0}^{N-1}|\Delta_j(s_N)-\Delta|
\le
\frac{M(K+\Delta)}{N}
+\frac{N-M}{N} \, \varepsilon,
\]
which  yields
\[
\limsup_{N\to\infty}
\frac1N\sum_{j=0}^{N-1}|\Delta_j(s_N)-\Delta|
\le \varepsilon.
\]
}

To prove \eqref{jN}, we define all random variables $\zeta_j(s_N)$ and $\zeta_j$ on the same probability space,
which, by  Skorohod's representation theorem,
  allows us to assume $\zeta_j(s_N)\to \zeta_j$ for all $j\ge 1$ almost surely as $N\to\infty$. We will first prove the %
double limit
\begin{equation}\label{Plim}
\lim_{\substack{j\to\infty\\ N\to\infty}}   %
\,R_j(s_N)= 1+\sum_{k=1}^\infty\,\prod_{r=1}^k\,\zeta_r=:R \quad \mbox{ in probability.}
\end{equation}
In view of \eqref{W_N'} we see that $\rho(s_N)\le \rho(0)=1$. Consequently, recall notation \eqref{p-metric}, for $p\in(0,2w)$ %
and $s_N\geq 0$,  we have $\|\zeta(s_N)\|_p\leq \|\zeta\|_p=:q<1$.

Given $\eps>0$, let $M_0$ be such that %
$q^{M_0}/(1-q)<\eps$.
Clearly, $R_{M_0}(s_N)\to R_{M_0}$ {   almost surely. } %
Since $\sup_N\,R_{M_0}(s_N)\le R_{M_0}$,  by the Lebesgue dominated convergence
it follows that there exists $N_0$ such that   for all %
{  $N>N_0$ } we have
$\|R_{M_0}(s_N)-R_{M_0}\|_p<\eps$.
Then, by the triangle inequality and independence, for all %
{  $j,N>M_0\vee N_0 $ }
we have
\begin{multline*}
 \|R_{j}(s_N) - R\|_p
 \leq \|R_{M_0}(s_N) - R_{M_0}\|_p
 + \sum_{k=M_0}^j \|\zeta(s_N)\|_p^k
 + \sum_{k=M_0}^\infty \|\zeta\|_p^k
 \\ \le \epsilon
+2\tfrac{q^{M_0}}{1-q}\le 3\epsilon.
\end{multline*}

Next, we introduce function $f$ defined by $f(x)=x^{2w}-(x-1)^{2w}$, $x\ge 1$, and note that
\begin{equation}
     \label{f-bound'}
     0\leq f(x)\leq (1+2w)x^{2w-1}.
   \end{equation}
\arxiv{Indeed, if $ 0< 2w\leq 1$ and $x\geq 1$ then  by triangle inequality,
$x^{2w}=((x-1)+1)^{2w}\leq (x-1)^{2w}+1$.
If $2w>1$, then $f(x)=2w\int_{x-1}^x t^{2w-1}\d t\leq 2w x^{2w-1}$.
}
   Consequently, for $0<2w\le 1$ we have $f(x)\le 2$, $x\ge 1$. Thus
   $$\sup_{j,N}\,\Delta_j(s_N)\le \mathbb E\,\sup_{j,N}\,f(R_j(s_N))\le 2.$$

For $2w>1$ choose $\delta>0$ such that $p:=(2w-1)(1+\delta)\in[1,2w)$. Then again  we have $q:=\|\zeta\|_p<1$.  In view of \eqref{f-bound'}, the triangle inequality for $\|\cdot  \|_p$  and independence  we get
\begin{multline*}
\sup_{N,j} \Delta_j(s_N)\le  \sup_{N,j} \EE\left[ \left(f(R_j(s_N))\right)^{1+\delta}\right]
\\ \leq {(1+2w)^{1+\delta}\,\sup_{N,j}\,} \left(\sum_{k=1}^j(\EE [\zeta_1(s_N)^{p}])^{k/p}\right)^{p}\\
\leq {(1+2w)^{1+\delta}\,\sup_{j}\,} \left(\sum_{k=1}^jq^k\right)^{p} \leq \tfrac{{(1+2w)^{1+\delta}} }{(1-q)^p}<\infty.
\end{multline*}
Thus the first relation in \eqref{jN} holds and we have the  moment estimate:
    \begin{equation}\label{uuii}
     C:=\sup_{N,j} \EE\left[ \left(f(R_j(s_N))\right)^{1+\delta}\right]<\infty.
    \end{equation}

To prove the second relation in \eqref{jN}, given \(A>0\), let \(\ell_A\) be a continuous piecewise linear function satisfying \(\ell_A(x)=x\) for \(x\in[-A,A]\) and \(\ell_A(x)=0\) for \(x\notin[-A-1,A+1]\), and set
\(
g_A=\ell_A\circ f.
\)
By weak convergence, recall \eqref{Plim},
\begin{equation}\label{ljN}
    \lim_{\substack{j\to\infty\\ N\to\infty}}\EE[g_A(R_j(s_N))]=\EE[g_A(R)].
\end{equation}

Note that because $R-1 \simeq \mathrm{Beta}_{II}(\alpha-w,2w)$, see \cite[Example 9]{chamayou1991explicit}, we have $\EE[f(R)]=\Delta$, as given by \eqref{Delta}.

Given $\varepsilon>0$, choose $A>0$ sufficiently large so that
$$
\max\{\tfrac C{A^\delta}, \EE[f(R) 1_{f(R)>A}]\}<\eps.
$$
Using H\"older's inequality and then applying Markov's inequality to the $(1+\delta)$-moment estimate in \eqref{uuii}, we obtain
  \begin{multline*}
        \sup_{N,j}\EE \left[f(R_j(s_N)) 1_{f(R_j(s_N))> A}\right]
        \\ \leq
    \sup_{N,j} \EE[ (f(R_j(s_N)))^{1+\delta}]^{1/(1+\delta)}\,\mathbb P[f(R_j(s_N))> A]^{\delta/(1+\delta)}
     \leq \tfrac{C}{A^\delta}<\eps.
  \end{multline*}

Since $0\le f(r)-g_A(r)\le f(r)\mathbf 1_{{f(r)>A}}$,
it follows that
\begin{multline*}
\left|\EE \bigl[f(R_j(s_N))\bigr]-\EE[f(R)]\right|
\le
\left|\EE\bigl[f(R_j(s_N))-g_A(R_j(s_N))\bigr]\right|
\\ +\left|\EE\bigl[f(R)-g_A(R)\bigr]\right|
+\left|\EE\bigl[g_A(R_j(s_N))\bigr]-\EE\bigl[g_A(R)\bigr]\right|
\\ \le
\EE\left[f(R_j(s_N))\mathbf 1_{{f(R_j(s_N))>A}}\right]
+\EE\left[f(R)\mathbf 1_{{f(R)>A}}\right]
\\ +\left|\EE\bigl[g_A(R_j(s_N))\bigr]-\EE\bigl[g_A(R)\bigr]\right|
\le
2\varepsilon
+\left|\EE\bigl[g_A(R_j(s_N))\bigr]-\EE\bigl[g_A(R)\bigr]\right|.
\end{multline*}
Thus, in view of \eqref{ljN}, the second relation of \eqref{jN} follows.
\end{proof}

\section{Barraquand's integral representation}\label{Sec:BarInt}
Barraquand \cite[Thm~1.11]{Barraquand-2024-integral} used Whittaker functions to determine  an interesting  $d$-fold integral representation for the $d$-point Laplace transform of the %
 increments of vector
$\vv L$ for positive $u,v$, and described the conditions required for its analytic continuation to negative $u,v$.
 In particular, Barraquand obtained the following identity, in which the multivariate integrals \eqref{II-1} appearing in the numerator and denominator of \eqref{LG-xx} are replaced by univariate integrals.
\begin{proposition}[{\cite[formula (1.12)]{Barraquand-2024-integral}}] \label{CL2} If $\alpha, u,v>0$, $-v<t<u$  and $t+\alpha>0$,
then %
  \begin{multline}
  \label{LG}
  \EE [ e^{- 2 t \vv L_N}]
  \\ =\tfrac{1}{\mathcal{Z}_N} \int_{\i \RR} \left(\Gamma(\alpha+t+z)\Gamma(\alpha+t-z)\right)^N
  \tfrac{\Gamma(u-t+z)\Gamma(u-t-z)\Gamma(v+t+z)\Gamma(v+t-z)}{ \Gamma(2z)\Gamma(-2z)} \d z . %
\end{multline}
\end{proposition}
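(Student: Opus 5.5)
By \eqref{LG-xx}, $\EE[e^{-2t\vv L_N}]=\GG_N(\alpha+t,u-t,v+t)/\GG_N(\alpha,u,v)$. Replacing $(\alpha,u,v)$ by $(\alpha+t,u-t,v+t)$ in the integrand of \eqref{LG} gives back the original integrand. So it suffices to prove the single identity
\begin{equation*}
\GG_N(\alpha,u,v)=c\int_{\i\RR}\bigl(\Gamma(\alpha+z)\Gamma(\alpha-z)\bigr)^N\tfrac{\Gamma(u+z)\Gamma(u-z)\Gamma(v+z)\Gamma(v-z)}{\Gamma(2z)\Gamma(-2z)}\,\d z
\end{equation*}
for all $\alpha,u,v>0$, with a constant $c$ independent of the parameters. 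The constant $c$ cancels in the ratio, and it is $1/(4\pi\i)$ in the usual normalization. Dividing this identity for $(\alpha+t,u-t,v+t)$ by the one for $(\alpha,u,v)$ yields \eqref{LG}. I would normalize \eqref{LG} so that $\mathcal Z_N$ there is the contour integral at $t=0$. The conditions $-v<t<u$ and $\alpha+t>0$ are exactly positivity of the shifted parameters, and they keep all poles off $\i\RR$.

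To prove the identity I would work from Lemma~\ref{Lem:Jacek-Beta} and use the Mellin–Barnes representation
\begin{equation*}
(1+S)^{-(u+v)}=\tfrac{1}{2\pi\i}\int \tfrac{\Gamma(s)\Gamma(u+v-s)}{\Gamma(u+v)}S^{-s}\,\d s.
\end{equation*}
A recursive structure is cleaner, though. Write $1+z_1+z_1z_2+\dots=1+z_1(1+z_2+\dots)$ and define
\begin{equation*}
F_k(w)=\int_{(0,\infty)^{k}}\prod_j\tfrac{z_j^{\alpha+u-1}}{(1+z_j)^{2\alpha}}\,\bigl(1+z_1+\dots+z_1\cdots z_k\bigr)^{-w}\,\d\vv z .
\end{equation*}
Then $\GG_N\propto F_{N-1}(u+v)$. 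I would seek a spectral variable $z$ in which each application of the one-step integral operator
\begin{equation*}
T g(R)=\int_0^\infty \tfrac{\zeta^{\alpha+u-1}}{(1+\zeta)^{2\alpha}}\,g(1+\zeta R)\,\d\zeta
\end{equation*}
acts diagonally. The natural eigenfunctions are Whittaker/Jacobi-type functions, namely ${}_2F_1$ expressions in $R$ indexed by $z\in\i\RR$. Each step should contribute the multiplier $\Gamma(\alpha+z)\Gamma(\alpha-z)$, up to factors like $\Gamma(2\alpha)$ that are absorbed into the prefactor of Lemma~\ref{Lem:Jacek-Beta}. The boundary steps should contribute $\Gamma(u\pm z)$ and $\Gamma(v\pm z)$, and the Plancherel density for this transform is $1/(\Gamma(2z)\Gamma(-2z))$. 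Concretely, this is the index (Olevskii/Jacobi) transform, whose inversion formula carries the measure $|\Gamma(2\i y)|^{-2}$.

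The key steps, in order, are as follows.

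(a) Expand the initial function $R^{-(u+v)}$, or more precisely the boundary weight at the $v$-end, in the index transform. This is the Barnes first lemma, giving the coefficient $\Gamma(u+z)\Gamma(u-z)\Gamma(v+z)\Gamma(v-z)$ divided by the $\Gamma(2\alpha)$-type normalization.

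(b) Verify the eigen-relation for $T$ acting on the ${}_2F_1$ kernel. This is a Barnes second lemma or Beta-integral computation.

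(c) Iterate $N-1$ times and evaluate the final $u$-end integral, which produces the last factor.

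(d) Justify exchanging integrals. Here Stirling's formula suffices: on $\i\RR$ the integrand decays like $e^{-\pi|y|(N-1)}$ times polynomial factors, and the $z$-integrals converge absolutely.

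As a sanity check, I would confirm the cases $N=1$ and $N=2$ against $\GG_1=\Gamma(\alpha+u)\Gamma(\alpha+v)$ and the stated $\GG_2$ via Barnes' lemmas. For $N=1$ the integral reduces to the known formula
\begin{equation*}
\tfrac1{4\pi\i}\int\tfrac{\prod_{i=1}^4\Gamma(a_i\pm z)}{\Gamma(\pm2z)}\,\d z=\tfrac{\prod_{i<j}\Gamma(a_i+a_j)}{\Gamma(a_1+a_2+a_3+a_4)}
\end{equation*}
(de Branges–Wilson), which reproduces $\Gamma(\alpha+u)\Gamma(\alpha+v)$ up to matching constants. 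This fixes $c$.

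The main obstacle is step (b): finding the correct eigenfunction kernel and proving that $T$ diagonalizes with eigenvalue exactly $\Gamma(\alpha+z)\Gamma(\alpha-z)$ times a parameter-free constant. My first attempt would be to take the kernel $K_z(R)=R^{-\alpha}{}_2F_1(\alpha+z,\alpha-z;\cdot;\,\cdot)$ suggested by the de Branges–Wilson integral and to check the eigen-relation by Mellin transforming in $R$. If that fails, I would fall back on induction in $N$, using the Wilson-integral recursion directly on the $z$-side.
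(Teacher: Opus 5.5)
There is a genuine gap. Your reduction is correct and is the paper's own: by \eqref{LG-xx} it suffices to prove $\GG_N(\alpha,u,v)=\II_N(\alpha,u,v)$ for $\alpha,u,v>0$ (Proposition~\ref{Lem:I=J}). Your operator form $\GG_N=\Gamma(\alpha+u)\Gamma(\alpha+v)\Gamma(2\alpha)^{N-1}(T^{N-1}h)(1)$ with $h(R)=R^{-(u+v)}$ is also right.

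After that the proposal is a programme, not a proof. You never exhibit the eigenfunctions, and you never prove the eigenvalue identity (step (b), which you yourself leave open). Steps (a) and (c) presuppose an inversion theorem for an unspecified transform, applied to $R^{-(u+v)}$ on $[1,\infty)$ and evaluated pointwise at the endpoint $R=1$. These items are the entire content of the identity.

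The bookkeeping you sketch is also off. The weight of $T$ contains $u$, so the eigenfunctions depend on $u$. The factor $\Gamma(u\pm z)$, together with one factor $\Gamma(\alpha\pm z)$, comes from the Plancherel density of the transform, not from the coefficient of $R^{-(u+v)}$ or a separate ``$u$-end integral''. The $N=1$ check is the three-pair de Branges integral \eqref{GG}, not the four-pair Wilson integral. Your fallback (induction on $N$ via a recursion on the integral side) lacks the one ingredient that makes such an induction work: a matching recursion for $\GG_N$.

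The paper supplies exactly that and needs no spectral theorem. Splitting off the \emph{last} summand $x_1\cdots x_{N-1}$ of the denominator with \eqref{A+B} and integrating out $x_N,y_N$ gives
\[
\Gamma(u+v)\GG_N(\alpha,u,v)=\tfrac{\Gamma(\alpha+v)}{2\pi\i}\int_{-\eps+\i\RR}\Gamma(-s)\Gamma(\alpha-v-s)\,\Gamma(u+v+s)\GG_{N-1}(\alpha,u,v+s)\,\d s .
\]
Barnes' first lemma \eqref{koek:1.6.3} says that $v\mapsto\Gamma(v+\i x)\Gamma(v-\i x)$ is an eigenfunction of this operator with eigenvalue $|\Gamma(\alpha+\i x)|^2$. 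Hence $\II_N$ satisfies the same recursion, and the base case is \eqref{GG}. The cost is that the recursion needs $0<\re(v)<\alpha$, so the induction runs over complex $v$ in that strip and is extended by analyticity (Proposition~\ref{Cl:G-an}, Remark~\ref{RM:I-an}). In short, the paper diagonalizes in the Mellin variable $v$, not in $R$.

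Your spatial route can also be completed with the same Barnes computation. Inserting the Mellin--Barnes expansion of $(1+\zeta R)^{-w}$ shows that $K_x(R)={}_2F_1(u+\i x,u-\i x;\alpha+u;1-R)$ satisfies $TK_x=\Gamma(\alpha+\i x)\Gamma(\alpha-\i x)K_x/\Gamma(2\alpha)$ and $K_x(1)=1$. Equivalently, $K_x(R)=R^{\alpha-u}{}_2F_1(\alpha+\i x,\alpha-\i x;\alpha+u;1-R)$, which is close to your guess. These are Jacobi functions with parameters $(\alpha+u-1,u-\alpha)$ and $R=\cosh^2t$, and:
\begin{itemize}
\item $T$ is symmetric in $L^2\bigl(r^{\alpha+u-1}(1+r)^{u-\alpha}\,\d r\bigr)$, where $r=R-1$;
\item the Plancherel density is proportional to $|\Gamma(u+\i x)\Gamma(\alpha+\i x)|^2/|\Gamma(2\i x)|^2$, with no discrete part when $u\ge 0$;
\item the Jacobi transform of $R^{-(u+v)}$ is $\Gamma(\alpha+u)\Gamma(v+\i x)\Gamma(v-\i x)/(\Gamma(\alpha+v)\Gamma(u+v))$.
\end{itemize}
You would still need the Jacobi inversion theorem with pointwise convergence at $r=0$. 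What this route buys is structural insight: for $u<0$ the discrete spectrum sits at $\i x=u+j$, $j\in\ZZ\cap[0,-u)$, which is exactly where the atoms of $\mathbb{D}_N(\alpha,v,u)$ in Proposition~\ref{Prop2} live.
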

(Here $\mathcal{Z}_N$ is the normalizing constant given by the same integral with $t=0$.)

The integral on the right-hand side may include an arbitrary nonzero multiplicative factor, which is subsequently canceled by the normalizing
constant $\mathcal{Z}_N$. We will work with a version of \eqref{LG} that incorporates such a multiplicative factor in order to match \eqref{II-1}.
   We write
 \begin{equation*}\label{LG-J}
    \EE [ e^{- 2 t \vv L_N}]=\tfrac{\II_N(\alpha+t,u-t,v+t)}{\II_N (\alpha,u,v)},
 \end{equation*}
where %
     \begin{equation}\label{JJ-1}
   \II_N (\alpha,u,v)=\tfrac{1}{
    2\pi \Gamma(u+v)} \int_{0}^\infty \left|\Gamma(\alpha+\i x)\right|^{2N} \tfrac{\Gamma(v+\i x)\Gamma(v-\i x)\Gamma(u +\i x )\Gamma(u -\i x )}{|\Gamma(2\i x)|^2}\d x.
 \end{equation}
 (Compare  \cite[Section 4.5]{Barraquand-2024-integral}.)

With this normalization, Proposition \ref{CL2} {  is a consequence of the following representation of $\mathcal Z_N$.}
  \begin{proposition}\label{Lem:I=J}
     Assume     $\alpha>0$, $u>0$ and  $\re(v)>0$. Then for any   $N=1,2,\dots$,
     \begin{equation}
       \label{I_N=J_N}
      \GG_N (\alpha,u,v)= \II_N (\alpha,u,v),
     \end{equation}
where $ \GG_N (\alpha,u,v)$ is given by \eqref{II-1}.  \end{proposition}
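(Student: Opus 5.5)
We prove \eqref{I_N=J_N} by induction on $N$, showing that both sides obey the same recursion in $N$ and agree at $N=1$. Lemma~\ref{Lem:Jacek-Beta} supplies the recursion for $\GG_N$; the recursion for $\II_N$ comes from Barnes-type integrals.

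\textbf{Step 1: recursion for $\GG_N$.} Isolate $z_1$ in \eqref{Jacek-Beta}. Writing $1+\sum_{j}\prod_{k\le j}z_k = 1+z_1\big(1+\sum_{j\ge2}\prod_{k=2}^j z_k\big)=1+z_1 S$, we use the Mellin--Barnes representation
\[
(1+z_1S)^{-(u+v)}=\tfrac{1}{2\pi\i\,\Gamma(u+v)}\int \Gamma(s)\Gamma(u+v-s)(z_1S)^{-s}\,\d s .
\]
After this substitution the $z_1$-integral is a beta integral, and what remains in $z_2,\dots,z_{N-1}$ is again of the form \eqref{Jacek-Beta}, with shifted exponents. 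Re-expressed through \eqref{II-1}, this gives a one-dimensional integral transform: $\GG_N(\alpha,u,v)$ equals a contour integral in $s$ of gamma factors times $\GG_{N-1}(\alpha,u',v')$, where the parameters $u',v'$ depend linearly on $s$.

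A cleaner route works directly in \eqref{II-1}. Integrate out $x_N$ and $y_1$, and read the remaining structure as a convolution, via $\Gamma(\alpha+u)\Gamma(\alpha+v)$-type Beta integrals. The goal is a kernel identity of the form
\[
\GG_N(\alpha,u,v)=\tfrac{1}{2\pi\i}\int K(\alpha,u,v;s)\,\GG_{N-1}(\alpha,s,v)\,\d s.
\]

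\textbf{Step 2: the same recursion for $\II_N$.} In \eqref{JJ-1}, the dependence on $u$ enters only through $\Gamma(u\pm\i x)/\Gamma(u+v)$. Write the factor $|\Gamma(\alpha+\i x)|^2\,\Gamma(u+\i x)\Gamma(u-\i x)$ using Barnes' first lemma:
\[
\tfrac{1}{2\pi\i}\int \Gamma(a+s)\Gamma(b+s)\Gamma(c-s)\Gamma(d-s)\,\d s=\tfrac{\Gamma(a+c)\Gamma(a+d)\Gamma(b+c)\Gamma(b+d)}{\Gamma(a+b+c+d)},
\]
with $a,b=\pm\i x$ and $c,d$ chosen so that the right-hand side yields $\Gamma(\alpha+\i x)\Gamma(\alpha-\i x)\Gamma(u+\i x)\Gamma(u-\i x)$ up to $x$-independent factors. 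Then exchange the $x$ and $s$ integrals by Fubini; absolute convergence follows from Stirling, since $|\Gamma(\alpha+\i x)|^{2N}/|\Gamma(2\i x)|^2$ decays exponentially when $N\ge1$. The inner $x$-integral is then exactly $\II_{N-1}$ with the new parameter $s$ in place of $u$. The kernel obtained should coincide with $K$ from Step~1; one checks this by matching the gamma factors.

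\textbf{Step 3: base case and conclusion.} For $N=1$, $\GG_1=\Gamma(\alpha+u)\Gamma(\alpha+v)$. The integral $\II_1$ is the classical de Branges--Wilson integral
\[
\tfrac{1}{4\pi}\int_{\RR}\tfrac{\prod_{k=1}^4\Gamma(a_k\pm\i x)}{\Gamma(\pm2\i x)}\,\d x=\tfrac{\prod_{j<k}\Gamma(a_j+a_k)}{\Gamma(a_1+a_2+a_3+a_4)},
\]
taken with $(a_k)=(\alpha,\alpha,u,v)$, which gives $\Gamma(2\alpha)\Gamma(\alpha+u)^2\Gamma(\alpha+v)^2\Gamma(u+v)/\Gamma(2\alpha+u+v)$. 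Dividing by $\Gamma(u+v)$, this does not immediately give $\GG_1$. Two fixes are possible: normalise the base case at $N=0$ instead, or use the $N=2$ value displayed after Lemma~\ref{Lem:Jacek-Beta}, which matches this Wilson evaluation exactly, and start the induction there. Induction then yields \eqref{I_N=J_N} for $u>0$ and real $v>0$. The extension to $\re v>0$ follows by analyticity in $v$ of both sides, which holds because both integrals converge locally uniformly.

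\textbf{Main obstacle.} The hardest part is getting the recursions in Steps~1--2 to match exactly. This involves choosing the correct intermediate parameter (the contour variable must play the role of a boundary parameter with positive real part), verifying that the contours can be placed so that all Barnes-lemma poles are separated, and justifying Fubini. If the direct matching fails, the fallback is to compare Mellin transforms in $u$: show that both sides, as functions of $u$, satisfy the same difference equation in $N$, using Corollary~\ref{per1}'s expectation form for $\GG_N$.
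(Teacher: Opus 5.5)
Your strategy is the paper's: a Mellin--Barnes recursion in $N$ for $\GG_N$, a Barnes-first-lemma recursion for $\II_N$, and induction on $N$. (The statement is the case $d=1$, $t_1=0$ of Theorem~\ref{BarraquandThm1.11}, proved through \eqref{G-rec*} and \eqref{I-rec*}.) Your two recursions do match. Splitting off $z_1$ in \eqref{Jacek-Beta} and using the $u\leftrightarrow v$ symmetry of $\GG_N$ gives $\GG_N(\alpha,u,v)=\tfrac{\Gamma(\alpha+u)}{\Gamma(u+v)}\tfrac{1}{2\pi\i}\int\Gamma(\alpha-s)\Gamma(u-s)\Gamma(v+s)\,\GG_{N-1}(\alpha,s,v)\,\d s$, with $0<\re(s)<\min\{\alpha,u\}$. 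Your choice $(a,b,c,d)=(\i x,-\i x,\alpha,u)$ in Barnes' lemma produces exactly this recursion for $\II_N$.

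The genuine gap is the base case. The integral you evaluated with $(a_k)=(\alpha,\alpha,u,v)$ contains $|\Gamma(\alpha+\i x)|^4$, so it is $\II_2$, not $\II_1$; that is why it reproduced $\GG_2$. The integrand of $\II_1$ has only the three pairs $\Gamma(\alpha\pm\i x)$, $\Gamma(u\pm\i x)$, $\Gamma(v\pm\i x)$. The de Branges integral \eqref{GG} (or \eqref{GG+} for complex $v$) with $(a,b,c)=(\alpha,u,v)$ gives $\Gamma(\alpha+u)\Gamma(\alpha+v)\Gamma(u+v)$, so $\II_1=\GG_1$ directly; this is the paper's base case. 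Neither of your fixes works. $\II_0$ is a divergent integral, since its integrand grows like $x^{2u+2v-1}$. Starting at $N=2$ leaves $N=1$, which is part of the claim, unproved.

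Your Fubini justification is also wrong. $|\Gamma(\alpha+\i x)|^{2N}/|\Gamma(2\i x)|^2$ behaves like $x^{(2\alpha-1)N+1}e^{(2-N)\pi x}$, so it does not decay exponentially for $N\le 2$. In the step $N-1\to N$, with $s=c+\i y$, the modulus of the double integrand is, up to polynomial factors, $e^{-(N-2)\pi x-\pi\max(x,|y|)-\pi|y|}$. This is integrable for $N\ge 2$ but not for $N=1$, which is exactly why the induction must be anchored at a directly verified $N=1$.

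Finally, both recursions invoke the induction hypothesis at complex parameters on the contour, e.g.\ $\GG_{N-1}(\alpha,s,v)$ with complex $s$. So the passage to complex parameters cannot be postponed until after the induction, as you propose. At each stage you must prove the identity for real $u,v$ and then extend analytically in one variable, using the symmetry of both sides in $(u,v)$, before moving to $N+1$. The paper does this by carrying complex $v$ through the induction.
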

  \begin{proof}
    This is essentially   the case $d=1$ and $t_1=0$ of Theorem \ref{BarraquandThm1.11}.
    (Theorem \ref{BarraquandThm1.11}  states \eqref{GGII}  for real parameters, but the proof uses complex $v$, see \eqref{v-contstr}.)
  \end{proof}
 In this paper, we present a more elementary argument, independent of Whittaker functions,  that re-derives
Barraquand's  integral  formula for positive $u,v$ (see Corollary~\ref{Cor:Barraquand}).
The argument leads to a similar formula, see Lemma~\ref{Thm:half-space}, for the stationary measures of the log-gamma polymer in the half-space.

However, we begin with the discussion of analytic  continuation  of  $ \II_N (\alpha,u,v)$.
Although the existence of an analytic continuation is apparent from \cite{Barraquand-2024-integral}, in Section~\ref{Sec:AltProof} we require explicit formulas to establish the full phase diagram by analytic methods.
   \subsection{Extension of Proposition \ref{Lem:I=J} }
It is known that \eqref{II-1}  defines a function that is analytic in $u,v$.
 This result follows from \cite[Propositions 3.15 and 3.16]{barraquand2024stationary}. (See also Proposition~\ref{Cl:G-an}.)

\begin{lemma}\label{Lem:I-anal}
Fix $\alpha>0$ and  $N\geq 1$.
Then, for fixed $v\in \{z\in\mathbb C:\;\re(z)>-\alpha\}$,  \eqref{II-1}  defines an analytic  function of    variable $u\in \{z\in\mathbb C:\;\re(z)>-\alpha\}$; similarly,  for fixed $u\in \{z\in\mathbb C:\;\re(z)>-\alpha\}$,
\eqref{II-1}  defines an analytic  function of    variable   $v\in \{z\in\mathbb C:\;\re(z)>-\alpha\}$.
\end{lemma}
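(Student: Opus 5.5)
The plan is to prove analyticity of
\[
\GG_N(\alpha,u,v)=\int_{(0,\infty)^{2N}} F(\vv x,\vv y;u,v)\,\d\vv x\,\d\vv y
\]
in $u$ for fixed $v$; the case of $v$ is symmetric, since \eqref{II-1} is invariant under $(u,v,\vv x,\vv y)\mapsto(v,u,\tilde{\vv y},\tilde{\vv x})$ with reversed indices. We use the standard criterion for parameter integrals. The integrand $F$ is entire in $u$ for each fixed $(\vv x,\vv y)$, because $y^{\alpha+u-1}=e^{(\alpha+u-1)\log y}$ and $S^{-u-v}=e^{-(u+v)\log S}$ with $S:=\sum_{k=0}^{N-1}x_1\cdots x_k\,y_{k+2}\cdots y_N>0$. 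Hence, by Morera's theorem together with Fubini, it suffices to show that $\int |F|$ is locally uniformly bounded for $u$ in compact subsets $K$ of $\{\re u>-\alpha\}$.

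The key step is a domination of $|F|$. Write $a=\re u$ and $b=\re v$. Then
\[
|F|=\prod x_i^{\alpha+b-1}\,\prod y_j^{\alpha+a-1}\,S^{-a-b}\,e^{-\sum x_i-\sum y_j}.
\]
On $K$ we have $a\in[a_0,a_1]$ with $a_0>-\alpha$. Since $S^{-a-b}\le S^{-a_0-b}+S^{-a_1-b}$ and $y^{\alpha+a-1}\le y^{\alpha+a_0-1}+y^{\alpha+a_1-1}$, the quantity $|F|$ is bounded by a finite sum of integrands of the form \eqref{II-1} with real parameters $(\alpha,a',b)$, where $a'\in\{a_0,a_1\}$, and with the exponent on $S$ possibly mismatched. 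Here is the one subtlety. The exponents of the $y$'s come from $a'\in\{a_0,a_1\}$, while the exponent of $S$ comes from $a''\in\{a_0,a_1\}$, and the two may differ. To handle this I would rerun the proof of Lemma~\ref{Lem:Jacek-Beta} with independent exponents. Integrating out $x_N,y_1$ and substituting $z_j=y_{N-j+1}/x_{N-j}$, a mixed term becomes
\[
C\int_{(0,\infty)^{N-1}}\prod_{j}\frac{z_j^{\alpha+a'-1}}{(1+z_j)^{2\alpha+a'-a''}}\,\frac{\d\vv z}{\bigl(1+\sum_j\prod_{k\le j}z_k\bigr)^{a''+b}},
\]
with constants involving $\Gamma(\alpha+b)\Gamma(\alpha+a')\Gamma(2\alpha+a'-a'')$. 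These constants are finite because $2\alpha+a'-a''\ge 2\alpha+a_0-a_1$, and we may shrink $K$ to a small disc so that $|a_1-a_0|<\alpha$.

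Finiteness of these $\vv z$-integrals is the main obstacle. Near $z_j\to0$ we have integrability since $\alpha+a'>0$. At infinity the denominator $(1+\sum\prod z_k)^{a''+b}$ must be controlled when $a''+b$ may be negative. I would bound $1+\sum_j\prod_{k\le j}z_k\le\prod_j(1+z_j)$, so that when $a''+b<0$ the factor is at most $\prod_j(1+z_j)^{-(a''+b)}$. This gives a product of one-dimensional integrals $\int_0^\infty z^{\alpha+a'-1}(1+z)^{-(2\alpha+a'-a''+a''+b)}\,\d z$, which converge iff $\alpha+a'>0$ and $2\alpha+b-a'... $; explicitly, the condition at infinity is $\alpha+b>0$, which holds. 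When $a''+b\ge0$ the factor is at most $1$, and convergence needs only $2\alpha+a'-a''>\alpha+a'$, that is $\alpha>a''$. This can fail for large $\re u$. In that case I instead bound $(1+\sum\prod z_k)^{-(a''+b)}\le \prod_{k\le N-1}\cdots$; more simply, since $S\ge x_1\cdots x_{N-1}$ (the $k=N-1$ term), I would directly use
\[
S^{-a''-b}\le (x_1\cdots x_{N-1})^{-a''-b}\quad\text{or}\quad S^{-a''-b}\le (y_2\cdots y_N)^{-a''-b},
\]
applied before substitution. Either inequality turns the original integrand into a product of Gamma integrals, provided $\alpha+b-a''-b=\alpha-a''>0$ or $\alpha+a'-a''-b>0$ respectively. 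Since we can also split $S^{-c}\le N^{-c}\max_k(\text{term}_k)^{-c}$ and choose, for each $k$, the term $x_1\cdots x_k\,y_{k+2}\cdots y_N$, distributing the exponent so that every variable retains a positive power, we reduce everything to products of $\Gamma$ integrals. Checking that such a distribution of exponents exists for all $\re u,\re v>-\alpha$ is the combinatorial heart of the argument. This is exactly the content of \cite[Prop.~3.15--3.16]{barraquand2024stationary}, which I would invoke if the bookkeeping becomes heavy. With a locally uniform $L^1$ bound in hand, analyticity follows by Morera's theorem together with Fubini.
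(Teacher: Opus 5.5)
Your framework is right and matches the paper's (Proposition~\ref{Cl:G-an}). The integrand is entire in $u$, the $v$-case follows from the $u\leftrightarrow v$ symmetry of \eqref{II-1}, and a locally uniform integrable majorant gives analyticity.

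\textbf{The gap.} You never produce that majorant on the whole half-plane, and this is the entire content of the lemma. Write $a=\re u$, $b=\re v$, $z_k=x_1\cdots x_k\,y_{k+2}\cdots y_N$, so that $S=\sum_{k=0}^{N-1}z_k$. Your explicit bounds are of two kinds: the $\vv z$-integral with $(1+\sum_j\prod_{k\le j}z_k)^{-(a''+b)}$ bounded by $1$ or by $\prod_j(1+z_j)^{-(a''+b)}$, and the single-monomial bounds by $(x_1\cdots x_{N-1})^{-c}$ or $(y_2\cdots y_N)^{-c}$. Together they cover only $a''+b<0$, $a''<\alpha$, or $\alpha+a'-a''-b>0$. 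At any point with $\re u>\alpha$ and $\re v>\alpha$, on a small disc, none of them applies.

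This failure is unavoidable for any bound of $S^{-(a+b)}$ by a single $z_k^{-(a+b)}$. Such a bound strips the full exponent $a+b$ from some variable, leaving $x_1$ with exponent $\alpha-a-1$ or $y_N$ with exponent $\alpha-b-1$.

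Your proposed remedy does not repair this. The inequality $S^{-c}\le N^{-c}(\max_k z_k)^{-c}$ is false. The inequality $S^{-c}\le N^{-c}\max_k z_k^{-c}=N^{-c}(\min_k z_k)^{-c}$ is true but only leads to a sum of single-monomial bounds. In that sum the term $k=N-1$ needs $a<\alpha$ and the term $k=0$ needs $b<\alpha$. Deferring the remaining ``combinatorial heart'' to \cite{barraquand2024stationary} leaves exactly the nontrivial step unproved. That citation is also how the paper first justifies the lemma, but then your own domination attempts add nothing.

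\textbf{The missing idea.} Split the exponent of $S$ between the two extreme monomials $z_0=y_2\cdots y_N$ and $z_{N-1}=x_1\cdots x_{N-1}$, using $S\ge\max_k z_k$. First borrow a positive power of $S$ so that both pieces have positive exponent. For $0<\eps<\min\{\alpha,\alpha+a,\alpha+b\}$,
\[
S^{-(a+b)}=S^{2\alpha-2\eps}\,S^{-(a+\alpha-\eps)}\,S^{-(b+\alpha-\eps)}
\le N^{2\alpha-2\eps}\Big(\sum_{k=0}^{N-1}z_k^{2\alpha-2\eps}\Big)\,z_0^{-(a+\alpha-\eps)}\,z_{N-1}^{-(b+\alpha-\eps)}.
\]
Hence $|F|$ is dominated by $x_N^{\alpha+b-1}y_1^{\alpha+a-1}\prod_{i<N}x_i^{\eps-1}\prod_{j\ge 2}y_j^{\eps-1}$, times a sum of monomials with nonnegative exponents, times $e^{-\sum_i x_i-\sum_j y_j}$. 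This is a finite sum of products of Gamma integrands.

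This works for all $a,b>-\alpha$ at once; the split is possible precisely because $a+b<(\alpha+a)+(\alpha+b)$. Local uniformity is immediate, because $u$ now enters only through $y_1^{\alpha+a-1}\le y_1^{\alpha+a_0-1}+y_1^{\alpha+a_1-1}$. No mixed exponents $a'\ne a''$, no Beta substitution, and no shrinking of discs are needed. This is the argument of Proposition~\ref{Cl:G-an}.
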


We remark that by Hartogs' theorem \cite[Section 4.2]{Vladimirov1966},  a function analytic in each variable is    a holomorphic function of  variables $u,v$ on the product of the half planes. In particular, $u\mapsto \GG_N(\alpha,u,u)$ is  holomorphic on the half-plane ${ \{z\in\mathbb C:\;\re(z)>-\alpha\}}$.

 { Our next goal is to present an explicit analytic continuation of  \(\II_N(\alpha,u,v)\), thereby extending the %
 representation \eqref{I_N=J_N} to a larger range of the parameters $u$ and $v$.}
   The key ingredient is the following integrability result, accompanied by an explicit estimate that will allow us to invoke the dominated convergence theorem in the proof of Proposition~\ref{Prop2}.

 \begin{lemma}\label{Lem:J-int}
   Assume {  $N\in\ZZ_{\geq 1}$},   $\alpha>0$ and $u,v\in\CC$ are such that $\re(u)\not\in\ZZ_{\leq 0}$ and $\re(v)\not\in\ZZ_{\leq 0}$.
   Then the integral
     \begin{equation*}\label{integral}
     \int_{0}^\infty \left|\Gamma(\alpha+\i x)\right|^{2N} \tfrac{\Gamma(v+\i x)\Gamma(v-\i x)\Gamma(u +\i x )\Gamma(u -\i x )}{|\Gamma(2\i x)|^2}\d x.
     \end{equation*}
       converges absolutely.

Furthermore, for   intervals  $[a_1,b_1],[a_2,b_2]\subset \RR\setminus\ZZ_{\leq 0}$, and $A\ge 1$, there are constants  $C,\theta>0$ such that
\begin{equation}\label{4DCT}
     |\Gamma(\alpha+\i x)|^{2N} \tfrac{\left|\Gamma(v+\i x)\Gamma(v-\i x)\Gamma(u +\i x )\Gamma(u -\i x )\right|}{|\Gamma(2\i x)|^2}\le     C \left(1+x^{\theta}\right) e^{- N\pi x},\quad x\ge 0,
  \end{equation}
for all   complex $u,v$ such that $\re(u)\in[a_1,b_1]$, $\re(v)\in[a_2,b_2]$ and $|\im(u)|,|\im(v)|\leq A$.
 \end{lemma}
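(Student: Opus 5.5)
The plan is to bound each gamma factor on the line $\re=\text{const}$ by Stirling's formula in its uniform form. First note that the only possible singularities of the integrand on $[0,\infty)$ are at finite $x$. The factors $\Gamma(u\pm\i x)$ are finite there because $\re(u)\notin\ZZ_{\le0}$ keeps $u\pm\i x$ away from the poles; the same holds for $\Gamma(v\pm \i x)$. At $x=0$ the factor $1/|\Gamma(2\i x)|^2=|2\i x|^2/|\Gamma(1+2\i x)|^2\sim 4x^2$ vanishes rather than blowing up. Hence the integrand is continuous on $[0,\infty)$, and absolute convergence follows once the bound \eqref{4DCT} is established, since $(1+x^\theta)e^{-N\pi x}$ is integrable.

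For \eqref{4DCT}, I will use the standard uniform Stirling estimate: for $\sigma$ in a compact set and $|y|\to\infty$,
\begin{equation*}
|\Gamma(\sigma+\i y)|=\sqrt{2\pi}\,|y|^{\sigma-1/2}e^{-\pi|y|/2}\bigl(1+O(1/|y|)\bigr),
\end{equation*}
uniformly in $\sigma$. Write $u=a+\i b$ with $a\in[a_1,b_1]$ and $|b|\le A$. Then $u\pm\i x=a+\i(b\pm x)$, and for $x\ge x_0:=2A+1$ we have $|b\pm x|\ge x/2\ge 1$. This gives
\begin{equation*}
|\Gamma(u+\i x)\Gamma(u-\i x)|\le C\,(1+x)^{2|a|+1}e^{-\pi(|x+b|+|x-b|)/2}=C(1+x)^{2|a|+1}e^{-\pi x},
\end{equation*}
using $|x+b|+|x-b|=2x$ for $x\ge|b|$. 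The same bound holds for the $v$ factors. Similarly, $|\Gamma(\alpha+\i x)|^{2N}\le C x^{N(2\alpha-1)}e^{-N\pi x}$, and $1/|\Gamma(2\i x)|^2=\frac{2x\sinh(2\pi x)}{\pi}\le Cxe^{2\pi x}$ by the reflection formula. Multiplying, the exponentials combine as $e^{-N\pi x-\pi x-\pi x+2\pi x}=e^{-N\pi x}$, leaving a polynomial factor $(1+x)^\theta$ with $\theta$ depending only on $N,\alpha$ and the intervals. So \eqref{4DCT} holds on $[x_0,\infty)$.

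On the compact range $x\in[0,x_0]$ I need a uniform bound over the parameter set. The function $(x,u,v)\mapsto$ integrand is continuous on the compact set $[0,x_0]\times\{\re u\in[a_1,b_1],|\im u|\le A\}\times\{\dots\}$, because $[a_1,b_1]$ avoids $\ZZ_{\le0}$, so the gamma factors never hit a pole, and the $1/|\Gamma(2\i x)|^2$ factor is the continuous function $2x\sinh(2\pi x)/\pi$. Therefore it is bounded by some $C'$, and $C'\le C'e^{N\pi x_0}e^{-N\pi x}$ there; enlarging $C$ finishes the proof. The main care point is the uniformity of Stirling's estimate in the complex shift $\pm b$, which is handled by taking $x\ge 2A+1$. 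I do not expect any genuine obstacle.
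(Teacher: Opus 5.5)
Your proof is correct, but it is organized differently from the paper's. The paper does not split the $x$-range and never uses the Stirling asymptotic. It bounds every factor on all of $[0,\infty)$ with the explicit inequality $|\Gamma(z)|^2\le 2\pi|z|^{2a-1}e^{-\pi|b|}\exp(\tfrac{1}{3|z|})$, valid for $z=a+\i b$ with $a\ge 0$. When $\re(u)\in[-K-1+\delta,-K-\delta]$, it first shifts to positive real part via $\Gamma(z)=\Gamma(z+K+1)/\prod_{j=0}^{K}(z+j)$ and $|u+j\pm\i x|\ge\delta$. Combined with $1/|\Gamma(2\i x)|^2=\tfrac{2}{\pi}x\sinh(2\pi x)$ and the $c_r$-inequality, this gives fully explicit $C$ and $\theta$.

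Your route uses uniform Stirling on $[x_0,\infty)$ (it is valid for bounded, possibly negative, real part) and joint continuity on $[0,x_0]$ times the compact parameter set. It is shorter and needs no case split on the sign of $\re(u)$. Your pairing $|x+b|+|x-b|=2x$ also avoids the paper's $e^{\pi A}$ loss.

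One detail should be stated. Stirling only gives $|\Gamma(\sigma+\i y)|\le C|y|^{\sigma-1/2}e^{-\pi|y|/2}$ for $|y|\ge Y_0$, where $Y_0$ depends on the $\sigma$-range. So take $x_0\ge\max\{2A+1,2Y_0\}$, or bound the ratio by continuity for $1\le|y|\le Y_0$.

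What the paper's explicit route buys is robustness. In Remark~\ref{Rem:J-int+}, $u$ ranges over a neighbourhood of a nonpositive integer $-m$. There the integrand stays bounded but is no longer jointly continuous at $(x,u)=(0,-m)$, so your compactness step would need a separate local estimate. The paper's factor-by-factor bounds are instead easily modified by hand to cover that case.
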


\begin{proof} {The absolute convergence of the integral is an immediate consequence of the bound \eqref{4DCT}.
To prove that this bound holds true we} derive bounds of the form
$c \left(1+x^{a}\right) e^{- b x}$ for each factor { on the left-hand side of \eqref{4DCT}} and then combine them to obtain { the final inequality}.

The simplest factor is
\begin{equation}\label{Sinh}
 \tfrac{1}{|\Gamma(2 \i x)|^2}=\tfrac{2}{\pi} x \sinh(2\pi x) \leq \tfrac{2}{\pi} x e^{2\pi x}\leq C_1(1+x^{\theta_1})e^{2\pi x}.
\end{equation}
(Here, $C_1=2/\pi$ and $\theta_1=1$.)

For the remaining factors we use inequality \cite[\href{http://dlmf.nist.gov/5.6.E9}{(5.6.9)}]{NIST:DLMF} which says that for $z=a+\i b$ with $a\geq 0$ we have
\begin{equation}\label{(5.6.9)NIST}
  |\Gamma(z)|^2\leq 2\pi |z|^{2a-1}e^{-\pi |b|} \exp(\tfrac{1}{3|z|}).
\end{equation}
In case $\re(z)<0$ we refer to $\Gamma(z)=\Gamma(z+1)/z$ to apply \eqref{(5.6.9)NIST} to a  variable with positive real part.

\bigskip
In view of \eqref{(5.6.9)NIST} we have
\begin{multline*}
  |\Gamma(\alpha+\i x)|^{2N}\le (2\pi)^N |\alpha+\i x|^{(2\alpha-1)N}e^{-\pi N x}\,\exp(\tfrac N{3\sqrt{\alpha^2+x^2}})
  \\ \le \tfrac{(2\pi)^N\exp(\frac N{3\alpha})}{\alpha^N} (\alpha^2+x^2)^{\alpha N}\,e^{-\pi N x}.
\end{multline*}

The $c_r$-inequality  implies that for $c_r=2^{r-1}\vee 1\le 2^r$
\begin{equation}\label{cr}
(a+w)^r\le c_r(a^r+w^r)\le c_r(a^r\vee 1)(1+w^r)\le2^r(a\vee 1)^r(1+w^r)
\end{equation}
for positive $a,r$ and $w\geq 0$. Consequently,
\begin{equation}\label{G-alpha}
|\Gamma(\alpha+\i x)|^{2N}\le  C_2^N (1+x^{\theta_2})e^{-\pi N x},
\end{equation}
with $C_2:=C_2^{(\alpha)}=\tfrac{2^{1+\alpha}\pi\exp(\frac 1{3\alpha})}{\alpha}  (\alpha^2\vee 1)^{\alpha }$ and $\theta_2=2\alpha N$.

\bigskip
Next, we consider $ |\Gamma(u\pm \i x)|^2$.
In the argument we only consider $\re(u)<0$. (We omit the proof for $\re(u)>0$, which is similar to the argument for \eqref{G-alpha}.)
We write $u=a+\i b$ and, without loss of generality, we consider  $[a_1,b_1]= [-K-1+\delta,-K-\delta]$, where
$K\in\ZZ_{\geq 0}$ is such that $$1-\delta\ge \tilde\alpha:=a+K+1\ge\delta\in(0,\tfrac12)\quad\mbox{and}\quad a+j<-\delta\quad\mbox{for}\;j=0,1,\ldots,K.$$  We seek a bound of the form  \eqref{4DCT} that holds for all $x\geq 0$   with constants $C,\theta$  that depend only on parameters $ K,A, \delta$.
Clearly, (note that $|a+j|>\delta$, $j=0,1,\ldots,K$)
$$
    |\Gamma(u\pm \i x)|^2 =  \tfrac{ |\Gamma(u+K+1\pm \i x)|^2}{\prod_{j=0}^{K} |u+j\pm \i x|^2}\le    \tfrac{ |\Gamma(u+K+1\pm \i x)|^2}{\prod_{j=0}^{K} (a+j)^2}\leq   \tfrac{1}{\delta^{2K+2}}    |\Gamma(\tilde \alpha+\i(b\pm  x))|^2.
$$
Thus, see \eqref{G-alpha}  { with  $\alpha=\tilde\alpha$, and   $N=1$}, we obtain
\begin{equation*}\label{*tmp*}
  |\Gamma(\tilde\alpha+\i(b\pm  x))|^2\le C_2^{(\tilde\alpha)}(1+|b\pm x|^{\theta})e^{-\pi|b\pm x|}.
\end{equation*}
where $\theta=2\tilde\alpha$.
Recalling that $\delta\le \tilde\alpha<1$  we see that
$$
C_2^{(\tilde\alpha)}\le \tfrac{4\pi}{\delta}\exp(\tfrac{1}{3\delta}).
$$
Since $|b\pm x|\ge |x|-|b|\ge |x|-A$ we have $e^{-\pi |b\pm x|}\le e^{\pi A}\,e^{-\pi x}$.

Since  $A\geq 1$ and $\theta=2\tilde\alpha\le 2$  we have $1+(2A)^{\theta}\le  5A^2$ and $1+x^{\theta}\le  2(1+x^2)$. Thus, in view of \eqref{cr} we obtain
$$
1+|b\pm x|^{\theta}\le 1+2^{\theta}(1\vee|b|)^{\theta}(1+x^{\theta})\le (1+(2A)^{\theta})(1+x^{\theta}) \leq 10A^2(1+x^2).
$$

\arxiv{
Indeed,
$1+x^{\theta}=(1+x^\theta) 1_{x\geq 1}+(1+x^\theta)1_{x\leq 1}\leq (1+x^2) 1_{x\geq 1}+2\cdot  1_{x\leq 1}
\leq 2(1+x^2) 1_{x\geq 1}+ 2(1+x^2) 1_{x\leq 1}=2(1+x^2)$
and $1+(2A)^{\theta}\le A^2+(2A)^2= 5 A^2$.
}

Finally, we conclude that
\begin{equation}\label{G-u}
|\Gamma(u\pm \i x)|^2\le C_3(1+x^{\theta_3})e^{-\pi x}
\end{equation}
with $C_3=\tfrac{40 A^2\pi}{\delta^{2K+3}}\,e^{\pi A+\frac{1}{3\delta}}$ and $\theta_3=2$.

\bigskip
The same proof  applies to $v\in\CC$ such that $\re (v)\in[a_2,b_2]=[-J-1+\delta,-J-\delta]\subset(-\infty,0)$ for some
$J\in\ZZ_{\geq 0}$,  $\delta\in (0,1/2)$ and $|\im(v)|\leq A$ and  establishes   the  following bound
\begin{equation}
    \label{G-v}|\Gamma(v\pm \i x)|^2\leq C_4  (1+x^{\theta_4})e^{-\pi x},
\end{equation}
where $\theta_4=2$ and $C_4=C_4(J,A,\delta)$ does not depend on $x\geq 0 $.

\bigskip
Denote $\theta_{\max}=\max\{\theta_1,\theta_2,\theta_3,\theta_4\}=\max\{\theta_2,2\}$ and observe that
 $$
 (1+x^{\theta_1})(1+x^{\theta_2})(1+x^{\theta_3})(1+x^{\theta_4})\leq 16 +\left(1+x^{\theta_{\max}}\right)^4
 \leq 24 \left(1+x^{4 \theta_{\max}}\right).
 $$
 Indeed, for $x\in[0,1]$ the left-hand side is at most $2^4=16$ and for $x>1$ it is bounded by $\left(1+x^{\theta_{\max}}\right)^4$. Then we use $(1+a)^2\leq 2(1+a^2)$ twice  to get  $(1+a)^4\leq 8(1+a^4)$. So for all $x\geq 0$, the  left-hand side  is bounded by $ 16+8(1+x^{4\theta_{\max}}) =24+8 x^{4\theta_{\max}} \leq 24(1+x^{4\theta_{\max}})$. Thus,  combining the bounds \eqref{Sinh}, \eqref{G-alpha}, \eqref{G-u}, \eqref{G-v}
 we obtain \eqref{4DCT} with $C=24C_1 C_2 C_3 C_4$ and $\theta=4\theta_{\max}$.
\end{proof}
\arxiv{For completeness, we give  the omitted argument for the bound \eqref{G-v} when $\re(v)>0$. As previously, we write $v=a+\i b$ with $|b|<A$ and $a\in[a_2,b_2]$, where $a_2=\delta>0$.
Then, using \eqref{(5.6.9)NIST} with $z=a+1+\i(b\pm x)$ we obtain
$$
|\Gamma(v\pm \i x)|^2=\frac{|\Gamma(a+1+\i(b\pm x))|^2}{|a\pm \i x|^2}\le  \frac{|\Gamma(a+1+\i(b\pm x))|^2}{\delta^2}
\leq \frac{2\pi \sqrt[3]{e}}{\delta^2} e^{\pi A} \; |z|^{2 a+1} e^{-\pi x}.
$$
(Here we used $|z|>a+1>1$ and $|b|<A$.) Since $2a+1>1$, by the triangle inequality
\begin{multline*}
 |z|^{2a+1} %
\leq (b_2+1+A+|x|)^{2b_2+1}
\leq 2^{2b_2+1} \left((b_2+1+A)^{2b_2+1} + |x|^{2b_2+1} \right)
\\ \leq  2^{2b_2+1}  (b_2+1+A)^{2b_2+1} (1+|x|^{2b_2+1}) .
\end{multline*}
This proves \eqref{G-v} with $\theta_4=2b_2+1$ and constant $C_4=C_4(a_2,b_2,A)=\ {\pi }{a_2^{-2}} e^{1/3+\pi A}4^{b_2+1}  (1+b_2+A)^{2b_2+1}$.
}

\begin{remark}\label{Rem:J-int+}
    The bound \eqref{4DCT} remains valid if one of the parameters is an
    integer   in $\ZZ_{\leq 0}$ while the real part of the other parameter is not in $\ZZ_{\leq 0}$. For example, suppose $u=-m\in\ZZ_{\leq 0}$ while $\re(v)\not\in\ZZ_{\leq 0}$. Then,  in the proof of \eqref{4DCT},  we replace the two separate
    bounds \eqref{Sinh} and \eqref{G-u} with a single inequality:
\begin{equation}\label{sinh+G-u}
\tfrac{|\Gamma(-m+\i x)|^2}{|\Gamma(2\i x)|^2}=  \tfrac{\pi}{x\sinh (\pi x)} \tfrac{1}{\prod_{k=1}^m(k^2+x^2)}\, \tfrac{2x \sinh(2\pi x)}{\pi}
=2\tfrac{e^{\pi x}+e^{-\pi x}}{\prod_{k=1}^m(k^2+x^2)}\leq \tfrac{4}{m!^2}e^{\pi x}.
\end{equation}
Somewhat more generally, for a fixed $m\in\ZZ_{\leq 0}$, we need for bound \eqref{4DCT}  to hold for $u=-m+\eps$ uniformly in  $\eps\in[-1/2,1/2]$.
This can be seen as follows.
Using the recursion $z\Gamma(z)=\Gamma(1+z)$ we obtain %
\begin{multline}
    \label{u=-m bound}
\tfrac{|\Gamma(-m+\eps+\i x)|^2}{{|\Gamma(2\i x)|^2}}
\\= \tfrac{2}{\pi}\cdot \tfrac{|\Gamma(1+\eps+\i x)|^2}{\prod_{k=1}^m ((k-\eps)^2+x^2)} \cdot \tfrac{x \sinh(2\pi x)}{\eps^2+x^2}\leq \tfrac{2^{  2m+1}}{\pi}  |\Gamma(1+\eps+\i x)|^2
\,\tfrac{x \sinh(2\pi x)}{\eps^2+x^2}.
\end{multline}
\arxiv{$(k-\eps)^2\geq (k-1/2)^2\geq 1/4 $ for $k\geq 1$. (The actual constant is of no importance.)
}

Then, {  from \eqref{G-alpha} with $\alpha=1+\eps\in[1/2,3/2]$ and $N=1$ we get
$|\Gamma(1+\eps+\i x)|^2\leq C_2^{(1+\eps)} (1+x^{3})e^{-\pi x}$}.
In view of the elementary inequality $1-e^{-x}\le x$ the
 last factor in \eqref{u=-m bound} is bounded by
$$
\tfrac{x \sinh(2\pi x)}{\eps^2+x^2}\le e^{2\pi x}\tfrac{1-e^{-4\pi x}}{2x}\le 2\pi e^{2\pi x}.
$$
Thus \eqref{4DCT} holds uniformly over (real)
$u \in [-m-1/2,-m+1/2]$  and over all complex $v$ in a compact set  $\re(v)\in[a_2,b_2]\subset \RR\setminus \ZZ_{\le 0}$, $|\im(v)|\leq A$.
\end{remark}
By Lemma \ref{Lem:I-anal}, the left-hand side of \eqref{I_N=J_N} is well defined for all $u,v>-\alpha$. It is therefore natural to determine the appropriate form of {  its} right-hand side {  in the same domain of $u$ and $v$}.

The following result does not cover all cases of interest; in particular, it excludes the {\em coexistence line} with $u=v<0$. However, it applies to a broader range of parameters than Proposition~\ref{Lem:I=J}. In fact, formulas for all parameters $u,v>-\alpha$, including the case $u=v<0$, can be obtained by taking suitable limits of \eqref{I_N-x}. The resulting expressions are rather involved, and their precise form depends on the values of $u$ and $v$; therefore, we defer them to Section~\ref{Sec:Lims}.

Recall   the   Pochhammer symbol $ (a)_n=a(a+1)\dots(a+n-1)$.

\begin{proposition}\label{Prop2}
Assume  %
$\alpha,u,v\in\RR$ are such that $\alpha>0$, $\alpha+u>0$
  and  $\alpha+v>0$.
 Furthermore, we assume that
 $u,v\not\in \ZZ_{\leq 0}$ %
  and if $u<0$ and $v<0$, we assume that  $v-u\not\in\ZZ$.
Then %
  \begin{multline}\label{I_N-x}
     \GG_N(\alpha ,u ,v )
     =\tfrac{1}{2\pi \Gamma(u+v)}\int_{0}^\infty \left|\Gamma(\alpha +\i x)\right|^{2N}\,\tfrac{|\Gamma(v+\i x)|^2|\Gamma(u +\i x )|^2}{|\Gamma(2\i x)|^2}\d x \\ +\mathbb{D}_N(\alpha,u,v)+ \mathbb{D}_N(\alpha,v,u),
  \end{multline}
where
\begin{equation}\label{D(u,v)}
   \mathbb{D}_N(\alpha,u,v)=\sum_{j\in \ZZ\cap[0,-v )} m_j(u,v) \Gamma(\alpha+v+j)^{N}\Gamma(\alpha-v-j)^{N}\nonumber
\end{equation}
   with
    \begin{equation}
    \label{their-atoms}
    m_j(u,v)
    =\tfrac{-2(v+j )}{j!}\cdot \tfrac{ \Gamma(u-v)}{\Gamma (1-2 v ) }\cdot\tfrac{ (v+u)_j(2 v )_j }{ (v-u+1)_j},\; j\in \ZZ\cap[0,-v ).
  \end{equation}
  \end{proposition}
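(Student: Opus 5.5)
The plan is to prove \eqref{I_N-x} by analytic continuation from the region where Proposition~\ref{Lem:I=J} applies, namely $u,v>0$. There $\mathbb D_N\equiv0$ (the index sets $\ZZ\cap[0,-v)$ are empty), so \eqref{I_N-x} is exactly \eqref{I_N=J_N}. By Lemma~\ref{Lem:I-anal} and Hartogs, $\GG_N(\alpha,u,v)$ is analytic in each of $u,v$ on $\{\re>-\alpha\}$. I will therefore show that the right-hand side of \eqref{I_N-x} is also analytic, and agrees with $\GG_N$, as $u$ (and then $v$) is moved leftward across the points of $\ZZ_{\le 0}$. First I fix $v>0$ and continue in $u$; then I fix $u$ and continue in $v$. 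The roles are symmetric, and this symmetry produces the two terms $\mathbb D_N(\alpha,u,v)$ and $\mathbb D_N(\alpha,v,u)$.

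The mechanism is the standard contour-shifting one. Write the integral as $\frac{1}{4\pi \i\,\Gamma(u+v)}\int_{\i\RR}F(z)\,\d z$ with
\[
F(z)=\frac{(\Gamma(\alpha+z)\Gamma(\alpha-z))^N\,\Gamma(u+z)\Gamma(u-z)\Gamma(v+z)\Gamma(v-z)}{\Gamma(2z)\Gamma(-2z)},
\]
which is even in $z$. As $u$ decreases from positive values, the poles $z=-u-j$ of $\Gamma(u+z)$ and $z=u+j$ of $\Gamma(u-z)$ approach the contour $\i\RR$ from the left and right respectively. Before $u$ crosses $-j$, I deform the contour to avoid the poles. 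Afterwards I push it back to $\i\RR$, which picks up residues at $z=\pm(u+j)$; by evenness these combine into twice the residue at one of them.

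Computing the residue gives
\[
\Gamma(\alpha+u+j)^N\Gamma(\alpha-u-j)^N\cdot\frac{(-1)^j}{j!}\,\frac{\Gamma(2u+j)\Gamma(v-u-j)\Gamma(v+u+j)}{\Gamma(2u+2j)\Gamma(-2u-2j)}.
\]
Using reflection and Pochhammer identities, this should simplify, after division by $\Gamma(u+v)$, to the factor $m_j(v,u)$ in \eqref{their-atoms}. The factor $-2(u+j)$ comes from $1/(\Gamma(-2u-2j)\Gamma(2u+2j))$ combined with $\Gamma(2u+j)$. This verification is routine but must be done carefully, including the sign and the factor $2$ from the two symmetric poles.

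Justifying the analytic dependence of the remaining integral needs the domination bound \eqref{4DCT} of Lemma~\ref{Lem:J-int}. It is uniform on compact sets of $(u,v)$ with real parts avoiding $\ZZ_{\le0}$, so differentiation under the integral (or Morera plus dominated convergence) gives holomorphy in $u$ on each strip $\re u\in(-K-1,-K)$. The same bound, applied on shifted vertical lines, controls the tails when the contour is moved. Remark~\ref{Rem:J-int+} handles $u$ near an integer. The continued function, namely the contour integral over a deformed path plus finitely many residues, is analytic across $u=-m$, so equality with $\GG_N$ persists by the identity theorem. The integer values $u\in\ZZ_{\le0}$ themselves are excluded in the statement, since there the individual terms blow up.

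I expect the main obstacle to be the case $u<0$, $v<0$ together, which is exactly where the hypothesis $v-u\notin\ZZ$ appears. Once $u<0$ has been continued with $v>0$, continuing $v$ below $0$ introduces the poles of $\Gamma(v\pm z)$. I must also check that the already-present atoms $m_j(u,v)$ remain analytic in $v$. Their factor $\Gamma(v-u-j)$ has poles when $v-u\in\ZZ$, and these must not produce cancellation issues that shift the formula. My plan is to restrict first to $v-u\notin\ZZ$, do both residue computations independently, and verify that no pole of $\Gamma(u\pm z)$ coincides with a pole of $\Gamma(v\pm z)$ on the contour. That non-coincidence is guaranteed precisely by $v-u\notin\ZZ$, so the residues are simple and the two atom families simply add.
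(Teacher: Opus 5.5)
Your overall plan (start from $u,v>0$ where $\mathbb D_N\equiv0$, collect residues at $z=\pm(u+j)$, double them by evenness, and control the integral by \eqref{4DCT}) is the same as the paper's. Your residue computation is correct, including where the factor $-2(u+j)$ comes from. The gap is in the crossing step.

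For real $u$, the pole $z=-u-j$ of $\Gamma(u+z)$ must stay to the left of the contour, and the pole $z=u+j$ of $\Gamma(u-z)$ must stay to the right. As $u\to-j$ they converge to the same point $z=0$ from opposite sides, so the contour is pinched between them. For $u$ just below $-j$, a separating contour would have to snake between two real points at distance $2|u+j|$, and this gap closes. So ``deform before crossing, push back afterwards'' cannot be carried through $u=-j$ along the real axis, and the same happens for $v$ at $v=-j$.

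Hence your claim that the deformed-path representation is analytic across $u=-m$ is unsupported. Remark~\ref{Rem:J-int+} cannot supply it: it only bounds real $u$ near $-m$, while for $\re u=-m$, $\im u\ne0$ the integrand has poles on $\i\RR$. This is not a technicality. Near $x=0$ the integrand behaves like $c\,x^2/((u+j)^2+x^2)$ with $c>0$, so the integral term alone has a kink proportional to $|u+j|$ at $u=-j$. That kink is cancelled only by the newly appearing atom, which is linear in $u+j$ and present only for $u<-j$.

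The missing idea is to go around these points through non-real parameters. The paper decouples $\pm u,\pm v$ into independent $\vv s,\vv t$ and uses good contours, which exist exactly when $s_i-t_j\notin[0,\infty)$. With $\vv s=-\vv t$ this fails for real $t_j\le0$, so the paper takes $t_1=u+\i\eps_1$, $t_2=v+\i\eps_2$ with $\eps_1\ne\eps_2\in(0,1)$. Then:
\begin{itemize}
\item the colliding poles cross $\i\RR$ at the distinct points $\mp\i\eps_j$, and $t_1+t_2$ also stays off $(-\infty,0]$;
\item shifting back to $\i\RR$ gives \eqref{I_N-eps};
\item one lets $\eps_1,\eps_2\to0$ using continuity of $\GG_N$, the explicit limits of the residue coefficients (this is where $u,v\ne0$ and $v-u\notin\ZZ$ enter), and dominated convergence via \eqref{4DCT}, which is uniform for $|\im u|,|\im v|\le A$.
\end{itemize}

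Within your one-variable scheme there is an equivalent repair. Read $|\Gamma(u+\i x)|^2$ as $\Gamma(u+\i x)\Gamma(u-\i x)$ and prove the right-hand side is holomorphic on $\{\re u>-\alpha\}$ minus a discrete set of real points: $\ZZ_{\le0}$, and in the $v$-step also the points with $v-u\in\ZZ$. Glue the strips across $\re u=-m$ only at non-real points, then apply the identity theorem on this connected set.

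Two smaller corrections:
\begin{itemize}
\item $u,v\in\ZZ_{\le0}$ are excluded because \eqref{4DCT} needs real parts outside $\ZZ_{\le0}$, not because terms blow up; cf.\ Claim~\ref{Cl:u=-m}.
\item The hypothesis $v-u\notin\ZZ$ rules out coinciding same-side poles (double poles off the contour), not poles on the contour.
\end{itemize}
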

   This result gives the form of analytic continuation for Proposition \ref{Lem:I=J} as  indicated in \cite{Barraquand-2024-integral}.
 A rigorous argument is more involved than it may initially appear, so for completeness we include a proof of Proposition~\ref{Prop2} in Section~\ref{Sec:AC}.
 We thank Alexey Kuznetsov for his substantial contributions to this proof.

\subsection{Multivariate Laplace transform}\label{Sec:MLT}
For  $\vv t=(t_1,\dots,t_N)\in\RR^N$, the multi-point Laplace transform of the increments of $\vv L$ is
\begin{equation}\label{Lap-0}
     \mathbb E\left[e^{-2\sum_{i=1}^N\,t_i(\vv L_i-\vv L_{i-1})}\right]=\tfrac{\GG_{\vv t}(\alpha,u,v)}{\GG_{N}(\alpha,u,v)},
\end{equation}
 where $\GG_{\vv t}(\alpha,u,v)=\GG_{t_1,\dots,t_N}(\alpha,u,v)$ is
 \begin{multline}\label{GGt}
 \GG_{t_1,\dots,t_N}(\alpha,u,v)
 \\=\int\limits_{(0,\infty)^{2 N}}
  \tfrac{ \left(\prod_{k=1}^N\,x_k^{\alpha+v+2t_{k}-1}\right)\,\left(\prod_{j=1}^N\,y_j\right)^{\alpha+u-1} }{ \left(\sum_{k=0}^{N-1} x_1\dots x_{k} y_{k+2}\dots y_N\right)^{u+v}} e^{-\sum_{j=1}^N\,(x_j+ y_j)} \d \vv x \d \vv y,
 \end{multline}
 and the normalizing constant $\GG_N(\alpha,u,v)$  is given by \eqref{II-1}.

A multivariate version of Corollary~\ref{per1} takes the following form.
\begin{proposition}\label{Prop:Perp-more}
    If $|u|<\alpha$, $v>-\alpha$ and $\alpha-u+2t_j>0$, $j=1,\dots N-1$, and $\alpha+v+2t_N>0$, then
\begin{multline}\label{pert-MV}
     \GG_{t_1,\dots,t_N}(\alpha,u,v)
     =\Gamma(\alpha+v+2t_N) \Gamma(\alpha+u)^N \\ \cdot  \Biggl(\prod_{k=1}^{N-1}\Gamma(\alpha-u+2t_k)\Biggr)\, \EE [(1+\zeta_1+\zeta_1\zeta_2+\dots+\zeta_1\dots \zeta_{N-1})^{-u-v}]
     \end{multline}
with independent $\zeta_k\simeq \mathrm{Beta}_{II}(\alpha+u,\alpha-u+2t_{N-k})$, $k=1,\dots,N-1$.

    If $|v|<\alpha$, $u>-\alpha$ and $\alpha+v+2t_j>0$, $j=1,\dots,N$,  then
   \begin{multline}\label{pert-MU}
     \GG_{t_1,\dots,t_N}(\alpha,u,v)
     =\Gamma(\alpha+u) \Gamma(\alpha-v)^{N-1} \\ \cdot   \Biggl(\prod_{k=1}^N \Gamma(\alpha+v+2t_k)\Biggr)\, \EE [(1+\xi_1+\xi_1\xi_2+\dots+\xi_1\dots\xi_{N-1})^{-u-v}]
     \end{multline}
with independent $\xi_k\simeq \mathrm{Beta}_{II}(\alpha+v+2 t_k,\alpha-v)$, $k=1,\dots,N-1$.
\end{proposition}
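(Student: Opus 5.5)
The plan is to repeat the computation in the proof of Lemma~\ref{Lem:Jacek-Beta}, now keeping track of the extra exponents $2t_k$ on the $x_k$ in \eqref{GGt}. Each of the two formulas comes from its own change of variables. In both, $N+1$ of the $2N$ integrations are done explicitly as gamma integrals, and the remaining $N-1$ are recognized as expectations over independent $\mathrm{Beta}_{II}$ laws.

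\emph{First formula \eqref{pert-MV}.}
\begin{enumerate}[(1)]
\item Integrate out $x_N$ and $y_1$. Neither appears in the denominator, so this produces $\Gamma(\alpha+v+2t_N)\Gamma(\alpha+u)$.
\item In the denominator factor out $x_1\cdots x_{N-1}$, exactly as in the proof of Lemma~\ref{Lem:Jacek-Beta}.
\item Substitute $z_j=y_{N-j+1}/x_{N-j}$ for $j=1,\dots,N-1$, keeping the $x$'s; the Jacobian is $x_1\cdots x_{N-1}$.
\item Collect powers of $x_{N-j}$. The contributions are $\alpha+v+2t_{N-j}-1$ from the numerator, $\alpha+u-1$ from $y_{N-j+1}^{\alpha+u-1}=(z_jx_{N-j})^{\alpha+u-1}$, $+1$ from the Jacobian, and $-(u+v)$ from the denominator. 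The total is $2\alpha+2t_{N-j}-1$.
\item The exponential becomes $e^{-x_{N-j}(1+z_j)}$, so integrating out $x_{N-j}$ gives $\Gamma(2\alpha+2t_{N-j})(1+z_j)^{-2\alpha-2t_{N-j}}$.
\item What remains is $\int \prod_j z_j^{\alpha+u-1}(1+z_j)^{-2\alpha-2t_{N-j}}(1+z_1+z_1z_2+\dots)^{-u-v}\,\d\vv z$.
\item Normalize each factor by $B(\alpha+u,\alpha-u+2t_{N-j})=\Gamma(\alpha+u)\Gamma(\alpha-u+2t_{N-j})/\Gamma(2\alpha+2t_{N-j})$. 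This turns it into the density of $\zeta_j\simeq\mathrm{Beta}_{II}(\alpha+u,\alpha-u+2t_{N-j})$, and the factors $\Gamma(2\alpha+2t_{N-j})$ cancel.
\item Collecting the constants gives \eqref{pert-MV}. The hypotheses $\alpha+u>0$, $\alpha-u+2t_j>0$ and $\alpha+v+2t_N>0$ are exactly what makes every gamma and beta integral finite.
\end{enumerate}

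\emph{Second formula \eqref{pert-MU}.} Again integrate out $x_N$ and $y_1$, but this time factor $y_2\cdots y_N$ out of the denominator, which becomes $y_2\cdots y_N\bigl(1+\tfrac{x_1}{y_2}+\tfrac{x_1x_2}{y_2y_3}+\dots\bigr)$.
\begin{enumerate}[(1)]
\item Substitute $\xi_j=x_j/y_{j+1}$ for $j=1,\dots,N-1$, keeping the $y$'s; the Jacobian is $y_2\cdots y_N$.
\item The power of $y_{j+1}$ is $(\alpha+u-1)+(\alpha+v+2t_j-1)+1-(u+v)=2\alpha+2t_j-1$, and the exponential becomes $e^{-y_{j+1}(1+\xi_j)}$.
\item Integrating out $y_{j+1}$ gives $\Gamma(2\alpha+2t_j)(1+\xi_j)^{-2\alpha-2t_j}$, multiplying $\xi_j^{\alpha+v+2t_j-1}$.
\item Normalizing by $B(\alpha+v+2t_j,\alpha-v)$ yields $\xi_j\simeq\mathrm{Beta}_{II}(\alpha+v+2t_j,\alpha-v)$ and the constant $\Gamma(\alpha-v)^{N-1}\prod_{k<N}\Gamma(\alpha+v+2t_k)$.
\item Together with $\Gamma(\alpha+v+2t_N)\Gamma(\alpha+u)$ from the first step, this gives \eqref{pert-MU}.
\end{enumerate}

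There is no real obstacle; the only care needed is bookkeeping of indices. In the first formula the reversed indexing $z_j\leftrightarrow x_{N-j}$ attaches the parameter $t_{N-j}$ to $\zeta_j$. All integrands are nonnegative, so Tonelli justifies the order of integration, and the finiteness conditions listed in the statement are exactly those needed for the gamma and beta integrals.
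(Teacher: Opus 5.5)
Your proposal is correct and follows the paper's proof. You integrate out $x_N$ and $y_1$, factor $x_1\cdots x_{N-1}$ (resp.\ $y_2\cdots y_N$) out of the denominator, and make the same substitutions $z_j=y_{N-j+1}/x_{N-j}$ (resp.\ $x_j/y_{j+1}$), integrating out the remaining variables as gamma integrals before normalizing to $\mathrm{Beta}_{II}$ densities. Your exponent bookkeeping and the resulting constants check out.
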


\begin{proof}
  Factoring out $x_1\dots x_{N-1}$ from the denominator in   \eqref{GGt} we obtain
\begin{multline*}
   \GG_{t_1,\dots,t_N}(\alpha,u,v)
   \\ =  \int_{(0,\infty)^{2 N}}
  \tfrac{ \left(\prod_{k=1}^N\,x_k^{\alpha+v+2t_{k}-1}\right)\,\left(\prod_{j=1}^N\,y_j\right)^{\alpha+u-1} }{ \left(\sum_{k=0}^{N-1} x_1\dots x_{k} y_{k+2}\dots y_N\right)^{u+v}} e^{-\sum_{j=1}^N\,(x_j+ y_j)} \d \vv x \d \vv y
  \\=   \int_{(0,\infty)^{2 N}}
  \tfrac{ x_N^{\alpha+v+t_N-1}\left(\prod_{k=1}^{N-1}\,x_k^{\alpha-u+2t_{k}-1}\right)\,\left(\prod_{j=1}^N\,y_j\right)^{\alpha+u-1} }{ \left(1+\frac{y_N}{x_{N-1}}+\frac{y_{N-1}}{x_{N-2}}\frac{y_N}{x_{N-1}}+\dots+\prod_{j=1}^{N-1}\,\frac{y_{j+1}}{x_j}\right)^{u+v}} e^{-\sum_{j=1}^N\,(x_j+ y_j)} \d \vv x \d \vv y.
  \end{multline*}
 Integrating out $x_N$ and $y_1$ we get
  \begin{multline*}
      \tfrac{\GG_{t_1,\dots,t_N}(\alpha,u,v)}{\Gamma(\alpha+v+2t_N)\Gamma(\alpha+u) }\\
      =
  \int_{(0,\infty)^{2 N-2}}
  \tfrac{ \left(\prod_{k=1}^{N-1}\,x_k^{\alpha-u+2t_{k}-1}\right)\,\left(\prod_{j=2}^N\,y_j\right)^{\alpha+u-1} }{ \left(1+\frac{y_N}{x_{N-1}}+\frac{y_{N-1}}{x_{N-2}}\frac{y_N}{x_{N-1}}+\dots+\prod_{j=1}^{N-1}\,\frac{y_{j+1}}{x_j}\right)^{u+v}} e^{-\sum_{j=2}^N\,(x_{j-1}+ y_j)} \d \vv x \d \vv y
  \\=\int_{(0,\infty)^{2 N-2}}
  \tfrac{ \left(\prod_{k=1}^{N-1}\,x_k^{2\alpha +2t_{k}-1}\right)\,\left(\prod_{j=1}^{N-1}\,z_j\right)^{\alpha+u-1} }{ \left(1+z_1+z_1z_2+\dots+z_1\dots z_{N-1}\right)^{u+v}} e^{-\sum_{j=1}^{N-1}x_j(1+z_{N-j})}\, \d \vv x \d \vv z
\end{multline*}
where in the penultimate integral we performed a change of variables $z_j=\tfrac{y_{N-j+1}}{x_{N-j}}$, $j=1,\dots, N-1$. Finally, integrating out $x_j$, $j=1,\dots, N-1$ we obtain
\begin{multline*}
  \GG_{t_1,\dots,t_N}(\alpha,u,v)
 ={\Gamma(\alpha+v+2t_N)\Gamma(\alpha+u)\,\prod_{k=1}^{N-1}\Gamma(2\alpha+2t_k)}
 \\ \cdot  \int_{(0,\infty)^{N-1}}\,\Big(\prod_{k=1}^{N-1}\,\tfrac{z_k^{\alpha+u-1}}{(1+z_k)^{2\alpha+2 t_{N-k}}}\Big)\,\frac{ \d \vv z}{ \left(1+z_1+z_1z_2+\ldots +z_1z_2\ldots z_{N-1}\right)^{u+v}}.
\end{multline*}
\arxiv{For completeness, we prove of the second part:

 Factoring out $y_2\dots y_{N}$ from the denominator in   \eqref{GGt} we obtain
\begin{multline*}
   \GG_{t_1,\dots,t_N}(\alpha,u,v)=  \int_{(0,\infty)^{2 N}}
  \frac{ \left(\prod_{k=1}^N\,x_k^{\alpha+v+2t_{k}-1}\right)\,\left(\prod_{j=1}^N\,y_j\right)^{\alpha+u-1} }{ \left(\sum_{k=0}^{N-1} x_1\dots x_{k} y_{k+2}\dots y_N\right)^{u+v}} e^{-\sum_{j=1}^N\,(x_j+ y_j)} \d \vv x \,\d \vv y
   \\=   \int_{(0,\infty)^{2 N}}
   \frac{ \left(\prod_{k=1}^N\,x_k^{\alpha+v+2t_{k}-1}\right)\,y_1^{\alpha+u-1}\left(\prod_{j=2}^N\,y_j\right)^{\alpha-v-1} }
   { \left(1+\frac{x_1}{y_2}+\frac{x_1}{y_{2}}\frac{x_2}{y_3}+\dots+\prod_{j=1}^{N-1}\,\frac{x_{j}}{y_{j+1}}\right)^{u+v}} e^{-\sum_{j=1}^N\,(x_j+ y_j)} \d \vv x \,\d \vv y
   \\
   \mbox{ (integrating out $x_N$ and $y_1$)}
  \\
   =\Gamma(\alpha+v+2t_N)\Gamma(\alpha+u)
    \int_{(0,\infty)^{2 N-2}}
  \frac{ \left(\prod_{k=1}^{N-1}\,x_k^{\alpha+v+2t_{k}-1}\right)\,\left(\prod_{j=2}^N\,y_j\right)^{\alpha-v-1} }{\left(1+\frac{x_1}{y_2}+\frac{x_1}{y_{2}}\frac{x_2}{y_3}+\dots+\prod_{j=1}^{N-1}\,\frac{x_{j}}{y_{j+1}}\right)^{u+v}} e^{-\sum_{j=2}^N\,(x_{j-1}+ y_j)} \d \vv x \,\d \vv y
  \\
  \mbox{ (changing variable $z_j=\frac{x_j}{y_{j+1}}$, $j=1,\dots, N-1$)}
  \\=\Gamma(\alpha+v+2t_N)\Gamma(\alpha+u) \int_{(0,\infty)^{2 N-2}}
  \frac{ \left(\prod_{j=1}^{N-1}\,y_{j+1}^{2\alpha +2t_{j}-1}\right)\,\left(\prod_{k=1}^{N-1}\,z_k\right)^{\alpha+v+2 t_k-1} }{ \left(1+z_1+z_1z_2+\dots+z_1\dots z_{N-1}\right)^{u+v}} e^{-\sum_{j=1}^{N-1}y_{j+1}(1+z_{j})} \d \vv y\, \d \vv z
  \\=
  \Gamma(\alpha+v+2t_N)\Gamma(\alpha+u) \prod_{k=1}^{N-1}\Gamma(2\alpha+2t_k)
  \int_{(0,\infty)^{N-1}}\,\prod_{k=1}^{N-1}\,\tfrac{z_k^{\alpha+v+2t_k-1}}{(1+z_k)^{2\alpha+2 t_{k}}}\,\tfrac{ \d \vv z}{ \left(1+z_1+z_1z_2+\ldots +z_1z_2\ldots z_{N-1}\right)^{u+v}}.
  \\
  =\Gamma(\alpha+u)  \Gamma(\alpha-v)^{N-1} \prod_{k=1}^N \Gamma(\alpha+v+2t_k) \EE [(1+\xi_1+\xi_1\xi_2+\dots+\xi_1\dots\xi_{N-1})^{-u-v}].
\end{multline*}
}
\end{proof}

Our next goal is to present an elementary proof of \cite[Theorem 1.11]{Barraquand-2024-integral}.
 The one-dimensional version of this theorem is  stated as
 Proposition~\ref{CL2}.

Let $\vv t$ be given by
 \begin{equation}\label{tt_d}
\vv t=%
(\underbrace{t_1,\dots, t_1}_{n_1 \, \mbox{\tiny times}},\underbrace{t_2,\dots, t_2}_{n_2 \, \mbox{\tiny times}},\dots,\underbrace{t_d,\dots,t_d}_{n_d\,\mbox{\tiny times}}),
\end{equation}
where
$t_1>\dots>t_d$, $\vv n=(n_1,\dots,n_d)\in\ZZ_{\ge1}^d$, and
$n_1+\dots+n_d=N$.

To obtain a $d$-variate %
integral representation of $\GG_{\vv t}(\alpha,u,v)$, analogous to Barraquand's contour integral formula, we introduce the following multipoint analogue of $\II_N(\alpha+t,u-t,v+t)$, see  \eqref{JJ-1},
\begin{multline}\label{IIId}
\II_{\vv t}^{\vv n}(\alpha,u,v) =\II_{t_1,\dots,t_d}^{n_1,\dots,n_d}(\alpha,u,v)=
\tfrac{1}{
    (2\pi)^{d} \Gamma(u+v)
  \prod_{j=2}^d\Gamma(2t_{j-1} -2t_{j})}
\\ \cdot   \int_{(0,\infty)^d} \prod_{j=2}^d |\Gamma(t_{j-1}-t_j + \i {y_{j-1}}+\i {y_j})|^2|\Gamma( t_{j-1}-t_j + \i {y_{j-1}}-\i {y_j} )|^2
\\ \cdot  \left(\prod_{j=1}^d\tfrac{\left|\Gamma(\alpha+t_{j}+\i {y_j})\right|^{2n_j}}{|\Gamma(2\i {y_j})|^2} \right) \left|\Gamma(u-t_{1} +\i {y_1} )\right|^2\,\Gamma(v+t_{d}+\i {y_{ d }})\Gamma(v+t_{d}-\i {y_{ d }})\, \d \vv y.
\end{multline}
(We wrote this expression in a form that allows for complex values of $v$, which are needed in our proof.)
In particular,  with $d=1$ we have $\vv n=(N)$, $\vv t=(t)$ and then   $\II_N(\alpha+t,u-t,v+t)=\II_{t}^{N}(\alpha,u,v)$,  see \eqref{JJ-1}.
\begin{theorem}[Barraquand]\label{BarraquandThm1.11} %
Fix $\alpha, u,v\in\RR$, such that $u>-\min\{\alpha, v\}$.  Fix $n_1,n_2,\dots,n_d\in \ZZ_{\geq 1}$ such that  $n_1+n_2+\dots+n_d=N$ and let
$u>t_1>\dots>t_d>-\min\{\alpha, v\}$. %
Then with  $\vv t$ given by \eqref{tt_d}, we have
 \begin{equation}\label{GGII}
\GG_{\vv t}(\alpha,u,v)=\II_{t_1,\dots,t_d}^{\vv n}(\alpha,u,v).
\end{equation}
\end{theorem}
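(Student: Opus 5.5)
Fix the parameters as in the statement. The plan is to prove \eqref{GGII} by induction on the number $d$ of distinct levels. At each step we replace one block of the $\mathrm{Beta}_{II}$/gamma integral in \eqref{GGt} by a Barnes-type Mellin--Barnes integral. The first step is to reduce $\GG_{\vv t}(\alpha,u,v)$, via the substitutions used in Proposition~\ref{Prop:Perp-more}, to an $(N-1)$-fold integral of the form
\[
\prod_{k}\Gamma(2\alpha+2t_k)\int\prod_k\tfrac{z_k^{\alpha+v+2t_k-1}}{(1+z_k)^{2\alpha+2t_k}}\,\frac{\d\vv z}{(1+z_1+z_1z_2+\dots)^{u+v}}.
\]
The key move is to write the denominator through the Mellin--Barnes formula
\[
(1+X)^{-(u+v)}=\tfrac{1}{2\pi\i\,\Gamma(u+v)}\int \Gamma(s)\Gamma(u+v-s)X^{-s}\,\d s,
\]
applied with $X=z_1(1+z_2+z_2z_3+\dots)$. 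Integrating out $z_1$ against its $\mathrm{Beta}_{II}$ weight then produces a Beta integral in gamma functions, and the remaining structure is a perpetuity of length one less, raised to a complex power.

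Repeating this leads to an iterated contour integral. We organize it so that each block of $n_j$ equal $t$'s collapses to one variable $y_j$ with the factor $|\Gamma(\alpha+t_j+\i y_j)|^{2n_j}$. For $d=1$ this is the content of Proposition~\ref{Lem:I=J}: the one-level case, with $t$ absorbed by $(\alpha,u,v)\mapsto(\alpha+t,u-t,v+t)$. We prove it first for $\alpha,u,v>0$, by induction on $N$, using the recursion $\GG_N\to\GG_{N+1}$ obtained by integrating out the outermost $\mathrm{Beta}_{II}$ factor. That recursion multiplies the integrand by $|\Gamma(\alpha+\i x)|^2$. This is exactly what happens on the right of \eqref{JJ-1} if we verify the base case $N=1$ with Barnes' first lemma: $\int|\Gamma(u+\i x)\Gamma(v+\i x)\Gamma(\alpha+\i x)|^2/|\Gamma(2\i x)|^2\,\d x$ is evaluated by the Barnes-type (de Branges--Wilson) integral identity, which yields $\Gamma(\alpha+u)\Gamma(\alpha+v)\Gamma(u+v)\cdot2\pi$.

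For the passage from level $j$ to level $j+1$, the new variable interacts with the previous one through the kernel
\[
|\Gamma(t_{j}-t_{j+1}+\i y_{j}\pm\i y_{j+1})|^2/\Gamma(2t_j-2t_{j+1}).
\]
This kernel should come from the de Branges--Wilson integral used as an integral transform. Concretely, $|\Gamma(a+\i y_{j+1})|^2$ with $a=v+t_{j+1}$ at level $j+1$ is expanded as an integral over $y_j$ of
\[
|\Gamma(v+t_j+\i y_j)|^2\cdot|\Gamma(t_j-t_{j+1}+\i y_j\pm\i y_{j+1})|^2/|\Gamma(2\i y_j)|^2.
\]
This identity is exactly the Wilson integral specialised to those parameters, and it explains why $v$ is allowed to be complex in \eqref{IIId}. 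So the induction on $d$ goes as follows. Assume \eqref{GGII} holds for the last $d-1$ levels, where the right-hand side depends on $v$ only through $|\Gamma(v+t_d\pm\i y_d)|$. Next, use the gamma-integral representation of the first block, i.e.\ the $n_1$ factors with $t_1$, treated as in Proposition~\ref{Prop:Perp-more}, to show that the left side satisfies the same transformation in the parameter $v$. Then apply Fubini, which is justified by the exponential bounds of Lemma~\ref{Lem:J-int}, to reach \eqref{IIId}.

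Finally, we extend from $u,v,t_d>0$ to the stated range $u>-\min\{\alpha,v\}$, $t_d>-\min\{\alpha,v\}$ by analytic continuation. The left side is analytic by Lemma~\ref{Lem:I-anal} and its multipoint analogue. The right side is analytic as long as no gamma pole crosses the contour. The ordering $u>t_1>\dots>t_d>-v$ guarantees $u-t_1>0$, $t_{j-1}-t_j>0$ and $v+t_d>0$, so every argument has positive real part; the bound \eqref{4DCT} then gives locally uniform convergence.

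I expect the main obstacle to be the middle step: identifying the exact Barnes/Wilson identity that converts the $\mathrm{Beta}_{II}$ perpetuity structure across a jump $t_{j-1}\to t_j$ into the kernel above with the right normalization $\Gamma(2t_{j-1}-2t_j)$. I would first check the case $d=2$, $n_1=n_2=1$ by direct computation.
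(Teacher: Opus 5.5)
Your ingredients are the right ones: the Mellin--Barnes splitting \eqref{A+B}, Barnes' first lemma \eqref{koek:1.6.3}, and the de Branges/Wilson integrals. But the step that is the proof, showing that both sides of \eqref{GGII} satisfy the same recursion, is not carried out. The one concrete identity you give for the level transition is false. By \eqref{GG+} with $a=v+t_j$ and $b,c=t_j-t_{j+1}\pm\i y_{j+1}$,
\[
\tfrac{1}{2\pi}\int_0^\infty \tfrac{|\Gamma(v+t_j+\i y_j)|^2\,|\Gamma(t_j-t_{j+1}+\i y_j+\i y_{j+1})\,\Gamma(t_j-t_{j+1}+\i y_j-\i y_{j+1})|^2}{|\Gamma(2\i y_j)|^2}\,\d y_j
=\Gamma(2t_j-2t_{j+1})\,|\Gamma(v+2t_j-t_{j+1}+\i y_{j+1})|^2 ,
\]
which is not a multiple of $|\Gamma(v+t_{j+1}+\i y_{j+1})|^2$. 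The identity that works, \eqref{dualHahn}, runs the other way. One integrates $\Gamma(v+t_{j+1}\pm\i y_{j+1})$ against the kernel over the \emph{later} variable $y_{j+1}$ and gets $\Gamma(2t_j-2t_{j+1})\Gamma(v+t_j\pm\i y_j)$.

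Your plan is also internally inconsistent. With the weights $z_k^{\alpha+v+2t_k-1}(1+z_k)^{-2\alpha-2t_k}$ and $X=z_1(1+z_2+\cdots)$, integrating out $z_1$ removes the \emph{first} coordinate of $\vv t$ and gives
\[
\GG_{\vv t}(\alpha,u,v)=\tfrac{\Gamma(\alpha+u)}{2\pi\i\,\Gamma(u+v)}\int\Gamma(s)\,\Gamma(u+v-s)\,\Gamma(\alpha+v+2t_1-s)\,\GG_{\vv t'}(\alpha,s-v,v)\,\d s ,
\]
where $\vv t'$ is $\vv t$ with its first entry deleted. This is a shift in $u$, not in $v$. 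The first block enters \eqref{IIId} only through $u$, so no ``transformation in the parameter $v$'' can attach it.

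The same gap appears for $d=1$. Your claim that the recursion ``multiplies the integrand by $|\Gamma(\alpha+\i x)|^2$'' is what has to be shown. It holds because Barnes' lemma turns the $s$-integral of $\Gamma(u+v-s)\Gamma(\alpha+v-s)\Gamma(s-v\pm\i x)$ into $|\Gamma(\alpha+\i x)|^2\Gamma(u\pm\i x)/\Gamma(\alpha+u)$, while $\Gamma(s)$ cancels the normalization $1/\Gamma(u'+v)$. You also cannot cite Proposition~\ref{Lem:I=J}, since the paper deduces it from this theorem.

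What is missing is an explicit recursion in $N$ satisfied by both sides. Blocks with $n_j>1$ are shortened one coordinate at a time, and a level disappears only when its block is exhausted. The paper does this in Claim~\ref{Clm:G12} by peeling off the \emph{last} coordinate:
\begin{itemize}
\item On the $\GG$ side, \eqref{A+B} with $B=x_1\cdots x_{N-1}$ and integration over $x_N,y_N$ give \eqref{G-rec*}, a shift $v\mapsto v+s$ along $\re(s)=-\eps$.
\item On the $\II$ side, Barnes' lemma writes $|\Gamma(\alpha+t_d+\i x)|^2\Gamma(v+t_d\pm\i x)$ as the same $s$-integral of $\Gamma(v+s+t_d\pm\i x)$. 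When $n_d=1$, \eqref{dualHahn} collapses the $y_d$-integral, which yields \eqref{I-rec*}.
\end{itemize}
The shorter identity is used at $v+s$ with $\re(v+s)=\re(v)-\eps$. It must therefore hold for complex $v$ in the strip $-t_d<\re(v)<\alpha$; this, not the kernel, is why $v$ is complex in \eqref{IIId}. So analyticity has to be used inside each induction step, not only to extend a result proved for $u,v>0$.

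You can keep your $z_1$-peeling instead, and it also works, but then you must:
\begin{itemize}
\item write the $u$-factor as $\Gamma(u-t_1+\i y_1)\Gamma(u-t_1-\i y_1)$ and run the induction for complex $u$ with $\re(u)>t_1$;
\item use Barnes' lemma when $n_1>1$;
\item when $n_1=1$, evaluate the $y_1$-integral by the four-parameter Wilson integral \eqref{WI+} with parameters $\alpha+t_1$, $u-t_1$, $t_1-t_2\pm\i y_2$, and match it with Barnes' lemma applied to the $s$-integral, exactly as in the proof of Lemma~\ref{Thm:half-space}.
\end{itemize}
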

In expanded form,
 \begin{equation*}
\GG_{\underbrace{t_1,\dots, t_1}_{n_1 \mbox{\tiny -times}},\, \underbrace{t_2,\dots, t_2}_{n_2  \mbox{\tiny -times}},\;.\;.\;.\;,\underbrace{t_d,\dots,t_d}_{n_d\mbox{\tiny -times}}}(\alpha,u,v)=\II_{t_1,\dots,t_d}^{n_1,\dots,n_d}(\alpha,u,v).
\end{equation*}

\begin{corollary}[Barraquand]\label{Cor:Barraquand}
 Fix $\alpha,u,v>0$ and $0=m_0<m_1<\dots<m_d=N$. Let $u>t_1>\dots>t_d>-\min\{\alpha,v\}$. Then we have the
 following expression for the $d$-point Laplace transform
 \begin{equation}\label{GB-conc}
     \EE\left[\prod_{j=1}^d e^{- 2 t_j(\vv L_{m_j}-\vv L_{m_{j-1}})}\right]
     = \tfrac{\II_{t_1,\dots,t_d}^{n_1,\dots,n_d}(\alpha,u,v)}{\II_N(\alpha,u,v)},
 \end{equation}
 where $n_j=m_j-m_{j-1}$, $j=1,\dots,d$ and the normalizing constant  $\II_N(\alpha,u,v)$ is given by \eqref{JJ-1}.
\end{corollary}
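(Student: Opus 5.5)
The plan is to combine the multipoint Laplace transform formula \eqref{Lap-0} with Theorem~\ref{BarraquandThm1.11}, and to use Proposition~\ref{Lem:I=J} for the normalizing constant.

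\textbf{Step 1: rewrite the left-hand side.} Since $\vv L_0=0$, the exponent telescopes in blocks:
\[
\sum_{j=1}^d t_j(\vv L_{m_j}-\vv L_{m_{j-1}})=\sum_{i=1}^N \tilde t_i(\vv L_i-\vv L_{i-1}),
\]
where $\vv{\tilde t}$ is the vector \eqref{tt_d}. It repeats $t_j$ exactly $n_j=m_j-m_{j-1}$ times, on the indices $i\in(m_{j-1},m_j]$. Hence, by \eqref{Lap-0}, the left-hand side of \eqref{GB-conc} equals $\GG_{\vv{\tilde t}}(\alpha,u,v)/\GG_N(\alpha,u,v)$. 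Finiteness of $\GG_{\vv{\tilde t}}$ follows from the same integral, or directly from Theorem~\ref{BarraquandThm1.11}.

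\textbf{Step 2: the numerator.} The hypotheses $\alpha,u,v>0$ give $-\min\{\alpha,v\}<0<u$, so $u>-\min\{\alpha,v\}$. The ordering $u>t_1>\dots>t_d>-\min\{\alpha,v\}$ is assumed. Theorem~\ref{BarraquandThm1.11} therefore applies and yields
\[
\GG_{\vv{\tilde t}}(\alpha,u,v)=\II^{n_1,\dots,n_d}_{t_1,\dots,t_d}(\alpha,u,v).
\]

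\textbf{Step 3: the denominator.} Since $\alpha,u>0$ and $v>0$ (in particular $\re v>0$), Proposition~\ref{Lem:I=J} gives $\GG_N(\alpha,u,v)=\II_N(\alpha,u,v)$ with $\II_N$ as in \eqref{JJ-1}. Dividing yields \eqref{GB-conc}.

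Alternatively, one could apply Theorem~\ref{BarraquandThm1.11} with $d=1$ and $t_1=0$, which requires $u>0>-\min\{\alpha,v\}$ and holds here.

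The only point needing care is the bookkeeping in Step 1: one must match the index blocks $(m_{j-1},m_j]$ with the ordering convention of \eqref{tt_d}, so that $t_1$ is attached to the first increments, as in \eqref{GGt}. There is no substantive obstacle beyond this, since all the analytic content sits in Theorem~\ref{BarraquandThm1.11}.
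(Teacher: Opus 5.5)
Your proposal is correct and follows the paper's own argument. The paper also writes the left-hand side via \eqref{Lap-0} with the block vector \eqref{tt_d}, identifies the numerator by Theorem~\ref{BarraquandThm1.11}, and obtains the denominator as the case $d=1$, $t_1=0$ of that theorem. Your Step~3 citation of Proposition~\ref{Lem:I=J} is the same thing, since that proposition is proved precisely as this special case.
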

Indeed, the left-hand side of \eqref{GB-conc}  is given by  \eqref{Lap-0} applied to  the sequence \eqref{tt_d}.
The numerator on the right-hand side of \eqref{GB-conc}  is given by \eqref{GGII}.   With  $u,v>0$   we   recover the normalizing constant $\GG_N(\alpha,u,v)$ in the denominator, see \eqref{II-1},  as $\GG_{0}(\alpha,u,v)$  from \eqref{GGII} with $d=1$, $\vv n=N$ and $\vv t=0$.

For technical reasons, we will work with  equation \eqref{GGII} for complex $v$. We need the following extension of Lemma \ref{Lem:I-anal}.
\begin{proposition}\label{Cl:G-an}  Fix $\alpha\in\RR$, $u>-\alpha$ and $t_1,\dots,t_N>-\alpha$. Let $t_*=\min\{ t_j\}$. Then integral \eqref{GGt} converges and defines $\GG_{\vv t}(\alpha,u,v)$ as an analytic function of $v\in\CC$ on the half-plane $\re(v)>-\alpha-2t_*$.

\end{proposition}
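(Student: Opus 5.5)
The approach is the standard one for parameter integrals: establish absolute convergence of \eqref{GGt} locally uniformly in $v$ on compact subsets of the half-plane $\re(v)>-\alpha-2t_*$, then conclude analyticity from Morera's theorem (or differentiation under the integral sign). The integrand depends on $v$ only through $\prod_k x_k^{v}$ and through the denominator $S^{-u-v}$, where
\[
S:=\sum_{k=0}^{N-1} x_1\dots x_k\,y_{k+2}\dots y_N .
\]
For each fixed $(\vv x,\vv y)\in(0,\infty)^{2N}$ the integrand is entire in $v$. Its modulus is the real integrand with $v$ replaced by $\re(v)$, so it suffices to dominate the real-parameter integrand uniformly for $\re(v)\in[a,b]$ with $a>-\alpha-2t_*$.

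The key step is a pointwise bound on $S^{-u-v}$. Write $w=\re(v)$. When $u+w\ge0$, bound $S$ below by a single monomial, namely the term $x_1\cdots x_{k}\,y_{k+2}\cdots y_N$ for a suitably chosen $k$. When $u+w<0$, bound $S$ above by $N\max_k(\text{term}_k)$, which gives $S^{-u-w}\le N^{-u-w}\sum_k \text{term}_k^{-u-w}$. In both cases the integrand is dominated by a finite sum of products of one-dimensional gamma-type integrands $x^{c-1}e^{-x}$, $y^{c'-1}e^{-y}$. Each such product is integrable exactly when all exponents satisfy $c,c'>0$.

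The case analysis runs as follows. In the term with index $k$, each variable $x_j$ with $j\le k$ carries exponent $\alpha+w+2t_j-(u+w)\cdot\mathrm{sgn}$ (with the appropriate sign), and each $x_j$ with $j>k$ carries $\alpha+w+2t_j$; similarly for the $y$'s with $\alpha+u$. For $u+w\ge0$ I choose the term $k=N-1$, namely $x_1\cdots x_{N-1}$. Then every $y$ keeps exponent $\alpha+u>0$, while $x_j$ for $j\le N-1$ gets $\alpha+2t_j-u$, which is not obviously positive. A cleaner choice is therefore needed: use the $x$-free term $k=0$, namely $y_2\cdots y_N$. This gives exponents $\alpha+w+2t_j>0$ for all $x_j$ (this is exactly where $w>-\alpha-2t_*$ enters), together with $\alpha-w$ for the $y_j$ with $j\ge2$. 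The last quantity may fail to be positive for large $w$, so I will instead interpolate. By the weighted AM--GM inequality, $S\ge N\prod_k\text{term}_k^{\lambda_k}$ for any convex weights, so $S^{-(u+w)}$ is bounded by a product of monomials whose exponents are convex combinations. Choosing the weights appropriately keeps every exponent positive, using $u>-\alpha$, $t_j>-\alpha$ and $w>-\alpha-2t_*$. I expect this exponent bookkeeping, in the regime $u+w\ge0$, to be the main obstacle. If a clean choice of weights fails, the fallback is to reduce as in Proposition~\ref{Prop:Perp-more}: integrate out the $x$'s to obtain the $\mathrm{Beta}_{II}$-type integral in the $z$ variables, and bound $(1+z_1+\dots+z_1\cdots z_{N-1})^{-(u+w)}\le 1$. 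The remaining integrand $\prod z_k^{\alpha+u-1}(1+z_k)^{-2\alpha-2t_{N-k}}$ is then integrable because $\alpha+u>0$ and $\alpha-u+2t_{N-k}>0$ under the relevant assumptions.

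For $u+w<0$ the upper bound by the sum of monomials raises exponents, so positivity of the exponents is automatic.

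Finally, since the dominating function is locally uniform in $v$, the integrand is jointly continuous and holomorphic in $v$ for each point. Fubini's theorem together with Morera's theorem (integrating over triangles in the half-plane) then shows that $v\mapsto\GG_{\vv t}(\alpha,u,v)$ is analytic on $\re(v)>-\alpha-2t_*$. This mirrors Lemma~\ref{Lem:I-anal}, which is the case $\vv t=0$.
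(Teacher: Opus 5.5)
Your approach is viable and differs from the paper's. As written, though, the only nontrivial case is left open, and your fallback would not cover it. The case you flagged does close if you put weight on the two extreme monomials only. Both $z_0=y_2\cdots y_N$ and $z_{N-1}=x_1\cdots x_{N-1}$ are summands of $S$, so $S\ge z_0^{1-c}z_{N-1}^{c}$ for $c\in[0,1]$. (For general convex weights the correct inequality is $S\ge\prod_k \mathrm{term}_k^{\lambda_k}$; the factor $N$ is right only for equal weights.) Write $w=\re(v)\ge -u$ and $s=c(u+w)$. The exponents become $\alpha+w+2t_k-s$ for $x_k$ ($k\le N-1$) and $\alpha-w+s$ for $y_j$ ($j\ge 2$), while $x_N$ and $y_1$ keep $\alpha+w+2t_N$ and $\alpha+u$. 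You therefore need $s\in(w-\alpha,\,\alpha+w+2t_*)\cap[0,u+w]$. This set is nonempty precisely because $\alpha+t_*>0$, $u>-\alpha$ and $\alpha+w+2t_*>0$.

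The fallback fails under the present hypotheses. After the reduction of Proposition~\ref{Prop:Perp-more}, the bound $(1+z_1+\dots)^{-(u+w)}\le 1$ leaves $\int_0^\infty z^{\alpha+u-1}(1+z)^{-2\alpha-2t_j}\,\d z$. This is finite only if $\alpha-u+2t_j>0$, which is an assumption of Proposition~\ref{Prop:Perp-more} but not of this statement, and it fails for large $u$.

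Your bounds depend on $w$ through the case split and the choice of $c$, so local uniformity needs one more line. The modulus of the integrand equals $C(\vv x,\vv y)\,(x_1\cdots x_N/S)^{w}$, which is convex in $w$. On $\re(v)\in[a,b]$ it is therefore dominated by the sum of its values at $w=a$ and $w=b$.

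The paper avoids the case split. It multiplies numerator and denominator by $S^{2\alpha+2t_*-2\eps}$ with small $\eps>0$, and writes the enlarged denominator exponent as $(\re(v)+\alpha+2t_*-\eps)+(u+\alpha-\eps)$, with both parts positive. It then bounds the denominator below by $z_{N-1}^{\re(v)+\alpha+2t_*-\eps}z_0^{u+\alpha-\eps}$, using the same two extreme monomials. The leftover numerator factor $S^{\theta}$, with $\theta=2(\alpha+t_*-\eps)>0$, is bounded by $N^{\theta}\sum_k z_k^{\theta}$. This gives a single dominating function for both signs of $u+\re(v)$, in which $\re(v)$ enters only through the exponent of $x_N$, so local uniformity is immediate. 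Your version dispenses with the multiplier trick and shows why exactly the three hypotheses are needed. The cost is two cases and the separate uniformity remark.
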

This result is known, see \cite[Propositions 3.15 and   3.16]{barraquand2024stationary}. For the convenience of the reader, we
include a short   self-contained  proof.
\begin{proof}
Recall that if $z>0$ then $|z^u|=z^{\re(u)}$. The main step is to establish absolute integrability.
To do so we multiply the numerator and the denominator of the fraction under the integral in \eqref{GGt} by
\begin{equation}
    \label{Jacka-trick}
\left(\sum_{k=0}^{N-1} z_k\right)^{2\alpha+2 t_*-2\eps}=\left(\sum_{k=0}^{N-1} x_1\dots x_{k} y_{k+2}\dots y_N\right)^{2\alpha+2 t_*-2\eps},
\end{equation}
 where   $z_k:=x_1\dots x_k y_{k+2}\dots y_N>0$,   $\eps>0$ is chosen so that    $\eps<\alpha+t_*$, $\eps<u+\alpha$ and $\eps<\re(v)+\alpha+2 t_*$.

Observe that with the  additional factor \eqref{Jacka-trick}, the modulus of the denominator is bounded from below as follows.
\begin{multline*}
   \big(\sum_{k=0}^{N-1} x_1\dots x_{k} y_{k+2}\dots y_N\big)^{u+\re(v)+2(\alpha+t_*-\eps)}
    =\left(\sum_{k=0}^{N-1} z_k\right)^{u+\re(v)+2(\alpha+t_*-\eps)}
   \\=
\left(\sum_{k=0}^{N-1} z_k\right)^{\re(v)+\alpha+2t_*-\eps}\left(\sum_{k=0}^{N-1} z_k\right)^{u+\alpha-\eps}
 \geq z_{N-1}^{\re(v)+\alpha+2t_*-\eps} z_0^{u+\alpha-\eps}\\ =
\left(  x_1\dots x_{N-1}\right)^{\re(v)+\alpha+2t_*-\eps}\left( y_{2}\dots y_N\right)^{u+\alpha-\eps}.
\end{multline*}
(We  used the elementary fact that   function $x\mapsto x^\theta$ is   increasing   on $(0,\infty)$ for $\theta>0$.) %
This shows that the modulus of the integrand in \eqref{GGt} is bounded   from above by
\begin{multline}
    \label{to-integrate}
   x_N^{\alpha+\re(v)+2 t_N-1}y_1^{\alpha+u-1} \left(\prod_{k=1}^{N-1} x_j^{2 (t_k-t_*)+\eps-1}\right)\left(\prod_{k=2}^N y_j^{\eps-1}\right)\\ \cdot   \left(\sum_{k=0}^{N-1} x_1\dots x_{k} y_{k+2}\dots y_N\right)^{2\alpha+2 t_*-2\eps} e^{-\sum_{j=1}^N (x_j+y_j)}.
\end{multline}

Using monotonicity of $x\mapsto x^\theta$ again, now with $\theta=2(\alpha+t_*-\eps)>0$, we obtain
$$\left(\sum_{k=0}^{N-1} z_k\right)^\theta
\leq (N \max_{k\in\{0,\dots,N-1\}}\, z_k)^\theta= N^\theta\max_{k\in\{0,\dots,N-1\}}\,z_k^\theta\leq N^\theta \sum_{k=0}^{N-1}\, z_k^\theta.$$
Thus the expression \eqref{to-integrate}   is integrable on $(0,\infty)^{2N}$ and hence the integral \eqref{GGt} converges  absolutely.

It is well known that this implies analyticity, see e.g.  \cite[page 154, Exercise 2]{Conway78}.
\end{proof}

\begin{remark}\label{RM:I-an} The integral \eqref{IIId} converges for   $t_1>\dots>t_d$ such that   $\alpha+t_d>0$,   $u>t_1$ and $\re(v)+t_d>0$.
  Thus, for fixed $u,\alpha,t_1,\dots,t_d$ as above, expression  $\II_{\vv t}^{\vv n}(\alpha,u,v)$ is an analytic function of variable $v$ on the half-plane $\re(v)>-t_d$. (This half-plane is  contained in  the half-plane  $\re(v)>-\alpha-2 t_d$
  on which  $\GG_{\vv t}(\alpha,u,v)$ is an analytic function of variable $v$.)
\end{remark}

\subsection{Proof of Theorem \ref{BarraquandThm1.11}}

We use notation \eqref{tt_d} for the repeated entries of $\vv t$. Also write $\vv n=(n_1,\dots,n_d)\in\ZZ_{\geq 1}^d$ with $n_1+\dots+n_d=N$.
Following the notation as in  \eqref{IIId}, we denote
$$\GG_{t_1,\dots,t_d}^{\vv n}(\alpha,u,v):= \GG_{\vv t}(\alpha,u,v).$$

It is convenient to work with complex $v$ and  prove that \eqref{GGII} holds for $v$ such that
\begin{equation}
    \label{v-contstr}
\re(v)+t_d>0,
\end{equation}
where we keep the remaining conditions:  %
$u>t_1>\dots>t_d>-\alpha$ intact.

Under the assumptions of Theorem \ref{BarraquandThm1.11}, both the left-hand side  and the right-hand side of \eqref{GGII} are analytic functions of variable  $v$ on the half-plane  $\{v\in\CC: \re(v)>-t_d\}$.  Therefore, to show that they are equal on  { this half-plane}, it is enough to verify that they are equal on a strip $\{v:\, -t_d<\re(v)<\alpha\}$, (note that since $\alpha+t_d>0$ this  strip is nonempty). We will prove the latter fact by induction with respect to $N$.

 If  $N=1$, then $d=1$ and \eqref{GGII} holds by direct calculation:
 from \eqref{II-1} we obtain trivially that $\GG_t(\alpha, u, v )=\Gamma(\alpha+v+2t)\Gamma(\alpha+u)$. On the other hand, \eqref{JJ-1} gives
 \begin{multline*}
     \II_t^1(\alpha, u, v)=\II_{1}(\alpha+t,u-t,v+t)
     \\ =
     \tfrac{1}{
    2\pi \Gamma(u+v)}
  \int_{0}^\infty   \tfrac{\Gamma(\alpha+t+\i x)\Gamma(\alpha+t-\i x)\Gamma( v+t+\i x)\Gamma(v+t-\i x)\Gamma(u-t +\i x)\Gamma( u-t-\i x)}{\Gamma(2\i x)\Gamma(-2\i x)}\d x.
 \end{multline*}
   For   $\alpha+t>0$, $u-t>0$  and real  $v+t>0$,  this integral can be evaluated using de Branges beta integral \eqref{GG} with $a=\alpha+t, b=u-t,c=v+t$. We obtain  $\II_t^1(\alpha, u, v)=\Gamma(\alpha+v+2t)\Gamma(\alpha+u)$, so \eqref{GGII} holds for
real  $\alpha+t>0$, $u-t>0$ and real $v+t>0$.

The induction step is based on the following two recurrence relations (proofs of which are given later):

{
\begin{claim}\label{Clm:G12}For $u>t_1>t_2>\dots>t_d>-\re(v)>-\alpha$  and $\eps>0$    satisfying
\begin{equation}
  \label{eps2}
 \re(v)+t_d>\eps
\end{equation} we have   %
   \begin{multline}\label{G-rec*}
        \GG_{t_1,\dots,t_d}^{\vv n}(\alpha,u,v)
         = \tfrac{\Gamma(\alpha+v+2 t_d)}{\Gamma(u+v)} \tfrac{1}{2\pi \i} \, \int_{-\eps+\i \RR}
   \Gamma(u+v+s)\Gamma(-s)\Gamma(\alpha-v-s)
  \hfill  \\ \cdot
   \begin{cases}
       \GG_{t_1,\dots,t_{d-1}}^{n_1,\dots,n_{d-1}}(\alpha,u,v+s)\, \d s, & n_d=1, d>1, \\
       \\
       \GG_{t_1,\dots,t_d }^{n_1,\dots,n_{d-1},n_d-1}(\alpha,u,v+s)\,\d s, & n_d=2,3,\dots
       \end{cases}
   \end{multline}
   and
      \begin{multline}\label{I-rec*}
        \II_{t_1,\dots,t_d}^{n_1,\dots,n_d}(\alpha,u,v)
         = \tfrac{\Gamma(\alpha+v+2 t_d)}{\Gamma(u+v)} \tfrac{1}{2\pi \i}
         \, \int_{-\eps+\i \RR}
   \Gamma(u+v+s)\Gamma(-s)\Gamma(\alpha-v-s)
   \hfill \\ \cdot  \begin{cases}
       \II_{t_1,\dots,t_{d-1}}^{n_1,\dots,n_{d-1}}(\alpha,u,v+s)\, \d s, & n_d=1, d>1,  \\
       \\
       \II_{t_1,\dots,t_d}^{n_1,\dots,n_d-1}(\alpha,u,v+s)\, \d s, & n_d=2,3,\dots
       \end{cases}
   \end{multline}
    \end{claim}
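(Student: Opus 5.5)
Both recurrences come from a single Mellin--Barnes splitting in the variable $s$. For \eqref{G-rec*} the splitting is applied inside the $2N$-fold integral \eqref{GGt}. For \eqref{I-rec*} the $s$-integral is instead evaluated in closed form by Barnes-type lemmas.

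\emph{Recurrence \eqref{G-rec*}.} Write $S_N=\sum_{k=0}^{N-1}x_1\dots x_k\,y_{k+2}\dots y_N$. The key observation is that separating the term $k=N-1$ gives
$$S_N=x_1\dots x_{N-1}+y_N\,S_{N-1},$$
where $S_{N-1}$ has the same form in the variables $x_1,\dots,x_{N-2}$ and $y_2,\dots,y_{N-1}$. I then use the Mellin--Barnes formula
$$(1+w)^{-a}=\tfrac{1}{2\pi\i\,\Gamma(a)}\int_{-\eps+\i\RR}\Gamma(a+s)\Gamma(-s)\,w^{s}\,\d s,\qquad 0<\eps<\re(a),$$
with $a=u+v$ and $w=x_1\dots x_{N-1}/(y_NS_{N-1})$. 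This turns $S_N^{-u-v}$ into an $s$-integral of
$$\Gamma(u+v+s)\Gamma(-s)\,(x_1\dots x_{N-1})^{s}\,y_N^{-u-v-s}\,S_{N-1}^{-u-v-s}.$$
Next I interchange the integrals and integrate out two variables:
\begin{itemize}
\item integrating $y_N$ gives $\Gamma(\alpha-v-s)$, and this needs $\re(\alpha-v-s)>0$, which holds since $\re(v)<\alpha$ on the strip used in the induction;
\item integrating $x_N$ gives $\Gamma(\alpha+v+2t_N)=\Gamma(\alpha+v+2t_d)$.
\end{itemize}
What remains is exactly \eqref{GGt} on $N-1$ sites with $v$ replaced by $v+s$: the $x_k$ carry exponents $\alpha+(v+s)+2t_k-1$, the $y_j$ carry $\alpha+u-1$, and the denominator is $S_{N-1}^{u+(v+s)}$. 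This remainder is $\GG^{n_1,\dots,n_d-1}_{t_1,\dots,t_d}(\alpha,u,v+s)$ when $n_d\ge2$, and $\GG^{n_1,\dots,n_{d-1}}_{t_1,\dots,t_{d-1}}(\alpha,u,v+s)$ when $n_d=1$. Condition \eqref{eps2} ensures $\re(v+s)+t_d>0$ on the contour, so the inner integrals are of the type controlled in Proposition~\ref{Cl:G-an}.

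\emph{Recurrence \eqref{I-rec*}.} The prefactor of $\II(\alpha,u,v+s)$ contains $1/\Gamma(u+v+s)$, which cancels $\Gamma(u+v+s)$. After Fubini, the $s$-integral becomes
$$\tfrac1{2\pi\i}\int\Gamma(-s)\Gamma(\alpha-v-s)\,\Gamma(v+s+\tau+\i y)\,\Gamma(v+s+\tau-\i y)\,\d s .$$
By Barnes' first lemma this equals
$$\frac{|\Gamma(v+\tau+\i y)|^2\,|\Gamma(\alpha+\tau+\i y)|^2}{\Gamma(\alpha+v+2\tau)}.$$
\begin{itemize}
\item If $n_d\ge2$, then $\tau=t_d$ and $y=y_d$. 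The factor $\Gamma(\alpha+v+2t_d)$ cancels, and the extra $|\Gamma(\alpha+t_d+\i y_d)|^2$ raises $n_d-1$ to $n_d$, giving $\II^{\vv n}_{\vv t}(\alpha,u,v)$ directly.
\item If $n_d=1$, then $\tau=t_{d-1}$ and $y=y_{d-1}$, and the right side becomes
$$\frac{\Gamma(\alpha+v+2t_d)}{\Gamma(\alpha+v+2t_{d-1})}\,\II^{n_1,\dots,n_{d-1}+1}_{t_1,\dots,t_{d-1}}(\alpha,u,v).$$
To identify this with the left side, I evaluate the $y_d$-integral in \eqref{IIId} by the de Branges--Wilson integral, with parameters $t_{d-1}-t_d\pm\i y_{d-1}$, $\alpha+t_d$ and $v+t_d$. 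That integral produces
$$2\pi\,\Gamma(2t_{d-1}-2t_d)\,\frac{|\Gamma(\alpha+t_{d-1}+\i y_{d-1})|^2\,|\Gamma(v+t_{d-1}+\i y_{d-1})|^2\,\Gamma(\alpha+v+2t_d)}{\Gamma(\alpha+v+2t_{d-1})}.$$
The $\Gamma(2t_{d-1}-2t_d)$ and $2\pi$ cancel the corresponding normalizing factors in \eqref{IIId}, so this is the same expression.
\end{itemize}
The parameter conditions for both lemmas ($\re(v)+t_d>\eps$, $\alpha+t_d>0$, $t_{d-1}>t_d$) are exactly the hypotheses of the claim. Wilson's integral is needed for complex $v$; I would first prove it for real $v$ and extend by analyticity in $v$.

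\emph{Main obstacle.} The step I expect to be hardest is justifying the interchanges of integration for complex $v$ along $\re s=-\eps$. For \eqref{G-rec*}, I would bound $|\Gamma(u+v+s)\Gamma(-s)|$ by Stirling, which gives exponential decay $e^{-\pi|\im s|}$ times a polynomial. Combined with the remaining factor $|(x_1\dots x_{N-1})^{s}(y_NS_{N-1})^{-u-v-s}|$, this reduces absolute integrability to the integrand at real part $\re(v)-\eps$, which is handled by the bound of Proposition~\ref{Cl:G-an}. For \eqref{I-rec*}, I would use the estimates of Lemma~\ref{Lem:J-int} together with Stirling in $s$ to get joint integrability in $(s,\vv y)$.
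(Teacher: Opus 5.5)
Your proof is correct. For \eqref{G-rec*} it coincides with the paper's argument. The paper's $A+B$ is your $y_NS_{N-1}+x_1\cdots x_{N-1}$, the Mellin--Barnes formula is the same, and $x_N,y_N$ are integrated out in the same way. Your Fubini argument also works: the modulus of the integrand is $|\Gamma(u+v+s)\Gamma(-s)|$ times an $s$-independent function, and that function's integral is finite by Proposition~\ref{Cl:G-an} at the real parameter $\re(v)-\eps$.

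For \eqref{I-rec*} you go in the opposite direction from the paper.

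The paper starts from the left side. It replaces $|\Gamma(\alpha+t_d+\i y_d)|^2\Gamma(v+t_d+\i y_d)\Gamma(v+t_d-\i y_d)$ by its Barnes integral in $s$ and interchanges the integrals. For $n_d=1$ it then evaluates the remaining $y_d$-integral with the three-parameter de Branges integral \eqref{GG+}, which is formula \eqref{dualHahn}.

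You start from the right side and evaluate the $s$-integral by Barnes' first lemma. For $n_d\ge2$ this is the paper's computation read backwards. For $n_d=1$ you instead evaluate the $y_d$-integral of the left side with the four-parameter Wilson integral \eqref{WI+}. Both sides then meet at
\[
\frac{\Gamma(\alpha+v+2t_d)}{\Gamma(\alpha+v+2t_{d-1})}\,\II^{n_1,\dots,n_{d-1}+1}_{t_1,\dots,t_{d-1}}(\alpha,u,v).
\]

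What each route buys:
\begin{itemize}
\item The paper's route needs only the weaker three-parameter integral and gives a single chain of equalities.
\item Your route isolates an identity that mirrors a trivial fact on the $\mathcal Z$ side: $x_N$ enters \eqref{GGt} only through $x_N^{\alpha+v+2t_N-1}e^{-x_N}$, so $\mathcal Z$ satisfies the same identity. This would even let the induction dispense with the $n_d=1$ branch of the recurrence.
\item The interchange of integrals involves the same double integral in either direction, so its justification costs the same.
\end{itemize}

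Two small corrections:
\begin{itemize}
\item For complex $v$, the factors you write as $|\Gamma(v+\tau+\i y)|^2$ must be read as $\Gamma(v+\tau+\i y)\Gamma(v+\tau-\i y)$.
\item \eqref{WI+} already holds for complex $a_j$ with positive real parts, and two of your four parameters are complex in any case. So no analytic continuation in $v$ is needed.
\end{itemize}
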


}

\bigskip
    {The induction hypothesis is that \eqref{GGII} holds for $N-1$, with either $n_d$ replaced by $n_d-1$ when $n_d>1$  or, when $n_d=1$, with $d$ replaced by $d-1$ and it holds  for all $v$ such that $-t_d<\re(v)<\alpha$  (note that then $-t_{d-1}<\re(v)$).} We fix  $\alpha\in\RR$, $u>-\alpha$, $n_1,\dots,n_d\in\ZZ_{\geq 1}$ such that $N=n_1+\dots+n_d$ for some $d=1,2,\dots$. We also fix  $u>t_1>t_2>\dots>t_d>-\alpha$ and $v\in\CC$ such that $-t_d<\re(v)<\alpha$. Choose $\eps>0$ such that {\eqref{eps2} holds true.}

For $s=-\eps+\mathrm{i}t$ we have $|\re(v+s)|=|\re(v)-\eps|$ and thus, by the induction hypothesis,
\[
\GG_{t_1,\dots,t_{d-1}}^{n_1,\dots,n_{d-1}}(\alpha,u,v+s)
=
\II_{t_1,\dots,t_{d-1}}^{n_1,\dots,n_{d-1}}(\alpha,u,v+s),
\]
if $n_d=1$, and
\[
\GG_{t_1,\dots,t_d}^{n_1,\dots,n_d-1}(\alpha,u,v+s)
=
\II_{t_1,\dots,t_d}^{n_1,\dots,n_d-1}(\alpha,u,v+s)
\]
if $n_d>1$.

Thus, comparing \eqref{G-rec*} and \eqref{I-rec*}, we see that \eqref{GGII} holds for $v\in\CC$ in the strip $\{v:\, -t_d<\re(v)<\alpha\}$ to  the entire half-plane  $\{v:\, \re(v)>-t_d\}$.

\bigskip

To complete the proof of \ref{BarraquandThm1.11}, it remains to show that recurrence relations \eqref{G-rec*} and \eqref{I-rec*} hold.
\bigskip
\begin{proof}[{Proof of \eqref{G-rec*}}] %
        We use \eqref{GGt} with $\vv t$ given by \eqref{tt_d}. With some abuse of notation, we relabel the components of $\vv t$ as $(t_1,\dots,t_N)$.
   We use the following gamma integral \cite[\href{http://dlmf.nist.gov/5.13.E1}{(5.13.1)}]{NIST:DLMF}
$$
\tfrac{1}{2\pi \i \Gamma(a+b)}\int_{c-\i \infty}^{c+\i \infty} \Gamma(s+a)\Gamma(b-s) z^{-s} \d s=\tfrac{z^a}{(1+z)^{a+b}},
$$
which holds under the assumptions  $\re(a+b)>0$, $-\re(a)<c<\re(b)$, $z>0$.  The integration path is a straight line parallel to the imaginary axis. (For an elementary proof and an application to the proof of \eqref{koek:1.6.3}, see \cite[(4.4)]{goldfeld2011kontorovich}.)

We put $a=u+v-\eps, b=\eps $ and $c=0$, where $\eps>0$ satisfies \eqref{eps2} so that $\re(a)=u+\re(v)-\eps> u-t_d-\eps>0 $.
Taking $z=A/B$ with $A,B>0$, we get
\begin{equation}\label{A+B}
\tfrac{1}{(A+B)^{u+v}}=\tfrac{1}{2\pi\Gamma(u+v)}\int_{-\infty}^\infty \Gamma(u+v-\eps +\i t)\Gamma(\eps-\i t) \tfrac{B^{-\eps+\i t}}{A^{u+v-\eps +\i t}}\d t.
\end{equation}

Using this formula with $A=\sum_{k=0}^{N-2}x_1\dots x_ky_{k+2}\dots y_{N}$, $B=x_1\dots x_{N-1}$,  we
rewrite the quotient under the integral in \eqref{GGt} as
\begin{multline*}
  \tfrac{ \left(\prod_{k=1}^N\,x_k^{\alpha+v+2t_{k}-1}\right)\,\left(\prod_{j=1}^N\,y_j\right)^{\alpha+u-1} }{ \left(A+B\right)^{u+v}}
  \\=
  \tfrac{1}{2\pi\i \Gamma(u+v)} \int_{-\eps-\i \infty}^{-\eps+\i \infty}
  \tfrac{ \Gamma(u+v+s)\Gamma(-s) B^s\left(\prod_{k=1}^N\,x_k^{\alpha+v+2t_{k}-1}\right)\,\left(\prod_{k=1}^N\,y_j\right)^{\alpha+u-1} }{  A^{u+v+s}}\d s
  \\
  =\tfrac{1}{2\pi\i \Gamma(u+v)}\,\int_{-\eps-\i \infty}^{-\eps+\i \infty}\,x_N^{\alpha+v+2t_N -1}\,y_N^{\alpha-v-s-1}
\\ \cdot    \tfrac{ \Gamma(u+v+s)\Gamma(-s)\left(\prod_{k=1}^{N-1}\,x_k^{\alpha+v+s+2t_{k}-1}\right)\,
  \left(\prod_{j=1}^{N-1}\,y_j\right)^{\alpha+u-1}}{ \left(\sum_{k=0}^{N-2}x_1\dots x_ky_{k+2}\dots y_{N-1}\right)^{u+v+s}}\d s.
\end{multline*}
 We insert the above into \eqref{GGt}, and integrate with respect to $\d  x_N$ and $\d y_N$. The {absolute} convergence of the latter integral is ensured by $\re(\alpha-v-s)=\alpha-\re(v)+\eps>0$. %
 Then we integrate with respect to the remaining variables in $\d \vv x$, $\d \vv y$ and switch the order of integrals.   We obtain
\begin{multline*}
  \GG_{t_1,\dots,t_N}(\alpha,u,v)
  =\tfrac{\Gamma(\alpha+v+2t_N)}{2\pi i\Gamma(u+v)}\\ \cdot  \int_{-\eps-\i \infty}^{-\eps+\i \infty}
\Gamma(u+v+s)\Gamma(-s) \Gamma(\alpha-v-s) \GG_{t_1,\dots,t_{N-1}}(\alpha,u,v+s) \d s.
\end{multline*}

Depending on the value of $n_d$, this is \eqref{G-rec*}.
Indeed, $\vv t=(t_1,\dots,t_{d-1},t_d,\dots,t_d)$ if $n_{d}>1$ and $\vv t=(t_1,\dots,t_{d-1},t_d)$ if $n_{d}=1$.
    \end{proof}

\bigskip\begin{proof}[{Proof of \eqref{I-rec*}}]%
We use the following identity:
 \normalsize %
   \begin{multline}\label{dualHahn}
    \tfrac{1}{2\pi}\int_{(0,\infty)} \tfrac{\left|\Gamma(t_{d-1}-t_d + \i  {y}+\i  {x} )\right|^2 \, \left|\Gamma(
t_{d-1}-t_d + \i  {y}-\i  {x} )\right|^2\Gamma(v+s+t_{d}+\i  {x})\Gamma(v+s+t_{d}-\i  {x})}{ |\Gamma(2\i  {x})|^2}\,\d x \\
= \Gamma(2t_{d-1}-2t_d)\Gamma(v+s+t_{d-1}+\i  {y}) \Gamma(v+s+t_{d-1}-\i  {y}),
\end{multline}
which holds for $\re(s)=-\eps$ under the assumptions $t_d<t_{d-1}$ and $\re(v+s)+t_d=\re(v)+t_d-\eps>0$, see \eqref{eps2}. To obtain this formula, we apply  \eqref{GG+}  with
 $a=v+s +t_d$, $b=t_{d-1}-t_d+\i  {y}$, $c=t_{d-1}-t_d-\i {y}$, which  gives
\begin{multline*}
 \frac{1}{2\pi}
 \int_{(0,\infty)}\,\tfrac{\left|\Gamma(t_{d-1}-t_d + \i {y}+\i {x} )\right|^2 \, \left|\Gamma(
t_{d-1}-t_d + \i {y}-\i {x} )\right|^2\Gamma(v+s+t_{d}+\i {x})\Gamma(v+s+t_{d}-\i {x})}{ |\Gamma(2\i {x})|^2}\,\d x
\\ =\Gamma(a+b)\Gamma(a+c)\Gamma(b+c)
\\ =
\Gamma(v+s+t_{d-1}+\i {y}) \Gamma(v+s+t_{d-1}-\i {y})\Gamma(2t_{d-1}-2t_d),
\end{multline*}
and  \eqref{dualHahn} follows.

Expression \eqref{IIId}  can be rewritten as
\begin{multline}
     \II_{t_1,\dots,t_d}^{n_1,\dots,n_d}(\alpha,u,v)=\tfrac{1}{
    (2\pi)^{d-1} \Gamma(u+v)\prod_{j=2}^{d-1}\Gamma(2t_{j-1}-2t_j)}\\ \cdot  \int_{(0,\infty)^{d-1}}\,\Big(\prod_{j=1}^{d-1}\tfrac{\left|\Gamma(\alpha+t_{j}+\i {y_j})\right|^{2n_j}}{ |\Gamma(2\i {y_j})|^2} \Big)
    \left|\Gamma(u-t_{1} +\i {y_1} )\right|^2\label{IN1}\\
    \cdot \, \Big(\prod_{j=2}^{d-1}\left|\Gamma(t_{j-1}-t_j + \i {y_{j-1}}+\i {y_j} )\right|^2 \, \left|\Gamma(
t_{j-1}-t_j + \i {y_{j-1}}-\i {y_j} )\right|^2\Big)
\\ \cdot  J(y_{d-1})\,\d y_1\dots\d y_{d-1},
\end{multline}
where $J(y_{d-1})$ is the   integral with respect to $\d y_d$, given by: %
\begin{multline}\label{Jacka-J}
J(y)=\tfrac1{2\pi\,\Gamma(2t_{d-1}-2t_d)}\\ \cdot  \int_{(0,\infty)}\,\left|\Gamma(\alpha+t_{d}+\i {x})\right|^{2(n_d-1)}\,\tfrac{\left|\Gamma(t_{d-1}-t_d + \i {y}+\i {x} )\right|^2 \, \left|\Gamma(
t_{d-1}-t_d + \i {y}-\i {x} )\right|^2}{ |\Gamma(2\i {x})|^2}\\
\cdot \,\left(\left|\Gamma(\alpha+t_{d}+\i {x})\right|^2\Gamma(v+t_{d}+\i {x})\Gamma(v+t_{d}-\i {x})\right)\,\d x.
\end{multline}
\normalsize
In the above expression, we factored $\left|\Gamma(\alpha+t_{d}+\i {x})\right|^2$ out of   $\left|\Gamma(\alpha+t_{d}+\i {x})\right|^{2n_d}$, keeping it as one of the product in the parentheses at the end of the integrand in \eqref{Jacka-J}.
Referring to \eqref{koek:1.6.3} with  parameters $a=v+t_d -\eps + \i {x}$, $b=v+t_d -\eps -\i {x}$, $c=\eps $, $d=\alpha+\eps-v$
  which   have positive real part, see e.g. \eqref{eps2}, we can express the last factor {of  the integrand in} \eqref{Jacka-J} %
  as
\begin{multline*}
    \left|\Gamma(\alpha+t_{d}+\i {x})\right|^2\Gamma(v+t_{d}+\i {x})\Gamma(v+t_{d}-\i {x})
\\= \tfrac{\Gamma(\alpha+v+2 t_d)}{2\pi \i}  \int_{-\eps-\i \infty}^{-\eps+\i \infty}
\Gamma(-s) \Gamma(\alpha-v-s) \Gamma(v+s+t_d+\i {x}) \Gamma(v+s+t_d-\i {x}) \d s.
\end{multline*}
Therefore,
\begin{multline*}
J(y)=\tfrac1{2\pi\,\Gamma(2t_{d-1}-2t_d)}\\ \cdot  \int_{(0,\infty)}\,\left|\Gamma(\alpha+t_{d}+\i {x})\right|^{2(n_d-1)}\,
\tfrac{\left|\Gamma(t_{d-1}-t_d + \i {y}+\i {x} )\right|^2 \, \left|\Gamma(
t_{d-1}-t_d + \i {y}-\i {x} )\right|^2}{|\Gamma(2\i {x})|^2}\\
\cdot \,\tfrac{\Gamma(\alpha+v+2 t_d)}{2\pi \i}  \int_{-\eps-\i \infty}^{-\eps+\i \infty}
\Gamma(-s) \Gamma(\alpha-v-s) \Gamma(v+s+t_d+\i {{x}}) \Gamma(v+s+t_d-\i {{x}}) \d s\,\d x.
\end{multline*}
Changing the order of integration we get 
\begin{multline}\label{J-step}
J(y)=\tfrac{\Gamma(\alpha+v+2 t_d)}{2\pi \i}  \int_{-\eps-\i \infty}^{-\eps+\i \infty}
\Gamma(u+v+s)\Gamma(-s) \Gamma(\alpha-v-s)\,\\
\cdot \,\tfrac1{2\pi\,\Gamma(2t_{d-1}-2t_d)\Gamma(u+v+s)}\,\int_{(0,\infty)}\,\left|\Gamma(\alpha+t_{d}+\i {x})\right|^{2(n_d-1)}\\
\cdot  \,\tfrac{\left|\Gamma(t_{d-1}-t_d + \i {y}+\i {x} )\right|^2 \, \left|\Gamma(
t_{d-1}-t_d + \i {y}-\i {x} )\right|^2\Gamma(v+s+t_d+\i {x}) \Gamma(v+s+t_d-\i {x})}{|\Gamma(2\i {x})|^2} \,\d x\, \d s.
\end{multline}

If $n_d>1$,  we use the above expression for $J(y_{d-1})$ in \eqref{IN1}. After changing the order of integration we obtain the second part of formula \eqref{I-rec*}.

If $n_d=1$ then we use \eqref{dualHahn} to compute the inner integral   with respect to $\d x$ in \eqref{J-step}. We obtain
\begin{multline*}
J(y)=\tfrac{\Gamma(\alpha+v+2 t_d)}{2\pi \i} \\ \cdot   \int_{-\eps-\i \infty}^{-\eps+\i \infty}
\,\Gamma(u+v+s)\Gamma(-s) \Gamma(\alpha-v-s)\tfrac{\Gamma(v+s+t_{d-1}+\i {y}) \Gamma(v+s+t_{d-1}-\i {y})}{\Gamma(u+v+s)}\,\, \d s.
\end{multline*}

Using this  expression for $J(y_{d-1})$ in \eqref{IN1}, after changing the order of integration we obtain the first  part of formula \eqref{I-rec*}.
\end{proof}

\section{Proof of Proposition \ref{Prop2}}\label{Sec:AC}

 By Lemma~\ref{Lem:I-anal} (or Proposition~\ref{Cl:G-an}), the left-hand side of \eqref{I_N-x} is analytic for $u,v\in\CC$ with $\re(u)>-\alpha$ and $\re(v)>-\alpha$. Proposition~\ref{Lem:I=J} therefore reduces the problem to analyzing the analytic continuation of the integral \eqref{JJ-1}.

  For complex $u,v$ with $\re(u),\re(v) >0$ we rewrite the integral  \eqref{JJ-1} as
  \begin{equation}\label{step0}
   \II_N(\alpha,u,v)=
\tfrac{1}{
   4 \pi \i \Gamma(u+v)}\int_{\i \RR}\,\tfrac{ \Gamma(\alpha+z)^N\Gamma(\alpha-z)^N\Gamma(v+z)\Gamma(v-z)\Gamma(u+z)\Gamma(u-z )}{\Gamma(2z)\Gamma(-2z)}\,\d z.
 \end{equation}

If $\re(u)<0$ or $\re(v)<0$, then the integrand has singularities that affect the analytic continuation.

It is convenient to decouple the repeated occurrences of $u,v$ in this integral.
Denote ${\bf s}=(s_1,s_2) \in \CC^2$ and ${\bf t}=(t_1,t_2)\in \CC^2$. We define
\begin{equation*}\label{def_G}
H({\bf s},{ \bf t}; z):=
\tfrac{\Gamma(\alpha+z)^N\Gamma(\alpha-z)^N\Gamma(z-s_1)\Gamma(z-s_2)\Gamma(t_1-z)\Gamma(t_2-z)}{\Gamma(2z)\Gamma(-2z)},
\end{equation*}
where $\alpha>0$ is a fixed parameter.
We assume that
$\re(s_i)<0$ (for all $i=1,2$) and $\re(t_j)>0$ (for all $j=1,2$) and we define
\begin{equation}\label{def_F}
F({\bf s},{ \bf t})=\tfrac{1}{2\pi \i} \int_{\i \RR}
 H({\bf s},{ \bf t}; z) \d z.
\end{equation}
 Due to condition  $\re(s_i)<0$ and $\re(t_j)>0$, the integrand $ H({\bf s},{ \bf t}; z)$ is analytic function of $z$ on the contour $\i \RR$. The  integrand decays exponentially fast as $|\im(z)|\to \infty$ and is a continuous function of $z$ on $\i \RR$. Thus the integral converges and $F({\bf s},{ \bf t})$ is well-defined and it is analytic function of arguments $\vv s,\vv t$ on the domain
 $$
 D_0:=\bigl\{ \re(s_i)<0\; \text{for all } i=1,2,\;\text{and } \re(t_j)>0\;\text{for all } j=1,2\bigr\}.
 $$

 Note that  \eqref{step0} is
\begin{equation}
    \label{I2F}
    \II_N(\alpha,u,v)=\tfrac{F(-u,-v,u,v)}{2\Gamma(u+v)}.
\end{equation}

We denote by ${\mathcal C}$ any piecewise-smooth simple   curve in $\CC$ which coincides with the vertical line $\i \RR$
in the neighborhood of $\infty$.  This contour divides $\CC$ into two open sets: those points that lie to the right of this contour and those points that lie to the left of the contour.
 We will denote these parts by $\CC^+_{\mathcal C}$ and $\CC^-_{\mathcal C}$.
 See Figure \ref{Fig1+}  for an illustration.

\begin{figure}%
 \BeginAccSupp{ActualText={A contour that divides complex plane into two unbounded open sets}}
  \begin{tikzpicture}[scale=1]
  \fill[pattern=horizontal lines, pattern color=gray!60] (1,1.5) -- plot[domain=-3.755:3.585]  ({5.3-sign(\x)+cos(57.29577*\x+90-sign(\x)*90)},{5 +(2.5+sign(\x))*sin(57.29577*\x)/(abs(\x)-4)})
-- (1,8.6)-- (1,10) -- cycle;

 \fill[pattern=north east lines, pattern color=gray!60]   plot[domain=-3.755:3.58]  ({5.3-sign(\x)+cos(57.29577*\x+90-sign(\x)*90)},{5 +(2.5+sign(\x))*sin(57.29577*\x)/(abs(\x)-4)})
-- (8.5,8.53)-- (8.5,1.48 ) -- cycle;

 \draw[->,thick] (5,1) to (5,9.3);
 \draw[->] (0,5) to (9,5);

\draw[scale = 1,domain=-3.76:3.6,smooth,variable=\x,thick,blue] plot ({5.3-sign(\x)+cos(57.29577*\x+90-sign(\x)*90)},{5 +(2.5+sign(\x))*sin(57.29577*\x)/(abs(\x)-4)});

\draw[->, thick,blue] (7.2,3) to (7.22, 3.5);

\node [below] at (7,8) {$\CC_{\mathcal{C}}^+$};

\node [below] at (2,3) {$\CC_{\mathcal{C}}^-$};

\node [right] at (3.5,8) {\color{blue}$ \mathcal{C}$};

\node [right] at (5,9) {$\i \RR$};
 \end{tikzpicture}
\caption{ Contour $\mathcal{C}$ divides $\CC$ into two open sets  $\CC^+_{\mathcal C}$ and $\CC^-_{\mathcal C}$.}
\label{Fig1+}
\end{figure}

\begin{definition}
We call a contour ${\mathcal C}$ (of the type defined above) a \emph{good contour}
if it satisfies the following properties:
\begin{itemize}
\item if $z\in  \CC^+_{\mathcal C}$   then the whole half-line
$\{z+t \; : \; t \in [0,\infty) \}\subset \CC^+_{\mathcal C}$;
\item if $z\in  \CC^-_{\mathcal C}$ then the whole half-line
$\{z+t \; : \;t\in (-\infty,0]\} \subset \CC^-_{\mathcal C}$.
\end{itemize}
\end{definition}
For example, the contour ${\mathcal C}$ in Figure \ref{Fig1+} is  not  a good contour, whereas the  contours in Figures  \ref{Fig3+}  and \ref{Fig4} are good.

We will say that a good contour {\it separates} ${\vv s}$ and ${\vv t}$ if
$s_i \in \CC^-_{\mathcal C}$ (for all $i=1,2$) and
$t_j \in \CC^+_{\mathcal C}$ (for all $j=1,2$). See Figure  \ref{Fig3+} for an example. Thus domain $D_0$ is the precisely the domain of those $({\vv s},{\vv t})$ which are separated by the contour $\i \RR$.

\begin{figure}[H]
 \BeginAccSupp{ActualText={A good contour that separates s-variable poles that stay on the left from t-variable poles that stay on the right }}
 \begin{tikzpicture}[scale=.67]
 \draw[->] (5,0) to (5,11);
 \draw[->] (-2,5) to (13,5);
 \draw[-,dotted] (-1.,0) to (-1,11);
  \draw[-,dotted] (12.,0) to (12,11);
   \draw[-] (12.,4.9) to (12,5.05);
    \draw[-] (-1.,4.9) to (-1,5.05);
  \node [below] at (-1.2,5) {$-\alpha$};
  \node [below] at (12.2,5) {$\alpha$};
   \node [right] at (5,11) {$\i \RR$};
   \draw[fill,red] (0.3,7.8) circle (.05);
    \node [below] at (0.3,7.6) {\scriptsize$t_1$};
     \draw[fill,red] (2.3,7.8) circle (.05);
    \node [below] at (2.3,7.6) {\scriptsize$t_1+1$};
     \draw[fill,red] (4.3,7.8) circle (.05);
    \node [below] at (4.3,7.6) {\scriptsize$t_1+2$};
         \draw[ ] (6.3,7.8) circle (.05);
    \node [below] at (6.3,7.6) {\scriptsize$t_1+3$};
             \draw[ ] (8.3,7.8) circle (.05);
    \node [below] at (8.3,7.6) {\scriptsize$t_1+4$};
                \draw[ ] (10.3,7.8) circle (.05);
    \node [below] at (10.3,7.6) {\scriptsize$t_1+5$};
            \draw[ ] (12.3,7.8) circle (.05);
    \node [below] at (12.3,7.6) {\scriptsize$t_1+6$};

     \draw[fill,red] (1.6,4) circle (.05);
    \node [below] at (1.6,3.8) {\scriptsize$t_2$};
     \draw[fill,red] (3.6,4) circle (.05);
    \node [below] at (3.6,3.8) {\scriptsize$t_2+1$};
     \draw[] (5.6,4) circle (.05);
    \node [below] at (5.6,3.8) {\scriptsize$t_2+2$};
         \draw[ ] (7.6,4) circle (.05);
    \node [below] at (7.6,3.8) {\scriptsize$t_2+3$};
             \draw[ ] (9.6,4) circle (.05);
              \node [below] at (9.6,3.8) {\scriptsize$t_2+4$};
    \node [below] at (11.6,3.8) {\scriptsize$t_2+5$};
            \draw[ ] (11.6,4) circle (.05);
            \draw[ ] (13.6,4) circle (.05);
    \node [below] at (13.6,3.8) {\scriptsize$t_2+6$};

       \draw[] (-1.2,8.2) circle (.05);
    \node [above] at (-1.2,8.4) {\scriptsize $s_2-5$};
     \draw[] (-3.2,8.2) circle (.05);
    \node [above] at (-3.2,8.4) {\scriptsize$s_2-6$};

   \draw[] (0.8,8.2) circle (.05);
    \node [above] at (0.8,8.4) {\scriptsize$s_2-4$};
     \draw[] (2.8,8.2) circle (.05);
    \node [above] at (2.6,8.4) {\scriptsize$s_2-3$};
     \draw[] (4.8,8.2) circle (.05);
    \node [above] at (4.8,8.4) {\scriptsize$s_2-2$};
         \draw[fill,blue] (6.8,8.2) circle (.05);
    \node [above] at (6.8,8.4) {\scriptsize$s_2-1$};
             \draw[fill,blue] (8.8,8.2) circle (.05);
    \node [above] at (8.8,8.4) {\scriptsize $s_2$};

     \draw[] (-1.8,6) circle (.05);
    \node [above] at (-1.8,6.2) {\scriptsize $s_1-5$};

   \draw[] (0.05,6) circle (.05);
    \node [above] at (0.05,6.2) {\scriptsize $s_1-4$};
     \draw[] (2.05,6) circle (.05);
    \node [above] at (2.05,6.2) {\scriptsize$s_1-3$};
     \draw[] (4.05,6) circle (.05);
    \node [above] at (4.05,6.2) {\scriptsize$s_1-2$};
         \draw[fill,blue] (6.05,6) circle (.05);
    \node [above] at (6.05,6.2) {\scriptsize$s_1-1$};
             \draw[fill,blue ] (8.05,6) circle (.05);
    \node [above] at (8.05,6.2) {\scriptsize$s_1$};
 \draw[-,thick,blue] (5,9.2) to (5,11);
 \draw[-,thick,blue] (5.2,9) to (9.,9);
  \draw[->,thick,blue] (5,0) to (5,1);
  \node [right] at (5.2,1) {\color{blue}\scriptsize $\mathcal C$};
 \draw[-,thick,blue] (5,1) to (5,1.8);
 \draw[-,thick,blue] (1.2,2) to (4.8,2);
  \draw[-,thick,blue] (1,2.2) to (1,5.3);
    \draw[-,thick,blue] (8.3,5.5) to (1.2,5.5);
    \draw[-,thick,blue] (8.5,5.7) to (8.5,6.6);
      \draw[-,thick,blue] (-.6,6.8) to (8.3,6.8);
  \draw[-,thick,blue] (9.2,8.8) to (9.2,8.2);

    \draw[-,thick,blue] (-.8,7.8) to (-.8,7);

 \draw[-,thick,blue] (-.6,8) to (9,8);

 \draw[-,thick,blue] (5, 1.8) arc (0:90:.2);
\draw[-,thick,blue] (5, 9.2) arc (-180:-90:.2);
\draw[-,thick,blue] (-.8, 7) arc (-180:-90:.2);
\draw[-,thick,blue] (-.8, 7.8) arc (180:90:.2);
\draw[-,thick,blue] (1, 5.3) arc (180:90:.2);
\draw[-,thick,blue] (1, 2.2) arc (-180:-90:.2);
\draw[-,thick,blue] (9.2, 8.2) arc (0:-90:.2);
\draw[-,thick,blue] (8.5, 5.7) arc (0:-90:.2);
\draw[-,thick,blue] (9.2, 8.8) arc (0:90:.2);
 \draw[-,thick,blue] (8.5, 6.6) arc (0:90:.2);

\end{tikzpicture}
\caption{ A good contour that separates $\vv s=(s_1,s_2)$ and $\vv t=(t_1,t_2)$, with the locations of the poles of $H(\vv s,\vv t;z)$ marked.
With $s_1-s_2\not\in\ZZ$ and $t_1-t_2\not\in\ZZ$, all poles are simple poles. When the curve $\mathcal{C}$ is   replaced   back by $\i \RR$, the poles marked in blue contribute  positive residue, and the poles marked in red contribute negative residue.
}
\label{Fig3+}
\end{figure}

\begin{proposition}\label{Prop-A1}${}$
\begin{itemize}
\item[(i)] For any fixed $({\bf s}, {\bf t}) \in D_0$   the value of the integral in \eqref{def_F}   remains the same if the contour of integration is replaced by any good contour ${\mathcal C}$ that separates
${\bf s}$ and ${\bf t}$ and lies  in the strip  $-\alpha<\re(z)<\alpha$.
\item[(ii)] Given a fixed good contour  ${\mathcal C}$ that lies in the strip  $-\alpha<\re(z)<\alpha$, the integral
\begin{equation}\label{def_F2}
\frac{1}{2\pi \i} \int_{{\mathcal C}}
 H({\bf s},{ \bf t}; z) \d z
\end{equation}
defines a function analytic in the domain
$$
 D_{\mathcal C}:=\bigl\{ (\vv s,\vv t):  s_i \in \CC^-_{\mathcal C}, i=1,2  \mbox{ and }
t_j \in \CC^+_{\mathcal C}, j=1,2\bigr\}.
$$
This function is an analytic continuation of $F({\bf s},{ \bf t})$ defined in \eqref{def_F} into domain $D_{\mathcal C}$.
\end{itemize}
\end{proposition}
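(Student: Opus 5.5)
The plan is to use Cauchy's theorem together with the exponential decay of $H(\vv s,\vv t;z)$ in $|\im z|$, and then to apply a standard theorem on holomorphic dependence of integrals on parameters.

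First I will locate the singularities of $z\mapsto H(\vv s,\vv t;z)$ in the strip $-\alpha<\re(z)<\alpha$. The factors $\Gamma(\alpha\pm z)^N$ have poles only at $z=\mp(\alpha+k)$, which lie outside the open strip. The factor $1/(\Gamma(2z)\Gamma(-2z))$ is entire. Hence the only poles in the strip come from $\Gamma(z-s_i)$, at $z=s_i-k$ with $k\in\ZZ_{\ge0}$, and from $\Gamma(t_j-z)$, at $z=t_j+k$. These are exactly the left-ray and right-ray families shown in Figure~\ref{Fig3+}.

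Next I will use the defining property of a good contour. Suppose $\mathcal C$ separates $\vv s,\vv t$. Since $s_i\in\CC^-_{\mathcal C}$, the whole half-line $s_i-[0,\infty)$ lies in $\CC^-_{\mathcal C}$, so all poles $s_i-k$ lie to the left of $\mathcal C$. Similarly, every pole $t_j+k$ lies in $\CC^+_{\mathcal C}$.

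For (i), with $(\vv s,\vv t)\in D_0$, the same reasoning applies to $\i\RR$: the poles $s_i-k$ satisfy $\re<0$ and the poles $t_j+k$ satisfy $\re>0$. So neither $\mathcal C$ nor $\i\RR$ meets a pole, and every pole has the same side relative to both contours.

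Both contours coincide with $\i\RR$ outside a large disc $|z|\le R$. Their difference therefore reduces to a closed cycle $\Gamma_R$ inside the strip, formed by the bounded portion of $\mathcal C$ and the segment $[-\i R,\i R]$. By the Jordan curve/winding-number argument, each point enclosed by $\Gamma_R$ lies to the left of one contour and to the right of the other. No pole does this, so $H$ is holomorphic on a neighbourhood of the region bounded by $\Gamma_R$, and Cauchy's theorem gives equality of the two integrals.

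I expect the main obstacle to be making the topological claim precise: the winding number of $\Gamma_R$ around $w$ equals the indicator that $w$ is on different sides of the two contours. I would handle this by noting that for a point $w$ off both contours, the winding number is locally constant in $w$. It is also zero for $\re w$ very large or very small, and it changes by $\pm1$ exactly when $w$ crosses one of the contours. This is consistent with the good-contour property: the horizontal ray from $w$ to $\pm\infty$ meets the appropriate contour, so the ray-based argument works.

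For (ii), fix a good $\mathcal C$ in the strip and take a compact set $K\subset D_{\mathcal C}$. On $K$, the points $s_i-k$ and $t_j+k$ stay at a positive distance from $\mathcal C$; this is uniform because $\mathcal C$ is a closed set disjoint from these compact families within the strip. So the integrand is jointly continuous and holomorphic in $(\vv s,\vv t)$ for each $z\in\mathcal C$.

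For domination on the unbounded parts $z=\i y$, $|y|$ large, I will use the Stirling bounds of Lemma~\ref{Lem:J-int} (estimate \eqref{4DCT} style) uniformly for parameters in compacts. This gives decay of order $(1+|y|^\theta)e^{-(N-1)\pi|y|}$ in general, and at least $e^{-\pi|y|}\cdot$ polynomial since $N\ge1$ after the $\Gamma(2z)\Gamma(-2z)$ factor. Then Morera's theorem combined with Fubini (or the result in \cite[page 154]{Conway78}) yields analyticity on $D_{\mathcal C}$.

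Finally, $D_{\mathcal C}\cap D_0$ is a nonempty open set; it contains points with $\re s_i$ very negative and $\re t_j$ very positive. By (i), the integral \eqref{def_F2} equals $F$ there. Every connected component of $D_{\mathcal C}$ meets this set, because one can slide $s_i$ left and $t_j$ right within $D_{\mathcal C}$, which the good-contour property permits. So \eqref{def_F2} is the analytic continuation of $F$.
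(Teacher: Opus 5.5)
Your proposal is correct and takes the same route as the paper. Part (i) is Cauchy's theorem; you make the paper's ``modify the contour while avoiding singularities'' precise via the winding number of $\Gamma_R$ and the good-contour property. Part (ii) is holomorphic dependence of the contour integral on the parameters, plus agreement with $F$ on $D_0\cap D_{\mathcal C}$, where the paper uses only the subregion $\re(s_i)<-A$, $\re(t_j)>A$.

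One small slip: by \eqref{4DCT} the integrand on $\i\RR$ decays like $(1+|y|^{\theta})e^{-N\pi|y|}$, not $e^{-(N-1)\pi|y|}$ (which would give no decay at all for $N=1$). Your final claim of at least $e^{-\pi|y|}$ decay is nevertheless right.
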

\begin{proof}
The proof of part (i) follows from the Cauchy Integral Theorem: we can modify the contour of integration in any way we want, as long as it goes to infinity along a vertical line (so that we have exponential decay of the integrand) and the modification avoids singularities of the integrand.
 If  $\re(s_j)\geq 0$ or $\re(t_j)\le 0$, then  $H(\vv s, \vv t; z)$, as a function of argument $z$ restricted to the strip $|\re(z)|<\alpha$ has a finite number of singularities at points $z\in\mathcal{P}$, where
\begin{equation}\label{defP}
{\mathcal P}={\mathcal P}_{\vv s}\cup{\mathcal P}_{\vv t},
\end{equation}
where
\begin{align*}
    {\mathcal P}_{\vv s}&=\{ s_i-k_i :   \; k_i \in {\mathbb Z}_{\ge 0}, \; \re(s_i)-k_i>0,\; i=1,2\}, \\
{\mathcal P}_{\vv t}&=\{ t_j+m_j :  \;  m_j \in {\mathbb Z}_{\ge 0}, \; \re(t_j)+m_j<0,\; j=1,2\}.
\end{align*}

Note that since  contour  $\mathcal C$ lies in the strip $\{z: -\alpha<\re(z)<\alpha\}$, it automatically avoids singularities that appear in the expression  $\Gamma(\alpha+z)\Gamma(\alpha-z)$.

Part (ii) is fairly obvious, since both integrals in \eqref{def_F} and \eqref{def_F2} have the same values
if
 $\re(s_i)<-A$ (for all $i=1,2$) and $\re(t_j)>A$ (for all $j=1,2$) for some $A$ large enough. (We can take $A=\max_{z\in\mathcal C}|\re(z)|$.)
See Figure \ref{Fig4}.
\begin{figure}[H]
 \BeginAccSupp{ActualText={A good contour illustrating the definition of c as the minimum of the real parts and C and the maximum of the real parts of point on the contour }}
 \begin{tikzpicture}[scale=.7]
 \draw[->] (5,0) to (5,11);
 \draw[->] (-2,5) to (13,5);
 \draw[-,dotted] (-1.,0) to (-1,11);
  \draw[-,dotted] (12.,0) to (12,11);
   \draw[-] (12.,4.9) to (12,5.05);
    \draw[-] (-1.,4.9) to (-1,5.05);
  \node [below] at (-1.2,5) {$-\alpha$};
  \node [below] at (12.2,5) {$\alpha$};
   \node [right] at (5,11) {$\i \RR$};

    \draw[-,dotted] (8.5,0) to (8.5,11);
     \node [below] at (8.7,5) {$\begin{matrix}C\\ C=\max_{z\in \mathcal C}\re(z)\end{matrix}$};
        \draw[-,dotted] (1,0) to (1,11);
     \node [below] at (.8,5) {$\begin{matrix}c\\ c=\min_{z\in \mathcal C}\re(z)\end{matrix}$};
                 \draw[ ] (10.3,8) circle (.05);
    \node [below] at (10.3,7.8) {\scriptsize$t_1$};
             \draw[ ] (13.6,4) circle (.05);
     \node [below] at (13.6,3.8) {\scriptsize$t_2$};

       \draw[] (0.2,8) circle (.05);
    \node [above] at (0.2,8.2) {\scriptsize $s_2$};

     \draw[] (-1.8,6) circle (.05);
    \node [above] at (-1.8,6.2) {\scriptsize $s_1$};

 \draw[-,thick,blue] (5,9.2) to (5,11);
 \draw[-,thick,blue] (5, 9.2) arc (-180:-90:.2);

 \draw[-,thick,blue] (5.2,9) to (8.3,9);
 \draw[-,thick,blue] (8.3, 9) arc (90:0:.2);

    \draw[-,thick,blue] (8.5,5.7) to (8.5,8.8);
 \draw[-,thick,blue] (8.5, 5.7) arc (0:-90:.2);

      \draw[-,thick,blue] (8.3,5.5) to (1.2,5.5);
    \draw[-,thick,blue] (1, 5.3) arc (180:90:.2);

  \draw[->,thick,blue] (5.,0) to (5,1);
  \node [right] at (5.2,1) {\color{blue}\scriptsize $\mathcal C$};
 \draw[-,thick,blue] (5,0) to (5,1.8);
 \draw[-,thick,blue] (1.2,2) to (4.8,2);

  \draw[-,thick,blue] (1, 2.2) arc (-180:-90:.2);

  \draw[-,thick,blue] (1,2.2) to (1,5.3);

    \draw[-,thick,blue] (5, 1.8) arc (0:90:.2);
\end{tikzpicture}
\caption{In the proof of Proposition \ref{Prop-A1}, we can take $A=\max\{-c, C\}$. }
\label{Fig4}
\end{figure}
\end{proof}
This proposition leads to the following result:

\begin{theorem}\label{Thm:Alexey}
The function $F({\bf s},{ \bf t})$ can be analytically continued to a function analytic in the domain
\begin{multline*}
  {\mathfrak D}_\alpha:=\Big\{ {\vv s} \in \CC^2, {\vv t} \in \CC^2 \; :\;  s_i - t_j \not\in [0,\infty), \; \re(s_i)<\alpha, \;\re(t_j)>-\alpha
\\ \textnormal{ for all $i,j=1,2$}\Big\}.
\end{multline*}
\end{theorem}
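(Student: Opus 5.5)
The plan is to build the continuation locally from Proposition~\ref{Prop-A1}(ii) and then glue the local pieces into a single analytic function on ${\mathfrak D}_\alpha$.

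\emph{Step 1: local continuation.} Fix $(\vv s^0,\vv t^0)\in{\mathfrak D}_\alpha$. We construct a good contour $\mathcal C$ in the strip $-\alpha<\re(z)<\alpha$ with $s^0_i\in\CC^-_{\mathcal C}$ and $t^0_j\in\CC^+_{\mathcal C}$.
\begin{itemize}
\item Since the separation condition is open, $\mathcal C$ will also separate all $(\vv s,\vv t)$ in a small polydisc $U$ around $(\vv s^0,\vv t^0)$.
\item Then Proposition~\ref{Prop-A1}(ii) gives an analytic function $F_{\mathcal C}$ on $D_{\mathcal C}\supset U$.
\end{itemize}

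To build $\mathcal C$, use the horizontal-slit construction of Figure~\ref{Fig3+}. Each point $s_i$ with $\re(s_i)\ge0$ is to be left of the contour, so the region to its right along the horizontal ray $s_i+[0,\infty)$ must belong to $\CC^-$. Each point $t_j$ with $\re(t_j)\le 0$ is to be right of the contour, so its ray $t_j+(-\infty,0]$ must belong to $\CC^+$. Concretely, start from $\i\RR$ and add thin horizontal ``fingers'':
\begin{itemize}
\item one around the segment from $0$ to $s_i$ on level $\im(s_i)$, staying in $\re z<\alpha$ because $\re(s_i)<\alpha$;
\item one around the segment from $t_j$ to $0$, staying in $\re z>-\alpha$ because $\re(t_j)>-\alpha$.
\end{itemize}
The condition $s_i-t_j\notin[0,\infty)$ is exactly what makes this possible. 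When $\im s_i=\im t_j$, it says that $t_j$ lies strictly to the right of $s_i$ on that horizontal line, so the two fingers at the same height can be nested or made disjoint without crossing. The result remains a good contour in the sense of the definition.

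\emph{Step 2: gluing.} We need $F_{\mathcal C}=F_{\mathcal C'}$ on $D_{\mathcal C}\cap D_{\mathcal C'}$ whenever both contours are good and lie in the strip. By Proposition~\ref{Prop-A1}(ii), both functions agree with $F$ on the set where $\re s_i<-A$ and $\re t_j>A$, which is open and nonempty. The identity theorem then gives agreement on the connected component of $D_{\mathcal C}\cap D_{\mathcal C'}$ containing that set. Hence the main point is to show that $D_{\mathcal C}\cap D_{\mathcal C'}$ is connected, or at least that each of its points can be joined to the region $\{\re s_i\ll0,\ \re t_j\gg0\}$. The goodness property is designed for this: from any $s_i\in\CC^-_{\mathcal C}\cap\CC^-_{\mathcal C'}$, the leftward ray stays in both left regions, and similarly each $t_j$ moves rightward inside both right regions. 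Moving the $s_i$ leftward and the $t_j$ rightward is therefore a path inside the intersection reaching the region where the agreement is already known. The separate $\vv s$ and $\vv t$ coordinates can be moved independently, so the domain is a product and this argument suffices.

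\emph{Step 3: conclusion.} With consistency established, the local functions define a single analytic function on $\bigcup_{\mathcal C}D_{\mathcal C}$, and Step~1 shows this union equals ${\mathfrak D}_\alpha$. Conversely, any separated point satisfies the constraints of ${\mathfrak D}_\alpha$: a ray argument shows $s_i-t_j\notin[0,\infty)$, and $\re(s_i)<\alpha$, $\re(t_j)>-\alpha$ hold because of the strip.

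I expect the main obstacle to be the explicit contour construction in Step~1 when several points share an imaginary part or lie close together. A secondary point to check is that the fingers do not destroy the exponential decay at infinity; they are bounded modifications, so decay is unaffected.
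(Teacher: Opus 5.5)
Your proposal is correct and follows the paper's route: you produce a good contour in the strip $|\re(z)|<\alpha$ separating $\vv s$ from $\vv t$ and invoke Proposition~\ref{Prop-A1}, and your Step~2 (consistency on overlaps via the ray property of good contours and the identity theorem) usefully spells out a gluing point the paper leaves implicit. One slip in Step~1: the ray directions are swapped --- $s_i\in\CC^-_{\mathcal C}$ forces the leftward ray $s_i+(-\infty,0]$ into $\CC^-_{\mathcal C}$, and $t_j\in\CC^+_{\mathcal C}$ forces $t_j+[0,\infty)$ into $\CC^+_{\mathcal C}$ (the rays you wrote can never lie in those sets, since $\mathcal C$ coincides with $\i\RR$ near infinity), although the fingers you actually construct are the correct ones.
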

\begin{proof}
If $s_i - t_j \not\in [0,\infty)$  then there exists a good contour ${\mathcal C}$ that separates ${\bf s}$ and ${\bf t}$, and in fact separates the singularities in $\mathcal{P}_{\vv s}$ from the singularities in $\mathcal{P}_{\vv t}$, see Fig. \ref{Fig3+}.
Furthermore, since  $\re(s_j)<\alpha$ and $\re(t_j)>-\alpha$, this  contour can be chosen to lie in the strip $\{z: -\alpha<\re(z)<\alpha\}$.
Then we apply Proposition \ref{Prop-A1}.
\end{proof}
Now we have analytically continued $F({\bf s},{ \bf t})$ into ${\mathfrak D}_\alpha$. So we fix $({\bf s},{ \bf t}) \in {\mathfrak D}_\alpha$. There is some good contour ${\mathcal C}\subset\{|\re(z)|<\alpha\}$ that separates ${\bf s}$ and ${ \bf t}$, so that \eqref{def_F2} holds.
We can shift the contour of integration back from ${\mathcal C}$ to $\i \RR$. Of course, we need to ensure that ${\mathcal P} \cap \i \RR = \emptyset$ (see \eqref{defP} for definition of ${\mathcal P}$), so that no poles of the integrand lie on $\i \RR$. This can be done provided $\re(s_i)\not\in\ZZ_{\geq 0}$ and $\re(t_j)\not\in \ZZ_{\leq 0}$.

We encounter singularities arising from $t_1,t_2$ in the left half-plane $\re(z)<0$ and singularities from $s_1,s_2$ in the right half-plane $\re(z)>0$.
These singularities are simple poles if $t_1-t_2$ is not an integer, and $s_1-s_2$ is not an integer. (Note that the poles are
not simple  if $s_1,s_2>0$ and $s_1-s_2\in\ZZ$ or if $t_1,t_2<0$ and $t_1-t_2\in\ZZ$.)  Since $\mathcal{C}\subset\{|\re(z)|<\alpha\}$,
when we  change    the contour of integration from ${\mathcal C}$ to $\i \RR$, we get
 \begin{multline*}
 F({\bf s},{ \bf t})=\frac{1}{2\pi \i} \int_{{\mathcal C}} H({\bf s},{ \bf t}; z) \d z
 =\frac{1}{2\pi \i} \int_{\i \RR}
  H({\bf s},{ \bf t}; z) \d z \\ +  \sum_{\{k:\; \re(s_1)-k\geq 0\}}  \Res_{ z=s_1-k} H(\vv s,\vv t;z) +  \sum_{\{j:\; \re(s_2)-j\geq 0\}}  \Res_{z=s_2-j} H(\vv s,\vv t;z)\\
  -  \sum_{\{k:\; \re(t_1)+k\leq 0\}} \Res_{z=t_1+k} H(\vv s,\vv t;z) -   \sum_{\{j:\; \re(t_2)+j\leq 0\}}  \Res_{ z=t_2+j}H(\vv s,\vv t;z). \nonumber
 \end{multline*}
The above formula holds under the following assumptions on $\vv s$, $\vv t$:
 \begin{multline}\label{s-t-restrict1}
 \re(s_i)\not\in\ZZ_{\geq 0} \mbox{  and } \re(t_j)\not\in \ZZ_{\leq 0}; \\
 s_i-t_j\not\in[0,\infty), \re(s_i)<\alpha, \re(t_j)>-\alpha;
 \\ \mbox{if both $\re(s_j)>0$ we require } s_1-s_2\not\in\ZZ,  \\ \mbox{if both $\re(t_j)<0$ we require }  t_1-t_2\not\in\ZZ.
 \end{multline}
 We will be interested in  the case $\vv s=-\vv t $, in which case condition \eqref{s-t-restrict1}
 becomes
 \begin{multline}
\label{s-t-restrict2}
\re(t_1)\not\in\ZZ_{\leq 0},\; \re(t_2)\not\in\ZZ_{\leq 0},\;
\re(t_1)>-\alpha,\;\re(t_2)>-\alpha ; \\
\quad t_1\not\in(-\alpha, 0],\; t_2\not\in(-\alpha,0], \; t_1+t_2\not\in(-\infty,0];
\\ \mbox{if both $\re(t_1),\re(t_2)<0$ then we also require that   $t_1-t_2\not\in \ZZ$}.
 \end{multline}
 With $\vv s=-\vv t$,
 $$\Res_{ z=s_j-k} H(\vv s,\vv t;z)= \Res_{ z=-t_j-k}H(\vv s,\vv t;z)= -\Res_{ z=t_j+k} H(\vv s,\vv t;z),  k\in\ZZ_{\geq 0}$$ so under conditions \eqref{s-t-restrict2}, we obtain
   \begin{multline} \label{Res}
 F({\bf s},{ \bf t})=\frac{1}{2\pi \i} \int_{{\mathcal C}} H({\bf s},{ \bf t}; z) \d z
 =\frac{1}{2\pi \i} \int_{\i \RR} H({\bf s},{ \bf t}; z) \d z  \\  - 2  \sum_{\{k\in\ZZ_{\geq 0}:\; \re(t_1)+k\leq 0\}} \Res_{ z=t_1+k} H(\vv s,\vv t;z)  \\ -   2 \sum_{\{j\in\ZZ_{\geq 0}:\; \re(t_2)+j\leq 0\}}
 \Res_{ z=t_2+j}  H(\vv s,\vv t;z).
 \end{multline}
\subsection{Conclusion of the proof of Proposition \ref{Prop2}}
Now we take $u,v\in\RR$ as in Proposition \ref{Prop2}. In particular, $u,v\not\in\ZZ_{\leq 0}$. Consider
$t_1=u+\i \eps_1$, $t_2=v+\i \eps_2$, $\vv s=-\vv t$ with $\eps_1\ne \eps_2\in(0,1)$. It is  clear that condition \eqref{s-t-restrict2} is then satisfied,
with $t_1+t_2=u+v+\i(\eps_1+\eps_2)\not\in(-\infty, 0]$, so
from \eqref{Res} and \eqref{I2F} we obtain %
\begin{multline}\label{I_N-eps}
    \II_N(\alpha,u+\i \eps_1,v+\i\eps_2)
    \\=\frac{1}{
   4 \pi \i \Gamma(u+v)}\int_{\i \RR}\tfrac{ \Gamma(\alpha+z)^N\Gamma(\alpha-z)^N\Gamma(v+\i\eps_2+z)
   \Gamma(v+\i \eps_2-z)\Gamma(u+\i\eps_1+z)\Gamma(u+\i \eps_1-z )}{\Gamma(2z)\Gamma(-2z)}\d z
    \\+\sum_{\{j\geq 0:\; v+j\leq 0\}}m_j \Gamma(\alpha+v+\i \eps_2+j)^N\Gamma(\alpha -v -j -\i\eps_2)^N
   \\+\sum_{\{k\geq 0:\; u+k\leq 0\}}M_k \Gamma(\alpha+u+\i \eps_1+k)^N\Gamma(\alpha -u -k -\i\eps_1)^N,
\end{multline}
where
\begin{multline*}%
    m_j =-\tfrac{1}{\Gamma\left(u+v+\i (\eps_1+\eps_2)\right)} \\ \cdot  \Res_{z=v+\i\eps_2+j}\left(\tfrac{\Gamma(v+\i\eps_2+z)\Gamma(v+\i \eps_2-z)\Gamma(u+\i\eps_1+z)\Gamma(u+\i \eps_1-z )}{\Gamma(2z)\Gamma(-2z)} \right)
    \\=-\tfrac{(-1)^j}{j!}\, \tfrac {\Gamma(2v+2\i\eps_2+j)\Gamma(u+v+\i(\eps_1+\eps_2)+j)\Gamma(u-v+\i (\eps_1-\eps_2)-j )}{\Gamma\left(u+v+\i (\eps_1+\eps_2)\right)\Gamma(2(v+\i\eps_2+j))\Gamma(-2(v+\i\eps_2+j))}
    \\=\tfrac{\left(j+v+i \eps _2\right) \left(2 \left(v+i \eps _2\right)\right)_j \Gamma \left(u-v+i \eps _1-i
   \eps _2\right) \left(u+v+i \eps _1+i \eps _2\right)_j}{j! \left(v+i \eps _2\right) \Gamma
   \left(-2 \left(v+i \eps _2\right)\right) \left(-u+v-i \eps _1+i \eps _2+1\right)_j} %
\end{multline*}
and
\begin{multline*}%
    M_k=-\tfrac{1}{\Gamma\left(u+v+\i (\eps_1+\eps_2)\right)} \\ \cdot  \Res_{ z=u+\i\eps_1+k}\left(\tfrac{\Gamma(v+\i\eps_2+z)\Gamma(v+\i \eps_2-z)\Gamma(u+\i\eps_1+z)\Gamma(u+\i \eps_1-z )}{\Gamma(2z)\Gamma(-2z)}
   \right)
    \\=-\tfrac{(-1)^k}{k!} \,\tfrac {\Gamma(2u+2\i\eps_1+k)\Gamma(u+v+\i(\eps_1+\eps_2)+k)\Gamma(v-u-\i (\eps_1-\eps_2)-k )}{\Gamma\left(u+v+\i (\eps_1+\eps_2)\right)\Gamma(2(u+\i\eps_1+k))\Gamma(-2(u+\i\eps_1+k))}
    \\= \tfrac{\left(k+u+i \eps _1\right) \left(2 \left(u+i \eps _1\right)\right)_k \Gamma \left(-u+v-i \eps _1+i
   \eps _2\right) \left(u+v+i \eps _1+i \eps _2\right)_k}{k! \left(u+i \eps _1\right) \Gamma
   \left(-2 \left(u+i \eps _1\right)\right) \left(u-v+i \eps _1-i \eps _2+1\right)_k}.  %
\end{multline*}
It is natural to set $m_j=0$ if $v+j>0$ and $M_k=0$ if $u+k>0$.

We note that under the assumptions of Proposition \ref{Prop2}, we have $u,v\ne 0$ and $v-u\not\in\ZZ$ if both $u,v<0$.
So we can take the limits, and we obtain
$\lim_{\eps_1,\eps_2\to 0}m_j=m_j(u,v)$, and
$\lim_{\eps_1,\eps_2\to 0}M_k=m_k(v,u)$, where $m_j(u,v)$ is given by \eqref{their-atoms}.
 Note that taking the limit is also justified when one of the parameters $u,v$ is positive. For example,
if $v<0$ but $u>0$, then %
$M_k=0$ and the
expression $v-u + j$ in the limit of the denominator of $m_j$ is nonzero, as  $v-u+j<v+j\leq 0$.

Lemma \ref{Lem:J-int}, see \eqref{4DCT}, shows that we can use dominated convergence theorem
and  take the limit under the integral sign in \eqref{I_N-eps}.
Thus, passing to the limit as $\eps_1,\eps_2\to 0$ in \eqref{I_N-eps} we obtain \eqref{I_N-x}.

\begin{remark}
    If $u+v\in\ZZ_{\leq 0}$ then the integral in  \eqref{I_N-x} does not %
    {  appear in the formula.}

    {By Remark \ref{Rem:J-int+}, formula \eqref{I_N-x}  extends to the case when one of the parameters $u,v$ is a negative integer while the real part of the other one is not in $\ZZ_{\leq 0}$.}
\end{remark}

\subsection{Extensions of  Proposition~\ref{Prop2}}  \label{Sec:Lims}
When the assumptions of Proposition~\ref{Prop2} are not satisfied, the formula depends on whether $u \le v $ or $ u \ge v $, and we focus on the latter case.

We first consider the case $u-v\in\ZZ$. %

 \begin{proposition}\label{CL:u-v=m} %
     Suppose $v\in(-\alpha,0)\setminus \ZZ$ and $m\in\ZZ_{\geq 0}$ is such $v+m<0$. Let  $u=v+m$.
 \begin{enumerate}[(A)]
  \item    If $2v\not\in\ZZ_{\leq 0}$ %
     then
         \begin{multline}\label{G(v+m,v)}
         \GG_N(\alpha,v+m,v)
        \\   = \tfrac{1}{
    2\pi \Gamma(2v+m)} \int_{0}^\infty \left|\Gamma(\alpha+\i x)\right|^{2N} \tfrac{|\Gamma(v+\i x)|^2|\Gamma(v+m +\i x )|^2}{|\Gamma(2\i x)|^2}\d x
    -2S_1+2S_2,
     \end{multline}
     where
     \begin{multline*}
         S_1=\tfrac{1}{ \Gamma
   (1-2 v)} \sum_{j=0}^{m-1} \tfrac{(-1)^{j}  (j+v)  (m-j-1)! (2 v)_j
   (m+2 v)_j}{j! }  \Gamma(\alpha +j+v)^N \Gamma (\alpha-j-v )^N
     \end{multline*}
     and
     \begin{multline*}
         S_2= {(-1)^m} \sum _{i=0}^{\lfloor -v\rfloor -m} \tfrac{(i+m+v) \Gamma
   (-m-2 v+1) }{i! (i+m)! \Gamma (-i-2 m-2 v+1) \Gamma
   (-i-m-2 v+1)}
   \\ \cdot  \Big[N(\psi
   (i+m+v+\alpha )-\psi (-i-m-v+\alpha
   ))+\frac{1}{i+m+v}+\psi (-i-2 m-2 v+1)\\ +\psi
   (-i-m-2 v+1)-\psi (i+m+1)-\psi
   (i+1)\Big]\Gamma (\alpha-i-m-v )^N \Gamma (\alpha+i+m+v)^N.
     \end{multline*}
  \item   If  $v\in(-\alpha,0)\setminus \ZZ$ is such that $2v\in\ZZ_{\leq 0}$, then  formula \eqref{G(v+m,v)} still holds, except that the integral term vanishes:
  \begin{equation}
      \label{G(v+m,v)-D}
       \GG_N(\alpha,v+m,v)=-2S_1+2S_2.
  \end{equation}
\end{enumerate}
 \end{proposition}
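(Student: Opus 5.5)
Our plan is to obtain \eqref{G(v+m,v)} as a limit of Proposition~\ref{Prop2}. Fix $v$ and $m$ as in the statement and put $u_\delta=v+m+\delta$ with small real $\delta\ne0$. For $0<|\delta|$ small, $u_\delta\notin\ZZ_{\le0}$ and $v-u_\delta=-m-\delta\notin\ZZ$, so Proposition~\ref{Prop2} applies:
\[
\GG_N(\alpha,u_\delta,v)=\tfrac{1}{2\pi\Gamma(u_\delta+v)}\int_0^\infty(\cdots)\,\d x+\mathbb D_N(\alpha,u_\delta,v)+\mathbb D_N(\alpha,v,u_\delta).
\]
By Lemma~\ref{Lem:I-anal}, the left-hand side is continuous in $u$, so it tends to $\GG_N(\alpha,v+m,v)$ as $\delta\to0$. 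The integral term converges by dominated convergence using the uniform bound \eqref{4DCT}; note that $\re(u_\delta)$ stays in a compact interval avoiding $\ZZ_{\le0}$, because $v\notin\ZZ$. The prefactor $1/\Gamma(2v+m+\delta)$ tends to $1/\Gamma(2v+m)$. In case (B), where $2v+m\in\ZZ_{\le0}$ (note $2v\in\ZZ_{\le 0}$ forces this since $m\geq 0$), this prefactor tends to $0$ while the integral stays bounded. That is why the integral term is absent in \eqref{G(v+m,v)-D}.

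The real work is the limit of the two atomic sums, where individual terms blow up. In $\mathbb D_N(\alpha,u_\delta,v)$ the atoms sit at the points $v+j$, $0\le j<-v$, and carry the factor $\Gamma(u_\delta-v)/(v-u_\delta+1)_j=\Gamma(m+\delta)/(1-m-\delta)_j$. For $j\le m-1$ this is finite, giving $(-1)^j(m-j-1)!$ in the limit. This should produce, after using $1/\Gamma(1-2v)$ and $(2v+m)_j$, the sum $S_1$ with the weight $-2$, up to bookkeeping of signs and a factor $2$. For $j\ge m$, $(1-m-\delta)_j$ contains the factor $-\delta$, so these terms have simple poles in $\delta$. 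In $\mathbb D_N(\alpha,v,u_\delta)$ the atoms sit at $u_\delta+i=v+m+i+\delta$ with the factor $\Gamma(-m-\delta)$, which also has a simple pole. The index shift $j=i+m$ aligns the singular atoms of the two sums at the same points $v+m+i$, $0\le i\le\lfloor -v\rfloor-m$.

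Write each pair's sum as $A(\delta)\Gamma(\alpha+v+m+i)^N\Gamma(\alpha-v-m-i)^N\cdot\frac{c}{\delta}+B(\delta)\Gamma(\alpha+v+m+i+\delta)^N\Gamma(\alpha-v-m-i-\delta)^N\cdot\frac{c'}{\delta}$. First we check that the residues cancel, $A(0)c+B(0)c'=0$; this is automatic from continuity of the left side, but we will verify it directly. The finite part is then a derivative in $\delta$. Differentiating the $N$-th powers yields $N(\psi(\alpha+i+m+v)-\psi(\alpha-i-m-v))$. Differentiating the coefficient $\frac{(v+m+i+\delta)(2u_\delta)_i(u_\delta+v)_i}{i!\,\Gamma(1-2u_\delta)(u_\delta-v+1)_i}$, after rewriting Pochhammers as Gamma ratios, yields the terms $\frac1{i+m+v}$, $\psi(1-i-2m-2v)$, $\psi(1-i-m-2v)$, $-\psi(i+m+1)$ and $-\psi(i+1)$. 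Together these give $2S_2$.

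The main obstacle is this Laurent-expansion bookkeeping: getting the signs, the factorials $i!(i+m)!$, and the reflection-formula conversions (for instance of $\Gamma(-m-\delta)$ and of $1/(1-m-\delta)_{i+m}$) to match exactly. To handle it, we will express both coefficients via $\Gamma(x)\Gamma(1-x)=\pi/\sin\pi x$ so that the common singular factor $1/\sin(\pi\delta)$ is explicit, and then expand to first order. A sanity check is the case $N=1$, $m=0$, where one can compare against a de Branges-type evaluation. Case (B) follows by the same limit, with only the integral term dropping out as noted.
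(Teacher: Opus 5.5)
Your proposal is correct and follows essentially the paper's route: perturb $u$ to $v+m+\delta$ in Proposition~\ref{Prop2}, pass to the limit in the integral via \eqref{4DCT} (with $1/\Gamma(2v+m)=0$ removing it in case (B)), send the atoms $j\le m-1$ directly to $-2S_1$, and pair the $i$-th atom of $\mathbb D_N(\alpha,v,u_\delta)$ with $j=i+m$, using the reflection formula so that the $1/\delta$ parts cancel and the finite part is a first-order derivative. Two small corrections. First, in case (B) one has $2v+m<0$ because $2v+m=(v+m)+v$ with $v+m<0$, not because $m\ge 0$. Second, the bracket in $S_2$ does not come from differentiating only the coefficient you display: that alone would leave stray $\psi(m+1)$, $-\psi(1-m-2v)$ and a second copy of $\psi(1-i-2m-2v)$, and would miss $-\psi(i+1)$. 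The bracket is instead the difference of the logarithmic derivatives of two regular parts: that of $m_i(v,u_\delta)$, including the factor $1/\Gamma(1+m+\delta)$ produced by reflecting $\Gamma(-m-\delta)$, and that of $m_{i+m}(u_\delta,v)$.
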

 \begin{proof}

 Clearly,  $v<u<0$ and $u\not\in\ZZ$.
     With $u+\eps=v+m+\eps$, for small enough $\eps>0$, we can apply \eqref{I_N-x} with $u$ replaced by $v+m+\eps$ and take the limit as $\eps\to 0^+$.
We omit the details.
\arxiv{
For completeness, we provide omitted argument. In view of \eqref{4DCT}, we can pass to the limit under the  integral sign.   If $2v$ is an odd integer, then the integral converges, but  the normalizing constant $1/\Gamma(2v+m+\eps)$ converges to $1/\Gamma(2v+m)=0$,   as  $2v+m\in \ZZ_{\leq 0}$. Thus   if $2v$ is an odd integer, the integral term vanishes in the limit.
(Note that under the assumptions of this proposition, $2v$ cannot be an even integer.)  %

     The sum over $j$ is from $0$ to $J:=\lfloor-v\rfloor$ and by assumption we have $J\geq m$.
     We split this sum into  the (possibly empty) sum over $j\in\{0,\dots,  m-1\}$  and then over $j\in\{m,\dots,J\}$.
   For $j\leq m-1$ , we compute the  contribution of each term by taking the limit of
     \begin{multline*}
         m_j(v+m+\eps,v)\,\left(\Gamma(\alpha+v+j)\Gamma(\alpha-v-j)\right)^N
         \\ =
        -2 \frac{(j+v) (2 v)_j \Gamma (m+\eps ) (m+2 v+\eps )_j}{ j!
   \Gamma (1-2 v) (-m-\eps +1)_j}\Gamma(\alpha+v+j)^N\Gamma(\alpha-v-j)^N
    \\=-2\frac{(-1)^{j} (j+v) (2 v)_j \Gamma (-j+m+\eps ) (m+2
   v+\eps )_j}{ j! \Gamma (1-2 v)}\Gamma(\alpha+v+j)^N\Gamma(\alpha-v-j)^N
  \\
  =-2\frac{(-1)^{j} (j+v) \Gamma (m-j) (2 v)_j \Gamma
   (j+m+2 v) \Gamma(\alpha +j+v)^N \Gamma (-j-v+\alpha )^N}{ j! \Gamma
   (1-2 v) \Gamma (m+2 v)}+O\left(\eps \right).
     \end{multline*}
(Here we used \eqref{InvPochh} to rewrite the expression before taking the limit.) This gives the first sum in \eqref{G(v+m,v)}.

     Since $i+u+\eps=i+v+m+\eps<0$, the sum over $i$ is from $0$ to $J-m$, provided $\eps>0$ is small enough so that $\eps<-J-v$.  To compute the limit as $\eps\to 0^+$, we pair each  term in the  sum over $i$ with
     the term $j=m+i$ in the sum over $j\in\{m,\dots, J\}$. We compute the  contribution of each such pair for $i\in\{0,\dots,J-m\}$ by taking the limit of
\begin{equation}
    \label{sum-ryba}
    m_{i+m}(u+\eps,v)\Gamma(\alpha+v+i+m)^N\Gamma(\alpha-v-i-m)^N +M_i(u+\eps,v) \Gamma(\alpha+u+\eps+i)^N\Gamma(\alpha-u-\eps-i)^N.
\end{equation}
  To simplify
   $$m_{i+m}(u+\eps,v)=-\frac{2 (i+m+v) \Gamma (-m-\eps +1) \Gamma (m+\eps )
   }{\Gamma (1-2 v)
    (i+m)! \, \Gamma (i-\eps +1)  } \cdot \frac{ \Gamma (i+m+2 v)}{\Gamma (2 v)} \cdot \frac{\Gamma (i+2 m+2 v+\eps )}{\Gamma (m+2
   v+\eps )},$$
   we use the identity
  \begin{equation}\label{EuRef1}
      \frac{\Gamma(j+2v)}{\Gamma(2v)}=(-1)^j \frac{\Gamma(1-2v)}{\Gamma(1-2v-j)}
  \end{equation}
   and
   \begin{equation}\label{EuRef2}
       \Gamma(m+\eps)\Gamma(1-m-\eps)=(-1)^m \frac{\pi}{\sin(\pi \eps)}.
   \end{equation}
   (Both are consequences  of   the  Euler's reflection formula $\Gamma(z)\Gamma(1-z)=\tfrac{\pi}{\sin(\pi z)}$ \cite[\href{http://dlmf.nist.gov/5.5.E3}{(5.5.3)}]{NIST:DLMF}.)
   We obtain
   \begin{multline}\label{m(i+m)}
       m_{i+m}(u+\eps,v)=-\frac{2 (i+m+v) (-1)^m \pi
   }{\Gamma (1-2 v)
    (i+m)! \, \Gamma (i-\eps +1)  \sin(\pi \eps) } \cdot  \frac{\Gamma(1-2v)}{\Gamma(1-2v-i-m)}\cdot \frac{\Gamma(1-m-2v-\eps)}{\Gamma(1-2v -2m -i-\eps)}
    \\=-(-1)^{m}\frac{2 (i+m+v)
   }{
    (i+m)! \, i!  } \cdot  \frac{1}{\Gamma(1-2v-i-m)}\cdot \frac{\Gamma(1-m-2v)}{\Gamma(1-2v -2m -i)} \left(\tfrac{1}{\eps}-O(\eps^3)\right)
    \\ \cdot  \left(1-\eps \psi(1-m-2v)+O(\eps^2)\right)\left(1+\eps \psi(i+1) +O(\eps^2)\right)\left(1+\eps \psi(1-2v-2m-i) +O(\eps^2)\right)
   \\=(-1)^{m}\tfrac{2 (i+m+v)
   }{\Gamma (1-2 v-m)
   }
   \tfrac{(-2v-m)_i}{i!} \tfrac{(-2v-m)_{i+m}}{(i+m)!} \left(-\tfrac{1}{\eps}-  \psi(i+1)+ \psi(1-m-2v) -\psi(1-2v-2m-i)  \right) +O(\eps).
   \end{multline}
   Here we used repeatedly
   \begin{equation}\label{1/eps}
       \left(  \tfrac{  1}{\eps} +a+O(\eps)\right)(1+\eps b+O(\eps^2)) =   \tfrac{1}{\eps} + a + b+O(\eps).
   \end{equation}
Next, we analyze the second term in \eqref{sum-ryba}. Recall that  $i\in\{0,\dots,J-m\}$. Using Euler's reflection based identities as before, we have
  \begin{multline}\label{Mi}
      m_i(v,u+\eps)  = \frac{\Gamma (-m-\eps ) (i+m+v+\eps ) (2 (m+v+\eps
   ))_i (m+2 v+\eps )_i}{i! (m+v+\eps ) (m+\eps +1)_i
   \Gamma (-2 (m+v+\eps ))}
   \\= (-2)\frac{\Gamma (-m-\eps ) \Gamma (m+\eps +1)
   (i+m+v+\eps ) \Gamma (i+m+2 v+\eps ) \Gamma (i+2
   (m+v+\eps ))}{i!   \Gamma (i+m+\eps +1)
   \Gamma (1-2 (m+v+\eps )) \Gamma (2 (m+v+\eps )) \Gamma
   (m+2 v+\eps )}
   \\=\frac{2 (-1)^m (i+m+v+\eps )}{i!   \Gamma (i+m+\eps +1) \Gamma (1-2 (m+v+\eps ))} \cdot  \frac{ \pi }{\sin (\pi \eps)}
   \cdot \frac{\Gamma (i+m+2 v+\eps )}{\Gamma
   (m+2 v+\eps )} \cdot \frac{ \Gamma (i+2
   (m+v+\eps ))}{ \Gamma (2 (m+v+\eps) )}
   \\=\frac{2 (-1)^m (i+m+v+\eps )}{ i!\, \Gamma(i+m+\eps )} \cdot  \frac{ \pi }{\sin (\pi \eps)}
   \cdot
\frac{1}{\Gamma (1-2 (m+v+\eps )-i)} \frac{\Gamma
   (1-2v-m- \eps )}{\Gamma
   (1-2v-m- \eps -i)}
   \\= \frac{2 (-1)^m (i+m+v  )}{ i!(i+m)!}
   \cdot
\frac{1}{\Gamma (1-2 (m+v)-i)} \frac{\Gamma
   (1-2v-m )}{\Gamma
   (1-2v-m  -i)}
   \\ \cdot  \left(1+\tfrac{\eps}{i+m+v}\right)\left(1-\eps\psi(1+i+m)+O(\eps^2)\right)\left(\tfrac{1}{\eps}+O(\eps^3)\right)
   \left(1+2\eps \psi(1-2v-2m-i) +O(\eps^2)\right)  \\\cdot  \left(1+\eps \psi(1-2v-m-i) +O(\eps^2)\right)
   \left(1-\eps\psi(1-2v-m)+O(\eps^2)\right)
  \\ = (-1)^m\frac{2 (i+m+v  )}{  \Gamma(1-2v-m)} \frac{(-2v-m)_i}{i!}\frac{(-2v-m)_{m+i}}{(m+i)!}
  \big(\tfrac{1}{\eps}-\psi(1-2v-m)+\tfrac{1}{i+m+v} -\psi(1+i+m)
  \\+2\psi(1-2v-2m-i)+ \psi(1-2v-m-i) \big)+O(\eps).
  \end{multline}
  Since
  \begin{multline*}
  \Gamma(\alpha+u+\eps+i)^N\Gamma(\alpha-u-\eps-i)^N=\Gamma(\alpha+v+m+\eps+i)^N\Gamma(\alpha-v-m-\eps-i)^N
  \\ = \Gamma(\alpha+v+m)^N\Gamma(\alpha-v-m-i)^N\left(1+ \eps N \left[\psi (\alpha+v+m+\eps+i)-\psi(\alpha-v-m-\eps-i)\right]\right)+O(\eps^2),
  \end{multline*}
  combining this with \eqref{m(i+m)} and \eqref{Mi}, after cancelling the singular terms   in the sum of  \eqref{sum-ryba}, we can  take the limit as $\eps\to 0^+$. We obtain
  the second sum in \eqref{G(v+m,v)}.
  }
 \end{proof}

  \begin{corollary} \label{Prop2-sing}
      Assume $v=u<0$ and $\alpha+v>0$.  If $2v\not\in\ZZ_{\leq 0}$, then %
       \begin{multline}\label{I_N-x-sing-alt}
     \GG_N(\alpha,v,v)=
    \tfrac{1}{
    2\pi \Gamma(2v)}\int_{0}^\infty \left|\Gamma(\alpha+\i x)\right|^{2N}\tfrac{|\Gamma(v+\i x )|^4}{ |\Gamma(2\i x)|^2}\d x
    \\+
    \tfrac{2}{{ \Gamma(1-2v)}}\,\sum_{\{j\geq 0:\, j+v<0\}} \,(j+v) \tfrac{(-2v)_j^2}{j!^2} \, \left[\Gamma (-j-v+\alpha ) \Gamma
   (j+v+\alpha )\right]^N   \\ \cdot  \Big(N \left[\psi
    (j+v+\alpha )-\psi  (-j-v+\alpha )\right]+2 \left[\psi(1-j-2 v)- \psi  (j+1)\right]+\tfrac1{j+v}\Big).
    \end{multline}
  \end{corollary}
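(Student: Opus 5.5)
This is the special case $m=0$ of Proposition~\ref{CL:u-v=m}(A). We specialize formula \eqref{G(v+m,v)} to $m=0$ and simplify. Since $u=v+0$, and by hypothesis $v\in(-\alpha,0)$ with $2v\notin\ZZ_{\le0}$, we also have $v\notin\ZZ$. So the hypotheses of Proposition~\ref{CL:u-v=m}(A) hold with $m=0$. The integral term then matches the first term of \eqref{I_N-x-sing-alt} at once, since $\Gamma(2v+m)=\Gamma(2v)$ and $|\Gamma(v+\i x)|^2|\Gamma(v+m+\i x)|^2=|\Gamma(v+\i x)|^4$. The sum $S_1$ runs over $j=0,\dots,m-1$, so it is empty for $m=0$. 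It remains to show that $2S_2$ at $m=0$ equals the sum in \eqref{I_N-x-sing-alt}.

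At $m=0$ the index $i$ in $S_2$ runs over $0\le i\le\lfloor -v\rfloor$. Since $v\notin\ZZ$, this is the same set as $\{j\ge0: j+v<0\}$. The prefactor becomes
\[
\tfrac{(i+v)\Gamma(1-2v)}{i!^2\,\Gamma(1-i-2v)^2}.
\]
By Euler reflection (equivalently, \eqref{EuRef1}), $\Gamma(1-2v)/\Gamma(1-2v-i)=(-2v-i)_i\cdot(\text{sign})$. The cleanest route is to write $\tfrac{\Gamma(1-2v)}{\Gamma(1-i-2v)^2}=\tfrac{1}{\Gamma(1-2v)}\bigl(\tfrac{\Gamma(1-2v)}{\Gamma(1-2v-i)}\bigr)^2$ and then use $\Gamma(1-2v)/\Gamma(1-2v-i)=(1-2v-i)_i=(-1)^i(2v)_i$. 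The sign squares away, and we get $(2v)_i^2/\Gamma(1-2v)$.

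Here a discrepancy appears with the stated $(-2v)_j^2$: in general $(2v)_j\ne\pm(-2v)_j$. I would therefore recheck the Pochhammer form against the expression used inside the proof of Proposition~\ref{CL:u-v=m}, namely the factors $\tfrac{(-2v-m)_i}{i!}\tfrac{(-2v-m)_{i+m}}{(i+m)!}\Gamma(1-2v-m)^{-1}$ from \eqref{m(i+m)}. Both pieces of this computation are identity-level bookkeeping, and I would settle them by direct expansion for $i=1$. This sign and Pochhammer bookkeeping is the only real obstacle. I expect the intended reading to be consistent with the $(-2v-m)_i$ form that appears in \eqref{m(i+m)} and \eqref{Mi}.

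Finally, the bracket in $S_2$ at $m=0$ becomes
\[
N\bigl[\psi(i+v+\alpha)-\psi(\alpha-i-v)\bigr]+\tfrac{1}{i+v}+2\psi(1-i-2v)-2\psi(i+1),
\]
since the two $\psi(1-i-2m-2v)$ and $\psi(1-i-m-2v)$ terms coincide, as do $\psi(i+m+1)$ and $\psi(i+1)$. This is exactly the bracket in \eqref{I_N-x-sing-alt}, and the Gamma factors $\Gamma(\alpha-i-v)^N\Gamma(\alpha+i+v)^N$ match as well. Multiplying by $2$ (from the $+2S_2$) gives the factor $\tfrac{2}{\Gamma(1-2v)}$. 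This completes the reduction.

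As a fallback, if one prefers not to rely on the omitted details of Proposition~\ref{CL:u-v=m}, the same identity follows directly. Apply Proposition~\ref{Prop2} with $u=v+\eps$, and note that $\GG_N(\alpha,v+\eps,v)\to\GG_N(\alpha,v,v)$ by Lemma~\ref{Lem:I-anal}. Then pair $m_j(v+\eps,v)$ with $m_j(v,v+\eps)$: each carries the pole $\Gamma(\pm\eps)$, the poles cancel, and the $O(1)$ terms give the bracket via \eqref{1/eps}. Passing to the limit in the integral is justified by the dominated convergence bound \eqref{4DCT}.
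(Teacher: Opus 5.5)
Your reduction is the paper's own proof, which consists of the single remark that \eqref{I_N-x-sing-alt} is \eqref{G(v+m,v)} at $m=0$. Your bookkeeping is also right: $S_1$ is empty, the range $0\le i\le\lfloor -v\rfloor$ coincides with $\{j\ge 0: j+v<0\}$, and the bracket collapses as you say. By \eqref{InvPochh}, $\Gamma(1-2v)/\Gamma(1-2v-i)^2=(2v)_i^2/\Gamma(1-2v)$. So this argument proves \eqref{I_N-x-sing-alt} with $(2v)_j^2$ in place of $(-2v)_j^2$.

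The gap is that you leave this discrepancy open and plan to resolve it in the wrong direction. The check at $i=1$ cannot detect anything, since $(2v)_1^2=(-2v)_1^2=4v^2$. The first index where the two readings differ is $j=2$, where $(2v)_2^2=4v^2(2v+1)^2\neq 4v^2(2v-1)^2=(-2v)_2^2$. Moreover, the factors $(-2v-m)_i$ in \eqref{m(i+m)}, which you expect to be the intended reading, are themselves a slip. Applying \eqref{InvPochh} with $a=-2v-m$ gives $\Gamma(1-2v-m)/\Gamma(1-2v-m-i)=(-1)^i(2v+m)_i$.

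Your computation is the correct one, and the printed $(-2v)_j^2$ is a misprint. Two independent checks confirm this.

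First, your fallback settles it directly. At $u=v$ the atom \eqref{their-atoms} carries $(u+v)_j(2v)_j/(v-u+1)_j=(2v)_j^2/j!$. Pairing $m_j(v+\eps,v)$ with $m_j(v,v+\eps)$ therefore yields $\tfrac{2(j+v)(2v)_j^2}{j!^2\,\Gamma(1-2v)}$ times the bracket. To reach that bracket you need more than \eqref{1/eps}: you also need the reflection identity $\psi(1-z)-\psi(z)=\pi\cot(\pi z)$. It converts $-2\psi(2v+j)+2\psi(2v)-2\psi(1-2v)$ into $-2\psi(1-2v-j)$.

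Second, at $2v=-k$ one has $(2v)_j^2/j!^2=\binom{k}{j}^2$. This is exactly the coefficient in \eqref{I_N-x-sing+}, which for even $k$ is obtained independently from Proposition~\ref{Cl:uv=-m,-k}. By contrast, $(-2v)_j^2/j!^2=\binom{k+j-1}{j}^2$.

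Finally, the numbers $\Gamma(\alpha+v+j)\Gamma(\alpha-v-j)$ are distinct for distinct $j$, and each bracket has a nonzero coefficient of $N$. Hence the formula as printed genuinely fails whenever $v<-2$. Your proof should conclude with $(2v)_j^2$, equivalently $\Gamma(1-2v)^2/\Gamma(1-2v-j)^2$, rather than defer to the $(-2v)$ form.
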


\begin{proof}
    The formula follows from \eqref{G(v+m,v)} with $m=0$.
\end{proof}
We remark that if $2v$ is a negative  odd integer then the same formula holds, but without the integral term. However if
$2v$ is an even integer then the integral in \eqref{I_N-x-sing-alt} is replaced by an additional term, see \eqref{I_N-x-sing+}.

Next,   we consider the case $u,v\in\ZZ_{\leq 0}$.
 \begin{proposition} \label{Cl:uv=-m,-k} %
 Fix two non-negative integers $m,k\in\ZZ_{\geq 0}$  such that $m\le k<\alpha$. Then with $u=-m$, $v=-k$ we have
  \begin{multline}\label{G(-m,-k)}
     \GG_N(\alpha,-m,-k)=  (-1)^{k+m} \tfrac{(k+m)!}{k!^2m!^2} \Gamma (\alpha )^{2 N}\\+
     \sum_{j=0}^{k-m-1}
   (-1)^j {\binom{m+k}{j}}\, \tfrac{ 2 (k-j)(k-m-j-1)!}{(2k-j)!} \Gamma (\alpha +j-k)^N \Gamma (\alpha -j+k )^N
       \\+
      (-1)^{k+m} \sum_{i=0}^{m-1} {\binom{m+k}{i}}\
     \tfrac{2 (m- i)} {(2m- i)!  ( i+k-m)!  }
   \Big[N\left(\psi(\alpha+m-i)-\psi(\alpha+i-m)\right)
    \\ +\psi(i+1)+\psi(1+i+k-m)- \psi(1+k+m-i) -\psi(1+2m-i) +\tfrac{1}{m-i}\Big] \\ \cdot  \Gamma (\alpha+ i-m )^N \Gamma (\alpha- i+m)^N.
 \end{multline}
\end{proposition}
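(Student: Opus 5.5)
The plan is to obtain \eqref{G(-m,-k)} as a limit of formulas already established for nearby parameters, using the continuity of $\GG_N$ in $(u,v)$. By Lemma~\ref{Lem:I-anal} and Hartogs' theorem, $\GG_N(\alpha,u,v)$ is holomorphic in $(u,v)$ on $\{\re u,\re v>-\alpha\}$. Hence
\[
\GG_N(\alpha,-m,-k)=\lim_{\eps\to0^+}\lim_{\delta\to0^+}\GG_N(\alpha,-m+\eps,-k+\delta),
\]
or along any convenient path.

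\textbf{Case $m<k$.} Here it is simplest to put $v=-k+\delta$ with $\delta\notin\ZZ$ small and $u=v+(k-m)$. Proposition~\ref{CL:u-v=m} then applies, with $v$ noninteger and shift $k-m$, and gives
\[
\GG_N(\alpha,v+k-m,v)=\text{integral}-2S_1+2S_2 .
\]
We let $\delta\to0$ in each term.

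1. \emph{Integral term.} It carries the prefactor $1/\Gamma(2v+k-m)=1/\Gamma(-k-m+2\delta)=O(\delta)$. The integrand contains $|\Gamma(v+\i x)|^2|\Gamma(v+k-m+\i x)|^2/|\Gamma(2\i x)|^2$, which stays integrable uniformly in $\delta$ by Remark~\ref{Rem:J-int+} and the bound \eqref{u=-m bound}. Near $x=0$ the factor $1/|\Gamma(2\i x)|^2\sim 4x^2$ kills the $1/(\delta^2+x^2)$ singularities. So I expect the integral term to vanish in the limit. The exception is a delicate $O(1/\delta)$ contribution from $x\sim\delta$ when both gamma factors blow up; this would have to be checked, and it would be absorbed into the constant term $\Gamma(\alpha)^{2N}$.

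2. \emph{The sum $S_1$.} It is finite, and its terms converge after simplifying $(2v)_j/\Gamma(1-2v)$ with the reflection identities \eqref{EuRef1}--\eqref{EuRef2}. This should produce the middle sum over $j\le k-m-1$.

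3. \emph{The sum $S_2$.} Its terms are regular for $i<m$; they give the digamma bracket sum with $\psi(\alpha\pm(m-i))$.

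4. \emph{The index $i=m$ (i.e.\ $j=k$).} This is where $i+m+v\to0$ and the pole $\Gamma(\alpha\pm0)$ arises. It yields the isolated term $(-1)^{k+m}\frac{(k+m)!}{k!^2m!^2}\Gamma(\alpha)^{2N}$, coming from the $\frac{1}{i+m+v}$ entry multiplied by $(i+m+v)$. The terms with $i>m$ vanish, since $1/\Gamma(1-i-2m-2v)\to 1/\Gamma(1+2k-i-2m)$ only at negative integers.

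\textbf{Case $m=k$.} Use Corollary~\ref{Prop2-sing} with $v=u=-k+\delta$ and let $\delta\to0$ in the same way.

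The main obstacle is the bookkeeping in the $\delta\to0$ limit. We must track which Pochhammer and reciprocal gamma factors vanish or blow up, match the surviving terms with the binomials $\binom{m+k}{j}$ (via $(-2v-m)_i/i!\to\binom{2k-m}{i}$-type simplifications), and verify that the integral term contributes only to the $\Gamma(\alpha)^{2N}$ constant. The fallback, if the limit of the integral is awkward, is to redo the contour argument of Section~\ref{Sec:AC} directly at $u=-m$, $v=-k$. In that approach the poles at $z=\pm(k-j)$ become double poles, and their residues produce the $N\psi$ terms directly. Meanwhile the integrand on $\i\RR$ has the removable structure from \eqref{sinh+G-u}, and the pole at $z=0$ gives the $\Gamma(\alpha)^{2N}$ term.
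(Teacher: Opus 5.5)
Your limiting strategy is sound, and your steps 2 and 3 check out. Along $v=-k+\delta$, $u=v+(k-m)$, the terms of $-2S_1$ converge to the sum over $j\le k-m-1$ in \eqref{G(-m,-k)}. The terms of $2S_2$ with $i<m$ converge exactly to the digamma sum.

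\textbf{The gap: the integral term and the constant term.} Your appeal to Remark~\ref{Rem:J-int+} for a $\delta$-uniform bound fails here. That remark needs one parameter's real part to stay away from $\ZZ_{\le0}$, but on your path both $v=-k+\delta$ and $u=-m+\delta$ approach poles at once. The factor $4x^2$ from $1/|\Gamma(2\i x)|^2$ absorbs one factor $1/(\delta^2+x^2)$, not two. Near $x=0$ the integrand behaves like $\Gamma(\alpha)^{2N}\,4x^2/\bigl((k!\,m!)^2(\delta^2+x^2)^2\bigr)$. Its integral is $\pi\Gamma(\alpha)^{2N}/\bigl((k!\,m!)^2|\delta|\bigr)$. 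Since $1/\Gamma(-k-m+2\delta)\sim(-1)^{k+m}(k+m)!\,2\delta$, the integral term converges to $\mathrm{sgn}(\delta)(-1)^{k+m}\tfrac{(k+m)!}{k!^2m!^2}\Gamma(\alpha)^{2N}$. That is a full signed copy of the constant term, not a negligible correction.

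Your item 4 is also incorrect:
\begin{itemize}
\item For $\delta>0$, $\lfloor -v\rfloor=k-1$, so $S_2$ stops at $i=m-1$. There is no $i=m$ term, and the whole constant comes from the integral.
\item For $\delta<0$, the $i=m$ term is present. Because of the factor $2$ in $2S_2$, it contributes $2(-1)^{k+m}\tfrac{(k+m)!}{k!^2m!^2}\Gamma(\alpha)^{2N}$, and the integral removes one copy.
\end{itemize}
So ``integral $\to0$, and $i=m$ gives the constant'' yields the wrong answer. Making your route rigorous requires an explicit approximate-identity computation of the integral's limit. The same problem occurs in your $m=k$ case via Corollary~\ref{Prop2-sing}, where $1/\Gamma(2v)=O(\delta)$ multiplies an integral of size $1/|\delta|$. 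Also, for $m=0$, Proposition~\ref{CL:u-v=m} requires $\delta<0$.

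\textbf{How the paper avoids this.} It applies Proposition~\ref{Prop2} directly along the anti-diagonal $u=-m-\eps$, $v=-k+\eps$. Then $v-u\notin\ZZ$, and the integral is finite by Lemma~\ref{Lem:J-int}. Since $u+v=-k-m$ exactly, the prefactor $1/\Gamma(u+v)$ is identically zero, so the integral term vanishes for every $\eps$. Everything then comes from the atoms:
\begin{itemize}
\item the regular atoms with $j\le k-m-1$;
\item the pairs $(i,\,j=k-m+i)$, whose $1/\eps$ singularities cancel and leave the digamma brackets;
\item one unpaired atom, whose prefactor of size $\eps$ cancels a $1/\eps$ pole and yields $(-1)^{k+m}\tfrac{(k+m)!}{k!^2m!^2}\Gamma(\alpha)^{2N}$.
\end{itemize}
Your fallback of working directly at $u=-m$, $v=-k$ is not a ready alternative. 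There the integrand on $\i\RR$ behaves like $c\,x^{-2}$ at $0$; the cancellation in \eqref{sinh+G-u} handles only one integer parameter. Moreover, since $s_1-t_1=2m\ge0$, no good contour separates the pinching poles.
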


 \begin{proof}We apply \eqref{I_N-x} with $u=-m-\eps$ and $v=-k+\eps$ where $\eps\in(0,1/2)$ and take a limit as $\eps\to 0$.
Contributions to the limit may come from the integral and from the discrete part.  We omit the details.
 \arxiv{ For completeness, we provide omitted argument.

We first remark that the  integral term does not contribute. By Lemma \ref{Lem:J-int},  for $\eps\in(0,1/2)$  the integral is finite, as then  $u=-m-\eps, v=-k+\eps\not \in \ZZ_{\leq 0}$. Since $z\mapsto 1/\Gamma(z)$ is analytic on the  complex plane $\CC$,  and $1/\Gamma(-k)=0$,  with the normalizing constant  $1/\Gamma(u+v)=1/\Gamma(-k-m)=0$, the  contribution from the integral term vanishes.

Since $\eps\in(0,1/2)$,  in \eqref{I_N-x} parameter $j$ varies from $0$ to $k$ and parameter $i$ varies from $0$ to $m-1$. We split  the sum over $j$, into the three ranges:  $j=0,\dots,k-m-1$ and $j=m-k,\dots, k-1$, and then we consider separately $j=k$.
For $j\leq k-m-1$,  we compute the limit of $m_j(u,v)\Gamma(\alpha+v+j)^N\Gamma(\alpha-v-j)^N$
by noting that
\begin{multline*}
   m_j(u,v)=\frac{(j-k+\eps ) (-k-m)_j (2 (\eps -k))_j \Gamma (k-m-2\eps ) }{j! (\eps -k) \Gamma (-2(\eps -k)) (-k+m+2 \eps +1)_j}\to\frac{(k-j)(-k-m)_j(-2k)_j(k-m-1)!}{j!k(2k-1)!(-k+m+1)_j}
   \\=2(-1)^j (k-j)\tbinom{m+k}{j} \tfrac{(k-m-j-1)!}{(2k-j)!}.
\end{multline*}
This gives the first sum in \eqref{G(-m,-k)}.

For  $i =0,\dots,m-1$, we pair every term in the sum over $i$ with the term $j=k-m+i$ in the sum over $j$. Together with the previous sum, this
covers $j=0,\dots,m-k,\dots,k-1 $. (We leave $j=k$ for later.)    So we  need to compute the limit as $\eps\to 0$ of  the following
\begin{multline}\label{m+M2}
    m_{k-m+i}(u,v)\Gamma(\alpha+v+k-m+i)^N\Gamma(\alpha-v-k+m-i)^N+M_{i}(u,v)\Gamma(\alpha+u+i)^N\Gamma(\alpha-u-i)^N
     \\
    = m_{k-m+i}(-m-\eps,-k+\eps)\Gamma (\alpha+i-m +\eps )^N \Gamma (\alpha-i+m
    -\eps )^N
    \\+M_{i}(-m-\eps,-k+\eps)\Gamma (\alpha+i-m -\eps )^N \Gamma (\alpha-i+m
    +\eps )^N.
\end{multline}
As previously,
\begin{multline}\label{GammaN-GammaN}
  \Gamma (\alpha+i-m \pm \eps )^N \Gamma (\alpha-i+m
   \mp \eps )^N
   \\=\Gamma (\alpha+i-m )^N \Gamma (\alpha-i+m )^N
   \left(1\mp N \eps \left[\psi (\alpha+m-i )-\psi (\alpha+i-m ) \right] \right)+O(\eps^2).
\end{multline}
As previously, we use Euler's reflection formulas \eqref{EuRef1} and \eqref{EuRef2} and also \eqref{InvPochh}  to rewrite the expressions,
we use the  low-order expansions { for the factors} and we use \eqref{1/eps}. We obtain
\begin{multline*}
  m_{k-m+i}(-m-\eps,-k+\eps)
  =  \frac{-2(i-m+\eps ) (-k-m)_{i+k-m} \Gamma (k-m-2 \eps ) (2
   (\eps -k))_{i+k-m}}{  \Gamma (1-2 (\eps -k))
   (i+k-m)! (-k+m+2 \eps +1)_{i+k-m}}
   \\ =\frac{(-1)^{k-m} (i-m+\eps ) (m+k)!}{(i+k-m)!\,(2m-i)!}
   \frac{ 2\pi  }{   \Gamma (i+2
   \eps +1) \sin(2\pi
   \eps  )
   }\frac{ 1}{\Gamma (1+k+m-i -2\eps)}
   \\ =-\frac{(-1)^{k-m} (m-i ) (m+k)!}{(i+k-m)!(2m-i)!i!(k+m-i)!}
  \Big(\frac{1}{\eps}- \frac{1}{m-i} -2\psi(1+i)+2\psi(1+k +m-i)\Big)+ O(\eps).
\end{multline*}
Thus, from \eqref{GammaN-GammaN} we obtain  the first term in \eqref{m+M2}, which becomes
\begin{multline}\label{2.30a}
\frac{(-1)^{k-m}(m-i)}{(m+k)!}\binom{m+k}{i}\binom{m+k}{2m-i} \Big( -\frac{1}{\eps}+ \frac{1}{m-i} +2\psi(1+i)-2\psi(1+k+m-i)
\\+N   \left[\psi (\alpha+i-m ) -\psi (\alpha-i+m )\right] \Big).
\end{multline}

Similarly, for $i\in\{0,\dots,m-1\}$   we obtain
\begin{multline*}
  m_{i}(-k+\eps,-m-\eps) = -2\frac{(i-m-\eps ) (-k-m)_i (-2m-2\eps )_i \Gamma (-k+m+2
   \eps )}{i! \Gamma (1-2 (-m-\eps )) (k-m-2
   \eps +1)_i}
   \\=
  2 \binom{k+m}{i} \frac{ (m-i+\eps )   \Gamma (k-m-2 \eps +1) \Gamma (-k+m+2
   \eps )}{
     \Gamma (2m -i+1+2 \eps) \Gamma (i+k-m-2
   \eps +1)}
    \\=
   (-1)^{k-m}\binom{k+m}{i} \frac{ (m-i+\eps )  }{
     \Gamma (2m -i+1+2 \eps) \Gamma (i+k-m-2
   \eps +1)} \frac{2\pi}{\sin(2 \pi\eps)}
  \\=\frac{(-1)^{k-m}(m-i )}{(m+k)!}\binom{k+m}{i} \binom{k+m}{2m-i} \left(\tfrac{1}{\eps}+2 \psi(1+i+k-m)-2\psi(1+2m-i)+\tfrac{1}{m-i}\right)+O(\eps).
\end{multline*}
In view of \eqref{GammaN-GammaN},  the second term in \eqref{m+M2} is
\begin{multline}\label{2.30b}
 \frac{(-1)^{k-m}(m-i)}{(m+k)!}\binom{m+k}{i}\binom{m+k}{i+k-m} \Big(\frac{1}{\eps}+ \frac{1}{m-i}+
2\psi(1+i+k-m)-2\psi(1+2m-i)
\\+N   \left[\psi (\alpha+i-m ) -\psi (\alpha-i+m )\right] \Big).
\end{multline}
In  the sum of  expressions \eqref{2.30a} and \eqref{2.30b},
  the terms at $1/\eps$ cancel out.  Passing to the limit as $\eps\to 0$,  we obtain the second sum in \eqref{G(-m,-k)}.

It remains to consider the last unpaired value $i=k$. We need to compute the limit as $\eps\to 0$ of %
\begin{multline*}
      m_k(u,v)\Gamma(\alpha -\eps)^{N} \Gamma(\alpha +\eps)^{N} =    \frac{-2\eps  (-k-m)_k (-2k+ 2\eps)_k \Gamma (k-m-2 \eps ) \Gamma
   (\alpha -\eps )^N \Gamma (\alpha +\eps )^N}{k!   \Gamma
   (1+2k-2 \eps) (-k+m+2 \eps +1)_k}
   \\=\frac{(-2 \eps)  (k+m)!\, \Gamma (k-m-2 \eps ) \Gamma (\alpha
   -\eps )^N \Gamma (\alpha +\eps )^N}{k!\, m!\, \Gamma (k-2
   \eps +1) (-k+m+2 \eps +1)_k}.
\end{multline*}

Since
$$
  (-k+m+2 \eps +1)_k= (-k+m+2 \eps +1)_{k-m} (1+2\eps)_m=(-1)^{k-m} \frac{\Gamma(k-m-2\eps)}{\Gamma(-2\eps)} \cdot \frac{\Gamma(1+m+2\eps)}{\Gamma(1+2\eps)},
$$
where we used \eqref{InvPochh}, and since $(-2\eps)\Gamma(-2\eps)\to 1$, we obtain
$$  m_k(u,v)\Gamma(\alpha -\eps)^{N} \Gamma(\alpha +\eps)^{N}
  = (-1)^{k-m} \frac{(k+m)!}{k!^2m!^2} \Gamma (\alpha )^{2 N} +O(\eps).$$
  This gives the first term in \eqref{G(-m,-k)}.
  }
 \end{proof}

  \begin{corollary} \label{Prop2-sing+}
      Assume $v=u<0$ and $\alpha+v>0$.  If {  $2v=-k  \in   \ZZ_{\leq -1}$,} then
   \begin{multline}\label{I_N-x-sing+}
     \GG_N(\alpha,v,v)
 =   \tfrac{1}{k!}\binom{k}{k/2}^2\Gamma (\alpha )^{2 N}\; \mathbf{ 1}_{k/2\in \ZZ_{\geq 1} }
 \\+
       \sum_{{j\in[0,k/2)\cap\mathbb Z}}   \tfrac{k-2j}{k!}\binom{k}{j}^2
       \left(\Gamma
   \left(\alpha+j-\tfrac{k}{2} \right)  \Gamma
   \left(\alpha-j+\tfrac{k}{2}\right)\right)^N
   \\ \cdot  \Big(
    N \left[\psi
   \left(\alpha-j+\tfrac{k}{2} \right)-\psi
   \left(\alpha+j-\tfrac{k}{2} \right)\right]
   \\ +  2\left[\psi(1+j)-\psi(1+k-j)+\tfrac{1}{k-2 j}\right]
   \Big).
   \end{multline}
  \end{corollary}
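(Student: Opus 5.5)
The plan is to split according to the parity of $k$ and read the formula off the two degenerate cases already treated, Proposition~\ref{Cl:uv=-m,-k} and Proposition~\ref{CL:u-v=m}(B). No new analysis is needed: the work is matching coefficients.

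\emph{Case $k=2n$ even.} Here $u=v=-n$ with $n\in\ZZ_{\ge1}$ and $n<\alpha$. I apply \eqref{G(-m,-k)} with $m=k=n$ (the integers of Proposition~\ref{Cl:uv=-m,-k}), so that:
\begin{itemize}
\item The middle sum over $j\in\{0,\dots,k-m-1\}$ is empty.
\item The leading term is $(2n)!/(n!)^4\,\Gamma(\alpha)^{2N}$, which equals $\tfrac1{k!}\binom{k}{k/2}^2\Gamma(\alpha)^{2N}$.
\item In the last sum (over $i=0,\dots,n-1$), the identity
\[
\binom{2n}{i}\frac{2(n-i)}{(2n-i)!\,i!}=\frac{k-2i}{k!}\binom{k}{i}^2
\]
matches the prefactor. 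The Gamma factors are $\bigl(\Gamma(\alpha+i-\tfrac k2)\Gamma(\alpha-i+\tfrac k2)\bigr)^N$.
\item The bracket becomes $N[\psi(\alpha-i+\tfrac k2)-\psi(\alpha+i-\tfrac k2)]+2\psi(1+i)-2\psi(1+k-i)+\tfrac{2}{k-2i}$. This is exactly the bracket in \eqref{I_N-x-sing+}, because $\tfrac1{n-i}=\tfrac{2}{k-2i}$.
\end{itemize}

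\emph{Case $k$ odd.} Here $v=-k/2\in(-\alpha,0)\setminus\ZZ$ and $2v\in\ZZ_{\le 0}$, so Proposition~\ref{CL:u-v=m}(B) with $m=0$ applies. It gives $\GG_N(\alpha,v,v)=-2S_1+2S_2$, with $S_1=0$ because its sum over $j\in\{0,\dots,m-1\}$ is empty. The summation in $S_2$ runs over $0\le i\le\lfloor -v\rfloor=(k-1)/2$, which is exactly $i\in[0,k/2)\cap\ZZ$; the indicator term vanishes since $k/2\notin\ZZ$. With $1-2v=1+k$, the coefficient of $S_2$ becomes
\[
2\frac{(i-\tfrac k2)\,k!}{i!^2(k-i)!^2}=-\frac{k-2i}{k!}\binom{k}{i}^2 .
\]
Inside the bracket, $N(\psi(\alpha+i+v)-\psi(\alpha-i-v))$ flips sign relative to the target, $\tfrac1{i+v}=-\tfrac{2}{k-2i}$, and $2\psi(1-i-2v)-2\psi(i+1)=2\psi(1+k-i)-2\psi(1+i)$. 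The overall minus sign therefore turns the bracket into precisely the one in \eqref{I_N-x-sing+}.

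The only real obstacle is bookkeeping of signs and of the Pochhammer/binomial rewriting, especially in the odd case. I would use Proposition~\ref{CL:u-v=m}(B) and its explicit $S_2$ directly, not the displayed form of Corollary~\ref{Prop2-sing}: the latter is stated for $2v\notin\ZZ_{\le0}$, and its coefficient $(-2v)_j^2/j!^2$ would have to be reconciled with $\Gamma(1-2v)/\Gamma(1-j-2v)^2$ anyway. After these checks, both parities combine into the single formula \eqref{I_N-x-sing+}.
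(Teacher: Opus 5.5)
Your proposal is correct and is exactly the paper's argument: for odd $k$ the paper reads the formula off \eqref{G(v+m,v)-D} with $m=0$, and for even $k$ it reads it off \eqref{G(-m,-k)} with both integer parameters equal to $k/2$. Your coefficient and sign bookkeeping also checks out, namely the binomial identity, $\tfrac1{n-i}=\tfrac2{k-2i}$, and the overall sign flip coming from the factor $(i-\tfrac k2)$ in $2S_2$.
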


\begin{proof}
    If $k/2\not\in\ZZ_{\geq 1}$, this is  formula  \eqref{G(v+m,v)-D} with $m=0$.
     If $k/2\in\ZZ_{\geq 1}$, this is  formula  \eqref{G(-m,-k)} with  both parameters there equal and
     assigned the value $k/2$.
\end{proof}

 There are several other cases to consider. To save space, we point out just one more  case that is not covered by Proposition \ref{Prop2}.
 \begin{claim}\label{Cl:u=-m} %
     Suppose $v\in(-\alpha,0)\setminus \ZZ$ and $u=-m$, where $m\in\ZZ_{\geq 0}$ is such $v+m<0$. Then \eqref{I_N-x} holds, with the simplified integrand.
    That is,
\begin{multline*}
   \GG_N(\alpha,-m,v)= \tfrac{2}{
    \pi \Gamma(v-m)} \int_{0}^\infty \left|\Gamma(\alpha+\i x)\right|^{2N} |\Gamma(v+\i x)|^2\tfrac{\cosh(\pi x)}{\prod_{k=1}^m(k^2+x^2)} \d x\\
     +\tfrac{2\Gamma (v+m) }{\Gamma (1+2 m)}\sum _{i=0}^{m-1} \tfrac{(m-i) (-2 m)_i (v-m)_i }{
   i!  (-m-v+1)_i}\Gamma
   (\alpha+i-m )^{N} \Gamma (\alpha-i+m)^{N}
    \\ -
  \tfrac{2\Gamma (-m-v)}{\Gamma (1-2 v) }\sum_{j\in \ZZ\cap [0,-v)}\tfrac{(j+v) (2 v)_j  (v-m)_j }{
   j! (m+v+1)_j} \Gamma (\alpha+v+j )^{N} \Gamma(\alpha-v-j )^{N} .
\end{multline*}
 \end{claim}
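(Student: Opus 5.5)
The plan is to obtain the formula by continuity from Proposition~\ref{Prop2}, in the same way as Propositions~\ref{CL:u-v=m} and~\ref{Cl:uv=-m,-k}. Take $u=-m+\eps$ with small real $\eps\neq 0$. Then $u\notin\ZZ_{\le0}$, and $v-u=v+m-\eps\notin\ZZ$ because $v\notin\ZZ$. So \eqref{I_N-x} holds for $\GG_N(\alpha,-m+\eps,v)$, and we let $\eps\to0$. The left-hand side converges to $\GG_N(\alpha,-m,v)$ by the analyticity in $u$ given by Lemma~\ref{Lem:I-anal}. It remains to identify the limits of the three terms on the right.

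\textbf{Integral term.} Remark~\ref{Rem:J-int+} gives a bound of the form \eqref{4DCT} that is uniform for $u\in[-m-\tfrac12,-m+\tfrac12]$, with $v$ fixed and $v\notin\ZZ_{\le0}$. Dominated convergence then lets us pass to the limit under the integral sign, and $1/\Gamma(u+v)\to 1/\Gamma(v-m)$, which is finite and nonzero. To simplify the integrand, use the identity in \eqref{sinh+G-u}:
\[
\frac{|\Gamma(-m+\i x)|^2}{|\Gamma(2\i x)|^2}=\frac{2(e^{\pi x}+e^{-\pi x})}{\prod_{k=1}^m(k^2+x^2)}=\frac{4\cosh(\pi x)}{\prod_{k=1}^m(k^2+x^2)}.
\]
Combined with the prefactor $\tfrac1{2\pi}$, this yields exactly $\tfrac{2}{\pi\Gamma(v-m)}\int_0^\infty\cdots$.

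\textbf{Atoms from $v$.} The sum $\mathbb D_N(\alpha,u,v)$ runs over $j\in[0,-v)$. Its coefficients $m_j(u,v)$ involve $\Gamma(u-v)$, $(u+v)_j$, $(v-u+1)_j$ and $(2v)_j$, all evaluated at $u=-m$. These are continuous at $u=-m$: $u-v=-m-v$ is positive and non-integer, and $(v+m+1)_j$ does not vanish since $v\notin\ZZ$. Substituting directly gives the third line of the claimed formula, with the sign $-2$ and $\Gamma(-m-v)$, $(v-m)_j$, $(m+v+1)_j$.

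\textbf{Atoms from $u$.} This is the step requiring care. For $u=-m+\eps$ the sum $\mathbb D_N(\alpha,v,u)$ runs over $i\in[0,m-\eps)$, which is $\{0,\dots,m-1\}$ when $\eps>0$ and $\{0,\dots,m\}$ when $\eps<0$.
\begin{itemize}
\item For $i\le m-1$, $m_i(v,u)$ contains $\Gamma(v-u)/\Gamma(1-2u)$ and $(2u)_i$, $(u+v)_i$, $(u-v+1)_i$, all regular at $u=-m$. This gives
\[
\frac{-2(i-m)}{i!}\,\frac{\Gamma(v+m)}{\Gamma(1+2m)}\,\frac{(-2m)_i(v-m)_i}{(1-m-v)_i},
\]
which matches the second line of the claim.
\item The potentially dangerous term is $i=m$, present only for $\eps<0$. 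Its coefficient carries the factor $(u+i)=\eps\to0$, and $(2u)_m/\Gamma(1-2u)$ stays finite. So this term tends to $0$, and the limit is the same from both sides.
\end{itemize}
There is no pairing of divergent $1/\eps$ terms here, unlike in Proposition~\ref{CL:u-v=m}, because $v-u$ stays away from the integers.

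The main thing to check is the uniform domination near $u=-m$, which Remark~\ref{Rem:J-int+} supplies. The bookkeeping of the $\Gamma(u-v)$ and Pochhammer factors should be done using the reflection identities \eqref{EuRef1} to confirm the stated constants.
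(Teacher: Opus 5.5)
Your proposal is correct and follows the paper's route: you apply \eqref{I_N-x} at $u=-m+\eps$ and let $\eps\to0$, using the uniform bound of Remark~\ref{Rem:J-int+} for dominated convergence in the integral and \eqref{sinh+G-u} to simplify the integrand. The atoms are continuous in $u$, and the extra $i=m$ atom for $\eps<0$ vanishes through its factor $u+m=\eps$. Two small points: $v+m-\eps\notin\ZZ$ needs $|\eps|$ smaller than the distance from $v+m$ to $\ZZ$, and no reflection identities are needed, since the stated constants follow by direct substitution of $u=-m$.
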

 \arxiv{  \begin{proof}
     In this case, Lemma \ref{Lem:J-int} does not apply, but the convergence of the integral is ensured by Remark \ref{Rem:J-int+}, with the %
   {  integrand rewritten using}   Euler's reflection formula as %
   in \eqref{sinh+G-u}. The details are omitted.
 \end{proof}
 }

\section{Alternative proof of Theorem \ref{Thm:PhD}}\label{Sec:AltProof}
In this section we present  an alternative   proof  of Theorem \ref{Thm:PhD}
 based on  Proposition \ref{Prop2} %
 and its extensions.
\subsection{Analytic proof of Theorem \ref{Thm:PhD}(i)}
{
We present only a formal argument, without deriving an $N$-uniform bound on the remainder, and omit the boundary cases $u=0$ and $v=0$, which require additional modifications. The key Lemma~\ref{Lem:Extr} is also used in another proof.
}

    From \eqref{LG-xx}  and $t\in(-v,u)$  we obtain
\begin{equation*}\label{logE}
   \log \EE[e^{-2 t \vv L_N/N}]=\log \GG_N(\alpha+t/N,u-t/N,v+t/N)- \log \GG_N(\alpha,u,v) .
\end{equation*}
Thus in view of Lemma \ref{Lem:I-anal},   we obtain linear approximation %
\begin{equation*}
   \log \EE[e^{-2 t \vv L_N/N}] =
   \tfrac{t}{N{\GG_N(\alpha,u,v)}}  \left( \tfrac{\partial\GG_N}{\partial \alpha}-\tfrac{\partial\GG_N}{\partial u}+\tfrac{\partial\GG_N}{\partial v} \right) + o(t/N).
   \end{equation*}
Since  $t/N\in (-v,u)$,  the discrete sums $\mathbb{D}_N$ in \eqref{I_N-x} vanish and the linear approximation  becomes
    \begin{equation}\label{D[I]}
     \log \EE[e^{-2 t \vv L_N/N}]\sim
   \tfrac{t}{N{\II_N(\alpha,u,v)}}  \left( \tfrac{\partial\II_N}{\partial \alpha}-\tfrac{\partial\II_N}{\partial u}+\tfrac{\partial\II_N}{\partial v} \right)
\mbox{ as $N\to\infty$},
   \end{equation}
where $\II_N(\alpha,u,v)$ is given by the integral \eqref{JJ-1}.

Applying the linear approximation \eqref{D[I]} and differentiating under the integral sign, we see that the dominant term is
\begin{equation}
  \label{dominant}   \log \EE[e^{-2 t \vv L_N/N}]\sim 2t \tfrac{\int_0^\infty  \re(\psi(\alpha+\i x)) |\Gamma(\alpha+ \i x)|^{2N}
 |\Gamma(u+\i x)\Gamma(v+\i x)|^2|\Gamma(2\i x)|^{-2}\d x}
  {\int_0^\infty  |\Gamma(\alpha+ \i x)|^{2N}
 |\Gamma(u+\i x)\Gamma(v+\i x)|^2|\Gamma(2\i x)|^{-2}\d x},
\end{equation}
where we used \eqref{digamma}, i.e., the fact that for real $\alpha,  x$ we have
\begin{multline*}
\tfrac{\partial |\Gamma(\alpha +\i x)|^{2N}}{\partial \alpha}
=\tfrac{\partial \Gamma(\alpha +\i x)^N\Gamma(\alpha -\i x)^N}{\partial \alpha}
\\=N\left(\psi(\alpha +\i x)+\psi(\alpha -\i x) \right)|\Gamma(\alpha +\i x)|^{2N}
=2N \re\psi(\alpha +\i x)|\Gamma(\alpha +\i x)|^{2N}.
\end{multline*}
\arxiv{
Differentiation under the integral sign is justified by $|\psi(\alpha+\i x)|\sim  \tfrac12\log (\alpha^2+x^2)$ as $x\to \infty$
and bound \eqref{4DCT}.
}
To end the proof, we verify that for $t\in(-v,u)$ the Laplace transform converges:
\begin{equation}
    \label{Conclusion}
    \log \EE[e^{-2 t \vv L_N/N}]\to 2t \psi(\alpha).
\end{equation}
Let $\Psi(x):= \re(\psi(\alpha+\i x))$. Since $\phi(x):=|\Gamma(\alpha+\i x)\Gamma( u+\i x)\Gamma( v+\i x)|^2/|\Gamma(2\i x)|^2$ is integrable and positive on $(0,\infty)$, and $\Psi \phi$ is integrable,  \eqref{Conclusion} follows from \eqref{dominant}
 and  the following lemma,  which for $\phi$ continuous at $0$ is a consequence of the fact that
 probability measures $|\Gamma(\alpha+ \i x)|^{2N}/\int_0^\infty    |\Gamma(\alpha+ \i x)|^{2N}
  \d x$ converge weakly to $\delta_0(\d x)$,
\begin{lemma}\label{Lem:Extr}
   Suppose that  $\Psi:[0,\infty)\to\RR$ is continuous at $x=0$, $\Psi \phi$ and $\phi$ are integrable on $(0,\infty)$, $\phi\geq 0$ and $\phi(x)>0$ near $x=0$. Then
   \begin{equation}\label{Psi(0)}
     \lim_{N\to\infty} \tfrac{\int_0^\infty  \Psi(x) |\Gamma(\alpha+ \i x)|^{2N}
\phi(x)\d x}
  {\int_0^\infty    |\Gamma(\alpha+ \i x)|^{2N}
\phi(x)\d x}= \Psi(0).
   \end{equation}
\end{lemma}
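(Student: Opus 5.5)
The plan is a Laplace-type argument: the weights $w_N(x):=|\Gamma(\alpha+\i x)|^{2N}$ concentrate at $x=0$. The key fact is that $x\mapsto|\Gamma(\alpha+\i x)|^2$ is strictly decreasing on $[0,\infty)$. This follows from the product formula
\[
\frac{|\Gamma(\alpha+\i x)|^2}{\Gamma(\alpha)^2}=\prod_{k\ge0}\Big(1+\frac{x^2}{(\alpha+k)^2}\Big)^{-1}.
\]
Set $g(x):=|\Gamma(\alpha+\i x)|^2/\Gamma(\alpha)^2$. Then $g(0)=1$, $g<1$ on $(0,\infty)$, and $g$ is decreasing, so $\sup_{x\ge\delta}g(x)=g(\delta)<1$ for each $\delta>0$. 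Dividing numerator and denominator in \eqref{Psi(0)} by $\Gamma(\alpha)^{2N}$ lets us replace $w_N$ by $g^N$.

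Fix $\eps>0$. By continuity of $\Psi$ at $0$, choose $\delta>0$ with $|\Psi(x)-\Psi(0)|<\eps$ on $[0,\delta]$. Then
\[
\Big|\tfrac{\int_0^\infty \Psi g^N\phi}{\int_0^\infty g^N\phi}-\Psi(0)\Big|\le \eps+\tfrac{\int_\delta^\infty |\Psi-\Psi(0)|\,g^N\phi}{\int_0^\infty g^N\phi}.
\]
The numerator of the last fraction is at most $g(\delta)^N\int_\delta^\infty(|\Psi\phi|+|\Psi(0)|\phi)$. This is $O(g(\delta)^N)$ by the integrability of $\Psi\phi$ and $\phi$.

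The main step is a lower bound on the denominator that decays more slowly than $g(\delta)^N$. Since $g$ is continuous with $g(0)=1$, pick $\eta\in(0,\delta)$ with $g(x)\ge g(\delta)^{1/2}$ on $[0,\eta]$; this is possible because $g(\delta)^{1/2}<1$. Shrinking $\eta$ if needed so that $\phi>0$ on $(0,\eta]$, we get
\[
\int_0^\infty g^N\phi\ \ge\ g(\delta)^{N/2}\int_0^\eta\phi=:c\,g(\delta)^{N/2},\qquad c>0.
\]
Here $c>0$ because $\phi$ is positive on a set of positive measure near $0$. Hence the error fraction is $O(g(\delta)^{N/2})\to0$, and $\limsup$ of the difference is at most $\eps$. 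Letting $\eps\to0$ gives \eqref{Psi(0)}.

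The only place needing care is this denominator lower bound, which is where the hypothesis $\phi>0$ near $0$ enters. Using $g(\delta)^{1/2}$ avoids any need for the precise rate of concentration near $0$, such as the $N^{-1/2}$ Gaussian scaling. The $\phi$ appearing in the application has $|\Gamma(2\i x)|^{-2}\sim 4x^2$ near $0$ and so vanishes at $x=0$. That causes no trouble, since only positivity on $(0,\eta]$ is used.
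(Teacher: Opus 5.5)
Your proof is correct and follows essentially the same route as the paper: you split at $\delta$ using continuity of $\Psi$ at $0$, bound the tail by $g(\delta)^N$ using the strict monotonicity of $|\Gamma(\alpha+\i x)|^2$ from the product formula \eqref{G-prod}, and bound the denominator below by a slower exponential on a small interval near $0$ where $\phi$ has positive integral. The only cosmetic difference is that the paper obtains that lower bound as $|\Gamma(\alpha+\i\delta/2)|^{2N}\int_0^{\delta/2}\phi$ via monotonicity, whereas you choose $\eta$ with $g\ge g(\delta)^{1/2}$ on $[0,\eta]$.
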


\arxiv{
\begin{proof}[Proof of Lemma \ref{Lem:Extr}] Subtracting $\Psi(0)$ from both sides of \eqref{Psi(0)}, without loss of generality we may assume $\Psi(0)=0$. Given $\eps>0$, let $\delta>0$ be such that $|\Psi(x)|<\eps $ for $x\in[0,\delta)$.
Denote
$$\mathsf A_N= \int_0^\delta  \Psi(x) |\Gamma(\alpha+ \i x)|^{2N}
\phi(x)\d x, \, \quad \mathsf B_N= \int_\delta^\infty  \Psi(x) |\Gamma(\alpha+ \i x)|^{2N}
\phi(x)\d x, $$
$$
\mathsf C_N= \int_0^\delta    |\Gamma(\alpha+ \i x)|^{2N}
\phi(x)\d x, \quad \mathsf D_N=\int_\delta^\infty    |\Gamma(\alpha+ \i x)|^{2N}
\phi(x)\d x.
$$
Noting that $\mathsf C_N>0$, and $\mathsf D_N\geq 0$, we obtain
$$
\left|\frac{\int_0^\infty  \Psi(x) |\Gamma(\alpha+ \i x)|^{2N}
\phi(x)\d x}
   {\int_0^\infty    |\Gamma(\alpha+ \i x)|^{2N}
\phi(x)\d x}\right|\leq \frac{|\mathsf A_N|+|\mathsf B_N|}{\mathsf C_N+\mathsf D_N}\leq \frac{|\mathsf A_N|}{\mathsf C_N}+\frac{|\mathsf B_N|}{\mathsf C_N}\leq \eps+\frac{|\mathsf B_N|}{\mathsf C_N}.
$$
From \eqref{G-prod}, we see that $x\mapsto |\Gamma(\alpha+\i x)|^2$ is a strictly decreasing function.
Therefore
$$|\mathsf B_N|\leq C |\Gamma(\alpha+\i \delta)|^{2N},$$
where $C=\int_0^\infty |\Psi(x)|\phi(x)\d x<\infty$. %
Similarly,
$$\mathsf C_N\ge \int_0^{\delta/2}  |\Gamma(\alpha+ \i x)|^{2N}
\phi(x)\d x \geq c |\Gamma(\alpha+ \i \tfrac{\delta}{2})|^{2N},$$
where $c=\int_0^{\delta/2} \phi(x)\d x>0$. Therefore,
$$\frac{|\mathsf B_N|}{\mathsf C_N}\leq  \frac{C}{c} \left(\frac{|\Gamma(\alpha+\i \delta)|}{|\Gamma(\alpha+\i \tfrac{\delta}2)|}\right)^{2N} \to 0. $$
Since $\eps>0$ is arbitrary, this  proves \eqref{Psi(0)}.
\end{proof}
}
\subsection{Analytic Proof of Theorem \ref{Thm:PhD}(ii)}

We first consider the proof for $u,v$ that satisfy the assumptions of Proposition \ref{Prop2}.
That is, we assume $u>v$, $-\alpha<v<0$, $ v\not \in \ZZ_{\leq 0}$  and if $u<0$ we also assume that $u\not\in\ZZ_{\leq 0}$ and
$u-v\not\in\ZZ$.
In view of Remark \ref{Rem:J-int+}, the same proof works    if  $u\in\ZZ_{\leq 0}$ while $v\not\in\ZZ_{\leq 0}$ or if $v\in\ZZ_{\leq 0}$ while $u\not \in\ZZ_{\leq 0}$.

\begin{claim}\label{Claim 2} Suppose that $v<0$ and $u>v$. Then for fixed  $t\ne 0$, we have
\begin{multline}\label{C2-m0}
 \GG_N(\alpha+t/N,u-t/N,v+t/N) \\ \sim m_0(u,v)  \Gamma(\alpha+ 2 t/N+v)^N \Gamma(\alpha-v)^N \quad \mbox{as $N\to\infty$},
\end{multline}
where $m_0(u,v)={\Gamma (u-v)}/{\Gamma (-2 v)}>0$ is given by \eqref{their-atoms} with $j=0$.

Furthermore,  \eqref{C2-m0} holds also for $t=0$ if $u-v\not\in\ZZ$.
\end{claim}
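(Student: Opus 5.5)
The plan is to apply Proposition~\ref{Prop2} to the shifted parameters $(\alpha',u',v')=(\alpha+t/N,\,u-t/N,\,v+t/N)$. For large $N$ these satisfy its hypotheses: $v'\notin\ZZ_{\le 0}$, $u'\notin\ZZ_{\le0}$, and $v'-u'=v-u+2t/N\notin\ZZ$. This holds automatically for $t\neq0$ once $N$ is large, and it holds for $t=0$ exactly when $u-v\notin\ZZ$; this is where the extra hypothesis for $t=0$ enters. In the excluded integer cases I would appeal instead to Remark~\ref{Rem:J-int+}. Note also that $\alpha'+v'=\alpha+v+2t/N$ and $\alpha'-v'=\alpha-v$, while $\alpha'\pm u'$ is similarly explicit.

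The representation then reads
\[
\GG_N(\alpha',u',v') = \text{(integral term)} + \mathbb D_N(\alpha',u',v') + \mathbb D_N(\alpha',v',u').
\]
The first step is to locate the dominant exponential rate. In $\mathbb D_N(\alpha',u',v')$ the $j$-th term grows like $\bigl(\Gamma(\alpha'+v'+j)\Gamma(\alpha'-v'-j)\bigr)^N$. Since $x\mapsto\log\Gamma(\alpha'+x)+\log\Gamma(\alpha'-x)$ is even and strictly convex, it is strictly increasing in $|x|$. So among $j\in[0,-v')$, the largest value of $|v'+j|$ is attained at $j=0$, namely $|v'|=-v'$.

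The second step is to show that every other contribution is exponentially smaller.
\begin{itemize}
\item Terms of $\mathbb D_N(\alpha',v',u')$ exist only if $u'<0$. In that case $|u'+k|\le -u'<-v'$, because $u>v$, so they are beaten for large $N$.
\item The integral term is bounded by a constant times $\Gamma(\alpha')^{2N}$, because $|\Gamma(\alpha'+\i x)|\le\Gamma(\alpha')$ and the bound \eqref{4DCT} of Lemma~\ref{Lem:J-int} is uniform over parameters in compacts. Since $\Gamma(\alpha')^2<\Gamma(\alpha'+v')\Gamma(\alpha'-v')$ strictly, by a margin uniform in $N$ (convexity with $v<0$), this term is also negligible.
\end{itemize}
All comparisons are ratios $\rho^N$ with $\rho<1$ fixed, up to $1+O(1/N)$ perturbations whose $N$-th powers stay bounded. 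Finally, $m_0(u',v')\to m_0(u,v)=\Gamma(u-v)/\Gamma(-2v)$ by continuity, and this limit is positive since $u-v>0$ and $-2v>0$. This gives $\GG_N\sim m_0(u,v)\,\Gamma(\alpha+2t/N+v)^N\Gamma(\alpha-v)^N$.

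The main obstacle is uniformity. The coefficients $m_j(u',v')$ and $m_k(v',u')$ depend on $N$ and can blow up when $u-v\in\ZZ$: the factors $\Gamma(u'-v')$ and $1/(v'-u'+1)_j$ are singular there. For $t\neq0$ these coefficients are of size $O(N)$, and that polynomial growth is absorbed by the exponential gap. For $t=0$ they are genuinely infinite, which is why the statement requires $u-v\notin\ZZ$ there. One must also check that the number of atoms stays bounded: the index sets $[0,-v')$ are stable for large $N$ unless $v\in\ZZ$, which is excluded or else handled via Remark~\ref{Rem:J-int+}.
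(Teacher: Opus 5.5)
Your proposal is correct and follows essentially the same route as the paper. You apply Proposition~\ref{Prop2} at $(\alpha+t/N,u-t/N,v+t/N)$ and take the $j=0$ atom of $\mathbb D_N(\alpha',u',v')$ as the leading term. You bound the integral term using $|\Gamma(\alpha'+\i x)|\le\Gamma(\alpha')$ on all but one factor $|\Gamma(\alpha'+\i x)|^2$, together with \eqref{C-S}, and the remaining atoms via monotonicity of $x\mapsto\Gamma(\alpha'+x)\Gamma(\alpha'-x)$ in $|x|$; both are exponentially smaller. One small slip, which does not affect the argument: near $u-v\in\ZZ$ the singular Gamma factor is $\Gamma(v'-u')$ in $m_k(v',u')$, not $\Gamma(u'-v')$, which stays bounded because $u-v>0$.
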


\begin{proof}   We have $u-t/N<v+t/N$ and for large enough $N$ their difference is not an integer, so we can use \eqref{I_N-x} with $\alpha, u,v$ replaced by $\alpha+t/N$, $u-t/N$ and $v+t/N$.
Note that if $v-u\not\in\ZZ_{\leq0}$ or if $u>0$ then  for $N$ large enough
$$m_0(u-t/N,v+t/N)= \tfrac{\Gamma (u-v-2t/N)}{\Gamma (-2 v-2t/N)}\to \tfrac{\Gamma (u-v)}{\Gamma (-2 v)}=m_0(u,v)>0$$
is well defined. %
So the right-hand side of \eqref{C2-m0} arises from the first term $j=0$ of the discrete sum $\mathbb{D}_N(\alpha+t/n,u-t/N,v+t/N)$ in \eqref{I_N-x}, and we need to verify that this is
the dominant contribution.

We first confirm that  the contribution from the integral  term  in \eqref{I_N-x}
is of lower order. Indeed,
since $|\Gamma(\alpha+t/N+\i x)|\leq \Gamma(\alpha+t/N)$,  the contribution of the integral is bounded by
\begin{multline*}
  \GG_1(\alpha+t/N,u-t/N,v+t/N) \Gamma(\alpha+t/N)^{2N-2}
  \\ =\Gamma(\alpha+u)\Gamma(\alpha+v+2t/N) \Gamma(\alpha+t/N)^{2N-2}.
\end{multline*}
We observe that there is $\delta>0$ such that for $t/N<\delta$ this  contribution is of lower order than the right-hand side of \eqref{C2-m0}. Indeed, function
\begin{equation}\label{x-function} x\mapsto \tfrac{\Gamma(\alpha+x)^2}{\Gamma(\alpha+ 2 x +v ) \Gamma(\alpha-v)}
\end{equation}
is continuous at $x=0$ and by \eqref{C-S} its value at $x=0$ is smaller than $1$.

It remains to verify that the other terms in \eqref{I_N-x} also have a lower contribution.

By the second part of Lemma \ref{Lem:prop-Gamma}(iv), for $j\ge 1$ such that $v+t/N+j<0$ we have
\begin{multline*}
    \Gamma(\alpha+ 2 \tfrac tN+v+j) \Gamma(\alpha-v-j)\leq
\Gamma(\alpha+ 2 \tfrac tN+v+1) \Gamma(\alpha-v-1)
\\= \tfrac{\alpha +2t/N+v}{\alpha -v-1} \Gamma(\alpha+ 2 \tfrac tN+v) \Gamma(\alpha-v).
\end{multline*}
For $N$ large enough, $\tfrac{\alpha +2t/N+v}{\alpha -v-1}$ is arbitrarily close to $\tfrac{\alpha  +v}{\alpha -v-1}$.
Note that  in order for the second atom to exist,
w must have $-\alpha<v<-1$ so $\alpha-(v+1)>\alpha>0$ and $\alpha+v>0$.
Thus
$$0<\tfrac{\alpha +v}{\alpha   -(v+1)}< \tfrac{\alpha +v}{\alpha  -v}<\tfrac{\alpha-1}{\alpha+1}:=q<1$$
and we see that the contributions of higher atoms are negligible.

Similar reasoning shows that the terms in the discrete sum $\mathbb{D}_N(\alpha+t/N,v+t/N,u-t/N)$  in  \eqref{I_N-x} %
have geometrically lower contribution than \eqref{C2-m0}.
\end{proof}

 We are now ready to prove Theorem \ref{Thm:PhD}(ii)
  under additional conditions that $ v\not \in \ZZ_{\leq 0}$  and if $u<0$ we also assume that $u\not\in\ZZ_{\leq 0}$ and
$u-v\not\in\ZZ$.
Applying  \eqref{C2-m0} to the numerator and the denominator in \eqref{LG-xx},
 we obtain
\begin{multline*}
   \lim_{N\to\infty} \log \EE[e^{-2 t \vv L_N/N}]
   =\lim_{N\to\infty} \log \tfrac{ \GG_N(\alpha+t/N,u-t/N,v+t/N)}{ \GG_N(\alpha,u,v)}
   \\
 =
  \lim_{N\to \infty}  N \log \tfrac{\Gamma(\alpha+2 t/N+v)}{\Gamma(\alpha+v)}
=
2t \lim_{N\to \infty}  \tfrac{\log \Gamma(\alpha+v+2 t/N)-\log \Gamma(\alpha+v)}{2 t /N}
\\ =2 t \psi(\alpha+v).
\end{multline*}

This argument covers also the case $u\in\ZZ_{\leq 0}$   while $v\in (-\alpha,0)\setminus\ZZ$, where we invoke  Claim \ref{Cl:u=-m}.
Similar reasoning also applies if $v\in\ZZ_{\leq 0}$ while $u\not\in\ZZ_{\leq 0}$, based on an analogous version of   Claim \ref{Cl:u=-m}.

\medskip
There remain two more cases, where  the argument needs modifications.

The first case is  $ v=-k\in\ZZ_{\leq -1}$, $u=-m  \in \ZZ_{\leq 0} $ and $m<k$. Then we use Proposition  \ref{Cl:uv=-m,-k}.
The dominant contribution to the denominator in \eqref{LG-xx} is again the term corresponding to $j=0$, given by
$$\tfrac{(k-m-1)!}{(2k-1)!} \Gamma(\alpha+k)^N\Gamma(\alpha-k)^N.$$
With $t\ne 0$, in  the numerator we  use \eqref{I_N-x} with $\alpha,u,v$ replaced by $\alpha+t/N$, $u=-m-t/N$ and $v=-k+t/N$, and again it is clear that for large enough $N$ only the term with $j=0$ contributes.
Thus,   as $N\to\infty$,
\begin{multline*}
    \log \EE[e^{-2 t \vv L_N/N}]
   =  \log \tfrac{ \GG_N(\alpha+t/N,u-t/N,v+t/N)}{ \GG_N(\alpha,u,v)}
   \\
  \sim \log\tfrac{m_0(-m-t/N,-k+t/N)}{2k (k-m-1)!/{(2k)!}}+N \log \tfrac{\Gamma(\alpha+2t/N-k)\Gamma(\alpha +k)}{ \Gamma(\alpha+k) \Gamma(\alpha-k) }
  \\=\log\tfrac{m_0(-m-t/N,-k+t/N)}{2k (k-m-1)!/{(2k)!}}+2t \tfrac{\log \Gamma(\alpha+2t/N-k) - \log \Gamma(\alpha-k)}{2t/N}
  \to 2t \psi(\alpha+v).
   \end{multline*}
   Here, we used  the fact, established in the proof of Proposition  \ref{Cl:uv=-m,-k}, that with $k\geq 1$, we have
   $$\lim_{\eps\to 0} m_0(-m-\eps,-k+\eps)=\tfrac{(k-m-1)!}{(2k-1)!}.$$

  The second case is $ v\not \in \ZZ_{\leq 0}$  and $u<0$ but $v-u\in\ZZ_{\leq -1}$ (recall that $u>v$).
 To analyze the  numerator in  \eqref{LG-xx}, we use  \eqref{I_N-x}  with $\alpha,u,v$ replaced by $\alpha+t/N,u-t/N,v+t/N$.  As in the previous case, the dominating terms come from $j=0$. To analyze the denominator in \eqref{LG-xx} we apply
  Proposition \ref{CL:u-v=m}. The argument used in the proof of
Proposition \ref{CL:u-v=m}  can be modified to show that for $m\in\ZZ_{\geq 1}$ we have
$\lim_{\eps\to 0}m_0(v+m-\eps,v+\eps) = -2v(m-1)!$. The later appears in \eqref{G(v+m,v)-D} or \eqref{G(v+m,v)} for $j=0$.
We omit the details.

\arxiv
{To summarize, we have split this proof into the following five cases:

\begin{enumerate}[($A$)]
    \item $u>v$, $-\alpha<v<0$, $ v\not \in \ZZ_{\leq 0}$  and if $u<0$ we also assume that $u\not\in\ZZ_{\leq 0}$ and
$u-v\not\in\ZZ$
\item[($A_1$)]  $u>v$, $-\alpha<v<0$, $u\in\ZZ_{\leq 0}$ while $v\not\in\ZZ_{\leq 0}$
\item[($A_2$)]  $u>v$, $-\alpha<v<0$, $v \in\ZZ_{\leq 0}$ while $u\not\in\ZZ_{\leq 0}$
\item $u>v$, $-\alpha<v<0$, $u\in\ZZ_{\leq 0}$, $v\in\ZZ_{\leq -1}$
\item $u>v$, $-\alpha<v<0$, $ v\not \in \ZZ_{\leq 0}$   and
$u-v\in\ZZ_{\geq 1}$
\end{enumerate}
}

\subsection{Analytic proof of Proof of Theorem \ref{Thm:PhD}(iii)}

In this proof we fix $t$ and   take  $N$ large enough so that $v+t/N>u$ and $u-t/N<0$.

 \begin{claim}\label{Claim 1} Suppose that $u<0$ and  $v>u$  { satisfy the assumptions of Proposition \ref{Prop2}.}
Then, for fixed $t\ne 0$,
   \begin{equation}
      \label{C1-Mo}
      \GG_N(\alpha+t/N,u-t/N,v+t/N)\sim
      m_0(v,u) \Gamma(\alpha+ 2 t/N-u)^N \Gamma(\alpha+u)^N %
    \end{equation}
    as $N\to\infty$,  where $m_0(v,u)= \Gamma(v-u)/\Gamma(-2u)>0$ is given by \eqref{their-atoms} with $u,v$ swapped.
    Furthermore,  \eqref{C1-Mo} holds also for $t=0$ if $u-v\not\in\ZZ$.
\end{claim}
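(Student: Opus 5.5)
This mirrors the proof of Claim~\ref{Claim 2}, with the roles of $u$ and $v$ exchanged. For large $N$ the shifted parameters $u'=u-t/N$, $v'=v+t/N$, $\alpha'=\alpha+t/N$ still satisfy the hypotheses of Proposition~\ref{Prop2}. We have $u'<0$, $u'\notin\ZZ_{\le0}$, and if $v'<0$ then $v'-u'=v-u+2t/N\notin\ZZ$. For $t\neq0$ this holds for $N$ large; for $t=0$ it is exactly the assumption $u-v\notin\ZZ$.

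So \eqref{I_N-x} applies and gives
\[
\GG_N(\alpha',u',v')=\text{(integral term)}+\mathbb D_N(\alpha',u',v')+\mathbb D_N(\alpha',v',u').
\]
The candidate dominant term is the $j=0$ summand of $\mathbb D_N(\alpha',v',u')$, namely $m_0(v',u')\,\Gamma(\alpha'+u')^N\Gamma(\alpha'-u')^N$. Here $\alpha'+u'=\alpha+u$ and $\alpha'-u'=\alpha-u+2t/N$, which is exactly the right-hand side of \eqref{C1-Mo}. By \eqref{their-atoms}, $m_0(v',u')=\Gamma(v'-u')/\Gamma(1-2u')\cdot(-2u')=\Gamma(v-u+2t/N)/\Gamma(-2u+2t/N)$. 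This converges to $m_0(v,u)=\Gamma(v-u)/\Gamma(-2u)>0$, since $v-u>0$ and $-2u>0$.

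It then remains to show the other three kinds of contribution are of lower order, that is, $o(\Gamma(\alpha+u)^N\Gamma(\alpha-u)^N)$, uniformly after the $t/N$ shift.

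\emph{The integral term.} Bound $|\Gamma(\alpha'+\i x)|^{2N}\le \Gamma(\alpha')^{2N-2}|\Gamma(\alpha'+\i x)|^2$. The remaining integral is $\II_1(\alpha',u',v')$, which is finite by Lemma~\ref{Lem:J-int} and uniformly bounded in $N$ by \eqref{4DCT}. The question then reduces to whether
\[
\frac{\Gamma(\alpha+x)^2}{\Gamma(\alpha+u)\Gamma(\alpha-u+2x)}<1
\]
at $x=0$. This is the log-convexity inequality \eqref{C-S}, and continuity in $x$ carries it to small $x=t/N$.

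\emph{Higher atoms $j\ge1$ of $\mathbb D_N(\alpha',v',u')$.} By Lemma~\ref{Lem:prop-Gamma}(iv), $\Gamma(\alpha+u+j)\Gamma(\alpha-u-j+2t/N)$ is maximized at $j=0$ over the admissible range, with ratio bounded by some $q<1$. So these terms are geometrically smaller.

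\emph{Atoms of $\mathbb D_N(\alpha',u',v')$.} These exist only if $v'<0$. They involve $\Gamma(\alpha+v+j+2t/N)\Gamma(\alpha-v-j)$ with $v<v+j<0$. Since $u<v$, the point $u$ is farther from $0$ than $v+j$, so the same Lemma~\ref{Lem:prop-Gamma}(iv) monotonicity (the product $\Gamma(\alpha+a)\Gamma(\alpha-a)$ increases in $|a|$) gives a strict geometric gap. The coefficients $m_j(v',u')$ stay bounded as $N\to\infty$ because $v-u\notin\ZZ$ in the relevant case.

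The main obstacle is the bookkeeping in these comparisons. Each ratio must be strictly less than $1$ at $t=0$, so that a margin remains after the $O(1/N)$ perturbation, and the finitely many coefficients must converge. Everything else is the Claim~\ref{Claim 2} argument transcribed.
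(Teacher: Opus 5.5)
Your proposal is correct and follows the paper's proof essentially step for step. You apply \eqref{I_N-x} at the shifted parameters, isolate the $j=0$ atom of $\mathbb D_N(\alpha',v',u')$, dispose of the integral term via $|\Gamma(\alpha'+\i x)|\le\Gamma(\alpha')$ and \eqref{C-S}, and dominate the remaining atoms using that $a\mapsto\Gamma(\alpha+a)\Gamma(\alpha-a)$ increases in $|a|$, which you compare directly against the $j=0$ term rather than through the paper's explicit ratio $\tfrac{\alpha+u}{\alpha+2t/N-u-1}$.
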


\begin{proof}

For completeness, we include the argument.

We have $u-t/N<v+t/N$ and for large enough $N$ their difference is not an integer, so we can use \eqref{I_N-x} with $u,v$ replaced by $u-t/N$ and $v+t/N$.
The right-hand side of \eqref{C1-Mo} arises from the first term $j=0$
of the discrete sum $\mathbb{D}_N(\alpha+t/N,v+t/N,u-t/N)$ in \eqref{I_N-x}, and we need to verify that this is
the dominant contribution.

We first confirm that  the contribution from the integral term  in \eqref{I_N-x}
is of lower order. Indeed,
since $|\Gamma(\alpha+t/N+\i x)|\leq \Gamma(\alpha+t/N)$,  the contribution of the integral is bounded by
$$\GG_1(\alpha+t/N,u-t/N,v+t/N) \Gamma(\alpha+t/N)^{2N-2}.$$
We observe that there is $\delta>0$ such that for $t/N<\delta$ this  contribution is of lower order than the right-hand side of \eqref{C1-Mo}. Indeed, as in  \eqref{x-function}, function
$$x\mapsto \tfrac{\Gamma(\alpha+x)^2}{\Gamma(\alpha+ 2 x -u) \Gamma(\alpha+u)}$$
is continuous at $x=0$ and
its value at $x=0$ is smaller than $1$.

It remains to verify that the other terms in \eqref{I_N-x} also have a lower contribution.
By the second part of Lemma \ref{Lem:prop-Gamma}(iv), for $k\ge 1$ such that $u-t/N+k<0$ we have $ \Gamma(\alpha+ 2 t/N-u-k) \Gamma(\alpha+u+k)\leq
\Gamma(\alpha+ 2 t/N-u-1) \Gamma(\alpha+u+1)
= \tfrac{\alpha +u}{\alpha +2 t/N -u-1} \Gamma(\alpha+ 2 t/N-u) \Gamma(\alpha+u)$. Note that  in order for the second atom to exist,
w must have $-\alpha<u<-1$ so $\alpha-(u+1)>0$ and $\alpha+u>0$.
Thus
$$0<\tfrac{\alpha +u}{\alpha +2 t/N -(u+1)}< \tfrac{\alpha +u}{\alpha  -u}:=q<1$$
and we see that the contributions of higher atoms are negligible.

 Similar reasoning shows that the terms of the discrete sum  $\mathbb{D}_N(\alpha+t/N,u-t/N,v+t/N)$ in \eqref{I_N-x} with $m_j(u-t/N,v+t/N)>0$ have a lower contribution than the term $m_0(v+t/N,u-t/N)$ which corresponds to $j=0$ in the sum  $\mathbb{D}_N(\alpha+t/N,v+t/N,u-t/N)$.
\end{proof}
\arxiv
{
If   $u-v\not\in\ZZ_{\leq 0}$ then the above proof shows that \eqref{C1-Mo} holds also for $t=0$.
Thus, under additional conditions that $u-v,u,v\not\in\ZZ_{\leq 0}$,
 we can apply \eqref{C1-Mo} to the numerator and the denominator in \eqref{LG-xx}.
 We obtain
\begin{multline*}
   \lim_{N\to\infty} \log \EE[e^{-2 t \vv L_N/N}]
   =\lim_{N\to\infty} \log \frac{ \GG_N(\alpha+t/N,u-t/N,v+t/N)}{ \GG_N(\alpha,u,v)}
   \\
   =%
   \lim_{N\to \infty} \log \frac{m_0(v+t/N, u-t/N)}{m_0(v,u)}+
  \lim_{N\to \infty}  N \log \frac{\Gamma(\alpha+ 2 t/N-u)}{\Gamma(\alpha-u)}
\\=
2t \lim_{N\to \infty}  \frac{\log \Gamma(\alpha-u+ 2 t/N )-\log \Gamma(\alpha-u)}{2 t /N} =2 t \psi(\alpha-u).
\end{multline*}

We omit the proof for the cases $u-v\in\ZZ_{\leq 0}$ or $u,v\in\ZZ_{\leq 0}$, as we did not provide the corresponding versions of
Propositions \ref{CL:u-v=m} and \ref{Cl:uv=-m,-k} in this paper.
  }

\subsection{Analytic proof of Theorem \ref{Thm:PhD}(iv)}
For  fixed $t\ne 0$ and large enough $N$, we have $u-v-2t/N=-2t/N\not\in\ZZ$ and $ v-t/N,v+t/N \not \in \ZZ_{\leq -1}$. In this case, we can analyze the contributions of the two dominant terms in \eqref{I_N-x} that correspond to $j=0$ in  $\mathbb{D}_N(\alpha+t/N,v+t/N,v-t/N)$ and $j=0$
 in  $\mathbb{D}_N(\alpha+t/N,v-t/N,v+t/N)$.
 For $N\to\infty$, we obtain
\begin{multline*}%
  \GG_N(\alpha+t/N,v+t/N,v-t/N)
  \\ \sim
 \tfrac{\Gamma \left(\frac{2 t}{N}\right)
   \left(\Gamma (\alpha+v ) \Gamma
   \left(\alpha+\frac{2 t}{N}-v
   \right)\right)^N}{\Gamma \left(2
   \left(\frac{t}{N}-v\right)\right)}
   + \tfrac{\Gamma \left(-\frac{2
   t}{N}\right) \left(\Gamma (\alpha
   -v) \Gamma \left(\alpha+\frac{2
   t}{N}+v
   \right)\right)^N}{\Gamma \left(-2
   \left(\frac{t}{N}+v\right)\right)}
   \\\sim \tfrac{N}{2 t} \left(\tfrac{\left(\Gamma
   (\alpha+v ) \Gamma \left(\alpha +\frac{2
   t}{N}-v
   \right)\right)^N}{\Gamma \left(\frac{2
   t}{N}-2 v\right)}-\tfrac{\left(\Gamma
   (\alpha -v) \Gamma \left(\alpha+\frac{2
   t}{N}+v
   \right)\right)^N}{\Gamma \left(-
   \frac{2t}{N}-2v\right)}\right),
\end{multline*}
where we used   $\eps \Gamma(\eps)\sim 1$ as $\eps=\pm 2t/N\to 0$.

Next we consider the normalizing constant. For  $2v\not\in\ZZ_{\leq -1}$ we use \eqref{I_N-x-sing-alt}, where the dominant term
comes from $j=0$. We obtain
\begin{multline*}%
  \mathcal{Z}_N=\GG_N(\alpha,v,v)\sim
  \tfrac{N \psi (\alpha
   -v)-N \psi (\alpha+v )-2 \psi
   (-2 v)-2 \gamma-1/v }{\Gamma (-2
   v)}\Gamma(\alpha+v)^N\Gamma(\alpha-v)^N
  \\ \sim N \tfrac{\psi (\alpha -v)-\psi
   (\alpha+v )}{\Gamma (-2 v)} \Gamma(\alpha+v)^N\Gamma(\alpha-v)^N \quad \mbox{  as $N\to\infty$}.
\end{multline*}
If $2v=-k\in\ZZ_{\leq -1}$, we deduce the same asymptotics from \eqref{I_N-x-sing+}, where the factor $k/k!=1/(k-1)!$ is $1/\Gamma(-2v)$.
 Therefore, using \eqref{LG-xx} we obtain
 \begin{multline*}%
   \EE [ e^{- 2 t \vv L_N/N}]=\tfrac{\GG_N(\alpha+t/N,v-t/N,v+t/N)}{\GG_N(\alpha,v,v)}
 \\  \sim \tfrac{1}{2 t \left(\psi(\alpha-v)-\psi(\alpha+v)\right)} \tfrac{{ \left(\frac{\left(\Gamma
   (\alpha+v) \Gamma \left(\alpha+\frac{2
   t}{N}-v
   \right)\right)^N}{\Gamma \left(\frac{2
   t}{N}-2 v\right)}-\frac{\left(\Gamma
   (\alpha -v) \Gamma \left(\alpha+\frac{2
   t}{N}+v
   \right)\right)^N}{\Gamma \left(-\frac{2t}{N}-2v \right)}\right)}}{  \Gamma(\alpha+v)^N\Gamma(\alpha-v)^N}
   \\
   =\tfrac{1}{2 t \left(\psi(\alpha-v)-\psi(\alpha+v)\right)}
   {{ \left(\tfrac{   \Gamma \left(\alpha+\frac{2
   t}{N}-v
   \right)^N\Gamma(-2 v)}{ \Gamma(\alpha-v)^N\Gamma \left(\frac{2
   t}{N}-2 v\right)}-\tfrac{  \Gamma \left(\alpha+\frac{2
   t}{N}+v
   \right)^N\Gamma(-2 v)}{ \Gamma(\alpha+v)^N\Gamma \left(-\frac{2t}{N}-2v\right)}\right)}}.
 \end{multline*}
 As previously,
 $$\log  \tfrac{   \Gamma \left(\alpha+\frac{2
   t}{N}-v
   \right)^N}{ \Gamma(\alpha-v)^N} = \tfrac{\log \Gamma \left(\alpha+\frac{2
   t}{N}-v
   \right)-\log\Gamma(\alpha-v)}{1/N}\to 2 t \psi(\alpha-v) $$
   and similarly
   $$\log  \tfrac{   \Gamma \left(\alpha+\frac{2
   t}{N}+v
   \right)^N}{ \Gamma(\alpha+v)^N}\to 2 t \psi(\alpha+v).$$
   Therefore,
   $$\lim_{N\to\infty}\EE [ e^{- 2 t \vv L_N/N}]= \tfrac{1}{2t}\;\tfrac{e^{2\psi(\alpha-v) t}-e^{2
   \psi(\alpha+v) t}}{\psi(\alpha-v)  - \psi(\alpha+v)}.
   $$
 To conclude the proof, we  note that the last expression is indeed the Laplace transform of the asserted limit.

\section{Half-space log-gamma polymer}\label{Sec:H-HLGP}
We consider the stationary measures of the half-space log-gamma polymer defined as {   the law of the sequence} \eqref{Half-plane Z}. The change of variables that appears in the proof of Lemma~\ref{Lem:Jacek-Beta}
extends to this setting and yields the following formula for the multipoint Laplace transform of the increments of $\{\log Z_k\}_{k\ge0}$.

\begin{proposition}\label{P:half+}
Fix $\alpha>0$ and $u>-\alpha$. %
{ If $u\ge\tilde v$ and $-\alpha< \tilde v<\alpha$, then   for
  $t_1,\dots, t_k> (\tilde v-\alpha)/2$, with $t_{k+1}=0$ we have}
\begin{multline}\label{half-space-beta+}
 \EE\left[\prod_{r=1}^k Z_r^{2t_{r+1}-2t_r}\right]
\\ = \biggl(\prod_{r=1}^k \tfrac{\Gamma(\alpha-\tilde v+2t_r)}{\Gamma(\alpha-\tilde v)}\biggr)\, \EE\left[
 \prod_{r=1}^k(1+\zeta_0+\zeta_0\zeta_1+\dots+\zeta_0\zeta_1\dots \zeta_{r-1})^{2t_{r+1}-2t_r}\right],
\end{multline}
where $\{\zeta_j\}$ are independent { random variables,} $\zeta_0\simeq \mathrm{Beta}_{II}(u-\tilde v,\alpha+\tilde v)$ and
$\zeta_r\simeq \mathrm{Beta}_{II}(\alpha-\tilde v+2 t_r,\alpha+\tilde v)$ for $r=1,\dots,k-1$,
with a natural convention $\zeta_0=0$ for $\tilde v=u$ (in this case  $\{Z_k\}$ is an inverse-gamma multiplicative random walk).
\end{proposition}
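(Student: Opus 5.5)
The plan is to derive \eqref{half-space-beta+} directly from the definition \eqref{Half-plane Z}. We factor out the multiplicative random walk $P_k:=X_1\cdots X_k$, absorb the powers of $P_k$ into an exponential tilt of the gamma variables, and recognize the remaining ratios of independent gamma variables as $\mathrm{Beta}_{II}$ variables. This is the same change of variables $z_j=y_{\cdot}/x_{\cdot}$ used in the proof of Lemma~\ref{Lem:Jacek-Beta}.

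\emph{Step 1 (factorization).} Since $\frac{V}{Y_1\cdots Y_j\,X_j\cdots X_k}=P_k^{-1}\,\frac{V\,X_1\cdots X_{j-1}}{Y_1\cdots Y_j}$, we can write $Z_r=P_r^{-1}R_r$, where
\[
R_r=1+\zeta_0+\zeta_0\zeta_1+\dots+\zeta_0\zeta_1\cdots\zeta_{r-1},\qquad \zeta_0:=\tfrac{V}{Y_1},\quad \zeta_j:=\tfrac{X_j}{Y_{j+1}},\ j\ge1 .
\]
When $\tilde v=u$ we have $V=0$, so $\zeta_0=0$ and $R_r=1$.

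\emph{Step 2 (telescoping).} In $\prod_{r=1}^k Z_r^{2t_{r+1}-2t_r}$, the exponent of $X_j$ coming from the $P_r^{-1}$ factors is $\sum_{r=j}^k(2t_r-2t_{r+1})=2t_j$, because $t_{k+1}=0$. Hence
\[
\prod_{r=1}^k Z_r^{2t_{r+1}-2t_r}=\Big(\prod_{j=1}^k X_j^{2t_j}\Big)\prod_{r=1}^k R_r^{2t_{r+1}-2t_r}.
\]
The variable $X_k$ enters only through $X_k^{2t_k}$, since $R_k$ involves only $X_1,\dots,X_{k-1}$.

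\emph{Step 3 (tilting).} We write the expectation as an integral against the product of gamma densities. Multiplying the $\mathrm{Gamma}(\alpha-\tilde v)$ density of $X_j$ by $x^{2t_j}$ gives $\frac{\Gamma(\alpha-\tilde v+2t_j)}{\Gamma(\alpha-\tilde v)}$ times the $\mathrm{Gamma}(\alpha-\tilde v+2t_j)$ density. This requires $\alpha-\tilde v+2t_j>0$, which is exactly the hypothesis $t_j>(\tilde v-\alpha)/2$. After tilting, $X_1,\dots,X_{k-1},V,Y_1,\dots,Y_k$ are still independent gamma variables, and $X_k$ integrates out, contributing the factor with $r=k$. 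The variables $\zeta_0=V/Y_1$ and $\zeta_j=X_j/Y_{j+1}$, $j=1,\dots,k-1$, are built from pairwise disjoint sets of independent gammas, so they are independent. By the standard fact that $G_a/G_b\simeq\mathrm{Beta}_{II}(a,b)$ for independent gammas, $\zeta_0\simeq\mathrm{Beta}_{II}(u-\tilde v,\alpha+\tilde v)$ and $\zeta_j\simeq\mathrm{Beta}_{II}(\alpha-\tilde v+2t_j,\alpha+\tilde v)$. This gives \eqref{half-space-beta+}.

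The only point needing care is the justification of the manipulations. I expect it to be harmless: all integrands are nonnegative, so Tonelli's theorem justifies integrating out variables in any order. The identity then holds in $[0,\infty]$, with both sides finite or infinite together. The parameter constraints $u\ge\tilde v$ and $|\tilde v|<\alpha$ are used only to make the gamma and $\mathrm{Beta}_{II}$ parameters positive, with the degenerate case $\tilde v=u$ handled by the convention $\zeta_0=0$.
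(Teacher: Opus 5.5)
Your proposal is correct and is essentially the paper's proof. The paper writes the expectation as a gamma-density integral, factors out $1/(x_1\cdots x_r)$, integrates out $x_k$, and substitutes $z_0=z/y_1$, $z_j=x_j/y_{j+1}$ before integrating out the $y$'s. That is exactly your factorization $Z_r=P_r^{-1}R_r$, telescoping, exponential tilting, and identification of the gamma ratios as independent $\mathrm{Beta}_{II}$ variables, written analytically rather than probabilistically.
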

We note that a relation of Beta-type laws to  stationary measures in the half-space is mentioned  in
\cite[Section 1.2.3]{Barraquand-Corwin-AOP2024}.
\begin{proof}
  In view of \eqref{Half-plane Z}, the left-hand side of \eqref{half-space-beta+} is
 \begin{multline*}%
    \tfrac{1}{\Gamma(u-\tilde v) \Gamma(\alpha-\tilde v)^k \Gamma(\alpha+\tilde v)^k}
    \int _{(0,\infty)^{2k+1}} z^{u-\tilde v-1}
  \biggl(\prod_{j=1}^k x_j\biggr)^{\alpha-\tilde v-1}\biggl(\prod_{j=1}^ky_j\biggr)^{\alpha+\tilde v-1}   \\ \cdot  \Biggl(\prod_{r=1}^k  \biggl(\tfrac{1}{x_1\dots x_r}+\sum_{j=1}^r \tfrac{z}{y_1\dots y_j x_j\dots x_r}\biggr)^{2t_{r+1}-2t_r}\Biggr)e^{-z-\sum_{j=1}^k x_j-\sum_{j=1}^k y_j} \d z\, \d \vv x\, \d \vv y.
 \end{multline*}
 Note that
$$
\tfrac{1}{x_1\dots x_r}+z\sum_{j=1}^r\tfrac{1}{y_1\dots y_j x_j\dots x_r}=
\tfrac{1}{x_1\dots x_r}\biggl(1+ \tfrac{z}{y_1}+  \tfrac{z}{y_1}\tfrac{x_1}{y_{2}}+ \dots+ \tfrac{z}{y_1} \prod_{j=2}^{r}\tfrac{x_{j-1}}{y_j}\biggr).
$$

Therefore,
    \begin{multline*}
 \EE\left[\prod_{r=1}^k Z_r^{2t_{r+1}-2t_r}\right]
\\  = \tfrac{1}{\Gamma(u-\tilde v) \Gamma(\alpha-\tilde v)^k \Gamma(\alpha+\tilde v)^k}
    \int _{(0,\infty)^{2k+1}}
    \left(\prod_{j=1}^k x_j^{\alpha-\tilde v+2t_j-1}\right)\left(\prod_{j=1}^ky_j\right)^{\alpha+\tilde v-1}
     \\ \cdot \prod_{r=1}^k   \Bigl(1+ \tfrac{z}{y_1}+  \tfrac{z}{y_1}\tfrac{x_1}{y_{2}}+ \dots+ \tfrac{z}{y_1} \prod_{j=2}^{r}\tfrac{x_{j-1}}{y_j}\Bigr)^{2t_{r+1}-2t_r}e^{-z-\sum_{j=1}^k x_j-\sum_{j=1}^k y_j} \d z\, \d \vv x\, \d \vv y.
    \end{multline*}
        Integrating out $x_k$, we see that
          \begin{multline*}
 \EE\left[\prod_{r=1}^k Z_r^{2t_{r+1}-2t_r}\right]
 \\ = \tfrac{\Gamma(\alpha-\tilde v+2 t_k)}{\Gamma(u-\tilde v) \Gamma(\alpha-\tilde v)^k \Gamma(\alpha+\tilde v)^k}
    \int _{(0,\infty)^{2k}}
    \left(\prod_{j=1}^{k-1} x_j^{\alpha-\tilde v+2t_j-1}\right)\left(\prod_{j=1}^ky_j\right)^{\alpha+\tilde v-1}
    \\ \cdot  \prod_{r=1}^k    \Bigl(1+ \tfrac{z}{y_1}+  \tfrac{z}{y_1}\tfrac{x_1}{y_{2}}+ \dots+ \tfrac{z}{y_1} \prod_{j=2}^{r}\tfrac{x_{j-1}}{y_j}\Bigr)^{2t_{r+1}-2t_r}e^{-z-\sum_{j=1}^{k-1} x_j-\sum_{j=1}^k y_j} \d z \, \d \vv x\, \d \vv y.
    \end{multline*}
    We now change the variable of integration to $z_0=z/y_1$,  $ z_j=x_{j}/y_{j+1}$, $j=1,\dots,k-1$,
    eliminating \(x_j\) and \(z\) while retaining \(y_j\), with Jacobian \(y_1\dots y_k\).

This gives
 \begin{multline*}
 \EE\left[\prod_{r=1}^k Z_r^{2t_{r+1}-2t_r}\right]
 = \tfrac{\Gamma(\alpha-\tilde v+2 t_k)}{\Gamma(u-\tilde v) \Gamma(\alpha-\tilde v)^k \Gamma(\alpha+\tilde v)^k}
  \int_{(0,\infty)^{k}} z_0^{u-\tilde v -1} \Big(\prod_{r=1}^{k-1}  z_{r}^{\alpha-\tilde v+2t_r-1} \Big)
 \\ \cdot  \Big(\prod_{r=1}^k\left(1+z_0+z_0z_1+\dots+z_0z_1\dots z_{r-1}\right)^{2t_{r+1}-2 t_r} \Big) \d \vv z \\
\cdot  \int_{(0,\infty)^{k} }   y_1^{\alpha+u-1} \left( \prod_{r=2}^k y_{r}^{2\alpha+2t_{r-1}-1}\right)
    e^{-\sum_{j=1}^{k} (1+z_{j-1})y_j}\d \vv y.
 \end{multline*}
 Integrating with respect to $\d \vv y  =\d y_1\dots\d y_k$ we obtain
  \begin{multline}\label{direct-last}
 \EE\left[\prod_{r=1}^k Z_r^{2t_{r+1}-2t_r}\right]
\\  = \tfrac{\Gamma(\alpha+u)\Gamma(\alpha-\tilde v+2 t_k)\prod_{r=1}^{k-1}\Gamma(2\alpha+2 t_r)}{\Gamma(u-\tilde v) \Gamma(\alpha-\tilde v)^k \Gamma(\alpha+\tilde v)^k}
 \int_{(0,\infty)^{k}}\, \tfrac{z_0^{u-\tilde v -1}}{(1+z_0)^{\alpha+u}} \left(\prod_{r=1}^{k-1}  \tfrac{z_{r}^{\alpha-\tilde v+2t_r-1}}{(1+z_r)^{2\alpha+2t_r}} \right)
 \\ \cdot  \left(\prod_{r=1}^k\left(1+z_0+z_0z_1+\dots+z_0z_1\dots z_{r-1}\right)^{2t_{r+1}-2 t_r} \right) \,\d \vv z.
\end{multline}
This proves \eqref{half-space-beta+}.
\arxiv{Indeed, the normalizing constant needed for Beta integrals is
$$\frac{\Gamma(\alpha+u)}{\Gamma(u-\tilde v)\Gamma(\alpha+\tilde v)}\prod_{j=1}^{k-1} \frac{\Gamma(2\alpha+2 t_j)}{\Gamma(\alpha-\tilde v+2 t_j)\Gamma(\alpha+\tilde v)}.$$
}
\end{proof}

\subsection{Proof of Theorem~\ref{Thm:IntegralHS}}
By \eqref{direct-last}, the left-hand side of \eqref{Z-int} is given by
 $$
\EE\left[\prod_{r=1}^k Z_r^{2t_{r+1}-2t_r}\right]
    =\tfrac{\GGG_{t_1,\dots,t_k}(u,\tilde v)}{\Gamma(u-\tilde v) \Gamma(\alpha-\tilde v)^k \Gamma(\alpha+\tilde v)^k},
$$
where
\begin{multline}\label{G-1}
    \GGG_{t_1,\dots,t_k}(u,\tilde v)
    = \Gamma(\alpha+u)\Gamma(\alpha-\tilde v+2 t_k)\prod_{j=1}^{k-1}\Gamma(2\alpha+2 t_j)
 \\ \cdot  \int_{(0,\infty)^{k}} \tfrac{z_0^{u-\tilde v -1}}{(1+z_0)^{\alpha+u}} \left(\prod_{j=1}^{k-1}  \tfrac{z_{j}^{\alpha-\tilde v+2t_j-1}}{(1+z_j)^{2\alpha+2t_j}} \right)
  \left(\prod_{j=1}^k R_j(\vv z)^{2t_{j+1}-2 t_j} \right) \d \vv z,
\end{multline}
 $\vv z=(z_0,z_1,\dots, z_{k-1})$ and
  \begin{equation}\label{Rn}
  R_m(\vv x) = 1 + \sum_{j=1}^m \,\prod_{r=1}^j x_r \quad \text{for } m = 1, \dots, k. %
  \end{equation}
Theorem~\ref{Thm:IntegralHS} follows directly from the next result.

\begin{lemma}\label{Thm:half-space}
Let $\alpha>0$, $u\in \CC$ with $\re(u)>-\alpha$,  and $\tilde v<\re(u)$ such that $ |\tilde v|<\alpha$.
  For  $\re(u)>t_1>t_2>\dots>t_k> |\tilde v|$, with $t_{k+1}=0$, denote
\begin{multline}\label{IIIk}
    \II_{t_1,\dots,t_k}(u,\tilde v) = \tfrac{1}{(2\pi)^k \Gamma(u+\tilde v)\prod_{j=1}^k \Gamma(2t_j-2t_{j+1})}
\\ \cdot  \int_{(0,\infty)^{k}}
\prod_{j=1}^{k-1}
\left|
\Gamma(t_j-t_{j+1}+\i y_j+\i y_{j+1})
\Gamma(t_j-t_{j+1}+\i y_j-\i y_{j+1})
\right|^2
\\ \cdot  \left(
\prod_{j=1}^k
\tfrac{|\Gamma(\alpha+t_j+\i y_j)|^2}
     {|\Gamma(2\i y_j)|^2}
\right)
\left|\Gamma(t_k+\tilde v+\i y_k)\right|^2 \left|\Gamma(t_k-\tilde v+\i y_k)\right|^2
\\ \cdot
\Gamma(u-t_1+\i y_1)\,\Gamma(u-t_1-\i y_1)
\,\d y .
\end{multline}

Then
\begin{equation}\label{IG}
    \GGG_{t_1,\dots,t_k}(u,\tilde v) = \II_{t_1,\dots,t_k}(u,\tilde v).
\end{equation}
\end{lemma}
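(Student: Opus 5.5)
We will prove \eqref{IG} by induction on $k$, mirroring the proof of Theorem~\ref{BarraquandThm1.11}: establish a common recursion for both sides, in which the number of integration variables drops by one, and check that the two sides agree at the bottom of the recursion.

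\textbf{Analyticity and reduction.} As in Proposition~\ref{Cl:G-an} and Remark~\ref{RM:I-an}, both $\GGG_{t_1,\dots,t_k}(u,\tilde v)$ and $\II_{t_1,\dots,t_k}(u,\tilde v)$ are analytic in $u$ on a half-plane containing $\re(u)>t_1$. For $\GGG$ this holds because the factor $z_0^{u-\tilde v-1}(1+z_0)^{-\alpha-u}$ times powers of $R_j\le R_k$ is absolutely integrable. For $\II$ it follows from Stirling-type bounds as in Lemma~\ref{Lem:J-int}. Hence it suffices to prove \eqref{IG} for real $u$ with $u>t_1$.

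\textbf{Base case $k=1$.} Here
\[
\GGG_{t_1}(u,\tilde v)=\Gamma(\alpha+u)\Gamma(\alpha-\tilde v+2t_1)\int_0^\infty \frac{z_0^{u-\tilde v-1}}{(1+z_0)^{\alpha+u}}(1+z_0)^{-2t_1}\,\d z_0 ,
\]
which is an explicit beta integral equal to $\Gamma(\alpha-\tilde v+2t_1)\Gamma(u-\tilde v)\Gamma(\alpha+\tilde v+2t_1)\Gamma(\alpha+u)/\Gamma(\alpha+u+2t_1)$. On the other side, $\II_{t_1}$ is a one-dimensional integral with four Gamma pairs, $\alpha+t_1$, $t_1+\tilde v$, $t_1-\tilde v$ and $u-t_1$, over $|\Gamma(2\i y)|^2$. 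This is the Wilson/de Branges four-parameter integral (the $d=4$ analogue of \eqref{GG}, presumably among the appendix identities). With $a=\alpha+t_1$, $b=t_1+\tilde v$, $c=t_1-\tilde v$, $d=u-t_1$ it gives
\[
\frac{\prod_{i<j}\Gamma(a_i+a_j)}{\Gamma(a+b+c+d)}
=\frac{\Gamma(\alpha+2t_1+\tilde v)\,\Gamma(\alpha+2t_1-\tilde v)\,\Gamma(\alpha+u)\,\Gamma(2t_1)\,\Gamma(u+\tilde v)\,\Gamma(u-\tilde v)}{\Gamma(\alpha+u+2t_1)} .
\]
After dividing by $\Gamma(u+\tilde v)\Gamma(2t_1)$ this matches $\GGG_{t_1}$ exactly.

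\textbf{Induction step.} I would peel off the last level $t_k$. On the $\GGG$ side, write $R_k=R_{k-1}+z_0\cdots z_{k-1}$ and use the Mellin–Barnes formula \eqref{A+B} on $R_k^{-2t_k}$ with $A=R_{k-1}$ and $B=z_0\cdots z_{k-1}$. The $z_{k-1}$-integral then becomes a beta integral, and the result expresses $\GGG_{t_1,\dots,t_k}$ as a single contour integral in $s$ of Gamma factors times $\GGG$ with $k-1$ levels and a shifted parameter, namely $t_{k-1}$'s exponent altered by $s$. Alternatively, one can peel the outer variable $z_0$ (equivalently $u$), which is closer to the structure of $\II$, where $u$ appears only through $\Gamma(u-t_1\pm\i y_1)$. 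On the $\II$ side, integrate out $y_k$ using the de Branges–Wilson identity \eqref{GG+} in the form \eqref{dualHahn}, after first writing $|\Gamma(t_k+\tilde v+\i y)|^2|\Gamma(t_k-\tilde v+\i y)|^2|\Gamma(\alpha+t_k+\i y)|^2$ via a Barnes integral \eqref{koek:1.6.3} to split off one Gamma pair. This should produce the same $s$-recursion, exactly as in the proof of Claim~\ref{Clm:G12}.

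\textbf{Main obstacle.} The hardest part is making the two recursions literally coincide. The half-space product $\prod_j R_j^{2t_{j+1}-2t_j}$ has one exponent per level rather than a single power $(\cdot)^{-u-v}$, so the natural intermediate family must be enlarged, for instance by a general exponent $\beta$ on the last factor $R_{k-1}$, and the induction must be run over that larger family. I would first try to identify this family by requiring that $\II$ with its last parameter pair $(t_k\pm\tilde v)$ replaced by $(t_{k-1}+\beta/2\pm\cdots)$ match. A secondary issue is justifying the Fubini interchanges and contour placement ($\re s=-\eps$) under $t_k>|\tilde v|$; the exponential bounds of Lemma~\ref{Lem:J-int} should suffice for this.
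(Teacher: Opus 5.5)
\textbf{Where the proof breaks.} Your base case is correct and is the paper's: a beta integral matched with \eqref{WI+} at $a=(\alpha+t_1,u-t_1,t_1+\tilde v,t_1-\tilde v)$. The induction step, however, is the entire content of the lemma, and it is not proved. The recursion you describe also does not close.

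On the $\GGG$ side, take $A=R_{k-1}$ and $B=z_0\cdots z_{k-1}$ in \eqref{A+B}. The factor $B^s$ raises the exponent of \emph{every} $z_0,\dots,z_{k-2}$ by $s$, i.e.\ $\tilde v\mapsto\tilde v-s$ throughout, not just one exponent. Moreover $R_{k-1}^{-2t_k-s}R_{k-1}^{2t_k-2t_{k-1}}=R_{k-1}^{-2t_{k-1}-s}$. So what remains is not a $(k-1)$-level $\GGG$.

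On the $\II$ side, write $\Gamma(a\pm\i y):=\Gamma(a+\i y)\Gamma(a-\i y)$. Barnes' first lemma merges only two of the three pairs $\alpha+t_k$, $t_k\pm\tilde v$, so four Gamma pairs remain in $y_k$. You therefore need \eqref{WI+}, not \eqref{dualHahn}. Up to $\Gamma(2t_{k-1}-2t_k)$, the output is $\Gamma(t_{k-1}-\tilde v+s\pm\i y_{k-1})\Gamma(t_{k-1}+\tilde v\pm\i y_{k-1})\Gamma(2t_k+s)/\Gamma(2t_{k-1}+s)$, which again is not a $(k-1)$-level $\II$.

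You note that an enlarged family is needed, but you never exhibit one that closes on both sides. So ``this should produce the same $s$-recursion'' is exactly the missing step. Peeling $z_0$ in the form \eqref{G-1} is no easier, since $z_0$ enters every $R_j$.

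\textbf{How the paper closes it, and how your route can be closed.} The paper uses Lemma~\ref{Lem:H=L}. The change of variables \eqref{subRSKlike} turns $\prod_j R_j^{2t_{j+1}-2t_j}$ into the single power $R_k(\vv x)^{-(u+\tilde v)}$, giving \eqref{GGGk}. It then peels the \emph{first} level. Writing $R_k(\vv x)=1+x_1R_{k-1}(x_2,\dots,x_k)$ and using \eqref{A+B} with $A=x_1R_{k-1}$, $B=1$, plus a beta integral in $x_1$, gives \eqref{GGG-rec+}. This recursion stays inside the original family, with only $u$ replaced by $u-\eps+\i s$; that is why the statement allows complex $u$. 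On the $\II$ side, $y_1$ is integrated out by \eqref{WI+} with $a=(\alpha+t_1,u-t_1,t_1-t_2\pm\i y_2)$, and the two recursions are matched by \eqref{koek:1.6.3}.

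Your last-level route can also be made to work, but only after a specific enlargement:
\begin{itemize}
\item keep a free $t_{k+1}$ in \eqref{G-1} and allow complex $\tilde v$;
\item on the $\II$ side, replace the last pair by $\Gamma(t_k-\tilde v\pm\i y_k)\Gamma(t_k+\tilde v-2t_{k+1}\pm\i y_k)$;
\item replace $\Gamma(u+\tilde v)$ by $\Gamma(u+\tilde v-2t_{k+1})$.
\end{itemize}
Merge $\alpha+t_k$ with $t_k-\tilde v$ via \eqref{koek:1.6.3}, then apply \eqref{WI+} in $y_k$. Both sides $F$ then satisfy
\[
F_{t_1,\dots,t_k;\,t_{k+1}}(u,\tilde v)=\frac{\Gamma(\alpha-\tilde v+2t_k)}{2\pi\i\,\Gamma(2t_k-2t_{k+1})}\int_{-\eps+\i\RR}\Gamma(2t_k-2t_{k+1}+s)\,\Gamma(-s)\,\Gamma(\alpha+\tilde v-s)\,F_{t_1,\dots,t_{k-1};\,t_{k+1}-s/2}(u,\tilde v-s)\,\d s .
\]
The base case $k=1$ is \eqref{WI+} with $a=(\alpha+t_1,u-t_1,t_1-\tilde v,t_1+\tilde v-2t_2)$. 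Without one of these two closing devices, your argument remains a plan rather than a proof.
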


\bigskip
\begin{proof}
{From \eqref{G-1}, it is clear that $\GGG_{t_1,\dots,t_k}(u,\tilde v)$ is an analytic function of $u$ for
$\re(u)>\tilde v$.
On the other hand, the integral \eqref{IIIk} is an analytic function of $u$ for
$\re(u)>t_1$. We will prove by induction on $k$ that \eqref{IG} holds for $\re(u)>t_1$.
}

By Lemma~\ref{Lem:H=L}, \eqref{G-1} is the same as
\begin{multline}\label{GGGk}
\GGG_{t_1,\dots,t_k}(u,\tilde v)
    = \Gamma(\alpha+u)\Gamma(\alpha-\tilde v+2 t_k)\prod_{j=1}^{k-1}\Gamma(2\alpha+2 t_j)
 \\ \cdot  \int_{(0,\infty)^k} \frac{ \prod_{i=1}^k x_i^{\alpha+\tilde v+2t_i-1} }{  (1+x_k)^{\alpha-\tilde v+2t_k} \prod_{i=1}^{k-1} (1+x_i)^{2\alpha+2t_i} } \frac{1}{\big(R_k(\vv x)\big)^{u+\tilde v}} \,\mathrm{d}x_1 \dots \mathrm{d}x_k.
\end{multline}

\bigskip
For $k=1$ from \eqref{GGGk} we get
\begin{align}
    \GGG_{t_1}(u,\tilde v) &= \Gamma(\alpha+u)\Gamma(\alpha-\tilde v+2t_1) \int_0^\infty \tfrac{x_1^{\alpha+\tilde v+2t_1-1}}{(1+x_1)^{\alpha+u+2t_1}} \,\mathrm{d} x_1\nonumber \\
    =& \tfrac{\Gamma(\alpha+u)\Gamma(\alpha-\tilde v+2t_1)\Gamma(\alpha+\tilde v+2t_1)\Gamma(u-\tilde v)}{\Gamma(\alpha+u+2t_1)},\label{k=1}
\end{align}
while \eqref{IIIk} yields
\begin{multline*}
 \II_{t_1}(u,\tilde v)
 = \tfrac{1}{\Gamma(u+\tilde v)\Gamma(2t_1)}\,\\ \cdot \frac{1}{2\pi}\int_{(0,\infty)}\,  \tfrac{|\Gamma(\alpha+t_1+\i y_1)\Gamma(t_1+\tilde v+\i y_1)\Gamma(t_1-\tilde v+\i y_1)|^2}{|\Gamma(2\i y_1)|^2}
\Gamma(u-t_1+\i y_1)\Gamma(u-t_1-\i y_1)\,\d y_1,
\end{multline*}
which in view of \eqref{WI+} with $a_1=\alpha+t_1$, $a_2=u-t_1$, $a_3=t_1+\tilde v$, and $a_4=t_1-\tilde v$ evaluates to \eqref{k=1}.

\bigskip

Assume the identity \eqref{IG} holds for $k-1$, where $k\geq 2$. To prove that it also holds for $k$ we will show that both $\GGG_{t_1,\dots,t_k}$ and $\II_{t_1,\dots,t_k}$ satisfy the  same integral recurrence formula. First we will identify the recurrence formula for $\GGG_{t_1,\dots,t_k}$ and then we will show that it is satisfied by $\II_{t_1,\dots,t_k}$.

We rewrite  the integrand in \eqref{GGGk} using the decomposition
$
R_k(\vv x)=1+x_1R_{k-1}(x_2,\dots,x_k),
$
 and applying  \eqref{A+B} with
$
A=x_1R_{k-1}(x_2,\dots,x_k)$, $B=1
$ {  and $\eps>0$ such that $\re(u)-\eps>t_1$ and  $\re(u)+\tilde v>\eps$. (Note that under our assumptions, $\re(u)+\tilde v>0$.) }
\begin{multline*}
   \frac{1}{R_k(\vv x)^{u+\tilde v}}
   \\ =\tfrac{1}{2\pi \Gamma(u+\tilde v)}   \int_\RR  \Gamma(u+\tilde v-\eps+\i s) \Gamma(\eps - \i s) x_{1}^{-u-\tilde v+\eps-\i s} \tfrac{1}{R_{k-1}(x_2,\dots,x_k)^{u+\tilde v-\eps+\i s} }\d s.
\end{multline*}

Inserting this representation into \eqref{GGGk} %
and changing the order of integration we obtain
\begin{multline*}
    \GGG_{t_1,\dots,t_k}(u,\tilde v)
    \\=
    \tfrac{\Gamma(\alpha+u)\Gamma(\alpha-\tilde v+2 t_k)\prod_{r=1}^{k-1}\Gamma(2\alpha+2 t_r)}{2\pi \Gamma(u+\tilde v)}   \int_\RR  \Gamma(u+\tilde v-\eps+\i s) \Gamma(\eps - \i s)
      \hfill
    \\ \cdot
    \Big[\int_{(0,\infty)^k} \tfrac{ x_1^{\alpha+2t_1-u+\eps-\i s-1} \prod_{j=2}^k x_j^{\alpha+\tilde v+2t_j-1} }{  (1+x_k)^{\alpha-v+2t_k} \prod_{i=1}^{k-1} (1+x_i)^{2\alpha+2t_i} }
     \tfrac{1}{R_{k-1}(x_2,\dots,x_k)^{u+\tilde v-\eps+\i s} }
    \,\mathrm{d}x_1 \dots \mathrm{d}x_k\Big]\,\d s.
  \end{multline*}
Integration with respect to $x_1$ gives \footnotesize
\begin{multline}
    \label{GGG-rec+}
    \GGG_{t_1,\dots,t_k}(u,\tilde v)=
    \tfrac{\Gamma(\alpha+u)\Gamma(\alpha-\tilde v+2 t_k)\prod_{r=2}^{k-1}\Gamma(2\alpha+2 t_r)}{2\pi \Gamma(u+\tilde v)} \\
    \cdot  \int_\RR  \Gamma(u+\tilde v-\eps+\i s) \Gamma(\eps - \i s)\Gamma(\alpha+2t_1-u+\eps-\i s) \Gamma(\alpha+u-\eps+\i s)
    \\ \cdot
    \left[\int_{(0,\infty)^{k-1}} \tfrac{   \prod_{j=2}^k x_j^{\alpha+\tilde v+2t_j-1} }{  (1+x_k)^{\alpha-v+2t_k} \prod_{i=2}^{k-1} (1+x_i)^{2\alpha+2t_i} }
     \tfrac{1}{R_{k-1}(x_2,\dots,x_k)^{u+\tilde v-\eps+\i s} }
    \,\mathrm{d}x_2 \dots \mathrm{d}x_k\right]\d s
    \\ = \tfrac{\Gamma(\alpha+u)}{2\pi \Gamma(u+\tilde v)} \int_{ \RR} \Gamma(\alpha+2t_1-u+\eps -\i s) \Gamma(u+\tilde v-\eps+\i s)\Gamma(\eps-\i s)\GGG_{t_2,\dots,t_k}(u-\eps+\i s,\tilde v) \d s.
\end{multline}
\normalsize

\bigskip
Next, we consider $\II_{t_1,\dots,t_k}$. We will first derive another formula for $\II_{t_1,\dots,t_k}$. To this end we rewrite \eqref{IIIk} as
\begin{multline*}
    \II_{t_1,\dots,t_k}(u,\tilde v) = \tfrac{1}{(2\pi)^k \Gamma(u+\tilde v)\prod_{j=1}^k \Gamma(2t_j-2t_{j+1})}
\\ \cdot  \int_{(0,\infty)^{k-1}}
\prod_{j=2}^{k-1}
\left|
\Gamma(t_j-t_{j+1}+\i y_j+\i y_{j+1})
\Gamma(t_j-t_{j+1}+\i y_j-\i y_{j+1})
\right|^2
\\ \cdot  \left(
\prod_{j=2}^k
\tfrac{|\Gamma(\alpha+t_j+\i y_j)|^2}
     {|\Gamma(2\i y_j)|^2}
\right)
\left|\Gamma(t_k+\tilde v+\i y_k)\right|^2 \left|\Gamma(t_k-\tilde v+\i y_k)\right|^2\\ \cdot   J_{t_1,t_2}(y_2)
\,\d y_1\dots\d y_k,
\end{multline*}
where $J_{t_1,t_2}(y_2)$ is
\begin{equation*}
 \int_0^\infty  \tfrac{|\Gamma(\alpha+t_1+\mathrm{i} y_1)\Gamma(t_1-t_2+\mathrm{i} y_1+\mathrm{i} y_2)\Gamma(t_1-t_2+\mathrm{i} y_1-\mathrm{i} y_2)|^2\Gamma(u-t_1+\mathrm{i} y_1)\Gamma(u-t_1-\mathrm{i} y_1)}{|\Gamma(2\mathrm{i} y_1)|^2}\,\mathrm{d} y_1.
\end{equation*}
Using again \eqref{WI+}, this time with parameters $a_1=\alpha+t_1$, $a_2=u-t_1$, $a_3=t_1-t_2+\mathrm{i} y_2$, and $a_3=t_1-t_2-\mathrm{i} y_2$ we see that
\begin{equation*}
  J_{t_1,t_2}(y_2)=  \tfrac{2\pi \Gamma(\alpha+u)\Gamma(2t_1-2t_2)|\Gamma(\alpha+2t_1-t_2+ \mathrm{i} y_2)|^2\Gamma(u-t_2+ \mathrm{i} y_2)\Gamma(u-t_2- \mathrm{i} y_2)}{\Gamma(\alpha+u+2t_1-2t_2)}. %
\end{equation*}
Consequently,
\begin{multline}\label{Ilast}
    \II_{t_1,\dots,t_k}(u,\tilde v) = \tfrac{\Gamma(\alpha+u)}{(2\pi)^{k-1}\Gamma(u+\tilde v)\Gamma(\alpha+u+2t_1-2t_2)\,\prod_{j=2}^k \Gamma(2t_j-2t_{j+1})}
\\ \cdot  \int_{(0,\infty)^{k-1}}\,|\Gamma(\alpha+2t_1-t_2+ \mathrm{i} y_2)|^2
\\ \cdot  \Big(\prod_{j=2}^{k-1}
\left|
\Gamma(t_j-t_{j+1}+\i y_j+\i y_{j+1})
\Gamma(t_j-t_{j+1}+\i y_j-\i y_{j+1})
\right|^2\Big)
\\ \cdot  \Big(
\prod_{j=2}^k
\tfrac{|\Gamma(\alpha+t_j+\i y_j)|^2}
     {|\Gamma(2\i y_j)|^2}
\Big)
\left|\Gamma(t_k+\tilde v+\i y_k)\right|^2 \left|\Gamma(t_k-\tilde v+\i y_k)\right|^2\,
\\ \cdot   \Gamma(u-t_2+ \mathrm{i} y_2)\Gamma(u-t_2- \mathrm{i} y_2)
\,\d y_2\dots\d y_k,
\end{multline}

On the other hand, by our choice of $\eps>0$ in \eqref{GGG-rec+}, we have
$\re(u)-\eps>t_1>t_2$, so the induction hypothesis \eqref{IG} applies.
Therefore, we may replace
$\GGG_{t_2,\dots,t_k}(u-\eps+\i s,\tilde v)$ in the last expression of
\eqref{GGG-rec+} by the integral representation \eqref{IIIk}, obtaining \color{black}
\begin{multline*}
  \tfrac{\Gamma(\alpha+u)}{2\pi \Gamma(u+\tilde v)} \int_{ \RR} \Gamma(\alpha+2t_1-u+\eps -\i s) \Gamma(u+\tilde v-\eps+\i s)\Gamma(\eps-\i s)\II_{t_2,\dots,t_k}(u-\eps+\i s,\tilde v) \d s  \\
  = \tfrac{\Gamma(\alpha+u)}{(2\pi)^k \Gamma(u+\tilde v)\,\prod_{j=2}^k \Gamma(2t_j-2t_{j+1})} \int_{ \RR} \Gamma(\alpha+2t_1-u+\eps -\i s)\Gamma(\eps-\i s)
\\ \cdot  \int_{(0,\infty)^{k-1}}
\Big(\prod_{j=2}^{k-1}
\left|
\Gamma(t_j-t_{j+1}+\i y_j+\i y_{j+1})
\Gamma(t_j-t_{j+1}+\i y_j-\i y_{j+1})
\right|^2\Big)\\ \cdot  \Big(
\prod_{j=2}^k
\tfrac{|\Gamma(\alpha+t_j+\i y_j)|^2}
     {|\Gamma(2\i y_j)|^2}
\Big)
\left|\Gamma(t_k+\tilde v+\i y_k)\right|^2 \left|\Gamma(t_k-\tilde v+\i y_k)\right|^2 \\ \cdot
\Gamma(u-\eps+\i s-t_2+\i y_2)\,\Gamma(u-\eps+\i s-t_2-\i y_2)
\,\d y_2\dots\d y_k\,\d s\\
= \frac{\Gamma(\alpha+u)}{(2\pi)^k \Gamma(u+\tilde v)\,\prod_{j=2}^k \Gamma(2t_j-2t_{j+1})}\hfill  \\
\cdot  \int_{(0,\infty)^{k-1}}
\Big(\prod_{j=2}^{k-1}
\left|
\Gamma(t_j-t_{j+1}+\i y_j+\i y_{j+1})
\Gamma(t_j-t_{j+1}+\i y_j-\i y_{j+1})
\right|^2\Big)
\\ \cdot \Big(
\prod_{j=2}^k
\tfrac{|\Gamma(\alpha+t_j+\i y_j)|^2}
     {|\Gamma(2\i y_j)|^2}
\Big)
\left|\Gamma(t_k+\tilde v+\i y_k)\right|^2 \left|\Gamma(t_k-\tilde v+\i y_k)\right|^2\\ \cdot  K_{t_1,t_2}(y_2)\,\,\d y_2\dots\d y_k,
\end{multline*}
where $K_{t_1,t_2}(y_2)$ is
\[
\int_{\mathbb R}\,\Gamma(\alpha+2t_1-u+\eps -\i s)\Gamma(\eps-\i s) \Gamma(u-\eps+\i s-t_2+\i y_2)\,\Gamma(u-\eps+\i s-t_2-\i y_2)\,\d s.
\]
{(The interchanges of the order of integration are justified by absolute integrability.)}
In view of \eqref{koek:1.6.3} with $a=u-\eps-t_2+\i y_2$, $b=u-\eps-t_2-\i y_2$, $c=\eps$, and $d=\alpha+2t_1-u+\eps$ we get
\begin{equation*}
   K_{t_1,t_2}(y_2)= 2\pi\,\tfrac{|\Gamma(\alpha+2t_1-t_2+\mathrm{i} y_2)|^2\Gamma(u-t_2+\mathrm{i} y_2)\Gamma(u-t_2-\mathrm{i} y_2)}{\Gamma(\alpha+u+2t_1-2t_2)}.
\end{equation*}
Inserting this explicit expression for $K_{t_1,t_2}(y_2)$ in the last integral,  we obtain the right-hand side of \eqref{Ilast}, which is $\II_{t_1,\dots,t_k}$.

That is $\II_{t_1,\dots,t_k}$ satisfies the same recurrence formula as $\GGG_{t_1,\dots,t_k}$, and thus the identity \eqref{IG} follows.
 Indeed, the identity \eqref{IG} has already been established for a single variable $t_k$. Moreover, if \eqref{IG} holds for the $k-1$ variables $t_2,\dots,t_k$ { for some $k\geq 2$}, then the recurrence relations imply that it also holds for {   the $k$ variables} $t_1,\dots,t_k$
such that $\re(u)>t_1>t_2>\dots>t_k >|\tilde v|$.
\end{proof}

\subsection{Proof of Theorem~\ref{Thm-half-lim}} {
Recall that weak convergence in $\RR^\NN$, endowed with the product topology, is equivalent to convergence of finite-dimensional distributions. Since $\vv L\topp N_0=0$ and $\log Z_0=0$,   it is enough to prove that, for every $k\ge1$, the increments of
$
(\vv L\topp N_0,\dots,\vv L\topp N_k)
$
converge in distribution to the corresponding increments of
$
(\log Z_0,\dots,\log Z_k)
$.}

In the proof, the arguments differ across the various regions of the phase diagram, and we therefore divide the proof into several parts.
 For the convenience of the reader, we give an overview of the proof.
\begin{enumerate}[(i)]
\item \emph{Maximal current:} $u>0$ and $v>0$. In this case,  in  Section~\ref{Sec:Thm1.3(1)} we establish convergence to process \eqref{Half-plane Z} with $\tilde v=0$.

\item \emph{High density:} $u>v$ and $-\alpha<v<0$. In this case,  in  Section~\ref{Sec:Thm1.3(1i)} we establish convergence to process \eqref{Half-plane Z} with $\tilde v=v$.

\item \emph{Low density:} $u<v$ and {  $-\alpha<u<0$}. In this case,  in
Section~\ref{Sec:Thm1.3(1ii)} we establish convergence to process \eqref{Half-plane Z} with  $\tilde v=u$.
\item  \emph{Coexistence line:} %
{   $-\alpha<u=v\le 0$.} In this case,  in
Section~\ref{Sec:Thm1.3(iv)} we establish convergence to process \eqref{Half-plane Z} with  $\tilde  v=v$.
\item { \emph{Left  boundary of the maximal current region:}} $u=0,v>0$.  In this case,  in
Section~\ref{Sec:Thm1.3(v)} we establish convergence to process \eqref{Half-plane Z} with  $\tilde v=0$.
\item { \emph{Lower boundary of the maximal current region:}} $u>0,v=0$.   In this case,  in
Section~\ref{Sec:Thm1.3(vi)} we establish convergence to process \eqref{Half-plane Z} with  $\tilde v=0$.
\end{enumerate}
The analysis at the boundary of the maximal current region (points (v) and (vi) above) relies on several subtle   properties of exponential functionals of random walks. For the reader's convenience, we summarize the required results in Lemma~\ref{Conj} in Section~\ref{sec:AuxFacts}.

\subsubsection{Proof of Theorem~\ref{Thm-half-lim}  in maximal current region} %
\label{Sec:Thm1.3(1)}
We fist prove that for $u,v>0$, the  limit { as $N\to\infty$} of the Laplace transform of  $\{\vv L\topp N_k\} _{k=1,2\dots} $  does not depend on $v$ and is given by the following integral.

 \begin{proposition}\label{Prop:HalfSpace lim}
 Let $u,v,\alpha>0$.  Fix $k=1,\dots$ and $u>t_1>\dots>t_k>t_{k+1}:=0$. Then
\begin{multline*}%
\lim_{N\to\infty }  \EE\left[e^{\sum_{j=1}^k -2 t_j( \vv L\topp N_j-\vv L\topp N_{j-1})}\right]
= \tfrac{1}{(2\pi)^{k} \Gamma(u)^2 \Gamma(\alpha)^{2k}\prod_{j=1}^k \Gamma(2 t_{j}-2 t_{j+1})}
\\ \cdot  \int _{(0,\infty)^{k}}
 \prod_{j=1}^{k-1} |\Gamma(t_{j}-t_{j+1}+ \i {y_{j}}+\i {y_{j+1}})\Gamma( t_{j}-t_{j+1} + \i {y_{j}}-\i {y_{j+1}} )|^2
\\ \cdot  \left(\prod_{j=1}^{k}\tfrac{\left|\Gamma(\alpha+t_{j}+\i {y_j})\right|^{2}}{|\Gamma(2\i {y_j})|^2} \right) \left|\Gamma(u-t_{1} +\i {y_1} )\right|^2\,|
\Gamma(t_{k}+\i y_{k})|^4\, \d \vv y.
\end{multline*}
 \end{proposition}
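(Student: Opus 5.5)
The plan is to use Corollary~\ref{Cor:Barraquand} with $d=k+1$ and $\vv n=(1,\dots,1,N-k)$. For the Laplace transform of the first $k$ increments, set the last parameter $t_{k+1}=0$ with multiplicity $n_{k+1}=N-k$. Since $u,v>0$, all hypotheses hold: $u>t_1>\dots>t_k>t_{k+1}=0>-\min\{\alpha,v\}$. This gives
\[
\EE\Big[e^{-2\sum_{j=1}^k t_j(\vv L\topp N_j-\vv L\topp N_{j-1})}\Big]=\tfrac{\II^{1,\dots,1,N-k}_{t_1,\dots,t_k,0}(\alpha,u,v)}{\II_N(\alpha,u,v)} .
\]
In the numerator \eqref{IIId}, the last variable $y_{k+1}$ carries the weight $|\Gamma(\alpha+\i y_{k+1})|^{2(N-k)}$. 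The denominator \eqref{JJ-1} carries $|\Gamma(\alpha+\i x)|^{2N}$. As $N\to\infty$, both weights concentrate at $0$, in the spirit of Lemma~\ref{Lem:Extr}.

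First, rewrite the numerator as an integral in $y_{k+1}$ whose integrand is $|\Gamma(\alpha+\i y_{k+1})|^{2(N-k)}\phi_{k}(y_{k+1})$, times the prefactor. Here $\phi_k(y)$ is the $k$-fold inner integral over $y_1,\dots,y_k$, multiplied by $|\Gamma(v+\i y)|^2/|\Gamma(2\i y)|^2$ and by $|\Gamma(t_k+\i y_k+\i y)\Gamma(t_k+\i y_k-\i y)|^2$. Similarly, write the denominator as $\int|\Gamma(\alpha+\i x)|^{2(N-k)}\phi_0(x)\,\d x$ with $\phi_0(x)=|\Gamma(\alpha+\i x)|^{2k}|\Gamma(u+\i x)\Gamma(v+\i x)|^2/|\Gamma(2\i x)|^2$.

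Both $\phi_0$ and $\phi_k$ vanish at $0$ like $y^2$, because of the factor $1/|\Gamma(2\i y)|^2\sim 4y^2$. So instead of Lemma~\ref{Lem:Extr} directly, I apply its proof to $\Psi=\phi_k/\phi_0$, with $\phi_0$ playing the role of $\phi$. The ratio of the two integrals then tends to $\lim_{y\to0}\phi_k(y)/\phi_0(y)$, provided this ratio extends continuously to $0$ and $\Psi\phi_0=\phi_k$ is integrable. In this ratio the factors $|\Gamma(v+\i y)|^2$ and $|\Gamma(2\i y)|^{-2}$ cancel exactly. This is why the limit does not depend on $v$.

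What remains is
\[
\Psi(y)=\frac{\int_{(0,\infty)^k}(\cdots)\,|\Gamma(t_k+\i y_k+\i y)\Gamma(t_k+\i y_k-\i y)|^2\,\d y_1\cdots\d y_k}{|\Gamma(\alpha+\i y)|^{2k}|\Gamma(u+\i y)|^2},
\]
which is continuous at $y=0$ by dominated convergence. For the domination I use the bounds of Lemma~\ref{Lem:J-int}, uniformly for $|y|\le1$. At $y=0$ the new factor becomes $|\Gamma(t_k+\i y_k)|^4$, and the denominator becomes $\Gamma(\alpha)^{2k}\Gamma(u)^2$.

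Collecting the prefactors of \eqref{IIId} and \eqref{JJ-1}:
\begin{itemize}
\item the $\Gamma(u+v)$ terms cancel;
\item the $(2\pi)^{k+1}$ against $2\pi$ leaves $(2\pi)^{-k}$;
\item the product $\prod_{j=2}^{k+1}\Gamma(2t_{j-1}-2t_j)$ yields $\prod_{j=1}^k\Gamma(2t_j-2t_{j+1})$ with $t_{k+1}=0$.
\end{itemize}
This reproduces exactly the stated formula.

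The main obstacle is the uniform control needed for $\Psi$: continuity at $0$, and integrability of $\phi_k$ against the decaying weight. Integrability of $\phi_k$ follows from the convergence of \eqref{IIId} for $N=k+1$ (Remark~\ref{RM:I-an}). For continuity I would bound the $y$-dependent gamma factors for $|y|\le1$ using \eqref{(5.6.9)NIST}, with a shift $|y_k\pm y|\ge|y_k|-1$ as in the proof of Lemma~\ref{Lem:J-int}. This gives an $y$-independent integrable majorant in $(y_1,\dots,y_k)$.
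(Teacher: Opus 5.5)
Your proposal is correct and is essentially the paper's argument: Theorem~\ref{BarraquandThm1.11} (equivalently Corollary~\ref{Cor:Barraquand}) with $d=k+1$, $\vv t=(t_1,\dots,t_k,0)$, $\vv n=(1,\dots,1,N-k)$, followed by the concentration Lemma~\ref{Lem:Extr}. The only difference is bookkeeping: the paper divides numerator and denominator separately by $\int_0^\infty|\Gamma(\alpha+\i x)|^{2N-2k}|\Gamma(v+\i x)|^2|\Gamma(2\i x)|^{-2}\,\d x$ and applies the lemma twice, while you apply it once with $\phi=\phi_0$ and $\Psi=\phi_k/\phi_0$.

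One small correction. Convergence of \eqref{IIId} for $N=k+1$ gives integrability of $|\Gamma(\alpha+\i y)|^2\phi_k$, not of $\phi_k$ itself. You can fix this in either of two ways:
\begin{itemize}
\item absorb one factor $|\Gamma(\alpha+\i y)|^2$ into $\phi$ and use the weight to the power $N-k-1$; or
\item integrate $y$ out of $\phi_k$ using \eqref{GG+}, which reduces it to the convergent $d=k$ integral.
\end{itemize}
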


\begin{proof}
 Applying \eqref{Psi(0)} and passing to the limit $N\to\infty$ in Theorem~\ref{BarraquandThm1.11}, with $d=k+1$ and $\vv t=(t_1,\dots,t_k,0,\dots,0)$, yields the result. We omit the details.

\arxiv{  For completeness, we provide omitted argument.

By Theorem \ref{BarraquandThm1.11} used with $d=k+1$ and $\vv t=(t_1,\dots,t_k,0,\dots,0)$, the   Laplace transform  for $u,v>0$ is
$$ \EE\left[e^{-2\sum_{j=1}^k t_j( \vv L\topp N_j-\vv L\topp N_{j-1})}\right] = \frac{I_1(N) }{I_2(N)},$$
where
\begin{equation}
    \label{I_1(N)}
    I_1(N)=\frac{A(N)}{B(N)}
\end{equation}
with
\begin{multline*}
    A(N)= \tfrac{1}{(2\pi)^{k+1} \Gamma(u+v)\prod_{j=1}^k \Gamma(2 t_{j}-2 t_{j+1})}
 \int_{(0,\infty)^{k+1}} \prod_{j=1}^{k} |\Gamma(t_{j}-t_{j+1}+ \i {y_{j}}+\i {y_{j+1}})\Gamma( t_{j}-t_{j+1} + \i {y_{j}}-\i {y_{j+1}} )|^2
\\ \cdot
 \frac{\left|\Gamma(\alpha+ \i y_{k+1})\right|^{2N-2k}}{|\Gamma(2\i {y_{k+1}})|^2}
 \left(\prod_{j=1}^k\tfrac{\left|\Gamma(\alpha+t_{j}+\i {y_j})\right|^{2}}{|\Gamma(2\i {y_j})|^2} \right) \left|\Gamma(u-t_{1} +\i {y_1} )\right|^2\,|\Gamma(v+\i y_{ k+1 })|^2\, \d \vv y,
\end{multline*}
$$
    B(N)= \int_0^\infty |\Gamma(\alpha+\i x)|^{2N-2k} \tfrac{|\Gamma(v+\i x)|^2 }{|\Gamma(2\i x)|^2} \d x
$$
and
\begin{equation}\label{I_2(N)}
I_2(N)= \frac{\GG_N(\alpha,u,v)}{B(N)}=
 \frac{
   \frac{1}{2\pi\Gamma(u+v)} \int_0^\infty |\Gamma(\alpha+\i x)|^{2N} \tfrac{|\Gamma(v+\i x)|^2|\Gamma(u+\i x)|^2}{|\Gamma(2\i x)|^2} \d x
    }
     {
    \int_0^\infty |\Gamma(\alpha+\i x)|^{2N-2k} \tfrac{|\Gamma(v+\i x)|^2 }{|\Gamma(2\i x)|^2} \d x  }
     \to \frac{\Gamma(u)^2\Gamma(\alpha)^{2k}}{2\pi \Gamma(u+v)}.
\end{equation}
To compute the limit of  \eqref{I_1(N)}  we use  \eqref{Psi(0)} with
\begin{multline*} \Psi(x)=\tfrac{1}{(2\pi)^{k+1} \Gamma(u+v) \prod_{j=1}^k \Gamma(2 t_{j}-2 t_{j+1})}
 \int_{(0,\infty)^{k}} \prod_{j=1}^{k-1} |\Gamma(t_{j}-t_{j+1}+ \i {y_{j}}+\i {y_{j+1}})\Gamma( t_{j}-t_{j+1} + \i {y_{j}}-\i {y_{j+1}} )|^2
\\ \cdot     |\Gamma(t_{k}+ \i y_{k}+\i x)\Gamma( t_{k}+ \i y_{k}-\i x )|^2
 \left(\prod_{j=1}^k\tfrac{\left|\Gamma(\alpha+t_{j}+\i {y_j})\right|^{2}}{|\Gamma(2\i {y_j})|^2} \right) \left|\Gamma(u-t_{1} +\i {y_1} )\right|^2\,\, \d \vv y
 \end{multline*}
and
$$\phi(x)=\tfrac{|\Gamma(v+\i x)|^2 }{|\Gamma(2\i x)|^2}.$$
 We obtain
 \begin{multline*}
     \lim_{N\to\infty}\frac{A(N)}{B(N)}=\Psi(0)
     \\ = \tfrac{1}{(2\pi)^{k+1} \Gamma(u+v) \prod_{j=1}^k \Gamma(2 t_{j}-2 t_{j+1})}
\int _{(0,\infty)^{k}}
 \prod_{j=1}^{k-1} |\Gamma(t_{j}-t_{j+1}+ \i {y_{j}}+\i {y_{j+1}})\Gamma( t_{j}-t_{j+1} + \i {y_{j}}-\i {y_{j+1}} )|^2
\\ \cdot  \left(\prod_{j=1}^{k}\tfrac{\left|\Gamma(\alpha+t_{j}+\i {y_j})\right|^{2}}{|\Gamma(2\i {y_j})|^2} \right) \left|\Gamma(u-t_{1} +\i {y_1} )\right|^2\,|
\Gamma(t_{k}+\i y_{k})|^4\, \d \vv y.
 \end{multline*}
For the limit in \eqref{I_2(N)} we use \eqref{Psi(0)} with
$$
\Psi(x)=|\Gamma(u+\i x)|^2 |\Gamma(\alpha+\i x)|^{2k}
$$
and
$$\phi(x)=\tfrac{|\Gamma(v+\i x)|^2 }{|\Gamma(2\i x)|^2}.$$}
\end{proof}

We are now ready to conclude the proof of this part of Theorem~\ref{Thm-half-lim}.  %
We verify convergence of the finite-dimensional distributions.

By Proposition~\ref{Prop:HalfSpace lim}, the multipoint Laplace transform of the increments of
$
\{\vv L\topp N_1,\dots,\vv L\topp N_k\}
$
converges.   Theorem~\ref{Thm:IntegralHS} identifies the limit as %
\[
\EE\!\left[\prod_{r=1}^k Z_r^{\,2t_{r+1}-2t_r}\right]
=
\EE\!\left[
Z_1^{-2t_1}
\left(\tfrac{Z_2}{Z_1}\right)^{-2t_2}
\dots
\left(\tfrac{Z_k}{Z_{k-1}}\right)^{-2t_k}
\right],
\]
which is the multipoint Laplace transform of the increments of
$
\{\log Z_1,\dots,\log Z_k\}
$ with $\tilde v=0$.

Since $\vv L\topp N_0=0$ and $\log Z_0=0$, this determines the limiting multipoint Laplace transforms on an open set of parameters. By \cite[Theorem~A.1]{Bryc-Wang-2017ASEP}, convergence of the Laplace transforms on an open set implies convergence of the corresponding finite-dimensional distributions. \qed

\subsubsection{Proof of Theorem \ref{Thm-half-lim} in the high density region} \label{Sec:Thm1.3(1i)}
We assume $u>v$ and $-\alpha<v<0$.
 Consider  $\vv t=(t_1,\dots,t_k,0,0,\dots,0)\in\RR^N$ so that
    $$\lim_{N\to\infty }  \EE\left[e^{\sum_{j=1}^k -2 t_j( \vv L\topp N_j-\vv L\topp N_{j-1})}\right] = \lim_{N\to\infty} \frac{\GG_{t_1,\dots,t_k,0,\dots 0}(\alpha,u,v)}{\GG_N(\alpha,u,v)}.$$
    From \eqref{pert-MU}, we see that
     \begin{multline*}%
    \GG_{t_1,\dots,t_k,0,\dots 0}(\alpha,u,v)
   \\  =\Gamma(\alpha+u) \Gamma(\alpha-v)^{N-1} \Gamma(\alpha{+}v)^{N-k} \Big(\prod_{r=1}^k \Gamma(\alpha+v+2t_r)\Big)  \\ \cdot  \EE \left[(1+\xi_1+\xi_1\xi_2+\dots+\xi_1\dots\xi_{N-1})^{-u-v}\right]
     \\=\Gamma(\alpha+u) \Gamma(\alpha-v)^{N-1} \Gamma(\alpha+v)^{N-k} \biggl(\prod_{r=1}^k \Gamma(\alpha+v+2t_r)\biggr)
     \hfill \\\cdot  \EE \Bigl[\Big(1+\xi_1 + \xi_1\xi_2+\dots+\xi_1\dots\xi_{k-1}\\+\xi_1\dots\xi_{k}\underbrace{(1+\xi_{k+1}+\xi_{k+1}\xi_{k+2}+\dots+\xi_{k+1}\dots\xi_{N-1})}_{R_{N-k-1}} \Big)^{-u-v}\Bigr]
     \\=\Gamma(\alpha+u) \Gamma(\alpha-v)^{N-1} \Gamma(\alpha{+}v)^{N-k} \prod_{r=1}^k \Gamma(\alpha+v+2t_r)
     \\\cdot  \EE \left[\left(1+\xi_1+\xi_1\xi_2+\dots+\xi_1\dots\xi_{k}R_{N-k-1}\right)^{-u-v}\right]
     \end{multline*}
with independent {  random variables} $\xi_j\simeq \mathrm{Beta}_{II}(\alpha+v+2 t_j,\alpha-v)$, $j=1,\dots,k$ and
$\xi_j\simeq \mathrm{Beta}_{II}(\alpha+v,\alpha-v)$ for $j> k$.
Consequently, if $v<0$, then $R_{N-k-1}\to R$ almost surely, where $R-1\simeq \mathrm{Beta}_{II}(\alpha+v,-2v)$, see \cite[Example 9]{chamayou1991explicit} and  we see that
 \begin{multline*}
  \lim_{N\to\infty }  \EE\left[e^{\sum_{j=1}^k -2 t_j( \vv L\topp N_j-\vv L\topp N_{j-1})}\right]
  \\=
 \tfrac{\EE \left[\left(1+\xi_1+\xi_1\xi_2+\dots+\xi_1\dots\xi_{k-1}R' \right)^{-u-v}\right].}{\EE[R^{-u-v}]} \prod_{r=1}^k \tfrac{\Gamma(\alpha+v+2t_r)}{\Gamma(\alpha+v)}.
 \end{multline*}
  Moreover, $R'=\xi_{k}R$, where $\xi_{k}$ and $R=1+\xi_{k+1}+\xi_{k+1}\xi_{k+2}+\dots$ are independent, which  gives $R'\simeq \mathrm{Beta}_{II}(\alpha+v+2t_k,-2v)$.
  With explicit {  formula}
  $$\mathbb E\,[R^{-u-v}]=\tfrac{\Gamma(u-v)\Gamma(\alpha-v)}{\Gamma(\alpha+u)\Gamma(-2v)},$$
we can write   the answer as an explicit integral. We obtain
 \begin{multline*}%
     \lim_{N\to\infty }  \EE\left[e^{\sum_{j=1}^k -2 t_j( \vv L\topp N_j-\vv L\topp N_{j-1})}\right]
    =   \tfrac{\Gamma(\alpha+u) \Gamma(\alpha-v+2t_k) }{\Gamma(u-v)\Gamma(\alpha+v)^k\Gamma(\alpha-v)^k}
   \prod_{r=1}^{k-1} {\Gamma(2\alpha+2t_j)}
   \\ \cdot  \int_{(0,\infty)^k}
    \tfrac{ \prod_{j=1}^k x_j^{\alpha+v+2t_j-1} }{ \left(1 + \sum_{j=1}^k \,\prod_{r=1}^j x_r \right)^{u+v} (1+x_k)^{\alpha-v+2t_k} \prod_{j=1}^{k-1} (1+x_j)^{2\alpha+2t_j} } \,\mathrm{d}x_1 \dots \mathrm{d}x_k.
 \end{multline*}
 Our goal is to show that this limit coincides with the expression \eqref{half-space-beta+} for $\tilde v=v$, which we also write as an explicit integral.
  Using notation \eqref{Rn}, we obtain
 \begin{multline*}
    \EE\left[\prod_{r=1}^k Z_r^{2t_{r+1}-2t_r}\right]
= \tfrac{\Gamma(\alpha+u)\Gamma(\alpha-v+2 t_k)}{\Gamma(u-v)\Gamma(\alpha+v)^k \Gamma(\alpha-v)^k}  \prod_{j=1}^{k-1}  \Gamma(2\alpha+2 t_j)
\\ \cdot
\int_{(0,\infty)^k} \tfrac{ y_1^{u-v-1} \left( \prod_{j=2}^k y_j^{\alpha-v+2t_{j-1}-1} \right) \prod_{j=1}^{k} \big(R_j(\vv y)\big)^{2t_{j+1}-2t_j} }{  (1+y_1)^{\alpha+u} \prod_{j=2}^k (1+y_j)^{2\alpha+2t_{j-1}} } \,\mathrm{d}y_1 \dots \mathrm{d}y_k,
 \end{multline*}
Since  the integrals \eqref{Lk} and \eqref{Hk} are equal, this ends  the proof. \qed

\subsubsection{Proof of Theorem \ref{Thm-half-lim}  in the low density region}   \label{Sec:Thm1.3(1ii)}
We assume  $u<v$ and $-\alpha<u<0$. The half-space stationary measure \eqref{Half-plane Z} with parameter $\tilde v=u$ is a $\mathrm{Gamma}^{-1}(\alpha-u)$ multiplicative random walk. So we need to show that for each  $k$ and $t_1,\dots,t_k \geq 0 $   we have
   \begin{equation}\label{limNk-walk}
\lim_{N\to\infty }  \EE\left[e^{\sum_{j=1}^k -2 t_j( \vv L\topp N_j-\vv L\topp N_{j-1})}\right]
=\prod_{j=1}^k \tfrac{\Gamma(\alpha-u+2 t_j)}{\Gamma(\alpha-u)}.
\end{equation}
 If $N>k$,  from \eqref{pert-MV}   we obtain
       \begin{multline*}%
     \GG_{t_1,\dots,t_k,0,\dots 0}(\alpha,u,v)
     =\Gamma(\alpha+v) \Gamma(\alpha+u)^N  \Gamma(\alpha-u)^{N-1-k} \prod_{j=1}^k \Gamma(\alpha-u+2t_j) \\ \cdot
     \EE [(\underbrace{1+\zeta_1+\zeta_1\zeta_2+\dots+\zeta_1\dots \zeta_{N-k-2}}_{R_{N-k-2}(\vv \zeta)}
     \\ +\zeta_1\dots \zeta_{N-k-1}  (\underbrace{1+\xi_1+\dots+ \xi_1\dots  \xi_k}_{R_k(\vv\xi)}))^{-u-v}]
     \end{multline*}
with independent   random variables $\zeta_j\simeq \mathrm{Beta}_{II}(\alpha+u,\alpha-u)$, $j=1,\dots,N-k$,
and $  \xi_j\simeq \mathrm{Beta}_{II}(\alpha+u,\alpha-u+2t_{k-j+1})$ for $j=1,\dots,k$.
As previously, for   $0<p<-2u$, we have
$$\EE[\zeta_1^p]= \tfrac{\Gamma(\alpha+u+p)\Gamma(\alpha-u-p)}{\Gamma(\alpha+u)\Gamma(\alpha-u)}<1,$$
 the  series \eqref{Rseries}  converges in probability and the  moments of order $-(u+v)$ converge.
 \arxiv{Indeed, as previously, %
 if $u+v<0$, then    $p=-u-v\in(0,-2u)$ and $p$-th moments converge.  If $u+v\geq 0$, then   $R_n^{-u-v}\leq 1$. }
In particular, as $N\to\infty$,
\[
\EE[\left(R_{N-k-2}(\vv \zeta)+\zeta_1\dots \zeta_{N-k-1}R_k(\vv \xi)\right)^{-u-v}]  \to \EE[R^{-u-v}],
\]
as the last term is negligible, with $$\EE\left[\big(\zeta_1\dots \zeta_{N-k-1}
     R_k(\vv \xi)\big)^p\right] = \EE[\zeta_1^p]^{N-k-1}\EE[ R_k(\vv \xi)^p]$$ converging to $0$ exponentially fast as $N\to\infty$.
To complete the negligibility argument, we distinguish the cases
$u+v\ge 0$, where we take $p=-u$, and $u+v<0$, where we take
$p=-u-v$. By the triangle inequality for the metric \eqref{p-metric},
\[
\begin{aligned}
\| R_{N-k-2}(\vv \zeta)
&+\zeta_1\cdots \zeta_{N-k-1}R_k(\vv \xi)-R\|_p  \\
&\le
\|R_{N-k-2}(\vv \zeta)-R\|_p
+\|\zeta_1\|_p^{\,N-k-1}\,
\|R_k(\vv \xi)\|_p
\longrightarrow 0,
\end{aligned}
\]
as $N\to\infty$. This completes the proof when $u+v<0$.

Suppose now that $u+v\ge0$. Then
\[
\left(R_{N-k-2}(\vv \zeta)
+\zeta_1\cdots \zeta_{N-k-1}R_k(\vv \xi)\right)^{-u-v}
\le 1,
\]
which implies uniform integrability, and $\zeta_1\cdots \zeta_{N-k-1}R_k(\vv \xi)\to 0$ in $p$-th moment and in probability. So
\[
\left(R_{N-k-2}(\vv \zeta)
+\zeta_1\cdots \zeta_{N-k-1}R_k(\vv \xi)\right)^{-u-v}
\to R^{-u-v}
\]
in probability. Hence, by uniform integrability, the two sequences have the same limit of expectations.

 In view of Corollary \ref{Cor:from-perp},
  after canceling the common factor $\EE[R^{-u-v}]$, we obtain \eqref{limNk-walk}.  \qed

\subsubsection{Proof of Theorem~\ref{Thm-half-lim} on the coexistence line} \label{Sec:Thm1.3(iv)}
{  The case $u=v=0$ is trivial, as $\vv L$ is simply a sum of i.i.d. log-gamma random variables. Thus, it suffices to consider the case $u=v<0$.}
Denote by $w>0$ the common value of $-u=-v$.
From \eqref{pert-MV} we obtain
\begin{equation*}\label{Coex+}
   \mathbb E\left[e^{-2\sum_{j=1}^k\,t_j(\vv L_j-\vv L_{j-1})}\right]=\tfrac{\GG_{t_1,\dots,t_k,0,0\dots}(\alpha,-w,-w)}{\GG_{N}(\alpha,-w,-w)}\\
=
Q_N
\prod_{r=1}^k \tfrac{\Gamma(\alpha+w+2t_r)}{\Gamma(\alpha+w)} ,
\end{equation*}
where
$$
Q_N=\tfrac{\EE \left[\left(R_{N-k-1}+\zeta_1\dots \zeta_{N-k-1}(1+   \xi_1+\dots+  \xi_1\dots  \xi_k)\right)^{2w}\right]}{\EE[R_{N-1}^{2w}]}
$$
with independent  {  random variables}  $\zeta_j\simeq \mathrm{Beta}_{II}(\alpha-w,\alpha+w)$, $j=1,\dots,N-k$,
and $  \xi_j\simeq \mathrm{Beta}_{II}(\alpha-w,\alpha+w+2t_{k-j+1})$ for $j=1,\dots,k$.

We have $R_N\nearrow R$ where $R-1\simeq\mathrm{Beta}_{II}(\alpha-w,2w)$ (see \cite{chamayou1991explicit}), so $\|R_N\|_{2w}\to \infty$. Recalling notation \eqref{p-metric}, we have
\begin{multline*}
  \|R_{N-1}\|_{2w}-\|R_k\|_{2w}\leq \|R_{N-k-1}\|_{2w}
\\ \leq \|R_{N-k-1}+\zeta_1\dots \zeta_{N-k-1}(1+   \xi_1+\dots+  \xi_1\dots  \xi_k)\|_{2w}
\\ \leq \|R_{N-k-1}\|_{2w}+\|\zeta_1\dots \zeta_{N-k-1}(1+   \xi_1+\dots+  \xi_1\dots  \xi_k)\|_{2w}
\\ \leq    \|R_{N-1}\|_{2w}+\|R_k\|_{2w}+\|\zeta_1\dots \zeta_{N-k-1}(1+   \xi_1+\dots+  \xi_1\dots  \xi_k)\|_{2w}.
\end{multline*}
Thus $Q_N\to 1$ and $$
\lim_{N\to\infty}  \mathbb E\left[e^{-2\sum_{i=1}^k\,t_i(\vv L\topp N_i-\vv L\topp N_{i-1})}\right]= \prod_{r=1}^k \tfrac{\Gamma(\alpha+w+2t_r)}{\Gamma(\alpha+w)} $$
corresponds to $\tilde v=-w$, an  inverse-Gamma multiplicative random walk. \qed

\subsubsection{Exponential functionals of random walks}\label{sec:AuxFacts}
Our proof of Theorem~\ref{Thm-half-lim}  at the boundary of the maximal current region relies on additional facts concerning exponential functionals of symmetric random walks.
   We invoke  results \cite[Theorem~1]{Hirano-1997} and \cite[Theorem~2.7]{Xu-Wei:2023}, which   cover the functions needed in our setting. For the reader's convenience, we recall these results in Appendix~\ref{Se:ExpFun}.
  We note that for compactly supported functions, related  asymptotics appeared in \cite[Theorems~A and~B]{lePage1997local}.

We use these results to prove the following.

\begin{lemma}\label{Conj}
 Let  $\tau_1,\tau_2,\dots,$ be i.i.d. random variables,  $\tau_j\simeq\mathrm{Beta}_{II}(\alpha,\alpha)$. Define $P_k=\prod_{j=1}^k \tau_j$ and $R_n=1+P_1+\dots+P_n$.
For fixed   $\theta>0$, we have:
\begin{equation} \label{XuWei}
  \lim_{m\to\infty}  \sqrt{m}\, \EE[R_m^{-\theta}] =c
\end{equation}
for some $c\in(0,\infty)$.
Furthermore,
  for  $K>0$, we have
\begin{equation}\label{ratio-limit}
    \lim_{m\to\infty} \tfrac{\EE[(R_m+KP_m)^{-\theta}]}{\EE[R_m^{-\theta}]}=1.
\end{equation}
\end{lemma}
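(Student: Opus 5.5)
The walk $S_k=\log P_k=\sum_{j\le k}\log\tau_j$ has symmetric increments ($\tau\stackrel{d}{=}1/\tau$ for $\mathrm{Beta}_{II}(\alpha,\alpha)$), with mean zero and finite variance $2\psi_1(\alpha)$; its increments also have exponential moments of every order in $(-\alpha,\alpha)$. Hence $R_m=\sum_{k=0}^m e^{S_k}$ is an exponential functional of a centered random walk. For \eqref{XuWei} I would quote \cite[Theorem~2.7]{Xu-Wei:2023}, or \cite[Theorem~1]{Hirano-1997}, as recalled in Appendix~\ref{Se:ExpFun}. Those results give $\EE[F(R_m)]\sim c_F\,m^{-1/2}$ for a class of functions $F$ decaying at infinity, with $c_F\in(0,\infty)$. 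The work is only to check that $F(x)=x^{-\theta}$ on $[1,\infty)$ falls in that class. It is bounded and monotone, and the required integrability condition (typically $\int^\infty F(x)\,\frac{dx}{x}<\infty$ or a comparable bound) holds since $\theta>0$.

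Heuristically, $R_m^{-\theta}$ is non-negligible only when $\max_{k\le m}S_k=O(1)$, and that event has probability of order $m^{-1/2}$ by the ballot/Sparre Andersen estimate. This explains the normalization and gives positivity of $c$.

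For \eqref{ratio-limit}, the lower bound $\liminf\ge$ is trivial in one direction: $(R_m+KP_m)^{-\theta}\le R_m^{-\theta}$, so $\limsup$ of the ratio is at most $1$. For the reverse inequality, fix $M>0$ and split according to whether $S_m\le -M$. On $\{S_m\le -M\}$ we have $R_m+KP_m\le R_m(1+Ke^{-M})$, because $R_m\ge1$. This gives
\[
\EE[(R_m+KP_m)^{-\theta}]\ge (1+Ke^{-M})^{-\theta}\Bigl(\EE[R_m^{-\theta}]-\EE[R_m^{-\theta}\mathbf 1_{S_m>-M}]\Bigr).
\]
It therefore suffices to show $\EE[R_m^{-\theta}\mathbf 1_{S_m>-M}]=o(m^{-1/2})$ for each fixed $M$. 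Since $R_m\ge 1$ and $R_m\ge e^{\max_k S_k}$, this expectation is at most $\EE[e^{-\theta\max_{k\le m}S_k}\mathbf 1_{S_m>-M}]$. By the local limit theorem combined with the ballot estimate, the probability that a centered walk stays below level $x$ up to time $m$ and ends in a bounded window is $O(m^{-3/2})$ per unit of $x$. Summing against the weight $e^{-\theta x}$ then gives $O(m^{-3/2})$, which is $o(m^{-1/2})$.

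Alternatively, time reversal can be used. Writing $R_m=P_m\bigl(1+\tilde\tau_1^{-1}+\dots\bigr)$ in terms of the reversed walk shows that the relevant event forces the walk to be conditioned both to stay low and to return near its starting level. The appendix results (conditioned limit theorems in the Hirano / Xu--Wei framework) handle this kind of estimate directly. Letting $m\to\infty$ and then $M\to\infty$ gives $\liminf\ge1$.

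The main obstacle is the $o(m^{-1/2})$ estimate for the event "maximum bounded and endpoint not very negative." I would first try to derive it directly from the cited theorem, applied with a test function of the pair $(R_m,P_m)$ if the appendix version allows one. Failing that, I would use a Kozlov-type bound $\PP(\max_{k\le m}S_k\le x,\ S_m\ge -M)\le C(1+x)(1+x+M)m^{-3/2}$ and integrate it against $e^{-\theta x}$.
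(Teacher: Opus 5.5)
Your proof of \eqref{XuWei} is the paper's: apply \cite[Theorem~2.7]{Xu-Wei:2023} to $X_j=\log\tau_j$. The paper takes $F(x)=(1+x)^{-\theta}$ and uses the symmetry of $\log\tau$ to match Xu--Wei's $I_n=\sum_{j\le n}e^{-S_j}$. Two corrections to that part:
\begin{itemize}
\item The hypothesis that really needs checking is not an integrability condition on $F$. It is that the infimum of $\EE[\tau^\lambda]$ over $[0,\theta]$ is attained at $\Lambda=0$, which holds because $\EE[\tau^\lambda]=\Gamma(\alpha-\lambda)\Gamma(\alpha+\lambda)/\Gamma(\alpha)^2\ge1$. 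This, together with Spitzer's condition with $\rho=\tfrac12$, is what produces the $m^{-1/2}$ rate.
\item \cite[Theorem~1]{Hirano-1997} is not an alternative for \eqref{XuWei}. It requires $\sup_{x>0}x^{-a}|f(x)|<\infty$ with $a>0$, which excludes $f\equiv1$, and it yields $m^{-3/2}$ asymptotics for functionals carrying a factor vanishing with $P_m$.
\end{itemize}

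For \eqref{ratio-limit} you take a genuinely different route. The paper uses no truncation. The inequality $0\le x^{-\theta}-(x+a)^{-\theta}\le a\theta x^{-1-\theta}$ bounds the relative deficit by $\theta K\,\EE[P_mR_m^{-1-\theta}]/\EE[R_m^{-\theta}]$. Then \cite[Theorem~1]{Hirano-1997} with $f(x)=x$, $h(x)=(1+x)^{-1-\theta}$, $W(x)=e^x$ gives $\EE[P_mR_m^{-1-\theta}]\sim c\,m^{-3/2}=o(m^{-1/2})$.

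Your splitting on $\{S_m\le -M\}$ is correct. It reduces the claim to $\EE[R_m^{-\theta}\mathbf 1_{\{S_m>-M\}}]=o(m^{-1/2})$, an estimate of the same $m^{-3/2}$ order. The price is an extra limit $M\to\infty$ and a local ballot estimate imported from outside the appendix.

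Your Kozlov-type bound is misstated. Summing the local estimate $C(1+x)(1+y)m^{-3/2}$ over unit windows $y\le x+M$ below the barrier gives $\mathbb P(\max_{k\le m}S_k\le x,\ S_m\ge -M)\le C(1+x)(1+x+M)^2m^{-3/2}$, with the square. This is harmless after integrating against $e^{-\theta x}$.

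As you suspected, the obstacle can be closed inside the appendix framework. For $0<a<\theta$, $\mathbf 1_{\{S_m>-M\}}\le e^{a(S_m+M)}$ gives $\EE[R_m^{-\theta}\mathbf 1_{\{S_m>-M\}}]\le e^{aM}\,\EE[P_m^{a}R_m^{-\theta}]$. Hirano's theorem with $f(x)=x^{a}$, $h(x)=(1+x)^{-\theta}$, $W(x)=e^x$ (so $\beta\eta=\theta>a$) makes this $O(m^{-3/2})$.

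The paper's mean-value step is the shorter path: it produces the needed $P_m$-weighted functional in one line, without the $M$-truncation.
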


\begin{proof}

To prove \eqref{XuWei}, we apply \cite[Theorem 2.7]{Xu-Wei:2023}, see Theorem \ref{Thm:Xu-Wei-T1}, to the symmetric i.i.d.\ random variables \(X_j=\log \tau_j\) and the function \(F(x)=(1+x)^{-\theta}\), which is bounded on $(0,\infty)$. We verify that the assumptions listed in Section \ref{Se:ExpFun} are satisfied:
\begin{enumerate}[C.1]
\item[\ref{A2.1}:] We have  $\mathcal{D}_F=(0,\theta]$ with $\theta_F=\theta$.

\item[\ref{C2.2}:]  For \(0<\delta\le x\le y\), the mean value theorem yields
\[
0\le F(x)-F(y)
\le \tfrac{\theta}{(1+\delta)^{\theta+1}}(y-x)
=C_\delta (y-x).
\]

\item[ \ref{A2.5}:]
Using \eqref{C-S}, for $0\leq \la <\alpha$, we have
\[
\mathcal{L}_X(\lambda)
=\EE[e^{\lambda X}]
=\EE[\tau^\lambda]
=\tfrac{\Gamma(\alpha-\lambda)\Gamma(\alpha+\lambda)}
       {\Gamma(\alpha)^2}
\ge 1.
\]
Hence
$
\inf_{0\le \lambda\le { \theta}}\mathcal{L}_X(\lambda)=1,
$
and the infimum is attained at \(\Lambda=0\).

\item[\ref{C2.6}:] For symmetric random variables {  $\log \tau_j$ with second moments and density},  $\rho=1/2$.
\end{enumerate}
With $\rho$ and $\ell_1(n)$ as described in Remark \ref{Rem:Xuwei},  the conclusion \eqref{eq:2.3} of \cite[Theorem 2.7]{Xu-Wei:2023} gives \eqref{XuWei}.

\medskip
Next we verify that
 \begin{equation}\label{m-limit}
    \lim_{m\to\infty} \sqrt{m}\; \EE\left[ P_m R_m^{-1-\theta}\right] =0. %
\end{equation}
To prove  \eqref{m-limit}, we apply  \cite[Theorem 1]{Hirano-1997}, see Theorem \ref{Thm:Hirano-T1},  to the i.i.d. random variables $\xi_j=\log \tau_j$ and functions $f(x)=x$, $h(x)=(1+x)^{-1-\theta}$ and $W(x)=e^x$.
The assumptions in \cite[Theorem 1]{Hirano-1997}  are satisfied: conditions (a), (b), (c) hold because  $\EE[\log \tau]=0$, $\mathrm{Var}[\log \tau]=2 \psi_1(\alpha)$ and $\EE[\tau^\theta] = \tfrac{\Gamma(\alpha-\theta)\Gamma(\alpha+\theta)}{\Gamma(\alpha)^2}$.
Conditions (i), (ii) hold trivially, and condition (iii) holds with $a=1$, $\beta=1+\theta$, $\eps=\eta=1$, which satisfy \eqref{Hiii-beta} for $\theta>0$.

With $S_n=\sum_{k=1}^n \xi_k$, the conclusion \eqref{Hirano-concl} of this theorem says that
$ \EE\left[ P_m R_m^{-1-\theta}\right]\sim c m^{-3/2}$   as $m\to\infty$,
which implies \eqref{m-limit}.

\medskip
To derive \eqref{ratio-limit} from \eqref{XuWei} and \eqref{m-limit}, we apply under the expectation the standard mean-value estimate
\[
0 \le x^{-\theta}-(x+a)^{-\theta}
\le a\theta x^{-1-\theta},
\qquad a\ge 0,\; x>0.
\]
We obtain
$$
0\leq \tfrac{\EE[R_m^{-\theta}]-\EE[(R_m+K P_m)^{-\theta}] }{\EE[R_m^{-\theta}]}\leq \tfrac{\theta K}{\sqrt{m}\,\EE[R_m^{-\theta}]} \cdot \sqrt{m}\, \EE\left[ P_m R_m^{-1-\theta}\right]. %
$$
Since by \eqref{XuWei}, the sequence  $\{\sqrt{m}\,\EE[R_m^{-\theta}]\}_{m\in\ZZ_{\geq 1}}$ is bounded away from $0$,  by \eqref{m-limit}, the
right-hand side of the above expression converges to $0$ and \eqref{ratio-limit} follows.
\end{proof}

\subsubsection{Proof of Theorem~\ref{Thm-half-lim} at the boundary between the maximal current and low-density regions}\label{Sec:Thm1.3(v)}
Assume $u=0,v>0$.
From \eqref{pert-MV} we obtain
\begin{equation*}\label{v>0}
   \mathbb E\left[e^{-2\sum_{i=1}^k\,t_i(\vv L_i-\vv L_{i-1})}\right]=\tfrac{\GG_{t_1,\dots,t_k,0,0\dots}(\alpha,0,v)}{\GG_{N}(\alpha,0,v)}\\
= Q_N
\prod_{r=1}^k \tfrac{\Gamma(\alpha+2t_r)}{\Gamma(\alpha)},
\end{equation*}
where
\begin{equation}\label{QN(v)}
  Q_N=\tfrac{\EE \left[\left(R_{N-k-2}+\zeta_1\dots \zeta_{N-k-1}(1+ \tilde \xi_1+\dots+\tilde \xi_1\dots\tilde \xi_k)\right)^{-v}\right]}{\EE[R_{N-1}^{-v}]}
\end{equation}
with independent {  random variables} $\zeta_j\simeq \mathrm{Beta}_{II}(\alpha,\alpha)$, $j=1,\dots,N-k$,
and $\tilde \xi_j\simeq \mathrm{Beta}_{II}(\alpha,\alpha+2t_{k-j+1})$ for $j=1,\dots,k$. Here,  using notation \eqref{Rn} with $\vv y$ replaced by $\vv \zeta=(\zeta_1,\zeta_2,\dots)$, we put  $R_n=R_n(\vv \zeta)$.

We first verify that  $Q_N\geq 1$ whenever $t_j\geq 0$. Consider the following coupling between the i.i.d. random variables
$\zeta_j'\simeq \mathrm{Beta}_{II}(\alpha,\alpha)$ and  $\tilde \xi_j$
\[
\tilde \xi_j:=\tfrac{\tau_j(\alpha)}{\tau'_j(\alpha)+\tau_j'(2t_{k-j+1})}
\leq
\zeta_j':=\tfrac{\tau_j(\alpha)}{\tau_j'(\alpha)},
\]
where $\tau_j(\alpha)$, $\tau_j'(\alpha)$, and $\tau_j'(2t)$ are independent gamma random variables, $\tau_j(a)\simeq \mathrm{Gamma}(a)$, $j=1,\dots,k$. Let  $\vv \zeta'$ denote  an  independent copy of $\vv \zeta$, coupled with $\tilde{\vv \xi}$ as shown above. Then we have
\begin{multline*}
  R_{N-k-2}(\vv\zeta)+\zeta_1\dots \zeta_{N-k-1}R_k(\tilde{\vv \xi})
\\ \leq
R_{N-k-2}(\vv\zeta)+\zeta_1\dots \zeta_{N-k-1}R_k(\vv \zeta')
\stackrel{d}{=}
R_{N-1}(\vv\zeta).
\end{multline*}
Since $v>0$, the above inequality implies that the numerator defining $Q_N$ dominates the denominator, and hence  $Q_N\geq 1$.

To derive the upper bound for $Q_N$, we drop the second term in the numerator of \eqref{QN(v)}.
Using \eqref{XuWei} with $\theta=v$, we obtain
$$ Q_N\leq \tfrac{\EE \left[ R_{N-k-1}^{-v}\right]}{\EE[R_{N-1}^{-v}]} \sim \tfrac{\sqrt{N-1}}{\sqrt{N-k-1}} \to 1 \quad \mbox{  as $N\to\infty$}.
  $$
This concludes the proof. \qed

\subsubsection{Proof of Theorem~\ref{Thm-half-lim} at the boundary between the maximal current and high-density regions}  \label{Sec:Thm1.3(vi)}
  Assume   $u>0,v=0$ and take $t_1,\dots,t_k>-\alpha/2$, $t_{k+1}=0$.
 From \eqref{pert-MU} we obtain

\begin{equation}\label{poprawiny+}
   \tfrac{\GG_{t_1,\dots,t_k,0,0\dots}(\alpha,u ,0)}{\GG_{N}(\alpha,u,0)}
=\tfrac{ \Gamma(2\alpha)^{N-k-1}\prod_{j=1}^{k} \Gamma(2\alpha+2t_j)}{\EE[R_{N-1}^{-u}(\vv \tau)]\Gamma(\alpha)^{2N-2}}\,  I_N
\end{equation}
where
\[
I_N:=\int_{(0,\infty)^{N-1}}\,(1+x_1+x_1x_2+\ldots+x_1\ldots x_{N-1})^{-u}\,\prod_{j=1}^{N-1}\,\tfrac{x_j^{\alpha+2t_j-1}}{(1+x_j)^{2\alpha+2t_j}}\,\mathrm d\mathbf x
\]
with $t_j=0$ for $j>k$.
Applying the change of variables \eqref{subRSKlike} used in the proof of  Lemma~\ref{Lem:H=L}, %
we obtain
\begin{multline}
    \label{IN-sing}
    I_N =\int_{(0,\infty)^{N-1}} \tfrac{y_1^{\alpha+u-1}}{(1+y_1)^{\alpha+u}} \Biggl( \prod_{j=2}^{k+1} \tfrac{y_j^{2\alpha+2t_{j-1}-1}}{(1+y_j)^{2\alpha+2t_{j-1}}} \Biggr)\,\Biggl( \prod_{j=1}^{k} R_j^{2t_{j+1}-2t_j}(\vv y) \Biggr)\\\ \cdot  \Biggl( \prod_{j=k+2}^{N-1} \tfrac{y_j^{2\alpha-1}}{(1+y_j)^{2\alpha}} \Biggr) R_{N-1}^{-\alpha}(\vv y)\,  \mathrm d\mathbf{y}.
\end{multline}
Note that the integral is finite due to the presence of the factor {  $R_{N-1}^{-\alpha}(\vv y)\leq (y_1\dots y_{N-1})^{-\alpha}$}.
\arxiv{
\begin{proof}[Proof of \eqref{IN-sing}] To shorten notation, write   $R_j=R_j(\vv y)$. The irregular substitutions are:
$$x_1=\frac{y_2}{R_2}, \quad 1+x_1=\frac{R_1}{R_2}(1+y_2),\quad  x_{N-1}=\frac{R_{N-2}}{y_1\dots y_{N-1}}, \quad 1+x_{N-1}=\frac{R_{N-1}}{y_1\dots y_{N-1}} .$$
Also write $R_{N-1}(\vv x)=\frac{R_1}{y_1}$. The Jacobian  is:
$J=\tfrac{R_1}{y_1^2 y_2\dots y_{N-1}R_{N-1}}$, see \eqref{B.8J}.
Using \eqref{subRSKlike}, see \eqref{xfrac2yfrac}, the   non-boundary factors  become:
$$\frac{x_j^{\alpha+2 t_j-1}}{(1+x_j)^{2\alpha+2 t_j}} = \frac{y_{j+1}^{\alpha +2
   t_j-1}}{(1+y_{j+1})^{2\alpha+2t_j}} \cdot \frac{R_{j-1}^{\alpha +2
   t_j-1}}{R_j^{2\alpha+2t_j}} R_{j+1}^{\alpha+1}.
   $$
  We obtain:
\begin{multline*}
   I_N=\int_{(0,\infty)^{N-1}}\,(R_{N-1}(\vv x))^{-u}\,\tfrac{x_1^{\alpha+2t_1-1}}{(1+x_1)^{2\alpha+2t_1}} \left(\prod_{j=2}^{N-2}\,\tfrac{x_j^{\alpha+2t_j-1}}{(1+x_j)^{2\alpha+2t_j}}\right) \, \tfrac{x_{N-1}^{\alpha -1}}{(1+x_{N-1})^{2\alpha}}\mathrm d\mathbf x
\\ =  \int_{(0,\infty)^{N-1}}\frac{y_1^u}{R_1^u}
\left(\prod_{j=1}^{N-2} \tfrac{y_{j+1}^{\alpha+2 t_j-1}}{(1+y_{j+1})^{2\alpha+2 t_j}}\right)\frac{R_2^{\alpha+1}}{R_1^{2\alpha+2 t_1} }
\left(\prod_{j=2}^{N-2}
\frac{R_{j-1}^{\alpha +2
   t_j-1}}{R_j^{2\alpha+2t_j}} R_{j+1}^{\alpha+1}\right)
 \frac{R_{N-2}^{\alpha-1}}{(y_1\dots y_{N-1})^{\alpha-1}}  \frac{(y_1\dots y_{N-1})^{2\alpha}}{R_{N-1}^{2\alpha}} \\\cdot  \frac{R_1}{y_1 (y_1\dots y_{N-1})R_{N-1}} \d\mathbf y.
\end{multline*}
After cancellations, for  $N\geq k +3$ the integrand becomes
$$
    \frac{y_1^{\alpha+u-1}}{R_1^{\alpha+u+2t_1-2t_2}}
    \left(\prod_{j=2}^{k+1} \frac{y_j^{2\alpha+2t_{j-1}-1}}{(1+y_j)^{2\alpha+2t_{j-1}}}\right) \cdot  \left(\prod_{j=k+2}^{N-1}
    \frac{y_j^{2\alpha-1}}{(1+y_j)^{2\alpha}}\right) \cdot  \left(\prod_{j=2}^{k} R_j^{2t_{j+1}-2t_j}\right)
      \cdot R_{N-1}^{-\alpha}.
$$
Since $R_1=1+y_1$, we obtain
\begin{equation*}
\frac{y_1^{\alpha+u-1}}{(1+y_1)^{\alpha+u}}
    \left(\prod_{j=2}^{k+1} \frac{y_j^{2\alpha+2t_{j-1}-1}}{(1+y_j)^{2\alpha+2t_{j-1}}}\right) \cdot
    \left(\prod_{j=k+2}^{N-1} \frac{y_j^{2\alpha-1}}{(1+y_j)^{2\alpha}}\right) \cdot
    \left(\prod_{j=1}^{k} R_j^{2t_{j+1}-2t_j}\right)  \cdot R_{N-1}^{-\alpha}.
\end{equation*}
This gives \eqref{IN-sing}. \end{proof}
 }
We rewrite $I_N$ as
\begin{multline*}
	I_N
	 =\int_{(0,\infty)^{k+1}}\,\tfrac{y_1^{\alpha+u-1}}{(1+y_1)^{\alpha+u}} \Biggl( \prod_{j=2}^{k+1} \tfrac{y_j^{2\alpha+2t_{j-1}-1}}{(1+y_j)^{2\alpha+2t_{j-1}}} \Biggr) \Biggl( \prod_{j=1}^{k} R_j(\mathbf{y})^{2t_{j+1}-2t_j} \Biggr) \\
	 \cdot \Biggl(\int_{(0,\infty)^{N-k-2}}\,R_{N-1}(\mathbf{y})^{-\alpha}\, \Biggl(\prod_{j=k+2}^{N-1} \tfrac{y_j^{2\alpha-1}}{(1+y_j)^{2\alpha}} \Biggr)  \, \mathrm dy_{k+2}\ldots\mathrm dy_{N-1}\Biggr)
    \\ \d y_1\ldots \d y_{k+1}.
\end{multline*}

Write  $R_{N-1}(\vv y)=R_{k}(\vv y)+P_{k+1}(\vv y) R_{N-k-2}(y_{k+2},\dots y_{N-1})$, where $P_{n}(\vv y)=\prod_{i=1}^{n} y_i$ so that the inner integral becomes

\begin{multline*}
\int_{(0,\infty)^{N-k-2}}\left(R_{k}(\vv y)+P_{k+1}(\vv y) R_{N-k-2}(y_{k+2},\dots y_{N-1})\right)^{-\alpha}\, \\ \cdot  \Biggl(\prod_{j=k+2}^{N-1} \tfrac{y_j^{2\alpha-1}}{(1+y_j)^{2\alpha}} \Biggr)  \, \mathrm dy_{k+2}\ldots\mathrm dy_{N-1}.
\end{multline*}

We now perform the change of variables
$$
y_{k+1+j}=\frac{1}{s_j}, \qquad j=1,\dots,N-k-2,$$
in the inner integral{, with the Jacobian $J=s_1^{-2}\dots s_{N-k-2}^{-2}$}. This leaves $R_k(\vv y)$, $P_{k+1}(\vv y)$,  and the outer integral unchanged, while
 $$R_{N-k-2}(y_{k+2},\dots y_{N-1}) =\tfrac{R_{N-k-2}(\vv s)}{s_1\dots s_{N-k-2}} =\tfrac{R_{N-k-2}(\vv s)}{P_{N-k-2}(\vv s)} . $$
After a calculation, we obtain
\begin{multline*}
   I_N
	 =\int_{(0,\infty)^{k+1}}\,\tfrac{y_1^{u-1}}{(1+y_1)^{\alpha+u}} \Biggl( \prod_{j=2}^{k+1} \tfrac{y_j^{\alpha+2t_{j-1}-1}}{(1+y_j)^{2\alpha+2t_{j-1}}} \Biggr) \Biggl( \prod_{j=1}^{k} R_j(\mathbf{y})^{2t_{j+1}-2t_j} \Biggr) \\
	\cdot \Biggl(\int_{(0,\infty)^{N-k-2}}\left(\tfrac{R_{k}(\vv y)P_{N-k-2}(\vv s)}{P_{k+1}(\vv y)} + R_{N-k-2}(\vv s) \right)^{-\alpha}\,
    \\ \cdot  \Big(\prod_{j=1}^{N-k-2} \tfrac{s_j^{\alpha-1}}{(1+s_j)^{2\alpha}} \Big)   \d s_1\dots\d s_{N-k-2}\Biggr) \mathrm d y_1\ldots\mathrm d y_{k+1}.
\end{multline*}
We can now express $I_N$  in terms of another collection of independent Beta random variables. Let
$\zeta_0\simeq \mathrm{Beta}_{II}(u,\alpha)$ and
$\zeta_r\simeq \mathrm{Beta}_{II}(\alpha+2 t_r,\alpha)$ for $r=1,\dots,k$ be independent. %
Let
$\tau_j\simeq\mathrm{Beta}_{II}(\alpha,\alpha)$, $j\ge 1$, be i.i.d. (and independent from the family $\{\zeta_j\}$).
Then, with $\vv \zeta=(\zeta_0,\dots,\zeta_k)$ and $\vv \tau=(\tau_1,\dots,\tau_{N-k-2})$, we get
\begin{multline*}
I_N=\tfrac{\Gamma(\alpha)\Gamma(u)}{\Gamma(\alpha+u)}\cdot \left(\prod_{r=1}^k \tfrac{\Gamma(\alpha)\Gamma(\alpha+2 t_r)}{\Gamma(2\alpha+2 t_r)}\right) \cdot \tfrac{\Gamma(\alpha)^{2N-2k-4}}{\Gamma(2\alpha)^{N-k-2}}
\\ \cdot \EE\Big[ \left(R_{N-k-2}(\vv \tau)+ \tfrac{R_k(\vv \zeta)P_{N-k-2}(\vv \tau)}{P_{k+1}(\vv \zeta)} \right)^{-\alpha}\prod_{j=1}^kR_j^{2t_{j+1}-2t_j}(\vv \zeta)\Big].
\end{multline*}
We obtain, see \eqref{poprawiny+},
\begin{multline*}
    \tfrac{\GG_{t_1,\dots,t_k,0,0\dots}(\alpha,u ,0)}{\GG_{N}(\alpha,u,0)} =
    \tfrac{\Gamma(2\alpha)\Gamma(u)}{\Gamma(\alpha+u)\Gamma(\alpha)^{k+1}}
  \left(\prod_{r=1}^k  \Gamma(\alpha+2 t_r)\right)
\\ \cdot  \tfrac{\EE\left[ \big(R_{N-k-2}(\vv \tau)+ \frac{R_k(\vv \zeta)P_{N-k-2}(\vv \tau)}{P_{k+1}(\vv \zeta)} \big)^{-\alpha}\prod_{j=1}^kR_j^{2t_{j+1}-2t_j}(\vv \zeta)\right] }{\EE[R_{N-1}^{-u}(\vv \tau)]}.
\end{multline*}
The preceding argument extends directly to $\EE[R_{N-1}^{-u}(\tau)]$, corresponding to the case $k=0$. Hence,
\begin{multline}\label{poprawiny1}
    \tfrac{\GG_{t_1,\dots,t_k,0,0\dots}(\alpha,u ,0)}{\GG_{N}(\alpha,u,0)}
    \\ =
     \left(\prod_{r=1}^k  \tfrac{\Gamma(\alpha+2 t_r)}{\Gamma(\alpha)} \right)
   \tfrac{\EE\left[ \big(R_{N-k-2}(\vv \tau)+ \frac{R_k(\vv \zeta)P_{N-k-2}(\vv \tau)}{P_{k+1}(\vv \zeta)} \big)^{-\alpha}\prod_{j=1}^kR_j^{2t_{j+1}-2t_j}(\vv \zeta)\right] }{\EE\left[ \big(R_{N-2}(\vv \tau)+ \frac{P_{N-2}(\vv \tau)}{\zeta_0}\big)^{-\alpha}\right]}.
\end{multline}

The result follows from the following lemma, to be proved later.

\begin{lemma}\label{Lem:poprawiny}
For every fixed $k\ge 0$ and $\vv \zeta=( \zeta_0,\dots,\zeta_k)$,
\begin{equation}\label{poprawiny2}
\lim_{m\to\infty}
\tfrac{
\EE\left[
\left(
R_m(\vv\tau)
+\frac{R_k(\vv\zeta)P_m(\vv\tau)}
       {P_{k+1}(\vv\zeta)}
\right)^{-\alpha}
\prod_{j=1}^k
R_j^{\,2t_{j+1}-2t_j}(\vv\zeta)
\right]
}
{\EE\left[R_m^{-\alpha}(\vv\tau)\right]}
=
\EE\!\left[
\prod_{j=1}^k
R_j^{\,2t_{j+1}-2t_j}(\vv\zeta)
\right].
\end{equation}
\end{lemma}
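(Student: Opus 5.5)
The natural strategy is to condition on $\vv\zeta$ and treat the $\vv\tau$-expectation through Lemma~\ref{Conj}. Set $\theta=\alpha$ and, for fixed $\vv\zeta$, write $K=K(\vv\zeta)=R_k(\vv\zeta)/P_{k+1}(\vv\zeta)\in(0,\infty)$ and $W=W(\vv\zeta)=\prod_{j=1}^k R_j^{2t_{j+1}-2t_j}(\vv\zeta)$. By independence of $\vv\tau$ and $\vv\zeta$ (Fubini), the ratio in \eqref{poprawiny2} equals
\[
\EE\Big[ W(\vv\zeta)\, g_m\big(K(\vv\zeta)\big)\Big],\qquad g_m(K):=\tfrac{\EE[(R_m(\vv\tau)+K P_m(\vv\tau))^{-\alpha}]}{\EE[R_m^{-\alpha}(\vv\tau)]}.
\]
By \eqref{ratio-limit} of Lemma~\ref{Conj} with $\theta=\alpha$, we have $g_m(K)\to 1$ for every fixed $K>0$. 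Hence the integrand converges to $W(\vv\zeta)$ pointwise, for every realization of $\vv\zeta$ (since $K(\vv\zeta)>0$ a.s.).

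Passing the limit under $\EE_{\vv\zeta}$ is then a dominated convergence argument. Since $K\ge 0$ and $x\mapsto x^{-\alpha}$ is decreasing, we get $0\le g_m(K)\le 1$ for all $m$ and $K$. It therefore suffices to have $W(\vv\zeta)$ integrable, which also makes the right-hand side of \eqref{poprawiny2} finite.

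The only remaining work is checking $\EE[W]<\infty$. When $t_1>\dots>t_k>t_{k+1}=0$ (the regime used for the Laplace transform), each exponent $2t_{j+1}-2t_j$ is negative and $R_j\ge1$, so $0<W\le 1$ and there is nothing to check. For general $t_j>-\alpha/2$, some exponents are positive. In that case I would bound $R_j^{p}\le C(1+\sum_{i\le j}P_i^{p})$ and use the finiteness of the moments $\EE[\zeta_r^{p}]$ for $p$ below the second Beta$_{II}$ parameter ($\alpha$, or the constraint coming from $u$ for $\zeta_0$). This is the step that may require restricting $\vv t$ to an open set. That restriction is harmless, because convergence of Laplace transforms on an open set suffices by \cite[Theorem~A.1]{Bryc-Wang-2017ASEP}.

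The main obstacle, namely the uniformity in $K$, is absorbed by the trivial bound $g_m\le1$. So the genuine analytic content rests on \eqref{ratio-limit}, which is already established.
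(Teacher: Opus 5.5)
Your argument is correct and is essentially the paper's proof. The paper also uses independence, the trivial upper bound $(R_m+KP_m)^{-\alpha}\le R_m^{-\alpha}$, and \eqref{ratio-limit} with $\theta=\alpha$; the only difference is that instead of dominated convergence it gets the lower bound by restricting to $A_K=\{R_k(\vv\zeta)/P_{k+1}(\vv\zeta)\le K\}$, replacing the random $K(\vv\zeta)$ there by the constant $K$ via monotonicity, and then letting $K\to\infty$. That truncation, or equivalently Fatou's lemma in your formulation, does not need $\EE[W]<\infty$ a priori, so your integrability discussion and the possible restriction of $\vv t$ to a smaller open set are not needed for the lemma.
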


Indeed, by \eqref{XuWei} with $\theta=\alpha$,
\begin{equation}\label{granica1}
    \lim_{N\to\infty}
\tfrac{\EE\,[R_{N-k-2}^{-\alpha}(\vv\tau)]}
     {\EE[R_{N-2}^{-\alpha}(\vv\tau)]}
=1.
\end{equation}
Therefore, dividing the numerator and denominator on the right-hand side of \eqref{poprawiny1} by $\EE[R_{N-k-2}^{-\alpha}(\vv\tau)]$ and
$\EE[R_{N-2}^{-\alpha}(\vv\tau)]$ respectively, and using \eqref{granica1}, from Lemma \ref{Lem:poprawiny} (applied twice: with  $m=N-k-2$ for fixed $k$, and $m=N-2$, $k=0$) we obtain

\begin{equation}\label{poprawiny3}
\lim_{N\to\infty}
\tfrac{\GG_{t_1,\dots,t_k,0,0,\dots}(\alpha,u,0)}
     {\GG_N(\alpha,u,0)}
=
\left(
\prod_{r=1}^k
\tfrac{\Gamma(\alpha+2t_r)}{\Gamma(\alpha)}
\right)
\EE\left[
\prod_{j=1}^k
R_j^{\,2t_{j+1}-2t_j}
(\zeta_0,\dots,\zeta_{j-1})
\right].
\end{equation}
The right-hand side coincides with \eqref{half-space-beta+}. Since
$t_1,\dots,t_k>-\alpha/2$, convergence of Laplace transforms implies weak
convergence, completing the proof.

\arxiv{More explicitly, to obtain \eqref{poprawiny3} we write
\begin{multline*}
    \frac{\EE\left[ \big(R_{N-k-2}(\vv \tau)+ \tfrac{R_k(\vv \zeta)P_{N-k-2}(\vv \tau)}{P_{k+1}(\vv \zeta)} \big)^{-\alpha}\prod_{j=1}^kR_j^{2t_{j+1}-2t_j}(\vv \zeta)\right] }{\EE\left[ \big(R_{N-2}(\vv \tau)+ \tfrac{P_{N-2}(\vv \tau)}{\zeta_0}\big)^{-\alpha}\right]}
    \\ = \frac{\EE\left[ \big(R_{N-k-2}(\vv \tau)+ \tfrac{R_k(\vv \zeta)P_{N-k-2}(\vv \tau)}{P_{k+1}(\vv \zeta)} \big)^{-\alpha}\prod_{j=1}^kR_j^{2t_{j+1}-2t_j}(\vv \zeta)\right] }{\EE\left[ R_{N-k-2}^{-\alpha}(\vv \tau) \right]}\\ \cdot   \frac{\EE\left[ R_{N-2}^{-\alpha}(\vv \tau) \right] }{\EE\left[ \big(R_{N-2}(\vv \tau)+ \tfrac{P_{N-2}(\vv \tau)}{\zeta_0}\big)^{-\alpha}\right]}. \cdot   \frac{\EE\left[ R_{N-k-2}^{-\alpha}(\vv \tau) \right]}{\EE\left[ R_{N-2}^{-\alpha}(\vv \tau) \right]}
\\ \sim \EE\!\left[
\prod_{j=1}^k
R_j^{\,2t_{j+1}-2t_j}(\vv\zeta)
\right]. \cdot  1 \cdot  1 \quad \mbox{as $N\to\infty$}.
\end{multline*}
}
\begin{proof}[Proof of Lemma~\ref{Lem:poprawiny}]
    Since $\zeta_0,\dots,\zeta_k, \tau_1,\dots,\tau_{N-1}$ are independent, by a trivial bound $(R+A)^{-\alpha}\leq R^{-\alpha}$ for $R\geq 1,A\geq 0$, we see that
    the expression under the limit in \eqref{poprawiny2} is bounded above by the right-hand side.

    For the lower bound, we use the  limit \eqref{ratio-limit} as follows. For  $K>0$ consider the event
    $$A_K=\left\{\tfrac{R_k(\vv \zeta)}{P_{k+1}(\vv \zeta)}\leq K\right\}.$$
    The numerator in the limit in \eqref{poprawiny2} is then bounded from below by the following expression.
\begin{multline*}
       \int_{A_K}  \left(R_{m}(\vv \tau)+ \tfrac{R_k(\vv \zeta)P_{m}(\vv \tau)}{P_{k+1}(\vv \zeta)} \right)^{-\alpha}\left(\prod_{j=1}^kR_j^{2t_{j+1}-2t_j}(\vv \zeta)\right)  \d \mathbb{P}
       \\ \geq
     \int_{A_K} \left(R_{m}(\vv \tau)+ K P_{m}(\vv \tau) \right)^{-\alpha}\left(\prod_{j=1}^kR_j^{2t_{j+1}-2t_j}(\vv \zeta)\right)  \d \mathbb{P}
     \\=
     \EE\left[ \left(R_{m}(\vv \tau)+ K P_{m}(\vv \tau) \right)^{-\alpha}\right]
     \int_{A_K} \Big(\prod_{j=1}^kR_j^{2t_{j+1}-2t_j}(\vv \zeta)\Big)  \d \mathbb{P},
\end{multline*}
where in the last step we used independence.
By \eqref{ratio-limit} with $\theta=\alpha$, we see that
    \begin{multline*}
        \liminf_{m\to\infty}  \tfrac{\EE\left[ \left(R_{m}(\vv \tau)+
    \tfrac{R_k(\vv \zeta)P_{m}(\vv \tau)}{P_{k+1}(\vv \zeta)} \right)^{-\alpha}\prod_{j=1}^kR_j^{2t_{j+1}-2t_j}(\vv \zeta)\right] }{\EE[R_{m}^{-\alpha}(\vv \tau)]}
   \\   \ge   \liminf_{m\to\infty} \tfrac{\EE[ \left(R_{m}(\vv \tau)+ K P_{m}(\vv \tau) \right)^{-\alpha}] }{\EE[R_{m}^{-\alpha}(\vv \tau)]}  \int_{A_K} \Big(\prod_{j=1}^kR_j^{2t_{j+1}-2t_j}(\vv \zeta)\Big)  \d \mathbb{P}
  \\   =  \int_{A_K} \Big(\prod_{j=1}^kR_j^{2t_{j+1}-2t_j}(\vv \zeta) \Big) \d \mathbb{P}.
    \end{multline*}
   To end the proof, we let $K\to\infty$.
\end{proof}

 \appendix
\section{Properties of Gamma function}

We use the following properties of gamma function:
\begin{lemma}\label{Lem:prop-Gamma}{$ $}

\begin{enumerate}  [(i)]
\item If $\alpha,  x>0$, then   \cite[\href{http://dlmf.nist.gov/5.8.E3}{(5.8.3)}]{NIST:DLMF}
\begin{equation}\label{G-prod}
  |\Gamma(\alpha+\i x)|^2 = \tfrac{\Gamma(\alpha)^2}{\prod_{k=0}^\infty\left(1+\frac{x^2}{(\alpha+k)^2}\right)}.
\end{equation}
\item If $x\ne 0,-1,\dots$, then  \cite[\href{http://dlmf.nist.gov/5.2.E2}{(5.2.2)}]{NIST:DLMF}
\begin{equation}\label{digamma}
   \tfrac{\partial \Gamma(x)}{\partial x}=\psi(x)\Gamma(x),
\end{equation}
where $\psi$ is the {\em  digamma function}.
\item Function $x\mapsto \Gamma(\alpha+x)\Gamma(\alpha-x)$  is convex on $(-\alpha,\alpha)$ and in particular it is
a strictly decreasing function on $(-\alpha,0]$ with the minimum $\Gamma(\alpha)^2$ at $x=0$.
\item If  $\alpha>0$ and $0<p<\alpha$, then
\begin{equation}\label{C-S}
   \Gamma(\alpha)^2< \Gamma(\alpha+p)\Gamma(\alpha-p).
\end{equation}

\item For a Pochhammer  symbol
$(a)_j =\Gamma(j+a)/\Gamma(a)$ with $a\not\in\ZZ_{\leq -1}$, we have
\begin{equation}
    \label{InvPochh}
    (-a)_j=\tfrac{(-1)^j \Gamma (1+a )}{\Gamma (1+a-j)}, \quad j=0,1,\dots.
\end{equation}
\end{enumerate}
\end{lemma}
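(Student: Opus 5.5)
Each of the five items of Lemma~\ref{Lem:prop-Gamma} is a standard fact, so I would settle them one at a time by short arguments, taking (i) and (ii) directly from the cited DLMF formulas.

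For (i), I would start from the Weierstrass product $1/\Gamma(z)=z e^{\gamma z}\prod_{k\ge1}(1+z/k)e^{-z/k}$. Multiplying this at $z=\alpha+\i x$ and at $z=\alpha-\i x$ and dividing by the same expression squared at $z=\alpha$, the exponential factors cancel. Each remaining factor reduces to $|\alpha+k+\i x|^2/(\alpha+k)^2=1+x^2/(\alpha+k)^2$, which gives \eqref{G-prod}. Item (ii) is just the definition $\psi=\Gamma'/\Gamma$.

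For (iii), I would set $g(x)=\log\Gamma(\alpha+x)+\log\Gamma(\alpha-x)$ on $(-\alpha,\alpha)$. Its second derivative is $g''(x)=\psi_1(\alpha+x)+\psi_1(\alpha-x)>0$, since $\psi_1(y)=\sum_k (y+k)^{-2}>0$. Hence $g$ is strictly convex, and $\Gamma(\alpha+x)\Gamma(\alpha-x)=e^{g}$ is convex as well, because the exponential of a convex function is convex. The function $g$ is even, so $g'(0)=0$; strict convexity then forces $g'<0$ on $(-\alpha,0)$, which gives strict decrease there and the minimum value $\Gamma(\alpha)^2$ at $x=0$. Item (iv) follows immediately: for $0<p<\alpha$ we have $g(p)=g(-p)>g(0)$.

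For (v), I would expand $(-a)_j=(-a)(-a+1)\cdots(-a+j-1)$ and pull out $(-1)^j$ to get $(-1)^j a(a-1)\cdots(a-j+1)$. That falling product equals $\Gamma(1+a)/\Gamma(1+a-j)$. For integer $a\ge0$ with $j>a$ the expression has to be read through $1/\Gamma$, which is entire, and both sides are then $0$.

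The only place that needs any care is the convexity argument in (iii). The remaining items are pure bookkeeping.
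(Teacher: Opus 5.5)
Your proposal is correct and follows essentially the same route as the paper: (i) comes from the Weierstrass product (the paper cites the equivalent DLMF product formula (5.8.5)), and (iii)--(iv) come from strict log-convexity, since the second derivative of $\log\Gamma(\alpha+x)\Gamma(\alpha-x)$ is $\sum_{k\ge0}\bigl[(\alpha+k+x)^{-2}+(\alpha+k-x)^{-2}\bigr]>0$, combined with evenness in $x$. The paper treats (ii) and (v) as standard. Your elementary check of (v) is fine, including reading the identity through the entire function $1/\Gamma$ when $a\in\ZZ_{\ge0}$ and $j>a$.
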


\arxiv
{
Formula \eqref{G-prod}  is a consequence of  a more general \cite[\href{http://dlmf.nist.gov/5.8.E5}{(5.8.5)}]{NIST:DLMF}
which for $m=2$ implies that with complex $a_1=a_2=\alpha\not\in\ZZ_{\leq 0}$  and $b_1=\alpha-z\not\in\ZZ_{\leq 0}$, $b_2=\alpha+z\not\in\ZZ_{\leq 0}$  we have
$$\Gamma(\alpha-z)\Gamma(\alpha+z)=\Gamma(\alpha)^2 \prod_{k=0}^\infty \frac{(\alpha+k)^2}{(\alpha+k-z)(\alpha+k+z)}.$$

Property  (iii)  is a consequence of log-convexity, as   the second derivative of
$$
\log \Gamma(\alpha+x)\Gamma(\alpha-x)=2 \log \Gamma(\alpha)- \sum_{k=0}^\infty \log \left(1-\tfrac{x^2}{(\alpha+k)^2}\right)$$
is
$$
\sum_{k=0}^\infty \frac{1}{(\alpha+k+x)^2}+\sum_{k=0}^\infty \frac{1}{(\alpha+k-x)^2}>0.
$$
Inequality \eqref{C-S}  follows from  strict log-convexity of $x\mapsto \Gamma(\alpha+x)\Gamma(\alpha-x)$ as we have equality at $p=0$.
}

\section{Useful integrals}
The
de-Branges  Beta integral
   \cite{de1972gauss}, as stated in \cite[(1.6.12)]{koekoek2010hypergeometric}    is as follows:
 If $a$, $b$ and $c$ are positive except possibly for a pair of complex conjugates with positive real parts, then
\begin{equation}\label{GG}
\tfrac{1}{2\pi}\int_0^\infty \tfrac{|\Gamma(a+\i x)\Gamma(b+\i x)\Gamma( c+\i x)|^2}{|\Gamma(2 \i x)|^2}\d x =
\Gamma(a+b)\Gamma(a+c)\Gamma(b+c).
\end{equation}

We also need an integral \cite[(1.6.3)]{koekoek2010hypergeometric}
\cite[\href{http://dlmf.nist.gov/5.13.E3}{(5.13.3)}]{NIST:DLMF}
\begin{equation}\label{koek:1.6.3}
\tfrac{1}{2\pi}\int_{-\infty}^\infty \Gamma(a+\i t)\Gamma(b+\i t)\Gamma(c-\i t)\Gamma( d-\i t)\d t=
\tfrac{\Gamma(a+c)\Gamma(a+d)\Gamma(b+c)\Gamma(b+d)}{\Gamma(a+b+c+d)},
\end{equation}
which holds if  $\re(a),\re(b),\re(c),\re(d)>0$. { (This identity is known as Barnes' first lemma; for a short proof, see \cite[page 43]{goldfeld2011kontorovich}.)}

De-Branges-Wilson integral   \cite{de1972gauss}, \cite{wilson1980some}, see also \cite[\href{http://dlmf.nist.gov/5.13.E5}{(5.13.5)}]{NIST:DLMF},  is
{  %

\begin{equation}\label{WI+}\tfrac{1}{4\pi}\int_{-\infty}^{\infty}\tfrac{\prod_{k=1}^{4}\Gamma\left(a_{k}+\i t%
\right)\Gamma\left(a_{k}-\i t\right)}{\Gamma\left(2\i t\right)\Gamma\left(-2\i t%
\right)}\,\mathrm{d}t=\tfrac{\prod_{1\leq j<k\leq 4}\Gamma\left(a_{j}+a_{k}%
\right)}{\Gamma\left(a_{1}+a_{2}+a_{3}+a_{4}\right)}.
\end{equation}
This formula is valid for all complex $a_j$ such that $\re(a_j)>0$. Noting that the integrand is a symmetric function of variable $t$, we may consider the same integral over $(0,\infty)$.
}
The limit case  $a_4\to\infty$ of \eqref{WI+} gives
\begin{equation}\label{GG+}
\tfrac{1}{2\pi}\int_0^\infty \tfrac{\Gamma(a+\i x)\Gamma(a-\i x)\Gamma(b+\i x)\Gamma(b-\i x)\Gamma(c+\i x)\Gamma( c-\i x)}{|\Gamma(2 \i x)|^2}\d x =
\Gamma(a+b)\Gamma(a+c)\Gamma(b+c),
\end{equation}
which extends \eqref{GG} to complex parameters $a,b,c$ with positive real part. This formula can also be obtained from \eqref{GG} by analytic continuation;
compare \cite[page 697]{wilson1980some}.

\begin{lemma}\label{Lem:H=L} Recall notation \eqref{Rn}. For $k=1,2\dots,$ $\alpha>0$, $u>-\alpha$, $ v< u$, $|v|<\alpha$   and $t_1,\dots,t_k \ge 0$,  the following integrals:
\begin{equation}\label{Lk}
    L_k = \int_{(0,\infty)^k} \tfrac{ \prod_{i=1}^k x_i^{\alpha+v+2t_i-1} }{ \big(R_k(\vv x)\big)^{u+v} (1+x_k)^{\alpha-v+2t_k} \prod_{i=1}^{k-1} (1+x_i)^{2\alpha+2t_i} } \,\mathrm{d}x_1 \dots \mathrm{d}x_k
\end{equation}
and
\begin{equation}\label{Hk}
    H_k = \int_{(0,\infty)^k} \tfrac{ y_1^{u-v-1} \left( \prod_{j=2}^k y_j^{\alpha-v+2t_{j-1}-1} \right) \prod_{j=1}^{k} \big(R_j(\vv y)\big)^{2t_{j+1}-2t_j} }{  (1+y_1)^{\alpha+u} \prod_{j=2}^k (1+y_j)^{2\alpha+2t_{j-1}} } \,\mathrm{d}y_1 \dots \mathrm{d}y_k,
\end{equation}
 are finite and equal. %
 \end{lemma}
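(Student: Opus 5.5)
The plan is to prove $L_k=H_k$ with an explicit birational change of variables $(0,\infty)^k\to(0,\infty)^k$, of the ``RSK-like'' type already used in the half-space computations. Write $R_j=R_j(\vv y)$ and $P_j=y_1\cdots y_j$, with $R_0=1$. I would substitute
\[
x_j=\frac{y_{j+1}R_{j-1}}{R_{j+1}}\quad (1\le j\le k-1),\qquad x_k=\frac{R_{k-1}}{P_k}.
\]
The point of this choice is that the factors $1+x_j$ then factor nicely. Using $R_{j+1}=R_j+P_{j+1}$ and $R_j=R_{j-1}+P_j$, one checks
\[
1+x_j=\frac{(1+y_{j+1})R_j}{R_{j+1}}\quad(j<k),\qquad 1+x_k=\frac{R_k}{P_k}.
\]

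The first step is to show that this map is a bijection of $(0,\infty)^k$. The map is triangular in a suitable order: from $x_k$, and then recursively from $x_{k-1},\dots,x_1$, the quantities $R_j/P_j$ can be recovered one at a time, and these determine $\vv y$ uniquely with positive entries. The second step is to express $R_k(\vv x)=1+x_1+x_1x_2+\cdots$ in the $\vv y$ variables. I expect the products $x_1\cdots x_j$ to telescope to $\frac{P_{j+1}R_0R_1}{y_1R_jR_{j+1}}$-type expressions, and summing them should give $R_k(\vv x)=R_1(\vv y)/y_1\cdot(\text{telescoped factor})$. In the boundary case used later in the paper this becomes $R_{N-1}(\vv x)=R_1/y_1$. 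I would verify this identity by induction on $k$. The third step is to compute the Jacobian. I would do this with logarithmic coordinates $\log x_j$ versus $\log y_j$, where the matrix is triangular up to rank-one corrections; I expect $J=R_1/(y_1^2y_2\cdots y_k R_k)$, matching \eqref{B.8J}.

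The fourth step is to substitute into $L_k$ and collect exponents. Each bulk factor $x_j^{\alpha+v+2t_j-1}/(1+x_j)^{2\alpha+2t_j}$ contributes $y_{j+1}^{\alpha+v+2t_j-1}(1+y_{j+1})^{-2\alpha-2t_j}$ times a power of $R_{j-1}$, $R_j$ and $R_{j+1}$. The boundary factor $x_k$ together with $(1+x_k)$ contributes powers of $P_k$ and $R_k$, and $R_k(\vv x)^{-u-v}$ contributes $(R_1/y_1)^{-u-v}$ times telescoped terms. After this I would check that three things happen. The exponents of each interior $R_j$ telescope to $2t_{j+1}-2t_j$. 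The power of $P_k$ converts the exponents $\alpha+v$ on $y_2,\dots,y_k$ into $\alpha-v$ and leaves $y_1^{u-v-1}$. Finally, $R_1=1+y_1$ produces $(1+y_1)^{-\alpha-u}$. This gives exactly the integrand of $H_k$. This exponent bookkeeping, including the Jacobian, is the main obstacle. Because the interior and boundary indices behave differently, I would first carry out the computation explicitly for $k=1$ and $k=2$, and then do the general case by induction on $k$, splitting off the last coordinate as in the proof of \eqref{IN-sing}.

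Finiteness follows once equality is known, provided one side is shown finite. For $L_k$: the integrand is positive, and under $|v|<\alpha$, $u>v$ and $t_j\ge0$ I would bound $R_k(\vv x)^{-(u+v)}$ above. When $u+v\ge0$ I would use $R_k(\vv x)^{-(u+v)}\le1$. When $u+v<0$ I would use $R_k(\vv x)\le k\max(1,x_1\cdots x_k)$. In both cases the integral is dominated by products of one-dimensional $\mathrm{Beta}_{II}$-type integrals. These converge because $\alpha+v>0$ near $0$, and at infinity because each exponent of $x_j$ in the denominator exceeds the one in the numerator, which again follows from $u>v$ and $|v|<\alpha$. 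Since the substitution is a diffeomorphism and the integrands are positive, Tonelli's theorem then gives $H_k=L_k<\infty$.
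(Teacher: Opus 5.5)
For the equality, your route is the paper's: the same substitution \eqref{subRSKlike}, the same factorizations of $1+x_j$ and the Jacobian \eqref{B.8J}. The exponent count does close as you describe.

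Two things need tightening.
\begin{itemize}
\item There is no leftover ``telescoped factor''. We have $x_1\cdots x_j=\frac{R_1P_{j+1}}{y_1R_jR_{j+1}}$ for $j<k$, $x_1\cdots x_k=\frac{R_1}{y_1R_k}$, and $\frac{P_{j+1}}{R_jR_{j+1}}=\frac1{R_j}-\frac1{R_{j+1}}$. Together these give $R_k(\vv x)=(1+y_1)/y_1$ for every $k$.
\item The Jacobian is only asserted. In log-coordinates the matrix is lower Hessenberg, with superdiagonal entries $R_j/R_{j+1}$, so its determinant is not immediate.
\end{itemize}
Your own bijectivity variables $\rho_j=R_j(\vv y)/P_j(\vv y)$ settle the Jacobian. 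They satisfy $x_k=\rho_k-1$ and $x_j=(\rho_j-1)/\rho_{j+1}$, hence $\rho_1=R_k(\vv x)$. The map $\vv y\mapsto\vv\rho$ is lower triangular with diagonal $-(\rho_j-1)/y_j$. The map $\vv\rho\mapsto\vv x$ is upper bidiagonal with diagonal $\rho_2^{-1},\dots,\rho_k^{-1},1$. Using $\rho_j-1=R_{j-1}/P_j$, this gives $|J|=\frac{1+y_1}{y_1P_kR_k}$. The paper reaches the same value through the four maps $\phi_1,\dots,\phi_4$.

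The genuine gap is in the finiteness claim, case $u+v\ge0$. After bounding $R_k(\vv x)^{-(u+v)}\le 1$, the $x_k$ factor is $x_k^{\alpha+v+2t_k-1}(1+x_k)^{-(\alpha-v+2t_k)}$. This is integrable at infinity only if $v<0$. The hypotheses allow $0\le v<\min\{u,\alpha\}$, and there your dominating integral is $+\infty$. This already happens for $k=1$, where $L_1$ is finite only because $R_1(\vv x)=1+x_1$ supplies the decay. So the claim that every denominator exponent beats the numerator fails for $x_k$; the decay must come from $R_k(\vv x)$ itself.

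A fix: since $R_k(\vv x)\ge\max\{1,\,x_1\cdots x_{k-1}(1+x_k)\}$, we have $R_k(\vv x)^{-(u+v)}\le (x_1\cdots x_{k-1}(1+x_k))^{-\theta}$ for every $\theta\in[0,u+v]$. Choose such a $\theta$ with $2v<\theta<\alpha+v$. This is possible because $v<u$ and $v<\alpha$; for $v<0$ take $\theta=0$, which is your bound. Then $x_k$ gains the decay $(1+x_k)^{-\theta}$ with $\theta>2v$. Each $x_i$ with $i<k$ keeps the positive exponent $\alpha+v+2t_i-\theta$ at $0$. Everything therefore factors into convergent $\mathrm{Beta}_{II}$-type integrals. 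Taking the full power $\theta=u+v$ instead can break integrability at $x_i\to0$ when $u\ge\alpha+2t_i$.

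In the case $u+v<0$, your inequality $R_k(\vv x)\le k\max(1,x_1\cdots x_k)$ is false, because intermediate products can dominate. For example, $k=2$, $x_1=M$, $x_2=M^{-2}$ gives $R_2=1+M+M^{-1}$. The correct bound is $R_k(\vv x)^{-(u+v)}\le C_k\sum_{r=0}^k(x_1\cdots x_r)^{-(u+v)}$, as in the paper. The terms with $r<k$ leave the $x_k$ factor unshifted; they converge because $v<0$, which holds here since $2v<u+v<0$. The shifted factors converge using $\alpha+u>0$ and $u>v$.
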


 \begin{proof}
 To show that integral  \eqref{Lk} is finite, we note  that if $u+v>0$ then $\d x_k$ integral is finite as $R_k(\vv x)\geq x_1\dots x_{k-1}(1+x_k)$ gives
$$\tfrac{x_k^{\alpha+v+2t_k-1}}{(1+x_k)^{\alpha-v+2 t_k}R_k(\vv x)^{u+v}}
\leq \tfrac{x_k^{\alpha+v+2t_k-1}}{(1+x_k)^{\alpha+u+2 t_k}
(x_1\dots x_{k-1})^{u+v}}.$$

On the other hand if $u+v\leq 0$, then $v<0$ and  in \eqref{Lk} we have $R_k(\vv   x)^{-u-v}\leq C_k \sum_{r=0}^k (\prod_{j=1}^r x_j)^{-u-v}$, so the integral is bounded by a finite sum of finite beta-integrals.

 To verify that the integrals are equal, we  perform the following change of variables in $L_k$:

\begin{align}\label{subRSKlike}
    x_j = \tfrac{y_{j+1} R_{j-1}(\vv y)}{R_{j+1}(\vv y)} \quad \text{for } j = 1, \dots, k-1, \quad\mbox{and }\;
    x_k = \tfrac{R_{k-1}(\vv y)}{y_1 y_2 \dots y_k}.
\end{align}

To compute the Jacobian $J=J(y_1,\ldots,y_k)$ of this change of variables, we note that $\vv x:=(x_1,\dots,x_k)=(\phi_4\circ\dots\circ\phi_1)(\vv y)$, where
\begin{multline*}
   \phi_1(\vv y):=\Big(1+\sum_{j=1}^m\,\prod_{i=1}^j\,y_i\Big)_{\,m=1,\ldots,k}, \hfill
  \\ \phi_2(\vv R):=(\tfrac1{R_1}-\tfrac{1}{R_2},\tfrac{1}{R_2}-\tfrac{1}{R_3},\ldots,\tfrac{1}{R_{k-1}}-\tfrac{1}{R_k},\tfrac{1}{R_k}), \hfill \\
   \phi_3(\vv s):=\Big(\tfrac{s_m}{1-\sum_{j=1}^k s_j}\Big)_{\,m=1,\ldots,k}, \quad
     \phi_4(\vv q):=(q_1,q_2/q_1,\ldots,q_k/q_{k-1}). \hfill
\end{multline*}

Clearly, $J$ is the product of the consecutive Jacobians $J_1,\dots,J_4$ of the transformations $\phi_1,\dots,\phi_4$.
Denote $p_j=\prod_{i=1}^j\,y_i$.  We have
\[
J_1(\vv y)=\prod_{m=1}^{k-1}\,p_m,\quad J_2(\vv R)=\prod_{m=1}^k\,R_m^{-2}\quad\mbox{and}\quad J_4(\vv q)=\prod_{m=1}^{k-1}\,q_m^{-1}.
\]
 To compute the Jacobian $J_3$, we denote $S=\sum_{j=1}^k\,s_j$. Then, {  with $\vv q=\phi_3(\vv s)$,} we get
\[
\tfrac{\partial q_m}{\partial s_j}=\begin{cases} \tfrac{s_m}{(1-S)^2},\quad j\neq m,\\
                                                 \tfrac1{1-S}+\tfrac {s_m}{(1-S)^2},\quad j=m.
                                                 \end{cases}
\]

Using  matrix determinant lemma   $\det(I+\vv a \vv b^T) = 1+\vv b^T \vv a$, we obtain
\[
J_3(\vv s)=\det\left(\tfrac1{1-S}\,I+\tfrac1{(1-S)^2}\,\mathbf 1\,\vv s^T\right)=(1-S)^{-k-1}=\left(\tfrac{1+y_1}{y_1}\right)^{k+1}.
\]

We also note that $q_m=\tfrac{(1+y_1)p_{m+1}}{y_1R_mR_{m+1}}$, $m=1,\ldots,k-1$. Consequently,
\begin{multline*}
    J=\left(\prod_{m=1}^{k-1}\,p_m\right)\,\left(\prod_{m=1}^k\,R_m^{-2}\right)\,\left(\tfrac{1+y_1}{y_1}\right)^{k+1}\,\left(\prod_{m=1}^{k-1}\,\tfrac{y_1R_mR_{m+1}}{(1+y_1)p_{m+1}}\right)
    \\=\tfrac{p_1}{p_k}\,\tfrac{1}{R_1R_k}\, \tfrac{(1+y_1)^2}{y_1^2}=\tfrac{1+y_1}{y_1p_kR_k}.
\end{multline*}
Thus, the Jacobian  is
\begin{equation}\label{B.8J}
    \mathrm{d}x_1 \dots \mathrm{d}x_k = \tfrac{1+y_1}{y_1 \left( \prod_{j=1}^k y_j \right) R_k(\vv y)} \,\mathrm{d}y_1 \dots \mathrm{d}y_k.
\end{equation}
We also note that
\[
1+x_j=\tfrac{R_{j+1}+y_{j+1}R_{j-1}}{R_{j+1}}=\tfrac{(1+y_{j+1})R_j}{R_{j+1}},\;j=1,\ldots,k-1,\ \mbox{and}\quad 1+x_k=\tfrac{R_k}{y_1\ldots y_k}.
\]
{  Thus, for   $1<j<k$, we get
\begin{equation}\label{xfrac2yfrac}
   \tfrac{x_j^{\alpha+v+2 t_j-1}}{(1+x_j)^{2\alpha+2 t_j}} =
\tfrac{y_{j+1}^{\alpha +v+2
   t_j-1}}{(1+y_{j+1})^{2\alpha+2t_j}} \cdot \tfrac{R_{j-1}^{\alpha +v+2
   t_j-1}}{R_j^{2\alpha+2t_j}}\, R_{j+1}^{\alpha-v+1}.
\end{equation}
}

Moreover,
\[
R_k(\vv x)=1+x_1+\ldots+x_1\ldots x_k=\tfrac{1+y_1}{y_1}.
\]
\arxiv{Indeed, with $R_j=R_j(\vv y)$ we have
\begin{multline*}
    \frac{y_1}{R_1(\vv y)} R_k(\vv x)
    \\= \frac{y_1}{R_1}+\frac{y_1y_2}{R_1R_2}+\frac{y_1y_2y_3}{R_2R_3}+\dots+\frac{y_1\dots y_{k}}{R_{k-1}R_k}+\frac{1}{R_k}
= \frac{R_1-1}{R_1}+\frac{R_2-R_1}{R_1R_2}+\frac{R_3-R_2}{R_2R_3}+\dots+\frac{R_k-R_{k-1}}{R_{k-1}R_k}+\frac{1}{R_k}   \\= \left(1-\frac{1}{R_1}\right)+\left(\frac{1}{R_1}-\frac{1}{R_2}\right)+\left(\frac{1}{R_2}-\frac{1}{R_3}\right)+\dots+\left(\frac{1}{R_{k-1}}-\frac{1}{R_k}\right)+\frac{1}{R_k}=1.
\end{multline*}

}
Inserting all these formulas in the integral \eqref{Lk} we obtain \eqref{Hk}.
\end{proof}

\section{Asymptotics of exponential functionals of random walks}\label{Se:ExpFun}

For the convenience of the reader we paraphrase the theorems that we need, preserving most of the notation  from the original papers \cite{Hirano-1997,Xu-Wei:2023}.

\subsection{Katsuhiro Hirano \cite{Hirano-1997}} %
This reference uses notation $S_n=\xi_1+\dots+\xi_n$, where $\{\xi_j\}$ are i.i.d. and satisfy conditions:  (a) $\EE[\xi]=0$; (b)  $0<\EE[\xi^2]<\infty$; (c) $\EE [e^{\theta \xi}]$ converges for some $\theta>0$.
\begin{theorem}[\cite{Hirano-1997}, Theorem 1]\label{Thm:Hirano-T1}
Let functions $f$, $h$, and $W$ satisfy the following conditions:

\begin{enumerate}
\item[(i)] $f$ and $h$ are continuous on $[0,\infty)$.

\item[(ii)] $W$ is continuous on $\mathbb{R}$ and non-negative.

\item[(iii)] There exist positive numbers $a$, $\beta$, $\varepsilon$, $\eta$ such that
\begin{equation*}\label{Hiii-fh}
  \sup_{x>0} x^{-a}|f(x)| < \infty,
\qquad
\sup_{x>0} x^{\beta}|h(x)| < \infty,
\end{equation*}

and
\begin{equation*}
    \label{Hiii-W}
    \limsup_{x\to\infty} W(x)e^{-\varepsilon x} < \infty,
\qquad
\liminf_{x\to\infty} W(x)e^{-\eta x} > 0,
\end{equation*}
with
\begin{equation}\label{Hiii-beta}
    \beta\eta > a > 0.
\end{equation}
\end{enumerate}

If conditions (a), (b), and (c) are satisfied, then
\begin{equation}
    \label{Hirano-concl}
    \EE\!\left[
f\!\left(e^{S_n}\right)
\,h\!\left(\sum_{i=1}^{n} W(S_i)\right)
\right]
\sim c\,n^{-3/2},
\qquad n\to\infty,
\end{equation}
for some constant $c$.

\end{theorem}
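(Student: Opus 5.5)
Since this is a result quoted from \cite{Hirano-1997}, in the paper we simply invoke it. If one wanted a self-contained argument, I would proceed via fluctuation theory for the centered walk $S_n$, in the spirit of the local limit theorems of \cite{lePage1997local}.

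\textbf{Heuristic.} Write $M_n=\max_{i\le n}S_i$. Condition (iii) gives $|f(e^{S_n})|\le C e^{aS_n}\le Ce^{aM_n}$. Since $W\ge0$ and $W(x)\ge c e^{\eta x}$ for large $x$,
\[
\sum_{i\le n} W(S_i)\ \ge\ W(S_{i^*})\ \ge\ c\,e^{\eta M_n}
\]
whenever $M_n$ is large, where $i^*$ is the time of the maximum. Combined with $|h(x)|\le Cx^{-\beta}$ this gives $|h|\le Ce^{-\beta\eta M_n}$. Hence the integrand is at most $Ce^{(a-\beta\eta)M_n}$, which decays exponentially in $M_n$ because $\beta\eta>a$. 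For $S_n\to-\infty$ the integrand is killed by $|f(e^{S_n})|\le Ce^{aS_n}$, while $h$ stays bounded by continuity at $0$. So the expectation is dominated by paths with $M_n=O(1)$ and $S_n=O(1)$.

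\textbf{Step 1: the two-sided local estimate.} Under (a),(b), fluctuation theory (Wiener--Hopf factorization together with the local limit theorem for walks conditioned to stay below a level) gives, for fixed $x,y$,
\[
\mathbb P\bigl(M_n\le x,\ S_n\in[y,y+1]\bigr)\sim c(x,y)\,n^{-3/2}.
\]
It also gives a uniform bound $\le C(1+|x|)^{p}(1+|x-y|)^{p}n^{-3/2}$ for some $p$. I would decompose the expectation according to the integer parts of $M_n$ and $S_n$. The bounds above, summed over levels, give a dominating series $n^{-3/2}\sum_{m,\ell}\mathrm{poly}(m,\ell)\,e^{(a-\beta\eta)m}e^{-a\ell}$, which converges. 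Condition (c), the exponential moments, is used to control overshoots and the first and last few steps uniformly in $n$.

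\textbf{Step 2: identifying the limit.} On the dominant event, the bridge of length $n$ stays below a fixed level and returns to $O(1)$ at time $n$. Under this conditioning, the initial segment converges (in the sense of finite-dimensional distributions) to a walk conditioned to stay below a level. The time-reversed final segment converges to an independent copy of the analogous object. The middle portion of the path is at depth of order $\sqrt n$ with high probability.

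\textbf{Main obstacle: the middle contribution.} The hardest step is showing that the middle contributes negligibly to $\sum_i W(S_i)$. I expect this to be delicate, because $W$ is only assumed continuous and nonnegative, not small at $-\infty$. Here is what I would try first. By continuity of $h$ on $[0,\infty)$, it suffices to prove that $\sum_i W(S_i)$ converges in law under the conditioning, and this may force an additional assumption such as $W(x)\to0$ as $x\to-\infty$, as for $W=e^x$ in our application. Granting this, $\sum_i W(S_i)$ converges in law to $\Sigma_1+\Sigma_2$, the contributions of the initial and final pieces. Dominated convergence, using the bound from Step 1, then yields
\[
n^{3/2}\,\EE\bigl[f(e^{S_n})\,h\bigl(\textstyle\sum_i W(S_i)\bigr)\bigr]\to c .
\]
The constant $c$ is an explicit integral against the renewal measures of the ladder processes.
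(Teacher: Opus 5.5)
The paper gives no proof of this statement. It quotes it from Hirano and uses it as a black box, with $f(x)=x$, $h(x)=(1+x)^{-1-\theta}$, $W(x)=e^x$, to bound $\EE[P_mR_m^{-1-\theta}]$. So invoking it, as you propose, is exactly what the paper does.

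\textbf{The gap.} Your self-contained sketch stops at the step you flagged, and that step cannot be closed under the hypotheses as written, because the statement as transcribed is false. Both conditions on $W$ in (iii) are at $x\to+\infty$. So nothing controls $W$ on $(-\infty,0]$, where the walk spends time of order $n$ on the dominant event.

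Here is a counterexample. Take Gaussian steps, $f(x)=x$, $h(x)=(1+x)^{-2}$, and $W(x)=1$ for $x\le 0$, $W(x)=e^x$ for $x>0$. All hypotheses hold with $a=1$, $\beta=2$, $\varepsilon=\eta=1$. Since $\sum_{i\le n}W(S_i)\ge\max(n,e^{M_n})$, split at $M_n=\log n$ and use the uniform local bound $\mathbb P(M_n\le x,\,S_n\in[y-1,y])\le C(1+x)(1+x-y)n^{-3/2}$. This gives
\[
\EE\Bigl[e^{S_n}\bigl(1+{\textstyle\sum_{i\le n}}W(S_i)\bigr)^{-2}\Bigr]\le\EE\bigl[e^{S_n}\max(n,e^{M_n})^{-2}\bigr]=O\bigl(n^{-5/2}\log n\bigr).
\]
So $n^{3/2}$ times this positive expectation tends to $0$, and no $c$ with $a_n\sim c\,n^{-3/2}$ (in the paper's sense of $\sim$) exists.

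Presumably the $\varepsilon$-condition should read $\limsup_{x\to-\infty}W(x)e^{-\varepsilon x}<\infty$. An upper bound on $W$ at $+\infty$ is never useful, since only the decay of $h$ enters. Decay at $-\infty$ is exactly what the middle of the path needs. The paper's $W(x)=e^x$ satisfies the corrected condition with $\varepsilon=1$, so its application is unaffected.

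\textbf{Your proposed fix is too weak.} The extra assumption $W(x)\to0$ as $x\to-\infty$ does not suffice. Take $W(x)=1/(1-x)$ for $x\le0$ and $e^x$ for $x>0$, with the same $f,h$. Then $\sum_{i\le n}W(S_i)\ge n/(1+\max_{i\le n}|S_i|)$, which is of order at least $\sqrt n$ on the dominant event. Again the expectation is $o(n^{-3/2})$.

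A decay rate is needed, and the exponential one closes your argument. By the Markov property at time $i$ and the local bound used twice,
\[
\mathbb P\bigl(M_n\le x,\ S_i\in[-z-1,-z],\ S_n\in[y,y+1]\bigr)\le C(1+x)(1+x-y)(2+x+z)^2\,i^{-3/2}(n-i)^{-3/2}.
\]
Sum this against $\sup_{[-z-1,-z]}W\le Ce^{-\varepsilon z}$ (with $W$ bounded on $[0,x]$) and over $K\le i\le n-K$. This gives $\EE\bigl[\sum_{K\le i\le n-K}W(S_i);\,M_n\le x,\ S_n\in[y,y+1]\bigr]=O_{x,y}(K^{-1/2}n^{-3/2})$ uniformly in $n$. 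Since $h$ is continuous and vanishes at infinity, it is uniformly continuous, so the middle contribution is negligible.

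The rest of your outline then goes through, with dominating function $C\,e^{aS_n-\beta\eta M_n^+}$. Nothing in this route seems to need (c).

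Finally, the paper only uses this theorem to get $\sqrt m\,\EE[P_mR_m^{-1-\theta}]\to0$. For that, your Step~1 bound alone suffices: it gives $O(m^{-3/2})$ with no condition on $W$ at $-\infty$.
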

\subsection{Wei Xu \cite{Xu-Wei:2023}.} %
This reference uses notation:
$$S_n=\sum_{j=1}^nX_j, \quad  I_n = \sum_{j=1}^n e^{-S_j},$$
with centered i.i.d. $X_1,X_2,\dots$.   In the paper, $F$ is a positive   bounded function on $(0,\infty)$ that vanishes at infinity.
The assumptions are:
\begin{assumption}[{\cite[Assumption 2.1]{Xu-Wei:2023}}]\label{A2.1}
The set
$
\mathcal{D}_F
=\bigl\{\nu>0:\sup_{x>0} x^\nu F(x)<\infty\bigr\}
$
is nonempty, with   $\theta_F:=\sup \mathcal{D}_F\in(0,\infty]$.
\end{assumption}
 \begin{assumption}[{\cite[Condition 2.2]{Xu-Wei:2023}}]\label{C2.2}
  For any  \(0<\delta\),  there exists a constant $C_\delta$ such that for $x,y\geq \delta$ we have
$
|F(x)-F(y)|\leq C_\delta|x-y|$.
\end{assumption}
  \begin{assumption}{\cite[Assumption 2.5]{Xu-Wei:2023}}\label{A2.5}
     Let $\mathcal{L}_X(\lambda)
=\EE[e^{\lambda X}]$. The infimum
of $\mathcal{L}_X(\lambda)$ over $[0,\theta_F]$
is attained at some  $\Lambda\in[0,\theta_F]$, i.e., $\mathcal{L}_X(\Lambda)= \inf_{\lambda\in[0,\theta_F]} \mathcal{L}_X(\lambda)$.

 \end{assumption}

  \begin{assumption}[Spitzer's condition, {\cite[Condition 2.6]{Xu-Wei:2023}}]\label{C2.6}
   There exists a constant $\rho\in(0,1)$ such that, as $n\to\infty$,
\[
\frac{1}{n}\sum_{k=1}^{n}\mathbb{P}(S_k>0)\longrightarrow \rho.
\]
(All symmetric random walks
satisfy Spitzer’s condition with $\rho=1/2$.)
  \end{assumption}

The following result deals with the oscillating case, where $\lim\sup_{n\to\infty}S_n=-\liminf_{n\to\infty}S_n=\infty$.
\begin{theorem}[{\cite[Theorem 2.7]{Xu-Wei:2023}}]\label{Thm:Xu-Wei-T1}
In the oscillating case, if $\theta_F>\Lambda=0$ and  Assumptions (\ref{A2.1}-\ref{C2.6}) are satisfied, then, as
$n\to\infty$, we have
\begin{equation}\label{eq:2.3}
\EE[F(I_n)]
\sim
C_{F}\,n^{\rho-1}\ell_1(n),
\end{equation}
where the  slowly varying function $\ell_1$ and limiting constant $C_{F}\in(0,\infty)$ are given by  explicit expressions  \cite[(2.2) and (2.4)]{Xu-Wei:2023}, which are omitted here.
\end{theorem}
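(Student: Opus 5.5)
The plan is to reduce $\EE[F(I_n)]$ to the probability that the walk $\{S_j\}_{j\le n}$ stays above a fixed level. That probability is known, by fluctuation theory under Spitzer's condition (Assumption~\ref{C2.6}), to be regularly varying of index $\rho-1$; this is where the factor $n^{\rho-1}\ell_1(n)$ comes from. The mechanism is that $I_n=\sum_{j\le n}e^{-S_j}$ is of order $e^{-\min_{j\le n}S_j}$. Since $F(x)\le Cx^{-\nu}$ for some $\nu\in\mathcal D_F$ (Assumption~\ref{A2.1}), $F(I_n)$ is non-negligible only on paths whose minimum is not very negative.

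\textbf{Step 1: path decomposition.} Let $T_n$ be the first time at which $\min_{0\le j\le n}S_j$ is attained. On $\{T_n=k\}$ write
\[
I_n=e^{-S_k}\bigl(A_k+B_{n-k}\bigr),\qquad A_k=\sum_{j<k}e^{-(S_j-S_k)},\quad B_{n-k}=\sum_{j=k}^{n}e^{-(S_j-S_k)}.
\]
By the Feller duality lemma, the pre-minimum piece, read backwards, is a walk of length $k$ staying strictly positive. The post-minimum piece is an independent walk of length $n-k$ staying nonnegative. Hence
\[
\EE[F(I_n)]=\sum_{k=0}^{n}\EE\bigl[F(e^{-S_k}(A_k+B_{n-k}));\,T_n=k\bigr],
\]
with the two pieces independent given their respective ``stay-positive'' events.

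\textbf{Step 2: main term, $k\le K$ fixed.} Let $\tau^-$ denote the first time the walk goes strictly below $0$. Two facts drive this step.
\begin{itemize}
\item By Rogozin/Spitzer, $\PP(\tau^->m)\sim m^{\rho-1}\ell_1(m)$, and this is asymptotically unchanged when $m$ is replaced by $n-k$ with $k$ bounded.
\item Conditioned on $\{\tau^->n-k\}$, the walk converges in law, in Tanaka's sense, to the walk conditioned to stay nonnegative (the Doob $h$-transform with the renewal function). That limit walk drifts to $+\infty$ fast enough that $B_\infty=\sum_j e^{-S_j}<\infty$ almost surely.
\end{itemize}
Together with the local Lipschitz property of Assumption~\ref{C2.2} and boundedness of $F$, this gives
\[
\EE\bigl[F(e^{-S_k}(A_k+B_{n-k}))\,;\,T_n=k\bigr]\sim n^{\rho-1}\ell_1(n)\,c_k .
\]
Here $c_k$ is an explicit expectation of $F(e^{-S_k}(A_k+B_\infty))$ over the pre-minimum piece times the limiting conditioned law. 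Summing over $k\le K$ and letting $K\to\infty$ gives $C_F=\sum_k c_k$, provided the $c_k$ are summable.

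\textbf{Step 3: remainder, $k>K$.} We need to show that $\sum_{k>K}$ is at most $\varepsilon_K\, n^{\rho-1}\ell_1(n)$ with $\varepsilon_K\to0$. Using $F(x)\le Cx^{-\nu}$ and $A_k+B_{n-k}\ge1$, each term is bounded by
\[
C\,\EE\bigl[e^{\nu S_k};\,T_k=k\bigr]\cdot \PP(\tau^->n-k).
\]
The first factor is where Assumption~\ref{A2.5} with $\Lambda=0$ enters. Since $\inf_{[0,\theta_F]}\mathcal L_X$ is attained at $0$, there is no exponential tilt that would make the probability of a new strict minimum at time $k$, weighted by $e^{\nu S_k}$, decay more slowly than in the untilted walk. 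Consequently $\EE[e^{\nu S_k};T_k=k]$ is summable in $k$; its tail is controlled through the Baxter–Spitzer identity for the strict descending ladder process. Because $\nu<\theta_F$, that ladder process has a summable exponential moment. For the second factor, split the range:
\begin{itemize}
\item For $K<k\le n/2$, regular variation gives $\PP(\tau^->n-k)\le C\,\PP(\tau^->n)$.
\item For $k>n/2$, the weighted first factor is already $o(n^{\rho-1}\ell_1(n))$, because the ladder-height weight decays at least like a regularly varying tail of index below $\rho-1$.
\end{itemize}

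\textbf{Main obstacle.} I expect the uniform control in Step 3 for the range $k$ comparable to $n$ to be the hardest part. There the minimum occurs late and both pieces are long, so one needs local-limit estimates for the walk conditioned to stay negative, weighted by $e^{\nu S_k}$. My first attempt would be the Wiener–Hopf factorisation of the generating function $\sum_k s^k\EE[e^{\nu S_k};T_k=k]$, combined with a Tauberian argument. The Lipschitz condition of Assumption~\ref{C2.2} is also needed in Step 2, to justify dominated convergence under the conditioned laws; that part I expect to be routine.
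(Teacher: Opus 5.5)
The paper does not prove this statement; it quotes Xu's Theorem~2.7 without proof. I have therefore judged your proposal against what a complete argument requires. Your overall scheme is the standard one of Afanasyev--Geiger--Kersting--Vatutin for critical branching processes in random environment: decomposition at the first minimum, duality and independence of the two pieces, the fixed-$k$ limit via the walk conditioned to stay nonnegative, and the bound $F(I_n)\le Ce^{\nu S_k}$ on $\{T_n=k\}$. It handles $k\le n/2$ correctly.

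\textbf{The gap is the range $n/2<k\le n$.} You flag this range but do not close it, and the criterion you state for it is wrong. Put $a_k=\EE[e^{\nu S_k};T_k=k]$ and $p_m=\mathbb P(\tau^->m)$. Since $p_m$ is regularly varying with index $\rho-1>-1$, Karamata gives $\sum_{m\le n/2}p_m\sim c\,n\,p_n$. Hence the estimate $\sum_{n/2<k\le n}a_kp_{n-k}\le \max_{k>n/2}a_k\cdot\sum_{m<n/2}p_m$ is $o(p_n)$ only if $\max_{n/2<k\le n}a_k=o(1/n)$. Your justification does not give this:
\begin{itemize}
\item Termwise smallness $a_k=o(p_k)$ fails. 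More generally, decay like a regularly varying sequence of index $\gamma<\rho-1$ gives a bound of order $n^{\gamma+\rho}$ against $n^{\rho-1}$. That is enough only when $\gamma<-1$.
\item Summability of $(a_k)$ says nothing about the roughly $n/2$ terms with $k\asymp n$.
\item A Tauberian argument on $\sum_k s^ka_k$ controls partial sums, not individual coefficients that need not be monotone.
\end{itemize}

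\textbf{How to close it, using only non-degeneracy of $X$.} By Baxter--Spitzer, $\sum_k s^ka_k=\exp\bigl(\sum_{n\ge1}s^nb_n\bigr)$ with $b_n=n^{-1}\EE[e^{\nu S_n};S_n<0]$. Since $e^{\nu S_n}\le e^{-\nu j}$ on $\{S_n<-j\}$, the Kolmogorov--Rogozin inequality gives
$\EE[e^{\nu S_n};S_n<0]\le (1-e^{-\nu})^{-1}\sup_x\mathbb P(S_n\in[x,x+1])=O(n^{-1/2})$, so $b_n=O(n^{-3/2})$. Splitting each convolution at $n/2$ yields inductively $b^{*m}_n\le C\,m\,M^m n^{-3/2}$. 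Hence $a_n=\sum_m b^{*m}_n/m!=O(n^{-3/2})$, and the remainder over $n/2<k\le n$ is $O(n^{-1/2}p_n)$.

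\textbf{Two smaller corrections.}
\begin{itemize}
\item Assumption~\ref{A2.5} and the condition $\nu<\theta_F$ play no role in the summability of $(a_k)$. Strict descending ladder heights are negative, so $\sum_k a_k=(1-\EE[e^{\nu S_{\tau^-}}])^{-1}<\infty$ for every $\nu>0$. Moreover, in the oscillating case $\Lambda=0$ holds automatically.
\item In Step 2, finite-dimensional convergence of the conditioned walk does not by itself give $B_{n-k}\Rightarrow B_\infty$, because the conditioning measures change with $n$. This is not a dominated-convergence step. You need control of $\sum_{j>m}e^{-S_j}$ under $\mathbb P(\cdot\mid\tau^->n)$ that is uniform in $n$. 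That is exactly what the Afanasyev--Geiger--Kersting--Vatutin lemma $\EE[U_n\mid \min_{j\le n}S_j\ge 0]\to\EE^+[U_\infty]$ provides under Spitzer's condition.
\end{itemize}
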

\begin{remark}\label{Rem:Xuwei}
In this paper   we are interested in symmetric   i.i.d.  random variables \(\{X_j\}\) that have a density. Then  asymptotics \eqref{eq:2.3} holds with \(\rho=\frac12\), and \cite[formula (2.2)]{Xu-Wei:2023} gives \(\ell_1(n)=\frac{1}{\sqrt{\pi}}\). Oscillations of symmetric random walks were studied in \cite{Imhof1976}.
\end{remark}

\subsection*{Acknowledgments}
We thank Guillaume Barraquand for sharing an early version of \cite{Barraquand-2024-integral}.  We thank Alexey Kuznetsov for bringing to our attention Hartogs' theorem, an exercise in \cite{Conway78},
and most importantly, for providing  the analytic continuation argument  that we used in the proof of Proposition \ref{Prop2}. We thank Yizao Wang and Wei Xu for helpful discussions.

This research was partially sponsored by the National Science Center Poland [project no.  2023/51/B/ST1/01535].

\arxiv{
\subsection*{Use of AI tools declaration} During the preparation of this work, the authors used ChatGPT-4 and Mathematica 14.2 to assist in checking calculations and improving the presentation of the manuscript. In particular, after the analytic proof of Theorem \ref{Thm:PhD} had been completed, discussions with ChatGPT helped clarify several technical points concerning the $\mathrm{Beta}_{II}$   framework underlying the  probabilistic proof.
}

\end{document}